\newtheorem{theorem}{Theorem}[section]
\theoremstyle{definition}
\newtheorem{definition}[theorem]{Definition}
\newtheorem{example}[theorem]{Example}
\newtheorem{obs}[theorem]{Remark}
\newtheorem{obs*}[theorem]{Remark*}
\newtheorem{remarks}[theorem]{Remarks}
\theoremstyle{theorem}
\newtheorem{lemma}[theorem]{Lemma}
\newtheorem{proposition}[theorem]{Proposition}
\newtheorem{corollary}[theorem]{Corollary}
\newtheorem{proposition*}[theorem]{Proposition*}
\newtheorem{theorem*}[theorem]{Theorem*}
\newtheorem{corollary*}[theorem]{Corollary*}
\newtheorem{definition*}[theorem]{Definition*}
\begin{document}
\baselineskip=15pt
\title[On the geometric theory of local MV-algebras]{On the geometric theory\\ of local MV-algebras}

\author{Olivia Caramello and Anna Carla Russo}
\date{11 February 2016}

\maketitle
\begin{abstract}
We investigate the geometric theory of local MV-algebras and its quotients axiomatizing the local MV-algebras in a given proper variety of MV-algebras. We show that, whilst the theory of local MV-algebras is not of presheaf type, each of these quotients is a theory of presheaf type which is Morita-equivalent to an expansion of the theory of lattice-ordered abelian groups. Di Nola-Lettieri's equivalence is recovered from the Morita-equivalence for the quotient axiomatizing the local MV-algebras in Chang's variety, that is, the perfect MV-algebras.
We establish along the way a number of results of independent interest, including a constructive treatment of the radical for  MV-algebras in a fixed proper variety of MV-algebras and a representation theorem for the finitely presentable algebras in such a variety as finite products of local MV-algebras.       
\end{abstract}

\tableofcontents
\section{Introduction}

In this paper we continue the study of the theory of MV-algebras from a topos-theoretic point of view started in \cite{Russo} and \cite{Russo2}.

MV-algebras were introduced in 1958 by Chang (cf. \cite{Chang} and \cite{Chang2}) as the semantical counterpart of the \L ukasiewicz infinite-valued propositional logic. Since then many applications in different areas of Mathematics were found, the most notable ones being in functional analysis (cf. \cite{Mundici}), in the theory of lattice-ordered abelian groups (cf. \cite{PL}) and in the field of generalized probability theory (cf. Chapters 1 and 9 of \cite{Mundici_book} for a general overview).

Several equivalences between categories of MV-algebras and categories of lattice-ordered abelian groups ($\ell$-groups, for short) can be found in the literature, the most important ones being the following:
\begin{itemize}
\item \textbf{Mundici's equivalence} \cite{Mundici} between the whole category of MV-algebras and the category of $\ell$-groups with strong unit ($\ell$-u groups, for short); 
\item \textbf{Di Nola-Lettieri's equivalence} \cite{PL} between the category of {\it perfect MV-algebras} (i.e. MV-algebras generated by their radical) and the whole category of $\ell$-groups.
\end{itemize}
In \cite{Russo} and \cite{Russo2} we observed that these categorical equivalences can be seen as equivalences between the categories of set-based models of certain geometric theories, and we proved that these theories are indeed {\it Morita-equivalent}, i.e. they have equivalent categories of models inside any Grothedieck topos $\mathcal E$, naturally in $\mathcal E$. We are interested in Morita-equivalent theories because they allow us to apply a general topos-theoretic technique introduced by the first author in \cite{Caramello1}, namely the `bridge technique', for transferring notions, properties and results from one theory to the other.  This methodology is based on the possibility of representing Grothendieck toposes by means of different sites of definition, each of them corresponding to a different theory classified by that topos, and of transferring topos-theoretic invariants (i.e., properties or constructions preserved by categorical equivalences) across these different representations. 

In this paper we construct a new class of Morita-equivalences between theories of local MV-algebras and theories of $\ell$-groups, which includes the Morita-equivalence obtained in \cite{Russo2} by lifting Di Nola-Lettieri's equivalence. 
Our study starts with the observation that the class of perfect MV-algebras is the intersection of the class of local MV-algebras with a specific proper variety of MV-algebras, namely Chang's variety. It is natural to wonder what happens if we replace Chang's variety with an arbitrary variety of MV-algebras. We prove that `globally', i.e. considering the intersection with the whole variety of MV-algebras,  the theory of local MV-algebras is not of presheaf type, while if we restrict to any proper subvariety $V$, the theory of local MV-algebras in $V$ is of presheaf type. Moreover, we show that this theory is Morita-equivalent to a theory expanding the theory of $\ell$-groups. The categories of set-based models of these theories are not in general algebraic as in the case of perfect MV-algebras; however, in section \ref{sct:algebraicity} we characterize the varieties $V$ for which we have algebraicity as precisely those which can be generated by a single chain. All the Morita-equivalences contained in this new class are non-trivial, i.e. they do not arise from bi-interpretations, as we prove in section \ref{sct:bi-interpretability}.  
 
The innovation of this paper stands in the fact that we use topos-theoretic methods to obtain both logical and algebraic results. Specifically, we present two (non-constructively) equivalent axiomatizations for the theory of local MV-algebras in an arbitrary proper subvariety $V$, and we study the Grothendieck topologies associated with them as quotients of the algebraic theory ${\mathbb T}_{V}$ axiomatizing $V$. The subcanonicity of the Grothendieck topology associated with the first axiomatization ensures that the cartesianization of the theory of local MV-algebras in $V$ is the theory ${\mathbb T}_{V}$. To verify the provability of a cartesian sequent in the theory ${\mathbb T}_{V}$, we are thus reduced to checking it in the theory of local MV-algebras in $V$. Using this, we easily prove that the radical of every MV-algebra in $V$ is defined by an equation, which we use to present the second axiomatization. This latter axiomatization has the notable property that the associated Grothendieck topology is rigid. This allows us to conclude that the theory of local MV-algebras in $V$ is of presheaf type. The equivalence of the two axiomatizations and the consequent equality of the associated Grothendieck topologies yields in particular a representation result of every finitely presentable MV-algebra in $V$ as a finite product of local MV-algebras. This generalizes the representation result obtained in \cite{Russo2} for the finitely presentable MV-algebras in Chang's variety as finite products of perfect MV-algebras.  

Strictly related to the theory of local MV-algebras is the theory of {\it simple MV-algebras}, i.e.  of local MV-algebras whose radical is $\{0\}$; indeed, an MV-algebra $A$ is local if and only if the quotient $A\slash \textrm{Rad}(A)$ is a simple MV-algebra. This theory shares many properties with the theory of local MV-algebras: globally it is not of presheaf type but it has this property if we restrict to an arbitrary proper subvariety. On the other hand, while the theory of simple MV-algebras of finite rank is of presheaf type (as it coincides with the geometric theory of finite chains), the theory of local MV-algebras of finite rank is not, as we prove in section \ref{sec:localfiniterank}.

A particular attention is posed throughout the paper on the constructiveness of the results; we indicate with the symbol * the points where we use the axiom of choice. 

The plan of the paper is as follows. In sections \ref{sct:topos_theory} and \ref{sct:prelimMV} we recall the relevant background on toposes and MV-algebras and we show that the radical of an MV-algebra is not definable by a geometric formula in the theory $\mathbb{MV}$ of MV-algebras. In section \ref{sct:Local} we introduce the geometric theory $\mathbb{L}oc$ of local MV-algebras, we prove that its cartesianization coincides with $\mathbb{MV}$ and show that $\mathbb{L}oc$ is not of presheaf type. In section \ref{sec:TheoryOfV}, we deal with the algebraic theory of a Komori variety, i.e. of a proper subvariety $V$ of the variety of MV-algebras presented in the form $V=V(S_{n_1}, \dots, S_{n_k}, S_{m_1}^{\omega},\dots, S_{m_s}^{\omega})$; we show that one can attach to such a variety an invariant number $n$ (that is, a number that does not depend on the presentation of $V$ in terms of its generators) which satisfies the property that every local MV-algebra in $V$ is embeddable into a local MV-algebra of rank $n$. We also prove, in a fully constructive way using Di Nola-Lettieri's axiomatization for a Komori variety, that the radical of an MV-algebra in $V$ is definable by the formula $((n+1)x)^2=0$. In this context we also introduce our first axiomatization $\mathbb{L}oc_V^1$ for the class of local MV-algebras in $V$, and deduce from the fact that the Grothendieck topology associated with it as a quotient of the algebraic theory ${\mathbb T}_V$ of the variety $V$ is subcanonical, that its cartesianization coincides with ${\mathbb T}_V$. In section \ref{sct:local_varieties} we introduce our second (classically equivalent but constructively more refined) axiomatization $\mathbb{L}oc_V^2$ for the class of local MV-algebras in $V$. This axiomatization is based on the possibility of representing every local MV-algebra in $V$ as a subalgebra of a local MV-algebra of rank $n$, where $n$ is the invariant associated with the variety $V$ as above, and of characterizing by means of Horn formulae the radical classes inside this larger algebra. This axiomatization allows us to (constructively) prove the rigidity of the associated Grothendieck topology (as a quotient of the theory ${\mathbb T}_V$) and hence to conclude that the theory $\mathbb{L}oc_V^2$ is of presheaf type. In this context we also establish representation theorems for the finitely presentable algebras in the variety $V$ as finite products of local MV-algebras in $V$. In section \ref{sct:Morita-equivalence} we establish our Morita-equivalences between the theories $\mathbb{L}oc_V^2$ and theories extending the theory of lattice-ordered abelian groups. We prove that these Morita-equivalences are non-trivial in general, in the sense that they do not arise from bi-interpretations, and we characterize the varieties $V$ such that the category of local MV-algebras in $V$ is algebraic as those which can be generated by a single chain. In section       
\ref{sec:localfiniterank} we introduce the geometric theory of local MV-algebras of finite rank and prove that it is not of presheaf type. In section \ref{sct:simple} we introduce the geometric theory of simple MV-algebras and show that, whilst this theory is not of presheaf type, its `finite rank' analogue given by the geometric theory of finite chains is; moreover, we give an explicit axiomatization for this latter theory and show that its set-based models are precisely the (simple) MV-algebras that can be embedded as subalgebras of ${\mathbb Q}\cap [0,1]$.

\section{Preliminaries on topos theory}\label{sct:topos_theory}
In this section we recall the most important topos-theoretic results that we use throughout the paper. For a succinct introduction to topos theory we refer the reader to \cite{toposbackground}; classical references are \cite{MMcL} and \cite{SE}.

Throughout the paper we will be concerned with \textit{geometric theories}, i.e. theories whose axioms are sequents of the form $(\phi \vdash_{\vec{x}} \psi)$, where $\phi$ and $\psi$ are geometric formulae over the signature of the theory, i.e. first-order formulae built from atomic formulas by only using finitary conjunctions, existential quantifications and arbitrary disjunctions. The logic used to define provability is geometric logic. 

\begin{example}\label{ex:prop_theories}
A notable class of geometric theories is that of propositional theories. A propositional signature has no sorts, hence no function symbols, and only $0$-ary relation symbols. Notice that a structure for a propositional theory in a topos $\mathcal{E}$ is simply a function from the set of relation symbols to the lattice of subterminal objects of $\mathcal{E}$. 
\end{example}

We shall represent geometric theories through {\it Grothendieck toposes}, i.e.  categories of sheaves on a certain site of definition. Grothendieck toposes are rich enough in terms of categorical structure to allow the consideration of models of geometric theories inside them. 

Every geometric theory $\mathbb{T}$ has a (unique up to categorical equivalence) \textit{classifying topos} $\mathbf{Set}[\mathbb{T}]$, that is, a Grothendieck topos which contains a universal model $U_{\mathbb{T}}$ such that any other model of $\mathbb{T}$ in a Grothendieck topos $\mathcal{E}$ is (up to isomorphism) the image $f^*(U_{\mathbb{T}})$ of $U_{\mathbb{T}}$ under the inverse image functor of a unique (up to isomorphism) geometric morphism $f:\mathcal{E}\rightarrow \mathbf{Set}[\mathbb{T}]$. The classifying topos of $\mathbb T$ can be represented as the topos of sheaves $\textbf{Sh}({\mathcal C}_{\mathbb T}, J_{\mathbb T})$ on the geometric syntactic site $({\mathcal C}_{\mathbb T}, J_{\mathbb T})$ of $\mathbb T$. Every theory is complete with respect to its universal model, i.e. a sequent is provable in $\mathbb{T}$ if and only if it holds in the model $U_{\mathbb{T}}$.

A Grothendieck topos can have different sites of definition, each of them corresponding to a different theory classified by it. Two theories having the same classifying topos (up to equivalence) are said to be \textit{Morita-equivalent}. A \textit{quotient} of a geometric theory is a theory obtained from it by adding geometric axioms over its signature. 

\begin{theorem}\cite{Caramello4}\label{thm:DualityTheorem}
There is a natural bijection between the quotients of a geometric theory $\mathbb{T}$ and the subtoposes of its classifying topos $\mathbf{Set}[\mathbb{T}]$. In particular, every subtopos of $\mathbf{Set}[\mathbb{T}]$ is the classifying topos of the corresponding quotient of $\mathbb{T}$.
\end{theorem}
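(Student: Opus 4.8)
The plan is to represent the classifying topos in its syntactic form $\mathbf{Sh}(\mathcal{C}_{\mathbb{T}}, J_{\mathbb{T}})$ and to exploit the standard fact from topos theory that the subtoposes of a topos $\mathbf{Sh}(\mathcal{C}, J)$ are in natural bijection with the Grothendieck topologies $J'$ on $\mathcal{C}$ that contain $J$ (equivalently, with the Lawvere--Tierney local operators on the topos). The core of the argument thus becomes the task of setting up a bijection between the quotients of $\mathbb{T}$, taken up to syntactic equivalence, and the Grothendieck topologies on $\mathcal{C}_{\mathbb{T}}$ refining $J_{\mathbb{T}}$, after which each such topology's sheaf subtopos must be identified with the classifying topos of the corresponding quotient.

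First I would define the passage from quotients to topologies. Given a quotient $\mathbb{T}'$ obtained by adjoining a set $\Sigma$ of geometric sequents, I would assign to it the smallest Grothendieck topology $J_{\mathbb{T}'}$ containing $J_{\mathbb{T}}$ for which each added axiom becomes valid at the level of the site: concretely, an axiom $\phi \vdash_{\vec{x}} \psi$ translates into the requirement that the sieve on $\{\vec{x}.\phi\}$ generated by the canonical subobject $\{\vec{x}. \phi \wedge \psi\} \rightarrowtail \{\vec{x}.\phi\}$ be covering. Conversely, I would pass from a topology $J' \supseteq J_{\mathbb{T}}$ to a quotient by adjoining, for every $J'$-covering sieve on an object $\{\vec{x}.\phi\}$ generated by morphisms $[\theta_i]\colon \{\vec{y}_i.\psi_i\} \to \{\vec{x}.\phi\}$, the geometric axiom $\phi \vdash_{\vec{x}} \bigvee_i \exists \vec{y}_i\, \theta_i$, expressing that $\phi$ is covered by the images of the $\theta_i$.

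The heart of the proof is showing that these two assignments are mutually inverse and order-preserving. For this I would invoke the soundness and completeness of geometric logic with respect to the syntactic site: a geometric sequent is provable in a quotient $\mathbb{T}'$ precisely when its associated sieve belongs to the generated topology $J_{\mathbb{T}'}$, so that two quotients are syntactically equivalent if and only if they induce the same topology. The identification of sheaf subtoposes with classifying toposes then follows from the universal property of the syntactic site: the models of $\mathbb{T}'$ in a Grothendieck topos $\mathcal{E}$ are exactly the $\mathbb{T}$-models whose associated $J_{\mathbb{T}}$-continuous flat functor on $\mathcal{C}_{\mathbb{T}}$ is moreover $J_{\mathbb{T}'}$-continuous, and these correspond naturally to geometric morphisms $\mathcal{E} \to \mathbf{Sh}(\mathcal{C}_{\mathbb{T}}, J_{\mathbb{T}'})$. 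The naturality of the bijection reflects the fact that both the quotients and the subtoposes are ordered (by logical strength and by geometric inclusion, respectively) and that the correspondence preserves this order.

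The main obstacle I anticipate is precisely this completeness step relating geometric provability to membership in the generated Grothendieck topology: one must verify, in a fully syntactic manner, that adjoining an axiom forces exactly the intended covering sieves and nothing more, and dually that the quotient reconstructed from a topology $J'$ recovers $J'$ on the nose. This demands a careful induction on the generation of Grothendieck topologies matched against the rules of geometric deduction, and it is here that the equivalence of the two directions is genuinely established rather than merely asserted.
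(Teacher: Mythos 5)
Your proposal is correct in outline, but note that the paper itself gives no proof of this statement: it is imported wholesale as a citation to \cite{Caramello4}, so there is no internal argument to compare against. Your strategy --- identifying subtoposes of $\mathbf{Sh}(\mathcal{C}_{\mathbb{T}}, J_{\mathbb{T}})$ with Grothendieck topologies refining $J_{\mathbb{T}}$, passing from an axiom $(\phi \vdash_{\vec{x}} \psi)$ to the sieve generated by $\{\vec{x}.\phi \wedge \psi\} \rightarrowtail \{\vec{x}.\phi\}$, passing back from a covering sieve to a sequent $(\phi \vdash_{\vec{x}} \bigvee_i \exists \vec{y}_i\, \theta_i)$, and proving mutual inverseness via the equivalence between provability in the quotient and membership in the generated topology --- is essentially the same as the proof given in the cited reference, where that provability-versus-covering equivalence (which you rightly flag as the technical heart) is established through the identification of $\mathbf{Sh}(\mathcal{C}_{\mathbb{T}}, J_{\mathbb{T}'})$ as the classifying topos of the quotient $\mathbb{T}'$.
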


It follows in particular from the theorem that if $\mathbf{Sh}(\mathcal{C},J)$ is the classifying topos of a geometric theory $\mathbb{T}$ then every quotient of $\mathbb{T}$ is associated with a Grothendieck topology $J'$ on the category $\mathcal{C}$ that is finer than $J$. 

An important class of geometric theories is the class of \textit{theories of presheaf type}, i.e. of theories classified by a topos of presheaves. The classifying topos of a theory of presheaf type $\mathbb T$ can be represented as the topos $[\textrm{f.p.}\mathbb{T}$-mod$(\mathbf{Set}), \textbf{Set}]$, where $\textrm{f.p.}\mathbb{T}$-mod$(\mathbf{Set})$ is the category of finitely presentable $\mathbb T$-models. Further, by Theorem 4.3 \cite{Caramello2}, the category $\textrm{f.p.}\mathbb{T}$-mod$(\mathbf{Set})$ is equivalent to the dual of the full subcategory of the geometric syntactic category ${\mathcal C}_{\mathbb T}$ on the $\mathbb T$-irreducible formulae (i.e. the objects of ${\mathcal C}_{\mathbb T}$ which are $J_{\mathbb T}$-irreducible in the sense of not having any non-trivial $J_{\mathbb T}$-covering sieves).

Given a topos $\mathbf{Sh}({\mathcal C},J)$, if the topology $J$ is \textit{rigid}, i.e. every object $c\in \mathcal{C}$ has a $J$-covering sieve generated by morphisms whose domains are $J$-irreducible objects (in the sense of not having any non-trivial $J$-covering sieves) then the topos $\mathbf{Sh}({\mathcal C},J)$ is equivalent to the presheaf topos $[{{\mathcal C}_{i}}^{\textrm{op}}, \textbf{Set}]$, where ${\mathcal C}_{i}$ is the full subcategory of $\mathcal{C}$ on the $J$-irreducible objects. The rigidity of the Grothendieck topology associated with a quotient ${\mathbb T}'$ of a given theory of presheaf type is a priori only a sufficient condition for the theory ${\mathbb T}'$ to be of presheaf type. The following theorem shows that in some cases it is also a necessary condition.

\begin{theorem}[Theorem 6.26 \cite{Caramello3}]\label{thm:RigFinPre}
Let $\mathbb{T}'$ be a quotient of a theory of presheaf type $\mathbb{T}$ corresponding to a Grothendieck topology $J$ on the category $\textrm{f.p.}\mathbb{T}$-mod$(\mathbf{Set})^{op}$ via Theorem \ref{thm:DualityTheorem}. Suppose that $\mathbb{T}'$ is itself of presheaf type. Then every finitely presentable $\mathbb{T}'$-model is finitely presentable also as a $\mathbb{T}$-model if and only if the topology $J$ is rigid.
\end{theorem}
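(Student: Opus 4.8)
The plan is to translate the statement into topos theory and reduce it to a computation describing how sheafification interacts with evaluation at an object. Write $\mathcal{C} = (\textrm{f.p.}\,\mathbb{T}\textrm{-mod}(\mathbf{Set}))^{\textrm{op}}$, so that $\mathbf{Set}[\mathbb{T}] \simeq [\mathcal{C}^{\textrm{op}}, \mathbf{Set}]$ and, by Theorem \ref{thm:DualityTheorem}, $\mathbf{Set}[\mathbb{T}'] \simeq \mathbf{Sh}(\mathcal{C}, J)$, the two toposes being related by the canonical subtopos inclusion $i : \mathbf{Sh}(\mathcal{C},J) \hookrightarrow [\mathcal{C}^{\textrm{op}}, \mathbf{Set}]$ whose inverse image $i^{*}$ is the sheafification functor $a$. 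On points, $i$ induces the forgetful functor $U : \mathbb{T}'\textrm{-mod}(\mathbf{Set}) \to \mathbb{T}\textrm{-mod}(\mathbf{Set})$. Throughout I would use the standard fact that, for a theory of presheaf type, its finitely presentable models correspond to the representable (equivalently, indecomposable projective) objects of its classifying topos: an f.p.\ $\mathbb{T}$-model $c$ to $y(c)$ in $[\mathcal{C}^{\textrm{op}},\mathbf{Set}]$, and an f.p.\ $\mathbb{T}'$-model $N$ to an object $R_N$ of $\mathbf{Sh}(\mathcal{C},J)$ with $N^{*} \cong \mathbf{Sh}(\mathcal{C},J)(R_N, -)$; moreover these objects form a separating family. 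Write $l = a \circ y : \mathcal{C} \to \mathbf{Sh}(\mathcal{C}, J)$ for the canonical functor.

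The heart of the argument is a computation attached to an f.p.\ $\mathbb{T}'$-model $N$. Since $U(N)^{*} = N^{*} \circ i^{*} = \mathbf{Sh}(\mathcal{C},J)(R_N, a(-))$, I would first observe that, writing any sheaf $G$ as $a(i_{*}G)$ and using the adjunction $a \dashv i_{*}$, the hypothesis that $U(N)$ is finitely presentable, say $U(N)^{*} \cong \textrm{ev}_{c}$ for an object $c \in \mathcal{C}$, yields $\mathbf{Sh}(\mathcal{C},J)(R_N, G) \cong G(c)$ naturally in $G$; comparing with $\mathbf{Sh}(\mathcal{C},J)(l(c), G) \cong G(c)$ this forces $R_N \cong l(c)$ by Yoneda. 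Feeding this back, the identity $\mathbf{Sh}(\mathcal{C},J)(l(c), aF) \cong (aF)(c)$ turns the equality $\mathbf{Sh}(\mathcal{C},J)(R_N, aF) \cong F(c)$ into $(aF)(c) \cong F(c)$ for every presheaf $F$, which is exactly the condition that $c$ admits no nontrivial $J$-covering sieve, i.e.\ that $c$ is $J$-irreducible. Thus $U(N)$ is finitely presentable as a $\mathbb{T}$-model if and only if $R_N \cong l(c)$ for a $J$-irreducible object $c$.

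For the direction ($\Leftarrow$) I would invoke the criterion recalled just before the statement: when $J$ is rigid, $\mathbf{Sh}(\mathcal{C},J) \simeq [\mathcal{C}_{i}^{\textrm{op}}, \mathbf{Set}]$ with $\mathcal{C}_{i} \subseteq \mathcal{C}$ the full subcategory of $J$-irreducible objects. Since $\mathcal{C}_{i}$ consists of objects of $\mathcal{C}$, i.e.\ of f.p.\ $\mathbb{T}$-models, and every f.p.\ $\mathbb{T}'$-model is a retract of some $l(c)$ with $c \in \mathcal{C}_{i}$, while retracts taken in the full subcategory $\mathbb{T}'\textrm{-mod}(\mathbf{Set}) \hookrightarrow \mathbb{T}\textrm{-mod}(\mathbf{Set})$ agree and finitely presentable objects are closed under retracts, it follows that it is finitely presentable as a $\mathbb{T}$-model. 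For ($\Rightarrow$), assuming every f.p.\ $\mathbb{T}'$-model is f.p.\ as a $\mathbb{T}$-model, the computation above shows that every $R_N$ is isomorphic to $l(c)$ for a $J$-irreducible $c$. As $\mathbb{T}'$ is of presheaf type, the objects $R_N$ separate $\mathbf{Sh}(\mathcal{C},J)$, so $\{ l(c) : c \textrm{ } J\textrm{-irreducible} \}$ is a separating family. For any $c' \in \mathcal{C}$, the resulting jointly epimorphic family $\{ l(c) \to l(c') \}$ with $J$-irreducible domains consists of morphisms in the image of $l$, since $J$-irreducibility of $c$ gives $\mathbf{Sh}(\mathcal{C},J)(l(c), l(c')) \cong (a y(c'))(c) \cong \mathcal{C}(c, c')$; hence the sieve they generate on $c'$ is $J$-covering and generated by $J$-irreducible objects, which is precisely the rigidity of $J$.

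The main obstacle is the computation of the second paragraph, namely pinning down the coherence between the two descriptions of $\mathbf{Set}[\mathbb{T}']$ — the abstract presheaf representation coming from presheaf type and the concrete $\mathbf{Sh}(\mathcal{C},J)$ coming from Theorem \ref{thm:DualityTheorem} — so that the forgetful functor $U$ on finitely presentable models is correctly matched with the behaviour of the representing objects $R_N$ under sheafification. The delicate point is that $R_N$ being indecomposable projective does \emph{not} by itself force its representing object to be $J$-irreducible: it only forces it to be a retract of one, via a morphism generating a covering sieve. It is exactly the extra hypothesis that $U(N)$ be finitely presentable, transported through the equivalence $(aF)(c)\cong F(c)\textrm{ for all }F \iff c \textrm{ is } J\textrm{-irreducible}$, that upgrades ``retract of a $J$-irreducible'' to ``$J$-irreducible''. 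Establishing this last equivalence constructively and verifying the naturality of all the adjunction isomorphisms involved is where the real work lies.
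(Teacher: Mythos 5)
There is nothing in the paper to compare your argument against: the statement is recalled verbatim as Theorem 6.26 of \cite{Caramello3} and used as a black box, so no proof of it appears in this paper. Judged on its own merits (and against the argument in the literature, which it essentially reconstructs), your proof is correct. The dictionary you set up --- $U(N)^*(F)\cong \mathbf{Sh}(\mathcal{C},J)(R_N,aF)$, the Yoneda identification $R_N\cong l(c)$ when $U(N)\cong c$ is finitely presentable, and the resulting equivalence ``$U(N)$ is f.p.\ as a $\mathbb{T}$-model if and only if $R_N\cong l(c)$ with $c$ $J$-irreducible'' --- is the right one, and both directions of the theorem do follow from it as you indicate (the rigid case also using the Comparison Lemma equivalence $\mathbf{Sh}(\mathcal{C},J)\simeq [\mathcal{C}_i^{\mathrm{op}},\mathbf{Set}]$ together with closure of finitely presentable objects under retracts). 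The lemma you flag as ``the real work'' is true and routine: the direction from $J$-irreducibility to $(aF)(c)\cong F(c)$ is immediate from the plus-construction, and conversely, if $(aF)(c)\cong F(c)$ naturally in $F$ and $S\hookrightarrow y(c)$ is a $J$-covering sieve, then $a$ inverts this dense monomorphism, so the naturality square forces the inclusion $S(c)\to y(c)(c)$ to be a bijection, whence $1_c\in S$ and $S$ is maximal. Similarly, in your direction establishing rigidity, the passage from the jointly epimorphic family $\{l(f)\}$ to the conclusion that the sieve generated by the $f$'s is $J$-covering rests on two standard facts worth making explicit: $a$ preserves unions of images, and a monomorphism inverted by $a$ is $J$-dense. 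With those verifications spelled out, the argument is complete.
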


By Theorem \ref{thm:DualityTheorem}, the classifying topos of a quotient ${\mathbb T}'$ of a theory of presheaf type $\mathbb{T}$ can be represented as $\mathbf{Sh}(\textrm{f.p.}\mathbb{T}$-mod$(\mathbf{Set})^{\textrm{op}},J)$, where $J$ is the Gro-thendieck topology associated with ${\mathbb T}'$. The standard method for calculating the Grothendieck topology associated with a quotient of a cartesian theory is described in D3.1.10 \cite{SE}, while \cite{Caramello4} gives a generalization which works for arbitrary theories of presheaf type. 

Theories of presheaf type enjoy many properties that are not satisfied by general geometric theories; for example:
\begin{itemize}
\item If $\mathbb{T}$ and $\mathbb{S}$ are theories of presheaf type, then
$$\mathbb{T} \textrm{ is Morita-equivalent to }\mathbb{S}\Leftrightarrow \mathbb{T}\textrm{-mod}(\mathbf{Set})\cong \mathbb{S}\textrm{-mod}(\mathbf{Set});$$

\item The category $\mathbb{T}$-mod$(\mathbf{Set})$ is the ind-completion of the category $\textrm{f.p.}\mathbb{T}$-mod$(\mathbf{Set})$.
\end{itemize}

The following theorem shows the importance of sheaf representations as a way to understand if a theory is of presheaf type or not. In section \ref{sct:Local} we will use Dubuc-Poveda's sheaf representation to prove that the theory of local MV-algebras is not of presheaf type.  

\begin{theorem}[Theorem 7.9 \cite{Caramello5}]\label{thm:GlobSec}
Let $\mathbb{T}$ be a theory of presheaf type and $\mathbb{T}'$ be a sub-theory of $\mathbb{T}$ (i.e. a theory of which $\mathbb{T}$ is a quotient) such that every set-based model of $\mathbb{T}'$ admits a representation as the structure of global sections of a model of $\mathbb{T}$. Then every finitely presentable model of $\mathbb{T}$ is finitely presentable as a model of $\mathbb{T}'$.
\end{theorem}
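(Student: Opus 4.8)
The plan is to show directly that any finitely presentable $\mathbb{T}$-model $M$ corepresents, over the whole category $\mathbb{T}'\text{-mod}(\mathbf{Set})$, the interpretation functor of the geometric formula that presents it over $\mathbb{T}$; since such a functor automatically preserves filtered colimits, this yields the finite presentability of $M$ as a $\mathbb{T}'$-model.

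First I would use the presheaf-type hypothesis on $\mathbb{T}$ together with Theorem 4.3 of \cite{Caramello2} to write $M = M_{\{\vec{x}.\phi\}}$, the $\mathbb{T}$-model presented by a $\mathbb{T}$-irreducible geometric formula $\{\vec{x}.\phi\}$, with generic tuple $\vec{\xi}$ generating $M$ and the universal property $\mathrm{Hom}_{\mathbb{T}\text{-mod}(\mathbf{Set})}(M,P)\cong [\![\vec{x}.\phi]\!]_P$ for every $\mathbb{T}$-model $P$. Since $\mathbb{T}$ is a quotient of $\mathbb{T}'$ over the same signature $\Sigma$, the category $\mathbb{T}\text{-mod}(\mathbf{Set})$ is a full subcategory of $\mathbb{T}'\text{-mod}(\mathbf{Set})$, filtered colimits in both are computed in $\Sigma\text{-str}(\mathbf{Set})$, and the functor $[\![\vec{x}.\phi]\!]_{(-)}$ preserves them (being geometric). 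The whole problem therefore reduces to upgrading the representability above from $\mathbb{T}$-models to arbitrary $\mathbb{T}'$-models, i.e. to producing a natural isomorphism $\mathrm{Hom}_{\mathbb{T}'\text{-mod}(\mathbf{Set})}(M,N)\cong [\![\vec{x}.\phi]\!]_N$ for every $\mathbb{T}'$-model $N$.

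The map $f\mapsto f(\vec{\xi})$ is well defined because geometric formulae are preserved by $\Sigma$-homomorphisms, and it is injective because $\vec{\xi}$ generates $M$; the crux is its surjectivity, and this is exactly where I would invoke the global-sections hypothesis. Given a $\mathbb{T}'$-model $N$, write $N\cong\gamma_*F$ for a $\mathbb{T}$-model $F$ in a Grothendieck topos $\mathcal{E}$ with global-sections morphism $\gamma\colon\mathcal{E}\to\mathbf{Set}$. I would prove by induction on the structure of $\phi$ that $[\![\vec{x}.\phi]\!]_{\gamma_*F}\subseteq \gamma_*[\![\vec{x}.\phi]\!]_F$: the atomic and finite-conjunction cases use that $\gamma_*$ is left exact, the disjunction case that global satisfaction of a disjunct yields a global section of the corresponding subobject, and the existential case that a witnessing global section projects appropriately. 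Hence a tuple $\vec{a}\in [\![\vec{x}.\phi]\!]_N$ determines a global section $1\to [\![\vec{x}.\phi]\!]_F$ in $\mathcal{E}$; by the internal universal property of $\gamma^*M$ (the inverse image $\gamma^*$ preserves $\mathbb{T}$-models and the presentation, so $\gamma^*M$ is the $\mathbb{T}$-model presented by $\phi$ in $\mathcal{E}$) this is the same datum as a morphism $\gamma^*M\to F$ in $\mathbb{T}\text{-mod}(\mathcal{E})$, which, by the adjunction $\gamma^*\dashv\gamma_*$ at the level of $\Sigma$-structures, transposes to a $\Sigma$-homomorphism $M\to\gamma_*F=N$ sending $\vec{\xi}$ to $\vec{a}$. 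Tracing the generic tuple through these bijections shows they are mutually inverse to $f\mapsto f(\vec{\xi})$, giving the isomorphism and hence the theorem.

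The hard part will be the passage between the external and internal universal properties: justifying that $\gamma^*M$ really is the model presented by $\{\vec{x}.\phi\}$ inside $\mathcal{E}$ (i.e. that inverse image functors preserve presentations of finitely presentable models), and that the adjunction $\gamma^*\dashv\gamma_*$ lifts from the underlying toposes to $\Sigma$-structures and restricts correctly on the $\mathbb{T}$-model side. The key inclusion $[\![\vec{x}.\phi]\!]_{\gamma_*F}\subseteq\gamma_*[\![\vec{x}.\phi]\!]_F$ is the conceptual heart: it is precisely the point at which being a model of $\mathbb{T}'$ (global sections of a sheaf of $\mathbb{T}$-models) compensates for the failure of the universal property of $M$ over the larger category, and it is the step most likely to hide subtleties in the infinitary-disjunction case.
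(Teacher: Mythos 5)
First, a caveat on the comparison itself: the paper does not prove this statement --- it is quoted verbatim as Theorem 7.9 of \cite{Caramello5} --- so your proposal can only be measured against the argument of that reference. Your overall architecture does match the natural (and, as far as the cited source goes, the standard) proof: reduce everything to showing that evaluation at the generic tuple gives a natural bijection $\mathrm{Hom}_{\mathbb{T}'\textrm{-mod}(\mathbf{Set})}(M,N)\cong [[\vec{x}.\phi]]_N$, obtained by transposing across $\gamma^*\dashv\gamma_*$ and using the universal property of $\gamma^*M$ internally in $\mathcal{E}$. Your surjectivity half --- the inclusion $[[\vec{x}.\phi]]_{\gamma_*F}\subseteq \gamma_*[[\vec{x}.\phi]]_F$, the passage from a global section of $[[\vec{x}.\phi]]_F$ to a $\mathbb{T}$-model morphism $\gamma^*M\to F$, and its transposition to a $\Sigma$-homomorphism $M\to N$ hitting $\vec{a}$ --- is correct, and the two technical points you defer (stability of the presentation under inverse image, lifting of the adjunction to $\Sigma$-structures) are true and provable.

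The genuine gap is the injectivity step: ``it is injective because $\vec{\xi}$ generates $M$''. This is doubly wrong. First, the generic tuple of a finitely presentable model of a presheaf-type theory need \emph{not} generate it as a $\Sigma$-structure. Take the cartesian (hence presheaf-type) theory over the signature with one binary relation $R$ whose axioms say that $R$ is functional and total: its models are sets equipped with an endomap, the free model on one generator is $M=(\mathbb{N},\ \textrm{graph of successor})$, presented by $\{x.\top\}$ with generic element $0$, and yet the $\Sigma$-substructure of $M$ generated by $0$ is $(\{0\},\emptyset)$, since the signature has no function symbols. Second, and more seriously, injectivity of $f\mapsto f(\vec{\xi})$ on homomorphisms into models of the \emph{weaker} theory $\mathbb{T}'$ is not a formality: in the same example, with $\mathbb{T}'$ the empty theory over that signature, the structure $N$ on $\{0,c_1,c_2,\dots,d_1,d_2,\dots\}$ with $R=\{(0,c_1),(0,d_1)\}\cup\{(c_k,c_{k+1})\mid k\geq 1\}\cup\{(d_k,d_{k+1})\mid k\geq 1\}$ admits two distinct $\Sigma$-homomorphisms from $M$ agreeing at $0$. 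So injectivity, exactly like surjectivity, must be extracted from the global-sections hypothesis, contrary to your assertion that only surjectivity needs it. The repair uses machinery you already set up: the composite of the lifted adjunction bijection $\mathrm{Hom}_{\Sigma}(M,\gamma_*F)\cong \mathrm{Hom}_{\mathbb{T}\textrm{-mod}(\mathcal{E})}(\gamma^*M,F)$ with the internal universal-property bijection $\mathrm{Hom}_{\mathbb{T}\textrm{-mod}(\mathcal{E})}(\gamma^*M,F)\cong \mathrm{Hom}_{\mathcal{E}}(1,[[\vec{x}.\phi]]_F)$ is precisely $f\mapsto f(\vec{\xi})$, viewed as landing in $\gamma_*[[\vec{x}.\phi]]_F\subseteq N^n$; hence evaluation is injective with image $\gamma_*[[\vec{x}.\phi]]_F$. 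Combining ``image $\subseteq [[\vec{x}.\phi]]_N$'' (homomorphisms preserve geometric formulas) with your inclusion $[[\vec{x}.\phi]]_N\subseteq \gamma_*[[\vec{x}.\phi]]_F$ then forces $\gamma_*[[\vec{x}.\phi]]_F=[[\vec{x}.\phi]]_N$, giving the desired natural isomorphism and, with it, the theorem.
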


The following theorem provides a method for constructing theories of presheaf type whose category of finitely presentable models is equivalent to a given small category of structures.

\begin{theorem}[Theorem 6.29 \cite{Caramello5}]\label{thm:A-completion}
Let $\mathbb{T}$ be a theory of presheaf type and $\mathcal{A}$ a full subcategory of $\textrm{f.p.}\mathbb{T}$-mod$(\mathbf{Set})$. Then the $\mathcal{A}$-completion $\mathbb{T}_{\mathcal{A}}$ of $\mathbb{T}$ (i.e. the theory consisting of all the geometric sequents over the signature of $\mathbb{T}$ which are valid in all the models in $\mathcal{A}$) is of presheaf type classified by the topos $[\mathcal{A},\mathbf{Set}]$; in particular, every finitely presentable $\mathbb{T}_{\mathcal{A}}$-model is a retract of a model in $\mathcal{A}$.
\end{theorem}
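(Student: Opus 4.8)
The plan is to realize $[\mathcal{A},\mathbf{Set}]$ as a subtopos of the classifying topos $\mathbf{Set}[\mathbb{T}]\simeq[\textrm{f.p.}\mathbb{T}\textrm{-mod}(\mathbf{Set}),\mathbf{Set}]$ and then, via the Duality Theorem \ref{thm:DualityTheorem}, to identify the corresponding quotient of $\mathbb{T}$ with $\mathbb{T}_{\mathcal{A}}$. First, note that $\mathcal{A}$ is essentially small, being a full subcategory of the essentially small category $\textrm{f.p.}\mathbb{T}\textrm{-mod}(\mathbf{Set})$, so $[\mathcal{A},\mathbf{Set}]$ is a (presheaf) Grothendieck topos. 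Writing $i\colon\mathcal{A}\hookrightarrow\textrm{f.p.}\mathbb{T}\textrm{-mod}(\mathbf{Set})$ for the inclusion, restriction along $i$ yields a functor $i^{*}\colon[\textrm{f.p.}\mathbb{T}\textrm{-mod}(\mathbf{Set}),\mathbf{Set}]\to[\mathcal{A},\mathbf{Set}]$ which preserves all limits and colimits (they are computed pointwise) and has a right adjoint $i_{*}=\textrm{Ran}_{i}$. Since $i$ is fully faithful, $i_{*}$ is fully faithful (equivalently $i^{*}i_{*}\cong\textrm{id}$), so $(i^{*}\dashv i_{*})$ is a geometric inclusion exhibiting $[\mathcal{A},\mathbf{Set}]$ as a subtopos of $\mathbf{Set}[\mathbb{T}]$. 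By Theorem \ref{thm:DualityTheorem} this subtopos is the classifying topos of a unique quotient $\mathbb{T}'$ of $\mathbb{T}$.

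Next I would identify $\mathbb{T}'$ with $\mathbb{T}_{\mathcal{A}}$ by computing the universal model of $\mathbb{T}'$, which is the image $i^{*}(U_{\mathbb{T}})$ of the universal $\mathbb{T}$-model under the inverse image of the inclusion. Here $U_{\mathbb{T}}$ is the tautological evaluation diagram on $\textrm{f.p.}\mathbb{T}\textrm{-mod}(\mathbf{Set})$, so $i^{*}(U_{\mathbb{T}})$ is its restriction to $\mathcal{A}$. Because the interpretation of geometric formulae in a presheaf topos is computed pointwise and the stalk of $i^{*}(U_{\mathbb{T}})$ at an object $M\in\mathcal{A}$ is the standard interpretation in the $\mathbb{T}$-model $M$, a geometric sequent holds in $i^{*}(U_{\mathbb{T}})$ if and only if it holds in every $M\in\mathcal{A}$. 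By the completeness of $\mathbb{T}'$ with respect to its universal model, $\mathbb{T}'$ thus proves exactly the geometric sequents over the signature of $\mathbb{T}$ that are valid in all models in $\mathcal{A}$, i.e. $\mathbb{T}'=\mathbb{T}_{\mathcal{A}}$. Hence $\mathbb{T}_{\mathcal{A}}$ is classified by the presheaf topos $[\mathcal{A},\mathbf{Set}]$ and is of presheaf type. I expect this identification of the quotient to be the main obstacle: it requires describing the universal model of a quotient as the image of $U_{\mathbb{T}}$ under the subtopos inclusion and matching, term by term, the pointwise satisfaction of geometric sequents in $i^{*}(U_{\mathbb{T}})$ with their satisfaction in each model of $\mathcal{A}$.

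Finally, for the statement about finitely presentable models, I would use that $\mathbb{T}_{\mathcal{A}}$ is of presheaf type, so $[\textrm{f.p.}\mathbb{T}_{\mathcal{A}}\textrm{-mod}(\mathbf{Set}),\mathbf{Set}]\simeq[\mathcal{A},\mathbf{Set}]$. By the Morita theory of small categories, two presheaf toposes are equivalent precisely when the underlying categories have equivalent Cauchy (idempotent) completions; alternatively, one invokes the correspondence between finitely presentable models and irreducible objects (Theorem 4.3 of \cite{Caramello2}), together with the fact that the irreducible objects of a presheaf topos are exactly the retracts of representables. Since the category $\textrm{f.p.}\mathbb{T}_{\mathcal{A}}\textrm{-mod}(\mathbf{Set})$ is itself idempotent-complete (a retract of a finitely presentable model is finitely presentable), it must coincide with the idempotent completion of $\mathcal{A}$. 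Therefore every finitely presentable $\mathbb{T}_{\mathcal{A}}$-model is a retract of an object of $\mathcal{A}$, as claimed.
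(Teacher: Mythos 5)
The paper itself gives no proof of this statement: it is imported verbatim as Theorem 6.29 of \cite{Caramello5}, so there is no in-paper argument to compare against and your proposal must be judged on its merits. Its first two paragraphs are essentially correct. Restriction along the full inclusion $i\colon\mathcal{A}\hookrightarrow\textrm{f.p.}\mathbb{T}\textrm{-mod}(\mathbf{Set})$, paired with right Kan extension, is indeed a geometric inclusion, so $[\mathcal{A},\mathbf{Set}]$ is a subtopos of $\mathbf{Set}[\mathbb{T}]$; Theorem \ref{thm:DualityTheorem} then produces a quotient $\mathbb{T}'$ classified by it; and the identification $\mathbb{T}'=\mathbb{T}_{\mathcal{A}}$ goes through as you describe, granted the two nontrivial inputs you correctly isolate: that the universal model of the quotient attached to a subtopos is the inverse image of $U_{\mathbb{T}}$ (part of the duality-theorem machinery of \cite{Caramello4}), and that $U_{\mathbb{T}}$ for a presheaf-type theory is the tautological evaluation structure, so that validity of a geometric sequent in $i^{*}(U_{\mathbb{T}})$ amounts to validity in every $M\in\mathcal{A}$.

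The genuine gap is in your last paragraph. Morita theory yields only an \emph{abstract} equivalence $\textrm{Cauchy}(\mathcal{A})\simeq\textrm{Cauchy}(\textrm{f.p.}\mathbb{T}_{\mathcal{A}}\textrm{-mod}(\mathbf{Set}))\simeq\textrm{f.p.}\mathbb{T}_{\mathcal{A}}\textrm{-mod}(\mathbf{Set})$; nothing in that statement says the equivalence is compatible with the inclusion of $\mathcal{A}$ into $\textrm{f.p.}\mathbb{T}_{\mathcal{A}}\textrm{-mod}(\mathbf{Set})$, and without compatibility the conclusion ``retract of an object of $\mathcal{A}$'' does not follow. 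The abstract principle you invoke is in fact false: let $\mathcal{B}$ be the discrete category on objects $b_{0},b_{1},b_{2},\dots$ and $\mathcal{A}$ its full subcategory on the even-indexed objects; both categories are idempotent complete and $[\mathcal{A},\mathbf{Set}]\simeq[\mathcal{B},\mathbf{Set}]$ (both are countable powers of $\mathbf{Set}$), yet $b_{1}$ is a retract of no object of $\mathcal{A}$. So ``equivalent presheaf toposes plus idempotent completeness of the larger category'' cannot by itself force the larger category to be the idempotent completion of the smaller one \emph{along the given inclusion}. The repair uses exactly what you established in your second paragraph: choose the equivalence of the two classifying toposes so that it identifies the universal models; the induced equivalence on categories of points, $\mathbb{T}_{\mathcal{A}}\textrm{-mod}(\mathbf{Set})\simeq$ (points of $[\mathcal{A},\mathbf{Set}]$) $\simeq$ the ind-completion of $\mathcal{A}$, then carries each $M\in\mathcal{A}$ (which is classified by the point $\textrm{ev}_{M}$, by your stalk computation) to itself. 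Since equivalences preserve finite presentability and the finitely presentable objects of the ind-completion of $\mathcal{A}$ are precisely the retracts of objects of $\mathcal{A}$, every finitely presentable $\mathbb{T}_{\mathcal{A}}$-model is a retract, in $\mathbb{T}_{\mathcal{A}}\textrm{-mod}(\mathbf{Set})$, of some $M\in\mathcal{A}$. Your alternative route via Theorem 4.3 of \cite{Caramello2} and irreducible objects works too, but it requires the same compatibility (which representable corresponds to which model) to be made explicit rather than left implicit.
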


Theorem 6.32 \cite{Caramello5} provides, under appropriate conditions, an explicit axiomatization for the theories in Theorem \ref{thm:A-completion}. We shall apply it in section \ref{sec:finitechains} to derive an axiomatization for the geometric theory of finite chains. 

The following theorem shows that adding sequents of a certain kind to a theory of presheaf type gives a theory that is still of presheaf type.

\begin{theorem}[Theorem 6.28 \cite{Caramello5}]\label{thm:quotient_negation}
Let $\mathbb{T}$ be a theory of presheaf type over a signature $\Sigma$. Then any quotient $\mathbb{T}'$ of $\mathbb{T}$ obtained by adding sequents of the form $(\phi\vdash_{\vec{x}}\perp)$, where $\phi(\vec{x})$ is a geometric formula over $\Sigma$, is classified by the topos $[\mathcal{T},\mathbf{Set}]$, where $\mathcal{T}$ is the full subcategory of $\textrm{f.p.}\mathbb{T}$-mod$(\mathbf{Set})$ on the $\mathbb{T}'$-models.
\end{theorem}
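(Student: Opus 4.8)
The plan is to pass to the representation of the classifying topos of $\mathbb{T}$ as a presheaf topos and then, via the Duality Theorem (Theorem \ref{thm:DualityTheorem}), to identify the subtopos cut out by the new axioms with $[\mathcal{T},\mathbf{Set}]$. Since $\mathbb{T}$ is of presheaf type, $\mathbf{Set}[\mathbb{T}] \simeq [\mathcal{C},\mathbf{Set}]$ with $\mathcal{C}=\textrm{f.p.}\,\mathbb{T}\text{-mod}(\mathbf{Set})$, and the universal model $U_{\mathbb{T}}$ interprets each geometric formula $\phi(\vec{x})$ as the subobject $[\![\vec{x}.\phi]\!]$ sending a finitely presentable model $M$ to $\{\vec{a}\in M : M\models\phi(\vec{a})\}$. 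By Theorem \ref{thm:DualityTheorem} the quotient $\mathbb{T}'$ is classified by the largest subtopos of $[\mathcal{C},\mathbf{Set}]$ in which $U_{\mathbb{T}}$ satisfies the added sequents, that is, in which each $[\![\vec{x}.\phi_i]\!]$ becomes the initial object. The whole task is to recognise this subtopos.

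First I would reduce each condition $[\![\vec{x}.\phi_i]\!]\cong\perp$ to the vanishing of its support, i.e. of the subterminal object $U_i\hookrightarrow 1$ which is the image of $[\![\vec{x}.\phi_i]\!]\to 1$: indeed $[\![\vec{x}.\phi_i]\!]$ is initial exactly when $U_i$ is. Subterminal objects of $[\mathcal{C},\mathbf{Set}]$ correspond to cosieves of $\mathcal{C}$ (full subcategories closed under codomains of morphisms), and $U_i$ corresponds to the class $\mathcal{K}_i$ of finitely presentable models $M$ realising $\phi_i$. The key point, which I would record as a lemma, is that $\mathcal{K}_i$ is genuinely a cosieve: if $M\models\phi_i(\vec{a})$ and $h\colon M\to M'$ is a homomorphism, then $M'\models\phi_i(h(\vec{a}))$, because geometric formulae, being built without negation or implication, are preserved by arbitrary homomorphisms of $\mathbb{T}$-models. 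Forcing all the $U_i$ to be empty amounts to forcing the single subterminal object corresponding to the cosieve $\mathcal{K}=\bigcup_i\mathcal{K}_i$ to be empty.

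It then remains to identify the largest subtopos in which a subterminal object, corresponding to a cosieve $\mathcal{K}$, becomes initial. This is precisely the closed subtopos complementary to the open subtopos determined by $\mathcal{K}$; and for a presheaf topos the open subtopos associated with the cosieve $\mathcal{K}$ is $[\mathcal{K},\mathbf{Set}]$, with complementary closed subtopos the presheaf topos $[\mathcal{C}\setminus\mathcal{K},\mathbf{Set}]$ on the (domain-closed) complementary full subcategory. Setting $\mathcal{T}=\mathcal{C}\setminus\mathcal{K}$, an object $M$ of $\mathcal{C}$ lies in $\mathcal{T}$ if and only if it realises no $\phi_i$, i.e. if and only if it satisfies every sequent $(\phi_i\vdash_{\vec{x}}\perp)$, i.e. if and only if it is a $\mathbb{T}'$-model; so $\mathcal{T}$ is exactly the full subcategory of $\textrm{f.p.}\,\mathbb{T}\text{-mod}(\mathbf{Set})$ on the $\mathbb{T}'$-models, and $\mathbf{Set}[\mathbb{T}']\simeq[\mathcal{T},\mathbf{Set}]$ as required.

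The main obstacle I expect is to make the central step rigorous: either by carefully invoking the theory of complementary open and closed subtoposes together with the support reduction of the previous paragraph, or, more explicitly, by computing the Grothendieck topology $J$ associated with $\mathbb{T}'$ through the method of \cite{Caramello4} (the presheaf-type generalisation of D3.1.10 \cite{SE}) and checking directly that $J$ is generated by the empty covering sieves on the objects of $\mathcal{K}$, so that its sheaves are exactly the functors $\mathcal{C}\to\mathbf{Set}$ that are terminal on $\mathcal{K}$ and hence freely determined by their restriction to $\mathcal{T}$. As a consistency check one may note, using Theorem \ref{thm:A-completion} with $\mathcal{A}=\mathcal{T}$, that $[\mathcal{T},\mathbf{Set}]$ classifies the $\mathcal{T}$-completion of $\mathbb{T}$, whose axioms include the sequents $(\phi_i\vdash_{\vec{x}}\perp)$ since these hold in every model in $\mathcal{T}$.
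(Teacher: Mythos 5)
Note first that the paper contains no proof of this statement: it is quoted verbatim from [Caramello5] (Theorem 6.28 there), so there is no internal argument to compare yours against; what follows assesses your proposal on its own merits and against the proof strategy of the cited source. Your argument is correct. The chain of reductions is sound: for a presheaf-type theory the universal model in $[\mathcal{C},\mathbf{Set}]$, $\mathcal{C}=\textrm{f.p.}\mathbb{T}\textrm{-mod}(\mathbf{Set})$, interprets $\phi_i$ as the functor $M\mapsto [[\vec{x}.\phi_i]]_M$ (functorial precisely because geometric formulae are preserved by homomorphisms, the same fact that makes each $\mathcal{K}_i$ a cosieve); an object of a topos is initial iff its support is, and inverse image functors preserve images and unions, so forcing all the $[[\vec{x}.\phi_i]]$ to vanish is equivalent to forcing the single subterminal attached to $\mathcal{K}=\bigcup_i\mathcal{K}_i$ to vanish; the largest subtopos killing a subterminal $U$ is the closed complement $c(U)$, whose sheaves in a presheaf topos are exactly the functors that are terminal on $\mathcal{K}$, a category equivalent to $[\mathcal{C}\setminus\mathcal{K},\mathbf{Set}]$; and $\mathcal{C}\setminus\mathcal{K}$ is exactly $\mathcal{T}$. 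Two points should be made explicit in a full write-up. First, your appeal to Theorem \ref{thm:DualityTheorem} uses the stronger form ``the subtopos associated with a quotient is the largest one in which the sheafified universal model satisfies the added axioms''; this is the mechanism underlying the duality theorem in [Caramello4], not its bare statement as quoted in the paper, so it needs its own citation or proof. Second, the fact that $c(U)$ is the largest subtopos in which $U$ becomes initial deserves justification (it follows from the description of the closed topology via the closure operator $A'\mapsto A'\vee(U\times A)$, since any topology making $0\hookrightarrow U$ dense makes every $c(U)$-dense mono dense). Finally, the ``more explicit'' alternative you sketch at the end---computing the associated Grothendieck topology and checking it is generated by the empty sieves on the objects of $\mathcal{K}$, so that its sheaves are the presheaves terminal on $\mathcal{K}$---is essentially the computation carried out in the cited source; your closed-subtopos route and that one are minor variants of the same idea, the former buying a cleaner conceptual packaging at the price of invoking the theory of open/closed complements, the latter being self-contained.
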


Given a geometric theory $\mathbb{T}$, it can be interesting to study its \textit{cartesianization} $\mathbb{T}_c$, i.e. the cartesian theory consisting of all cartesian sequents over $\Sigma$ which are provable in $\mathbb{T}$ (recall that a  \textit{cartesian formula} relative to a theory $\mathbb T$ is a formula which can be built from atomic formulas by only using finitary conjunctions and $\mathbb T$-provably unique existential quantifications - a \emph{cartesian sequent} relative to $\mathbb T$ is a sequent $(\phi\vdash_{\vec{x}}\psi)$ where $\phi$ and $\psi$ are cartesian formulas relative to $\mathbb T$).

Recall that a cartesian theory is always a theory of presheaf type; hence, the cartesianization of a theory that is not of presheaf type gives a good presheaf-type approximation of the theory.

The following remark shows the importance of sheaf representations in determining the cartesianization of a given geometric theory.

\begin{obs}\label{obs:CartSheaves}
Let $\mathbb{T}$ be a cartesian theory and $\mathbb{S}$ a geometric theory such that every set-based model of $\mathbb{T}$ can be represented as the structure of global sections of an $\mathbb{S}$-model in a topos of sheaves over a topological space (or a locale). Then, the cartesianization of $\mathbb{S}$ is the theory $\mathbb{T}$. Indeed, the global sections functor is cartesian and hence preserves the validity of cartesian sequents. So any cartesian sequent that is provable in $\mathbb{S}$ is provable in every set-based model of $\mathbb{T}$. Since every cartesian theory (or more generally, every theory of presheaf type) is complete with respect to its set-based models, our claim follows. 
\end{obs}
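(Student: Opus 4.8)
The plan is to read the asserted identity $\mathbb{S}_c=\mathbb{T}$ as an equality of cartesian theories over the common signature $\Sigma$ and to prove the two inclusions separately. One of them is essentially formal: in the intended setting $\mathbb{S}$ is a quotient of $\mathbb{T}$, so every sequent provable in $\mathbb{T}$ is provable in $\mathbb{S}$; in particular every cartesian sequent of $\mathbb{T}$ belongs to $\mathbb{S}_c$, and since $\mathbb{T}$ is itself cartesian (so that $\mathbb{T}=\mathbb{T}_c$) this gives $\mathbb{T}\subseteq\mathbb{S}_c$. All of the content therefore lies in the reverse inclusion $\mathbb{S}_c\subseteq\mathbb{T}$, i.e.\ in showing that an arbitrary cartesian sequent $\sigma$ provable in $\mathbb{S}$ is already provable in $\mathbb{T}$.

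For this I would exploit that $\mathbb{T}$, being cartesian and hence of presheaf type, is complete with respect to its set-based models, so that it suffices to verify $\sigma$ in an arbitrary set-based model $M$ of $\mathbb{T}$. By hypothesis $M$ can be presented as the structure $\Gamma(N)$ of global sections of some model $N$ of $\mathbb{S}$ in a topos $\mathbf{Sh}(X)$ of sheaves over a space or locale $X$. Since $\sigma$ is provable in $\mathbb{S}$ it holds in $N$; the task is then to transport its validity along $\Gamma$ to $M$. Here I would use that the global sections functor $\Gamma=\mathrm{Hom}_{\mathbf{Sh}(X)}(1,-)\colon \mathbf{Sh}(X)\to\mathbf{Set}$, being right adjoint to the inverse image of the canonical geometric morphism $\mathbf{Sh}(X)\to\mathbf{Set}$, preserves all limits and is in particular left exact, i.e.\ cartesian. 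As the interpretation of cartesian formulae is built using only finite limits, a cartesian functor preserves the validity of cartesian sequents; hence $\sigma$, holding in $N$, also holds in $\Gamma(N)\cong M$. Since $M$ was arbitrary, $\sigma$ holds in every set-based $\mathbb{T}$-model and is therefore provable in $\mathbb{T}$.

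I expect the one genuinely delicate point to be precisely this preservation step, where the cartesian (rather than merely geometric) nature of $\sigma$ is indispensable: $\Gamma$ is not the inverse-image part of a geometric morphism, so it need not preserve arbitrary disjunctions or existential quantifications, and a general geometric sequent provable in $\mathbb{S}$ may well fail in the global sections $M$. It is exactly the restriction to cartesian sequents, matched with the left exactness of $\Gamma$, that makes the argument go through; the appeal to completeness of $\mathbb{T}$ and the formal inclusion $\mathbb{T}\subseteq\mathbb{S}_c$ are then routine.
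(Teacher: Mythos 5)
Your proof is correct and takes essentially the same route as the paper's: completeness of the cartesian theory $\mathbb{T}$ with respect to its set-based models, combined with the left exactness (cartesianness) of the global sections functor, which transports the validity of a cartesian sequent from the $\mathbb{S}$-model in the sheaf topos down to each set-based $\mathbb{T}$-model. Your explicit handling of the inclusion $\mathbb{T}\subseteq\mathbb{S}_c$ via the assumption that $\mathbb{S}$ is a quotient of $\mathbb{T}$ is a detail the paper leaves implicit (all its applications are of this form), but it does not change the substance of the argument.
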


We shall see in section \ref{sec:TheoryOfV} (cf. the proof of Proposition \ref{pro: (Loc_V)c=T_V}) that the fact that a given cartesian theory $\mathbb T$ is the cartesianization of a given quotient $\mathbb S$ of $\mathbb T$ results from the subcanonicity of the associated Grothendieck topology. Recall that a Grothendieck topology $J$ is {\it subcanonical} if every representable presheaf is a sheaf with respect to $J$, equivalently if every covering sieve on an object $c\in \mathcal{C}$ is \emph{effective-epimorphic}, that is, the morphisms in it form a colimit cone under the diagram consisting of all the arrows between them over $c$.

\section{Premilinaries on MV-algebras}\label{sct:prelimMV}

An MV-algebra is a structure $(A,\oplus,\neg,0)$ with a binary operation $\oplus$, a unary operation $\neg$ and a constant $0$ satisfying the following equations:
\begin{enumerate}[MV 1)]
\item $x\oplus(y\oplus z)=(x\oplus y)\oplus z$
\item $x\oplus y=y\oplus x$
\item $x\oplus 0=x$
\item $\neg \neg x=x$
\item $x\oplus \neg 0=\neg 0$
\item $\neg(\neg x\oplus y)\oplus y=\neg(\neg y\oplus x)\oplus x$
\end{enumerate}
We indicate with $\mathbb{MV}$ the theory of MV-algebras. A comprehensive survey of the theory of MV-algebras is provided by  \cite{CDM} and \cite{Mundici_book}.

Chang proved in \cite{Chang2}, non-constructively, that the variety of MV-algebras is generated by the algebra $([0,1],\oplus,\neg,0)$, whose MV-operations are defined as follows:
\begin{itemize}
\item[] $x\oplus y=\min(x+y,1)$ for all $x,y\in [0,1]$
\item[] $\neg x=1-x$ for all $x\in [0,1]$
\end{itemize}

In every MV-algebra $A$ we can define the constant $1=\neg 0$ and a natural order that induces a lattice structure on the algebra:
\begin{center}
$x\leq y$ iff $\neg x\oplus y=1$
\end{center} 

Given $x\in A$, the \textit{order} $\textrm{ord}(x)$ of $x$ is the least natural number $n$ such that $nx=1$. If such an $n$ does not exist we say that $x$ has infinite order and write $\textrm{ord}(x)=\infty$. If both $x$ and $\neg x$ have finite order we say that $x$ is a \textit{finite} element. We indicate with $\textrm{Fin}(A)$ the set of finite elements of $A$.

Congruences for MV-algebras are in bijection with ideals, i.e. downsets with respect to the order which are closed with respect to the sum and which contain the bottom element $0$. 

The {\it radical} $\textrm{Rad}(A)$ of an MV-algebra $A$ is either defined as the intersection of all the maximal ideals of $A$ (and as $\{0\}$ if $A$ is the trivial algebra in which $0=1$) or as the set of infinitesimal elements (i.e. those elements $x\neq 0$ such that $kx\leq \neg x$ for every $k\in \mathbb{N}$) plus $0$. The coradical $\neg \textrm{Rad}(A)$ is the set of elements such that their negation is in the radical. The first definition of the radical immediately implies that it is an ideal (as it is an intersection of ideals), but it requires the axiom of choice to be consistent. The second definition is instead constructive but it does not show that the radical is an ideal. In \cite{Russo2} we proved that if we restrict to the variety generated by the so-called Chang's algebra then the radical is defined by an equation, and we showed constructively that it is an ideal. In section \ref{sct:local_varieties} we shall see that this result generalizes to the case of an arbitrary proper subvariety of MV-algebras. However, it is not possible to define the radical by a geometric formula in the whole variety of MV-algebras. This can be deduced as a consequence of the fact that the class of semisimple MV-algebras, i.e. the algebras whose radical is equal to $\{0\}$, cannot be axiomatized in a geometric way over the signature of the theory of MV-algebras:

\begin{proposition*}\label{pro:SSnotGeom}
The class of semisimple MV-algebras does not admit a geometric axiomatization over the signature of the theory of MV-algebras.
\end{proposition*}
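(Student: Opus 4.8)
The plan is to exploit the standard fact that the category of set-based models of any geometric theory is closed under filtered colimits, and to exhibit a filtered colimit of semisimple MV-algebras whose colimit fails to be semisimple. This immediately rules out a geometric axiomatization: if the semisimple MV-algebras were exactly the models of a geometric theory $\mathbb{S}$ over the signature of $\mathbb{MV}$, then, since MV-algebras form an algebraic category in which filtered colimits are computed as in $\mathbf{Set}$ and geometric formulae are preserved by filtered colimits of structures, the class of semisimple algebras would be stable under filtered colimits, contradicting the example below.

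Before giving the example I would explain why genuinely \emph{non-injective} transition maps are needed, to motivate the construction. Along an increasing union of semisimple subalgebras the colimit stays semisimple, because injective MV-homomorphisms both preserve and reflect the order (if $f$ is injective and $f(\neg x\oplus y)=1$ then $\neg x\oplus y=1$, as $f(1)=1$); hence an element is infinitesimal in the union precisely when it is already infinitesimal at some stage, so the radical cannot grow. The whole point is therefore to let collapsing maps create an infinitesimal in the limit.

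For the counterexample, for each $n\ge 1$ set $A_n=\prod_{m\ge n}S_m$, the product of the finite chains $S_m=\{0,1/m,\dots,1\}$. Each $S_m$ is simple, and since the radical of a product is the product of the radicals, each $A_n$ is semisimple. Taking the transition maps $A_n\to A_{n+1}$ to be the surjections forgetting the first coordinate yields a filtered diagram (indexed by $\mathbb{N}$) whose colimit identifies two sequences exactly when they have the same tail, so that $A=\left(\prod_{m}S_m\right)\big/\left(\bigoplus_{m}S_m\right)$, the quotient by the ideal of finitely supported sequences. I would then check that the class $a$ of the sequence $(1/m)_m$ is a nonzero infinitesimal of $A$. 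It is nonzero because $(1/m)_m$ has infinite support; and for every fixed $k\in\mathbb{N}$ one has $k\cdot(1/m)=\min(k/m,1)\le (m-1)/m=\neg(1/m)$ for all $m\ge k+1$, so (using that $[x]\le[y]$ in $A$ iff $x_m\le y_m$ for all large $m$) the inequality $ka\le\neg a$ holds in $A$ for every $k$. Thus $a\in\textrm{Rad}(A)\setminus\{0\}$ and $A$ is not semisimple.

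The only thing requiring care is the non-semisimplicity of the colimit, that is, the computation of the order relation in $\left(\prod_m S_m\right)/\left(\bigoplus_m S_m\right)$ together with the observation that $a\neq 0$; the closure of geometric model classes under filtered colimits is well known and may be invoked directly. I do not expect any serious obstacle beyond this verification. Finally, I would note that the same example yields the intended consequence for the radical: a geometric formula $\rho(x)$ defining $\textrm{Rad}$ in $\mathbb{MV}$ would make the semisimple algebras the models of the geometric quotient $\mathbb{MV}\cup\{(\rho(x)\vdash_{x} x=0)\}$, which we have just shown to be impossible, so $\textrm{Rad}$ is not definable by a geometric formula over the signature of $\mathbb{MV}$.
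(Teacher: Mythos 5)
Your proof is correct, but it takes a genuinely different route from the paper's. Both arguments pivot on the same invariant---the category of set-based models of a geometric theory is closed under filtered colimits---but they exhibit the failure of this closure for semisimple algebras in different ways. The paper argues abstractly: since $\mathbb{MV}$ is of presheaf type, every MV-algebra is a filtered colimit of finitely presented MV-algebras, and every finitely presented MV-algebra is semisimple (Theorem 3.6.9 of \cite{CDM}, a non-constructive result); a geometric axiomatization would therefore force \emph{every} MV-algebra to be semisimple, which is false (e.g.\ for Chang's algebra $S_1^{\omega}$). You instead construct one explicit filtered colimit witnessing the failure: the chain $A_n=\prod_{m\ge n}S_m$ with tail-forgetting surjections, whose colimit $\prod_m S_m\big/\bigoplus_m S_m$ contains the nonzero infinitesimal given by the class of $(1/m)_m$. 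Your verifications check out: the order in the quotient is the eventual coordinatewise order, $k\cdot(1/m)\le (m-1)/m$ holds for all $m\ge k+1$, the class of $(1/m)_m$ is nonzero since that sequence has infinite support, and each $A_n$ is semisimple because the radical of a product is the product of the radicals (immediate from the characterization of the radical via infinitesimals). What the paper's route buys is brevity given the cited background, plus a stronger structural fact---semisimple algebras are ``dense'' in all MV-algebras under filtered colimits---which fits the paper's presheaf-type theme; what your route buys is self-containedness: it needs neither the presheaf-type property of $\mathbb{MV}$ nor the deep, non-constructive semisimplicity theorem for finitely presented MV-algebras. Your preliminary observation that non-injective transition maps are indispensable (filtered unions of semisimple algebras stay semisimple, since injective homomorphisms reflect the order) is a worthwhile addition, as it explains why any such counterexample must involve collapsing maps.
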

\begin{proof}
We know that the theory $\mathbb{MV}$ of MV-algebras is of presheaf type; hence, every MV-algebra is a filtered colimit of finitely presented MV-algebras. Now, every finitely presented MV-algebra is semisimple (cf. Theorem 3.6.9 \cite{CDM}\footnote{This result is not constructive.}) so, if the class of semisimple MV-algebras admitted a geometric axiomatization over the signature of the theory $\mathbb{MV}$, every MV-algebra would be semisimple (recall that the categories of set-based models of geometric theories are closed under filtered colimits). Since this is clearly not the case, our thesis follows.
\end{proof}

\begin{corollary*}
There is no geometric formula $\phi(x)$ such that, for every MV-algebra $A$, its interpretation in $A$ is equal to $\textrm{Rad}(A)$.
\end{corollary*}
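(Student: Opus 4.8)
The plan is to derive the Corollary directly from the preceding Proposition by showing that a geometric definition of the radical would immediately yield a geometric axiomatization of the semisimple MV-algebras, contradicting what has just been established. The key observation I would exploit is that the radical of any MV-algebra always contains $0$, so that an algebra $A$ is semisimple precisely when $\textrm{Rad}(A)$ contains no element other than $0$; this lets me capture semisimplicity by a mere \emph{containment} of $\textrm{Rad}(A)$ in $\{0\}$, which is the form amenable to a single geometric sequent.

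Concretely, I would argue by contradiction. Suppose there were a geometric formula $\phi(x)$ over the signature of $\mathbb{MV}$ whose interpretation in every MV-algebra $A$ equals $\textrm{Rad}(A)$. I would then consider the quotient of $\mathbb{MV}$ obtained by adjoining the single sequent
$$(\phi(x) \vdash_{x} x = 0).$$
This is a geometric sequent, since $\phi$ is geometric by hypothesis and the atomic formula $x=0$ is geometric. Its set-based models are exactly those MV-algebras $A$ in which $a \in \textrm{Rad}(A)$ forces $a = 0$, that is, those with $\textrm{Rad}(A) \subseteq \{0\}$; combined with the fact that $0 \in \textrm{Rad}(A)$ always holds, this says precisely that $\textrm{Rad}(A) = \{0\}$, i.e. that $A$ is semisimple.

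Thus the resulting quotient would constitute a geometric axiomatization of the class of semisimple MV-algebras over the signature of $\mathbb{MV}$, contradicting the Proposition above. Hence no such geometric formula $\phi(x)$ can exist, which is the desired conclusion. The argument is a clean reduction and I do not anticipate any serious obstacle; the only point requiring care is to check that the passage from \emph{definability of the radical} to \emph{axiomatizability of the semisimple algebras} preserves geometricity. This it does, precisely because $x=0$ is atomic and, thanks to $0$ lying in every radical, a single geometric implication suffices rather than a full biconditional.
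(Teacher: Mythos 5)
Your proposal is correct and coincides with the paper's own argument: both add the sequent $(\phi \vdash_{x} x=0)$ to $\mathbb{MV}$ and conclude that its models would be exactly the semisimple MV-algebras, contradicting the preceding Proposition. Your additional remark that a single implication suffices (because $0$ always lies in the radical) is a minor elaboration of the same reduction, not a different route.
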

\begin{proof}
If there existed a geometric formula $\phi(x)$ which defines the radical for every MV-algebra then the semisimple MV-algebras would be precisely the set-based models of the quotient of $\mathbb{MV}$ obtained by adding the sequent
$$(\phi\vdash_{x}x=0),$$
contradicting Proposition \ref{pro:SSnotGeom}. 
\end{proof}

\section{The geometric theory of local MV-algebras}\label{sct:Local}

In this paper we are interested in studying the class of local MV-algebras. This class admits several equivalent definitions (see Proposition 3.2 \cite{DiNola3}); for instance, one can say that an MV-algebra is local if it has exactly one maximal ideal which coincides with its radical. Local MV-algebras can be alternatively characterized as the non-trivial algebras such that every element is either in the radical, in the coradical, or it is finite\footnote{The equivalence between these definitions requires the axiom of choice.}. For defining the geometric theory of local MV-algebras, we shall use the following characterization: an MV-algebra $A$ is \emph{local} if it is non-trivial and for any $x\in A$, either $\textrm{ord}(x)<\infty$ or $\textrm{ord}(\neg x)<\infty$.

\begin{definition}
The geometric theory $\mathbb{L}oc$ of local MV-algebras is the quotient of the theory $\mathbb{MV}$ obtained by adding the axioms
\[
(\top \vdash_{x} \bigvee_{k=1}^{\infty}(kx=1 \vee k(\neg x)=1));
\]
\[
(0=1 \vdash_{} \bot).
\]
\end{definition}

Particular examples of local MV-algebras are perfect MV-algebras, that is, the local MV-algebras that lie in Chang's variety. However, there are local MV-algebras that are not contained in any proper subvariety; for instance, every infinite simple MV-algebra is local and generates the whole variety of MV-algebras.

In \cite{DubucPoveda}, Dubuc and Poveda provided a constructive representation for the whole class of MV-algebras as global sections of a sheaf of MV-chains on a locale (meaning a model of the theory of MV-chains in a localic topos). We use this representation to calculate the cartesianization of the theory $\mathbb{L}oc$. 

\begin{proposition}\label{thm:CartLoc}
The cartesianization of the theory $\mathbb{L}oc$ of local MV-algebras is the theory $\mathbb{MV}$ of MV-algebras.
\end{proposition}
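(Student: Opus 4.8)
The plan is to deduce the statement directly from Remark \ref{obs:CartSheaves}, taking the cartesian theory there to be $\mathbb{MV}$ and the geometric theory to be $\mathbb{L}oc$. Since $\mathbb{L}oc$ is a quotient of $\mathbb{MV}$, every cartesian sequent provable in $\mathbb{MV}$ is a fortiori provable in $\mathbb{L}oc$, so one inclusion is automatic and the whole content is the reverse one: that adjoining the locality axioms produces no genuinely new cartesian consequences. By Remark \ref{obs:CartSheaves}, this follows as soon as every MV-algebra $A$ can be exhibited as the algebra of global sections of a model of $\mathbb{L}oc$ living in a topos of sheaves over a locale. The natural candidate is the Dubuc--Poveda representation recalled above, which realises an arbitrary MV-algebra $A$ as the global sections of a sheaf $\mathcal{A}$ of MV-chains on a locale.

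The one thing that genuinely requires checking is that $\mathcal{A}$ is not merely a model of the theory of MV-chains but is in fact a model of $\mathbb{L}oc$, i.e. that it satisfies internally the two axioms defining $\mathbb{L}oc$. The locality disjunction is handled by the totality of the order: in any MV-chain one has $x\leq \neg x$ or $\neg x\leq x$, and since the instance $\neg x\oplus x=1$ of reflexivity of $\leq$ holds in every MV-algebra, monotonicity of $\oplus$ gives $\neg x\oplus \neg x=1$, that is $2(\neg x)=1$, in the first case, and symmetrically $2x=1$ in the second. Thus the theory of MV-chains already proves the sequent $(\top\vdash_{x}(2x=1\vee 2(\neg x)=1))$, a strengthening of the locality axiom (it is precisely its $k=2$ disjunct); being provable in the theory of MV-chains, it holds in the internal model $\mathcal{A}$. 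For non-triviality $(0=1\vdash\bot)$ one uses that the Dubuc--Poveda construction produces a sheaf whose stalks are \emph{nontrivial} MV-chains, so that $0\neq 1$ in each stalk; internally this amounts to saying that the largest open on which the global sections $0$ and $1$ coincide is the empty one, i.e. that the subobject of the terminal object interpreting the formula $0=1$ is the initial one. Hence $\mathcal{A}$ is a $\mathbb{L}oc$-model.

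Granting this, Remark \ref{obs:CartSheaves} applies verbatim: the global sections functor is cartesian and so preserves the validity of cartesian sequents, whence any cartesian sequent provable in $\mathbb{L}oc$ holds in $\mathcal{A}$ and therefore in $A=\Gamma(\mathcal{A})$; as $A$ ranges over all MV-algebras and $\mathbb{MV}$ (being cartesian, hence of presheaf type) is complete with respect to its set-based models, the sequent is provable in $\mathbb{MV}$. I expect the only real obstacle to be the verification that the Dubuc--Poveda sheaf is a model of $\mathbb{L}oc$ rather than just of the theory of MV-chains, and in particular the care needed to see that non-triviality is inherited by the internal model; by contrast, the locality disjunction is cheap, thanks to the uniform bound $k=2$ afforded by linearity.
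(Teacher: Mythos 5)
Your proof is correct and follows essentially the same route as the paper's: both invoke the Dubuc--Poveda representation of an arbitrary MV-algebra as the algebra of global sections of a sheaf of MV-chains on a locale, verify that every MV-chain is internally a model of $\mathbb{L}oc$ (the locality disjunction coming from totality of the order with the uniform bound $k=2$), and conclude via Remark \ref{obs:CartSheaves}. If anything, your write-up is slightly more careful than the paper's, which leaves the non-triviality axiom implicit in the notion of MV-chain and states the implications with the conclusions swapped (it writes ``$x\leq \neg x$ whence $2x=1$'' and ``$\neg x\leq x$ whence $2\neg x=1$'', whereas, as you correctly derive, $x\leq\neg x$ yields $2(\neg x)=1$ and $\neg x\leq x$ yields $2x=1$).
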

\begin{proof}
Every MV-chain is a local MV-algebra (in every Grothendieck topos): indeed, for any $x$ in such an algebra, either $x\leq \neg x$ (whence $2x=1$) or $\neg x \leq x$ (whence $2\neg x=1$). So, by Dubuc-Poveda's representation theorem, every MV-algebra is isomorphic to the algebra of global sections of a sheaf of local MV-algebras on a locale (meaning a model of the theory $\mathbb{L}oc$ in a localic topos). Remark \ref{obs:CartSheaves} thus implies our thesis.
\end{proof}

\begin{proposition*}
The theory $\mathbb{L}oc$ of local MV-algebras is not of presheaf type.
\end{proposition*}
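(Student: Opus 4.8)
The plan is to derive a contradiction from the assumption that $\mathbb{L}oc$ is of presheaf type by combining the cartesianization result (Proposition \ref{thm:CartLoc}) with the structural properties of theories of presheaf type recalled in Section \ref{sct:topos_theory}. The key observation is that $\mathbb{L}oc$ is a quotient of $\mathbb{MV}$, which is itself of presheaf type. Moreover, by Proposition \ref{thm:CartLoc}, the cartesianization of $\mathbb{L}oc$ is exactly $\mathbb{MV}$, and every MV-algebra arises as the algebra of global sections of a sheaf of local MV-algebras via Dubuc--Poveda's representation. This last fact says precisely that $\mathbb{MV}$ is a subtheory of the quotient $\mathbb{L}oc$ (of which $\mathbb{L}oc$ is a quotient) all of whose set-based models admit a representation as global sections of an $\mathbb{L}oc$-model. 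I would therefore invoke Theorem \ref{thm:GlobSec} with $\mathbb{T}=\mathbb{MV}$ and $\mathbb{T}'=\mathbb{L}oc$... but note the hypotheses are reversed, so the real work is to set up the right instance.

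First I would make precise the application of Theorem \ref{thm:GlobSec}. That theorem, applied with the presheaf-type theory $\mathbb{T}=\mathbb{MV}$ and the subtheory $\mathbb{T}'$ being the cartesianization (or $\mathbb{MV}$ itself viewed appropriately), concludes that every finitely presentable $\mathbb{MV}$-model is finitely presentable as a model of the weaker theory. The strategy, however, should run the other way: assuming $\mathbb{L}oc$ is of presheaf type, I would apply Theorem \ref{thm:RigFinPre} to the quotient $\mathbb{L}oc$ of the presheaf-type theory $\mathbb{MV}$. Since $\mathbb{L}oc$ is assumed of presheaf type, its finitely presentable models would be governed by the rigidity of the associated Grothendieck topology $J$ on $\textrm{f.p.}\mathbb{MV}\textrm{-mod}(\mathbf{Set})^{op}$. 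The plan is to show that the finitely presentable $\mathbb{L}oc$-models cannot all be finitely presentable as $\mathbb{MV}$-models, or conversely to exhibit a finitely presentable $\mathbb{MV}$-model (necessarily semisimple by Theorem 3.6.9 of \cite{CDM}) that is \emph{not} local, and argue that the presheaf-type hypothesis together with the cartesianization result forces a contradiction.

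Concretely, the cleanest route exploits the tension between two facts: every finitely presented $\mathbb{MV}$-model is semisimple, hence is a nontrivial finite product (or subdirect representation) of simple chains and is typically \emph{not} local; yet if $\mathbb{L}oc$ were of presheaf type, its classifying topos would be $[\textrm{f.p.}\mathbb{L}oc\textrm{-mod}(\mathbf{Set}),\mathbf{Set}]$ and the finitely presentable local algebras would have to generate, under filtered colimits, all local MV-algebras. Using Proposition \ref{thm:CartLoc}, the cartesianization being $\mathbb{MV}$ means the finitely presentable $\mathbb{L}oc$-models, as $\mathbb{MV}$-structures, must present the cartesian theory $\mathbb{MV}$; I would then argue via Theorem \ref{thm:GlobSec} that every finitely presentable $\mathbb{MV}$-model would have to be finitely presentable as an $\mathbb{L}oc$-model, hence (being finitely presentable and an $\mathbb{L}oc$-model) would have to be local. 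This contradicts the existence of finitely presented MV-algebras that are not local, for instance a free MV-algebra on one generator or a finite product of two distinct finite chains, which is finitely presentable but has more than one maximal ideal and hence is not local.

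The main obstacle will be pinning down the correct direction and hypotheses of Theorem \ref{thm:GlobSec} so that it yields the statement ``every finitely presentable $\mathbb{MV}$-model is finitely presentable as an $\mathbb{L}oc$-model,'' and then verifying that such finite presentability as an $\mathbb{L}oc$-model forces the model to actually \emph{satisfy} $\mathbb{L}oc$, i.e. to be local. The subtle point is that finite presentability as a model of a quotient does not automatically entail membership in the quotient's model class unless the presheaf-type structure is invoked carefully; here one uses that for a theory of presheaf type the finitely presentable models are precisely the models of that theory which are finitely presentable, so I must confirm that the Dubuc--Poveda global-sections representation supplies exactly the hypothesis of Theorem \ref{thm:GlobSec} with $\mathbb{T}=\mathbb{MV}$ of presheaf type and $\mathbb{T}'=\mathbb{L}oc$, whence every finitely presentable $\mathbb{MV}$-model is finitely presentable as an $\mathbb{L}oc$-model and thus local — the desired contradiction. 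Exhibiting an explicit finitely presented non-local MV-algebra to close the argument is routine.
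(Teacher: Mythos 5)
Your argument hinges on extracting from Theorem \ref{thm:GlobSec} the statement ``every finitely presentable $\mathbb{MV}$-model is finitely presentable as an $\mathbb{L}oc$-model'', i.e.\ on applying that theorem with $\mathbb{T}=\mathbb{MV}$ and $\mathbb{T}'=\mathbb{L}oc$. This instance is simply unavailable: the theorem requires $\mathbb{T}'$ to be a \emph{sub-theory} of $\mathbb{T}$, that is, $\mathbb{T}$ must be a quotient of $\mathbb{T}'$, whereas here the relation runs the opposite way ($\mathbb{L}oc$ is obtained from $\mathbb{MV}$ by adding axioms, not vice versa). No reductio assumption about $\mathbb{L}oc$ being of presheaf type can repair this, because the sub-theory relation is a syntactic fact independent of that assumption. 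In a proof by contradiction the false statement must still be \emph{validly} derived from the hypothesis, and here the derivation itself is broken: the only legitimate instance of Theorem \ref{thm:GlobSec} is $\mathbb{T}=\mathbb{L}oc$ (assumed of presheaf type) and $\mathbb{T}'=\mathbb{MV}$, with Dubuc--Poveda supplying the global-sections hypothesis (every MV-chain is local), and its conclusion points in the other direction: every finitely presentable $\mathbb{L}oc$-model is finitely presentable as an $\mathbb{MV}$-model.

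With the correctly oriented conclusion, your closing contradiction evaporates: the existence of finitely presented non-local MV-algebras (a free MV-algebra, a product of two distinct finite chains) is perfectly compatible with ``finitely presentable $\mathbb{L}oc$-models are finitely presentable $\mathbb{MV}$-models''. The contradiction must instead be reached as the paper does: finitely presentable $\mathbb{L}oc$-models, being finitely presented MV-algebras, are semisimple (Theorem 3.6.9 \cite{CDM}), and a local semisimple MV-algebra is simple (Proposition 2.3 \cite{LocalBDL}); since $\mathbb{L}oc$ is assumed of presheaf type, every local MV-algebra is a filtered colimit of its finitely presentable models, hence a filtered colimit of simple MV-algebras, hence itself simple (categories of models of geometric theories are closed under filtered colimits); but there exist non-simple local MV-algebras, e.g.\ Chang's algebra $S_1^{\omega}$. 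So your proposal assembles the right ingredients (Dubuc--Poveda, semisimplicity of finitely presented MV-algebras), but its pivotal step rests on an inapplicable instance of Theorem \ref{thm:GlobSec}, and the final contradiction is aimed at the wrong class of algebras: the conflict is not with non-local finitely presented $\mathbb{MV}$-models, but with non-simple local MV-algebras.
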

\begin{proof}
Let us suppose that the theory $\mathbb{L}oc$ is of presheaf type. Then $\mathbb{L}oc$ and its cartesianization $\mathbb{MV}$ satisfy the hypothesis of Theorem \ref{thm:GlobSec} whence  every finitely presentable $\mathbb{L}oc$-model is finitely presented as $\mathbb{MV}$-model. So every local MV-algebra is a filtered colimit of local finitely presented MV-algebras. But every finitely presented MV-algebra is semisimple and every local semisimple MV-algebra is simple, i.e. it has exactly two ideals (see Proposition 2.3 \cite{LocalBDL}). So every local MV-algebra is a filtered colimit of simple MV-algebras, whence it is a simple MV-algebra. Since this is clearly not the case, the theory of local MV-algebras is not of presheaf type.
\end{proof}

\begin{obs}
With the same arguments used for the theory of local MV-algebras it is possible to prove that the theory of MV-chains is not of presheaf type and that is cartesianization is the theory of MV-algebras.
\end{obs}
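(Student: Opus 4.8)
The plan is to run, for the theory of MV-chains, the two arguments already given for the theory $\mathbb{L}oc$ of local MV-algebras, replacing ``local MV-algebra'' by ``MV-chain'' throughout. Recall that the geometric theory of MV-chains is the quotient of $\mathbb{MV}$ obtained by adding the non-triviality axiom $(0=1\vdash \bot)$ together with the totality axiom $(\top \vdash_{x,y} (x\leq y \vee y\leq x))$, and that, as noted in the proof of Proposition \ref{thm:CartLoc}, every MV-chain is in particular a local MV-algebra.

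To compute the cartesianization I would proceed exactly as in Proposition \ref{thm:CartLoc}: by Dubuc-Poveda's representation theorem every MV-algebra is isomorphic to the algebra of global sections of a sheaf of MV-chains on a locale, i.e. of a model of the theory of MV-chains in a localic topos. Since $\mathbb{MV}$ is cartesian (being algebraic), Remark \ref{obs:CartSheaves}, applied with $\mathbb{T}=\mathbb{MV}$ and $\mathbb{S}$ the theory of MV-chains (which is a quotient of $\mathbb{MV}$), then yields at once that the cartesianization of the theory of MV-chains is $\mathbb{MV}$.

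To establish the failure of presheaf type I would argue by contradiction, mimicking the proof that $\mathbb{L}oc$ is not of presheaf type. Assuming the theory of MV-chains to be of presheaf type, it and its cartesianization $\mathbb{MV}$ satisfy the hypotheses of Theorem \ref{thm:GlobSec}, taking $\mathbb{T}$ to be the theory of MV-chains and $\mathbb{T}'=\mathbb{MV}$ (a sub-theory of it) and using the representation above. Hence every finitely presentable MV-chain would be finitely presentable as an $\mathbb{MV}$-model, so that every MV-chain would be a filtered colimit of finitely presented MV-chains. As every finitely presented MV-algebra is semisimple and every MV-chain is local, each such finitely presented MV-chain would be local and semisimple, hence simple (its radical, being its unique maximal ideal, would be $\{0\}$); thus every MV-chain would be a filtered colimit of simple MV-algebras. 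Since the class of simple MV-algebras is geometric --- it is axiomatized over $\mathbb{MV}$ by the non-triviality axiom together with $(\top \vdash_x (x=0 \vee \bigvee_{n=1}^{\infty} nx=1))$, cf. section \ref{sct:simple} --- it is closed under filtered colimits, and this would force every MV-chain to be simple, which is false (Chang's algebra being an MV-chain with non-trivial radical).

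The two facts feeding this final contradiction are the step to get right, although both already underlie the argument for $\mathbb{L}oc$: that a local semisimple MV-algebra is simple, and that the simple MV-algebras are closed under filtered colimits. Everything else is a verbatim transcription of the local case via Dubuc-Poveda's sheaf representation, whose stalks are precisely MV-chains.
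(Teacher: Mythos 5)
Your proposal is correct and follows essentially the same route as the paper, which is exactly what the remark prescribes: you run the Dubuc--Poveda representation through Remark \ref{obs:CartSheaves} for the cartesianization, and the Theorem \ref{thm:GlobSec} argument (finitely presented $\Rightarrow$ semisimple, chain $\Rightarrow$ local, local $+$ semisimple $\Rightarrow$ simple, simplicity geometric hence closed under filtered colimits) for the failure of presheaf type, with Chang's algebra correctly supplied as the non-simple MV-chain yielding the contradiction. The only additions you make --- justifying that a local semisimple algebra is simple and that simple algebras are closed under filtered colimits --- are points the paper itself relies on (citing Proposition 2.3 of \cite{LocalBDL} and the geometric axiomatization of section \ref{sct:simple}), so nothing is missing.
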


\section{The algebraic theory of a Komori variety}\label{sec:TheoryOfV}

In \cite{Komori} Komori gave a (non-constructive) complete characterization of the lattice of all subvarieties of the variety of MV-algebras. In particular, he proved that every proper subvariety is generated by a finite number of finite simple MV-algebras $S_m=\Gamma(\mathbb{Z},m)$ and a finite number of so-called Komori chains, i.e. algebras of the form $S_{m}^{\omega}=\Gamma(\mathbb{Z}\times_{\textrm{lex}} \mathbb{Z}, (m,0))$, where $\mathbb{Z}\times_{\textrm{lex}}\mathbb{Z}$ is the lexicographic product of the group of integers with itself and $\Gamma$ is Mundici's functor from the category of $\ell$-groups with strong unit to the category of MV-algebras (cf. \cite{Mundici}). We call the varieties of this form Komori varieties. In this paper every proper subvariety $V$ is intended to be a Komori variety.

The next result shows that, whilst a Komori variety can be presented by differents sets of generators, the least common multiple of the ranks of the generators is an invariant of the variety.

\begin{proposition}\label{pro:invariant_generator}
Let $V$ be a Komori variety such that 
\begin{center}
$V=V(S_{n_1}, \dots, S_{n_k}, S_{m_1}^{\omega},\dots, S_{m_s}^{\omega})$

$=V(S_{n'_1}, \dots, S_{n'_h}, S_{m'_1}^{\omega},\dots, S_{m'_t}^{\omega})$
\end{center}
The numbers $N=l.c.m.\{n_i,m_j\mid i=1,\dots,k, j=1,\dots,s\}$ and $N'=l.c.m.\{n'_i,m'_j\mid i=1,\dots,h, j=1,\dots,t\}$ are equal.
\end{proposition}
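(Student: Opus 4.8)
The plan is to characterize the invariant number $N$ intrinsically — that is, to give a description of $N$ that refers only to the variety $V$ itself and not to any choice of generators — and then to conclude that two presentations must yield the same $N$ because they compute the same intrinsic quantity. The most natural candidate for such an intrinsic characterization comes from the finite simple MV-algebras (finite chains) that belong to $V$: since $S_d = \Gamma(\mathbb{Z},d)$ is a subalgebra of $S_m$ precisely when $d$ divides $m$, and similarly $S_d$ embeds into the Komori chain $S_m^{\omega}$ exactly when $d \mid m$ (the finite subalgebras of $S_m^{\omega}$ being the chains $S_d$ with $d\mid m$), the finite chains contained in a variety generated by $S_{n_1},\dots,S_{n_k},S_{m_1}^{\omega},\dots,S_{m_s}^{\omega}$ should be controlled by the divisors of the $n_i$ and $m_j$.

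First I would recall the relevant divisibility facts from Komori's analysis: a finite chain $S_d$ lies in the variety generated by a set of generators if and only if $d$ divides one of the ranks appearing among the generators (using that the class of subalgebras of a simple chain is determined by divisibility, and that membership of a \emph{simple} algebra in a generated variety can be detected at the level of the generating chains rather than requiring genuine products or ultraproducts — this is where Komori's classification does the heavy lifting). Granting this, the set $D(V) = \{\, d \in \mathbb{N} \mid S_d \in V \,\}$ is an intrinsic invariant of $V$, and from the first presentation it equals the set of all divisors of the numbers $n_1,\dots,n_k,m_1,\dots,m_s$. The key observation is then that the least common multiple $N = \mathrm{l.c.m.}\{n_i,m_j\}$ is exactly the maximum element of $D(V)$: every $n_i$ and every $m_j$ divides $N$, so each generator's rank lies in $D(V)$ and divides $N$; conversely any common divisor structure forces $N$ itself to be the largest $d$ with $S_d \in V$, since $N$ is a common multiple of the ranks and one checks $S_N \in V$ while no strictly larger chain belongs to $V$.

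Computing both $N$ and $N'$ as $\max D(V)$ — the same intrinsically defined quantity — then gives $N = N'$ immediately. The main obstacle I anticipate is justifying rigorously that $S_N$ itself belongs to $V$ (not merely each $S_{n_i}$): this requires knowing that $V$, being a variety, is closed under the operations generating $S_N$ from the chains $S_{n_i}$ and $S_{m_j}^{\omega}$, or equivalently that the l.c.m.\ of ranks of chains in $V$ is again realized by a chain in $V$. I expect to handle this by exhibiting $S_N$ as a subalgebra of a suitable finite product or subdirect product of the generating chains (for instance, embedding $S_N$ diagonally into a product $\prod S_{n_i} \times \prod S_{m_j}$ via the Chinese-remainder-type decomposition of $S_N$ according to the prime-power factorization of $N$), so that closure of $V$ under subalgebras and products yields $S_N \in V$. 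The delicate point is precisely this embedding and the confirmation that no chain of rank strictly greater than $N$ can occur, which follows because any $S_d \in V$ must have $d$ dividing some generator rank and hence dividing $N$.
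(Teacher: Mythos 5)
There is a genuine gap, and it sits exactly at your key step. The intrinsic characterization you propose, $N = \max D(V)$ where $D(V) = \{d \in \mathbb{N} \mid S_d \in V\}$, is false in general: by the very divisibility criterion you invoke (Komori's theorem), $S_N \in V$ would require $N$ to divide one of the generator ranks, but $N$ is their least common multiple, so this happens only when some single generator rank is a common multiple of all the others. Concretely, for $V = V(S_2, S_3)$ one has $N = 6$ while $D(V) = \{1,2,3\}$, so $\max D(V) = 3 \neq 6$ and $S_6 \notin V$. Your anticipated repair — embedding $S_N$ diagonally into $\prod_i S_{n_i} \times \prod_j S_{m_j}$ by a Chinese-remainder decomposition — cannot work: there is no MV-algebra homomorphism $S_6 \to S_2 \times S_3$ at all, since composing with the projection onto $S_2$ would give a homomorphism $S_6 \to S_2$, and every homomorphism out of the simple algebra $S_6$ into a non-trivial algebra has trivial kernel, hence is injective, which is impossible (for cardinality reasons, or because an embedding $S_6 \hookrightarrow S_2$ forces $6 \mid 2$). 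The ring isomorphism $\mathbb{Z}/6\mathbb{Z} \cong \mathbb{Z}/2\mathbb{Z} \times \mathbb{Z}/3\mathbb{Z}$ has no analogue for MV-chains; indeed the paper's remark immediately after the proposition stresses that the maximum of the ranks and the l.c.m.\ are two \emph{different} invariants of $V$, which is only possible because neither is witnessed by a single chain realizing the l.c.m.

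The good news is that your strategy survives a one-line correction: the right intrinsic quantity is not $\max D(V)$ but $\textrm{l.c.m.}(D(V))$. Your first two steps are sound: $D(V)$ is presentation-independent, and by Komori's criterion it equals the set of all divisors of the generator ranks (each such $S_d$ embeds into some $S_{n_i}$, or into $S_{m_j} \subseteq S_{m_j}^{\omega}$). Consequently every element of $D(V)$ divides $N$, while each $n_i$ and $m_j$ lies in $D(V)$, so $\textrm{l.c.m.}(D(V)) = N$; computing the same intrinsic number from the second presentation gives $N' = \textrm{l.c.m.}(D(V)) = N$, with no need for $S_N$ itself to belong to $V$. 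Once repaired, this is essentially the paper's own proof rearranged: the paper applies Komori's theorem directly to the generators of the second presentation (each $n'_i$ and $m'_j$ divides some rank of the first presentation, hence divides $N$, so $N' \leq N$, and symmetrically $N \leq N'$). One further point the paper is careful about and you gloss over: for Komori chains the criterion is stronger, namely $S_{m'_j}^{\omega} \in V$ forces $m'_j$ to divide one of the $m_j$ (a Komori-chain rank, not merely any generator rank); this distinction is immaterial for the l.c.m.\ computation, which only needs the statement for finite chains, but it should not be misquoted as part of Komori's classification.
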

\begin{proof}
By Theorem 2.1\cite{Komori} we have that:
\begin{center}
$S_{n'_i}\in V\Rightarrow$ there exists $n\in \{n_i,m_j\mid i=1,\dots,k, j=1,\dots,s\}$ such that $n'_i$ divides $n$;

$S_{m'_j}^{\omega}\in V\Rightarrow$ there exists $m\in \{m_j\mid j=1,\dots,s\}$ such that $m'_j$ divides $m$.
\end{center}
This yields that $N'\leq N$. In a similar way we prove that $N\leq N'$; hence, $N=N'$, as required.
\end{proof}

\begin{obs}
By means of the same arguments used in the proof of the proposition, one can show that also the maximum of the ranks of the generators of a variety is an invariant. We use nonetheless the l.c.m. as invariant since we want to regard local MV-algebras in a given Komori variety as subalgebras of algebras of  a fixed finite rank, which therefore must be a multiple of all the ranks of the generators of the variety (cf. section \ref{sct:local_varieties} below). 
\end{obs}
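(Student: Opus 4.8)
The plan is to imitate the proof of Proposition \ref{pro:invariant_generator}, simply replacing the least common multiple by the maximum and reusing verbatim the two divisibility relations furnished by Theorem 2.1 \cite{Komori}. Set
$$M=\max\{n_i,m_j\mid i=1,\dots,k,\ j=1,\dots,s\}, \qquad M'=\max\{n'_i,m'_j\mid i=1,\dots,h,\ j=1,\dots,t\},$$
so that the goal is to establish $M=M'$.

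First I would prove $M'\leq M$. Choose a generator of the second presentation whose rank realizes the value $M'$; there are two cases. If $M'=n'_i$ for some $i$, then $S_{n'_i}\in V$, so Komori's theorem produces some $n\in\{n_i,m_j\}$ with $n'_i\mid n$; since divisibility of positive integers forces $n'_i\leq n$, we obtain $M'=n'_i\leq n\leq M$. If instead $M'=m'_j$ for some $j$, then $S_{m'_j}^{\omega}\in V$, and Komori's theorem yields some $m\in\{m_j\}$ with $m'_j\mid m$, whence again $M'=m'_j\leq m\leq M$. In either case $M'\leq M$, and interchanging the roles of the two presentations gives the reverse inequality $M\leq M'$ by the identical argument, so $M=M'$.

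There is no genuine obstacle here beyond careful bookkeeping of the two kinds of generators (the finite simple algebras $S_n$ and the Komori chains $S_m^{\omega}$, the latter of which can only divide another Komori chain's rank according to the second implication of Komori's theorem), together with the elementary remark that $a\mid b$ implies $a\leq b$ for positive integers. It is precisely this remark that converts the purely divisibility-theoretic conclusions of Komori's theorem into the numerical inequalities required to compare the two maxima, exactly as the l.c.m.\ computation does for the bound $N'\leq N$ in the proposition.
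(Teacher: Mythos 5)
Your proposal is correct and follows exactly the route the paper intends: the Remark explicitly appeals to ``the same arguments used in the proof of the proposition,'' and you reuse the two divisibility implications of Theorem 2.1 of Komori, converting $a \mid b$ into $a \leq b$ for positive integers to compare the two maxima, just as the paper's proof compares the two least common multiples. Nothing is missing; the case split on whether the maximum is realized by an $S_{n'_i}$ or an $S_{m'_j}^{\omega}$ is precisely the bookkeeping required.
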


Note that both the l.c.m. and the maximum are not discriminating invariants, i.e. there exist different varieties with the same associated invariant, for example $V(S_n)$ and $V(S_{n}^{\omega})$ for any $n\in \mathbb{N}$. In \cite{Panti_varieties}, Panti identified a discriminating invariant in the concept of {\it reduced pair}: a pair $(I,J)$ of finite subsets of $\mathbb N$ is said to be \emph{reduced} if  no $m\in I$ divides any $m_0\in (I\setminus \{m\})\cup J$, and no $t\in J$ divides any $t_0\in J\setminus \{t\}$ (in particular, $I\cap J=\emptyset$).

Di Nola and Lettieri have given in \cite{Varieties} equational axiomatizations for all varieties of MV-algebras. More specifically, they have proved the following 

\begin{theorem*}\cite{Varieties}\label{thm:axiomatization_varieties}
Let $V=V(S_{n_1}, \dots, S_{n_k}, S_{m_1}^{\omega},\dots, S_{m_s}^{\omega})$, $I=\{n_1, \ldots, n_k\}$, $J=\{m_1, \ldots, m_s\}$ and for each $i\in I$, $$\delta(i)=\{n\in {\mathbb N} \mid n\geq 1 \textrm{ and $n$ divides $i$}\}.$$ Then an MV-algebra lies in $V$ if and only if it is a model of the theory whose axioms are the axioms of $\mathbb{MV}$ plus the following:
\[
(\top \vdash_{x} ((n+1)x^n)^2=2x^{n+1}),
\]
where $n=\textrm{max}(I\cup J)$;
\[
(\top \vdash_{x} (px^{p-1})^{n+1}=(n+1)x^{p})
\]
for every positive integer $1< p < n$ such that $p$ is not a divisor of any $i\in I\cup J$;
\[
(\top \vdash_{x} (n+1)x^q=(n+2)x^q),
\]
for every $q\in \mathbin{\mathop{\textrm{\huge$\cup$}}\limits_{i\in I}}\Delta(i, J)$, where 
\[
\Delta(i, J)=\{d\in \delta(i)\setminus \mathbin{\mathop{\textrm{\huge$\cup$}}\limits_{j\in J}}\delta(j) \}.
\]
\end{theorem*}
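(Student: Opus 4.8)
The plan is to prove that the equational class $\mathcal{W}$ axiomatized by $\mathbb{MV}$ together with the three displayed families coincides with $V$, by comparing the two \emph{as varieties} through their subdirectly irreducible members. Both $\mathcal{W}$ and $V$ are equationally defined, hence varieties, and a variety is determined by its subdirectly irreducible algebras; since MV-algebras have a lattice reduct and are therefore congruence-distributive, every MV-algebra is a subdirect product of subdirectly irreducible quotients, and the latter are exactly the nontrivial MV-chains. (This reduction rests on the subdirect representation theorem and hence on the axiom of choice, consistently with the $*$ on the statement.) It therefore suffices to prove, for every nontrivial MV-chain $C$, that $C$ satisfies the three families if and only if $C\in V$.

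For the soundness half ($C\in V\Rightarrow C$ satisfies the identities) I would work in the interval presentation $\Gamma(G,u)$ of a chain, where $x\oplus y=(x+y)\wedge u$ and $x\odot y=(x+y-u)\vee 0$, so that the $\oplus$-multiples $mx$ and the $\odot$-powers $x^m$ occurring in the axioms become truncated computations in the underlying totally ordered group. One checks by direct calculation that each generator $S_{n_i}=\Gamma(\mathbb{Z},n_i)$ and $S_{m_j}^{\omega}=\Gamma(\mathbb{Z}\times_{\textrm{lex}}\mathbb{Z},(m_j,0))$ satisfies all three families; since $V$ is the variety generated by these algebras and the three families cut out a subvariety, every member of $V$, in particular every chain in $V$, inherits them. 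This is the routine half.

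The substantive half is the converse: a chain $C$ satisfying all three families lies in $V$. The strategy is to read off from $C$ the only two data that, by Komori's classification (Theorem 2.1 of \cite{Komori}), decide membership in a Komori variety: the \emph{rank} of $C$, namely the $d$ with $C/\textrm{Rad}(C)\cong S_d$, and whether $C$ has nonzero infinitesimals, i.e. whether $\textrm{Rad}(C)\neq\{0\}$. Komori's criterion says that $C\in V$ exactly when its rank $d$ divides some element of $I\cup J$ and, in case $C$ has infinitesimals, $d$ divides some element of $J$. I would then show that the three families force precisely this: the first identity (which for $n=1$ is the equation $(2x)^2=2x^2$ cutting out Chang's variety) forces the rank to be finite and at most $n=\max(I\cup J)$, so that $C/\textrm{Rad}(C)$ is a finite chain $S_d$ with $d\leq n$; the second family, ranging over the integers $1<p<n$ dividing no element of $I\cup J$, forbids $C$ from having any of these ranks, so that together with the first identity the rank $d$ must divide some generator; and the third family, indexed by $q\in\bigcup_{i\in I}\Delta(i,J)$, forces a chain of rank $q$ to have trivial radical, i.e. to be the finite simple chain $S_q$ rather than a chain carrying infinitesimals. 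Collating these constraints with Komori's criterion yields $C\in V$, and the subdirect representation then gives $A\in V$ for every model $A$ of the theory.

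The main obstacle is this third step: translating each identity into a constraint on the rank and the radical of an \emph{arbitrary} MV-chain $\Gamma(G,u)$, not just the named generators, since the subdirectly irreducibles produced above may be infinite and even nonstandard (e.g. members of $HS(P_U(\{S_{n_i},S_{m_j}^{\omega}\}))$ with $\mathbb{Z}$ replaced by an ultrapower). The decisive computation is the one separating $S_q$ from $S_q^{\omega}$ in the third family: an element $(q-1,c)$ with $c>0$ just below the unit of $S_q^{\omega}=\Gamma(\mathbb{Z}\times_{\textrm{lex}}\mathbb{Z},(q,0))$ has $\odot$-power $x^q=(0,qc)$, a nonzero infinitesimal on which the truncated multiples $(n+1)$ and $(n+2)$ genuinely differ, so that $(n+1)x^q=(n+2)x^q$ fails; the corresponding element of the finite chain $S_q$ has $x^q=0$ and the identity holds. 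Carrying out the analogous analysis uniformly for all chains, and verifying that the rank and infinitesimality data extracted from the identities match Komori's list exactly, is where the real work lies.
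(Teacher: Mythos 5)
First, a point of reference: the paper you are working from does not prove this statement at all — it is quoted from Di~Nola--Lettieri \cite{Varieties} as an imported result — so there is no internal proof to compare against. That said, your plan follows what is essentially the route of the original argument: reduce to totally ordered algebras via the subdirect representation theorem, verify the identities on the generators for soundness, and for completeness translate each identity family into constraints on the rank and the radical of an arbitrary chain, to be matched against Komori's membership criterion (equivalently, Theorem 7.2 of \cite{DiNola3} as cited in the paper). Two small inaccuracies in your reduction: it is Birkhoff's theorem, valid in any variety, that yields the subdirect decomposition (congruence-distributivity enters only through J\'onsson's lemma, which you never use); and the subdirectly irreducible MV-algebras are chains but not conversely — harmless here, since proving the criterion for \emph{all} chains is enough.

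The genuine gap is that your ``third step'' is asserted rather than proved, and it is precisely where all the content of the theorem lies. Concretely, you still owe: (a) that a chain satisfies $((n+1)x^n)^2=2x^{n+1}$ if and only if its rank is finite and at most $n$ — this is Lemma 8.4.1 of \cite{CDM}, which the paper itself invokes; if you do not cite it, you must in particular handle chains of \emph{infinite} rank, e.g.\ by producing in $C/\mathrm{Rad}(C)$ a witness $x$ with $x^n$ close to $1/(n+1)$, where the two sides take values $1$ and $<1$; (b) that $(px^{p-1})^{n+1}=(n+1)x^p$ fails in every chain of rank $p$ — here it suffices to check $S_p$ (take $x$ with value $(p-1)/p$, so that $x^{p-1}=1/p$ and $x^p=0$, making the left side $1$ and the right side $0$) and then use that identities pass to the quotient $C/\mathrm{Rad}(C)\cong S_p$; this quotient trick, which you never state, is what makes ``uniformly for all chains'' tractable for families (1) and (2); and (c) for family (3), both directions: failure in \emph{every} rank-$q$ chain with nontrivial radical — your computation treats only $\Gamma(\mathbb{Z}\times_{\mathrm{lex}}\mathbb{Z},(q,0))$, whereas the general case $C\cong\Gamma(\mathbb{Z}\times_{\mathrm{lex}}G,(q,h))$ given by Theorem 5.5 of \cite{DiNola3} requires choosing $c$ according to the sign of $h$ so that $qc-(q-1)h>0$ — and, on the soundness side, validity in every generator $S_m^{\omega}$ with $m\in J$, which is not ``routine'' but rests on the arithmetic fact that $q\nmid m$ forces $qa\neq(q-1)m$ for all integers $a$, so that $x^q$ is never a nonzero infinitesimal in $S_m^{\omega}$. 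Until (a)--(c) and the analogous soundness computations for families (1) and (2) are written out, what you have is the correct proof strategy — indeed the standard one — with its decisive verifications still open.
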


The theory obtained by adding to the theory defined in Theorem \ref{thm:axiomatization_varieties} the axiom 
\[
(\top \vdash_{x} ((n+1)x^n)^2=2x^{n+1}),
\]
for $n=\textrm{l.c.m.}(I\cup J)$ will be denoted by ${\mathbb T}_{(I, J)}$ or by ${\mathbb T}_{V}$, to stress that it consists of all the algebraic sequents which are satisfied by all the algebras in $V$. This additional axiom is actually classically redundant since it follows from that for $\textrm{max}(I\cup J)$ as it expresses the property of an MV-chain to have rank $\leq n$ (cf. Lemma 8.4.1 \cite{CDM}) and is satisfied by all the generators of $V$. From now on the number $n$ attached to a variety $V$ will always be the invariant defined in Proposition \ref{pro:invariant_generator}. 

In \cite{Russo2}, we studied the theory of perfect MV-algebras, that is, the theory of local MV-algebras in the variety generated by the algebra $S_{1}^{\omega}$, also called Chang's algebra. We  proved that the radical of any algebra in $V(S_{1}^{\omega})$ is defined by the equation $x\leq \neg x$, i.e. $x^2=0$. Since every algebra in $V(S_{1}^{\omega})$ satisfies the equation $$2x^2=(2x)^2,$$ we have that $x^2=0$ is equivalent to $(2x)^2=0$. We also proved that all the elements of the form $(2x)^2$ in an algebra in $V(S_{1}^{\omega})$ are Boolean.

In light of these results, it is natural to conjecture that for an arbitrary Komori variety with associated invariant $n$, the radical of an MV-algebra $A$ in $V$ be defined by the formula $((n+1)x)^2=0$, and that the elements of the form $((n+1)x)^2$ be all Boolean elements (notice that $n=1$ in the case of Chang's variety). The following proposition settles the second question in the affirmative and provides the essential ingredients for the proof of the first conjecture which will be achieved in Lemma \ref{lmm:rad=infinitesimal} below.

\begin{proposition}\label{prop:twosequents}
Let $(I, J)$ be a pair defining a Komori variety and $n$ the associated invariant. Then the following sequents are provable in the theory ${\mathbb T}_{(I, J)}$:
\begin{enumerate}[(i)]
\item $(((n+1)x)^2=0\vdash_x ((n+1)kx)^2=0)$, for every $k\in \mathbb{N}$;
\item $(\top\vdash_x ((n+1)x)^2\oplus ((n+1)x)^2=((n+1)x)^2)$.
\end{enumerate}
\end{proposition}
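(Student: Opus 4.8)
The plan is to work entirely inside the algebraic theory ${\mathbb T}_V$, deriving both sequents from its axioms — in particular from the rank axiom $((n+1)z^n)^2=2z^{n+1}$ attached to the invariant $n$ — together with standard MV-identities. Since both sequents are Horn (hence cartesian), it suffices to produce such derivations, equivalently to verify the sequents in an arbitrary algebra of $V$ by manipulating the defining axioms; this avoids any appeal to the non-constructive representations of Chang or Komori and keeps the argument constructive. The tools I would use are $\neg(kz)=(\neg z)^k$, $\neg(z^k)=k(\neg z)$, the commutativity/associativity of $\odot$ (so that $(p\odot q)^k=p^k\odot q^k$), monotonicity of $\odot$, and the equivalences $u^2=0\iff u\le\neg u$ and $u^2=1\iff u=1$. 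First I would record two reformulations turning the statements into assertions about $n$-th $\odot$-powers: De Morgan together with the rank axiom applied to $\neg x$ gives the duality
\[
\neg\bigl(((n+1)x)^2\bigr)=2(\neg x)^{n+1}=((n+1)(\neg x)^n)^2,
\]
and reading off the equivalences above yields the characterization
\[
((n+1)x)^2=0 \;\iff\; 2(\neg x)^{n+1}=1 \;\iff\; (n+1)(\neg x)^n=1 .
\]

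For part (ii), the goal $((n+1)x)^2\oplus((n+1)x)^2=((n+1)x)^2$ is precisely the assertion that $b:=((n+1)x)^2$ is a Boolean element, equivalently that its negation $2(\neg x)^{n+1}$ is Boolean, i.e. that $4(\neg x)^{n+1}=2(\neg x)^{n+1}$. The displayed duality rewrites $2(\neg x)^{n+1}$ as $((n+1)w)^2$ with $w=(\neg x)^n$ an $n$-th power, and the heart of the matter is that the rank axiom forces $n$-th powers to land in the radical or in the co-radical, never in the strictly finite region: this is exactly the equational content of the identity $((n+1)w^n)^2=2w^{n+1}$, whose right-hand side is genuinely two-valued. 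I would extract the required idempotency by feeding $w=(\neg x)^n$ back into the rank axiom and the remaining defining axioms of ${\mathbb T}_V$ so as to annihilate the finite part, exactly generalizing the computation for Chang's variety in \cite{Russo2}, which is the case $n=1$ governed by the single identity $(2x)^2=2x^2$.

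For part (i), applying the characterization above to $y=x$ and to $y=kx$, together with $\neg(kx)=(\neg x)^k$ and $((\neg x)^k)^n=((\neg x)^n)^k$, reduces the sequent to
\[
(n+1)W=1 \;\vdash_x\; (n+1)W^{k}=1,\qquad W=(\neg x)^n .
\]
Since $W^k\le W$, only the lower bound is non-trivial, so I would argue by induction on $k$, the inductive step encoding that a \emph{non-radical} $n$-th power is in fact co-radical and hence all its $\odot$-powers remain non-radical; equivalently, once $((n+1)x)^2=0$ has forced $(n+1)x$, and therefore $x$, into the radical, closure of the radical under $z\mapsto kz$ yields $((n+1)kx)^2=0$. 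As this closure is itself one of the ``essential ingredients'' the proposition is meant to provide, I would derive it here directly from the rank axiom rather than from any prior ideal-theoretic property of the radical.

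The main obstacle is the apparent circularity in part (ii): every naive use of the rank axiom merely \emph{trades} the Booleanness of $((n+1)x)^2$ for the Booleanness of another expression of the very same shape (e.g. $((n+1)(\neg x)^n)^2$), because the axiom only relates the two normal forms $((n+1)z^n)^2$ and $2z^{n+1}$ without by itself pinning down either value. Genuinely breaking this loop — exhibiting that the finite part of $x$ is killed, so that the indicator $((n+1)x)^2$ takes only the values $0$ and $1$ — is where the real work lies; I expect to resolve it by an induction on the $\odot$-exponent combined with the remaining Di Nola–Lettieri axioms, which is also the step that genuinely requires the full axiomatization of $V$ and not the rank axiom alone.
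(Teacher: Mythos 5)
Your reductions are sound and in fact coincide with the paper's own starting point: the dualities $\neg(((n+1)x)^2)=2(\neg x)^{n+1}=((n+1)(\neg x)^n)^2$ and the reformulation of (ii) as Booleanness of $2(\neg x)^{n+1}$ are exactly how the proof in the paper is set up. But at the two places where the real work happens your text contains no argument: for (i) the ``induction on $k$'' is never written down, and the justification you sketch for its inductive step --- closure of the radical under $z\mapsto kz$ --- is, as you yourself note, precisely the statement being proved; for (ii) you explicitly name the circularity (every use of the rank axiom trades the Booleanness of $((n+1)x)^2$ for that of another expression of the same shape) and then defer its resolution (``I expect to resolve it by an induction on the $\odot$-exponent''). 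A proof plan whose crucial steps are flagged as unresolved obstacles is a genuine gap, not a proof.

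The missing idea --- and the way the paper breaks the loop --- is to leave the MV-language altogether: both sequents are translated, via the constructive, purely syntactic interpretation functor into the theory of $\ell$-u groups of section 4.2 of \cite{Russo}, and are then settled by lattice-group arithmetic that has no MV-equational counterpart of comparable simplicity. For (ii), setting $z=2(nx+x-nu)$, the rank axiom becomes $\inf(u,z^{+})=\sup(0,u+\inf(0,\sup(-2u,nz)))$; from this one deduces $u\leq z^{+}+nz^{-}$, and then $\inf(u,2z^{+})\leq z^{+}+\inf(nz^{-},z^{+})=z^{+}$ because $0\leq\inf(nz^{-},z^{+})\leq n\inf(z^{-},z^{+})=0$. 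It is the availability of subtraction and of the orthogonality $\inf(z^{-},z^{+})=0$ that annihilates the ``finite part'' you could not eliminate equationally. For (i), the hypothesis $((n+1)x)^2=0$ becomes $2(n+1)x-u\leq 0$ in the group, and the key is the identity $(n+1)(2nx-u)+u=n(2(n+1)x-u)$, which converts it into $(n+1)(\neg(2nx))=1$; the rank axiom applied to $\neg(2x)$ then yields $((n+1)(2x))^2=0$, and iterated doubling together with downward monotonicity of the condition $((n+1)y)^2=0$ replaces your unproved induction on $k$. Note finally that your closing expectation is off the mark: the paper's proof of both parts uses only the single rank axiom $((n+1)x^n)^2=2x^{n+1}$ attached to the invariant $n$ (plus MV/$\ell$-group identities), not the remaining Di Nola--Lettieri axioms of ${\mathbb T}_{(I,J)}$.
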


\begin{proof}
(i) It clearly suffices to prove that $((n+1)x)^2=0\vdash_x ((n+1)2x)^2=0)$ is provable in ${\mathbb T}_{(I, J)}$. To show this, let us first prove that the sequent $(((n+1)x)^2=0 \vdash_{x} (n+1)(\neg (2nx))=1)$ is provable in ${\mathbb T}_{(I, J)}$. 

We can use the interpretation functor from MV-algebras to $\ell$-groups with strong unit (as in section 4.2 of \cite{Russo}) to verify the provability of this sequent by arguing in the language of $\ell$-u groups. The condition $((n+1)x)^2=0$ is equivalent to $2\neg((n+1)x)=1$ and hence to the condition $2(u-\inf(u, (n+1)x))\geq u$ in the theory of $\ell$-u groups. But $2(u-\inf(u, (n+1)x))\geq u$ if and only if $\inf(u, 2(n+1)x-u)\leq 0$, which is equivalent, since $u\geq 0$, to the condition $2(n+1)x-u\leq 0$. Multiplying by $n$, we obtain that $n(2(n+1)x-u)\leq 0$. On the other hand, the condition $(n+1)(\neg (2nx))=1$ is equivalent to the condition $(n+1)(u-\inf(u, 2nx))\geq u$ in the language of $\ell$-u groups or, equivalently, to the condition  $\inf(u, (n+1)(2nx-u)+u)\leq 0$. Since $(n+1)(2nx-u)+u=n(2(n+1)x-u)$, we are done.

Now that we have proved our sequent, to deduce our thesis, it suffices to show that the sequent $((n+1)(\neg (2nx))=1 \vdash_{x} ((n+1)(2x))^2=0)$ is provable in the theory ${\mathbb T}_{(I, J)}$. By writing $\neg (2nx)=\neg (n(2x))=(\neg(2x))^n$ we see that $(n+1)(\neg (2nx))=1$ is equivalent to $(n+1)((\neg(2x))^n)=1$. The first axiom of ${\mathbb T}_{(I, J)}$ thus yields that $2((\neg(2x))^{n+1})=1$; but $(\neg(2x))^{n+1}=\neg((n+1)(2x))$ whence $2\neg((n+1)(2x))=1$, that is $((n+1)(2x))^2=0$, as required.    

(ii) We shall argue as in (i) in the language of $\ell$-u groups to show the provability of the given sequent. Let us start reformulating the axiom
$(\top \vdash_{x} ((n+1)x^n)^2=2x^{n+1})$ of ${\mathbb T}_{(I, J)}$ in the language of $\ell$-u groups. It is easy to see, by means of simple calculations in the theory of $\ell$-u groups, that the term $2x^{n+1}$ corresponds to the term $\inf(u, \sup(0, 2((n+1)(x-u)+u)))$, while the term $((n+1)x^n)^2$ corresponds to the term $\sup(0, u+\inf(0, \sup(-2u, -2(n+1)(nu-nx-u)-2u)))$. Now, $2(n+1)(nx-nu+u)-2u=2n(nx+x-nu)$. Let us set $z=2(nx+x-nu)$. Then $2((n+1)(x-u)+u))=z$ and $-2(n+1)(nu-nx-u)-2u=nz$, so the two terms rewrite respectively as $\inf(u, z^{+})$ and $\sup(0, u+\inf(0, \sup(-2u, nz)))$.

The sequent in the theory of $\ell$-u groups which corresponds to the sequent $(\top \vdash_{x} ((n+1)x^n)^2=2x^{n+1})$ is therefore
\[
(0\leq x\leq u \vdash_{x} \inf(u, z^{+})=\sup(0, u+\inf(0, \sup(-2u, nz)))),
\] 
where $z$ is an abbreviation for the term $2(nx+x-nu)$.  We have to prove that this sequent provably entails the sequent expressing the property that the elements of the form $((n+1)x)^2$ are Boolean. Let us first prove that the elements of the form $2x^{n+1}$, that is, of the form $\inf(u, z^{+})$ in the language of $\ell$-u groups, are Boolean. Clearly, this is the case if and only if $\inf(u, 2z^{+})\leq z^{+}$. To show this, we observe that the above-mentioned sequent implies that $z^{+}\geq \inf(u, z^{+})=\sup(0, u+\inf(0, \sup(-2u, nz)))\geq u+\inf(0, \sup(-2u, nz)) \geq u+\inf(0, nz)=u-nz^{-}$, in other words $u \leq z^{+}+nz^{-}$. So $\inf(u, 2z^{+})\leq \inf(z^{+}+nz^{-}, z^{+}+z^{+})=z^{+}+\inf(nz^{-}, z^{+})$. But $\inf(nz^{-}, z^{+})=0$ since $0\leq \inf(nz^{-}, z^{+}) \leq \inf(nz^{-}, nz^{+})=n\inf(z^{-}, z^{+})=0$. This completes the proof that $\inf(u, z^{+})$ is a Boolean element. Now, we can rewrite the term $((n + 1)x)^2$ as $\neg (2(\neg x)^{n+1})$. By the first part of the proof, $2(\neg x)^{n+1}$ is a Boolean element. But the negation of a Boolean element is still a Boolean element, whence $((n + 1)x)^2$ is Boolean, as required.   
\end{proof}

\subsection{The theory $\mathbb{L}oc_V^1$}

To prove that the formula $((n+1)x)^2=0$ defines the radical of an MV-algebra in $V$, it is convenient to regard the theory ${\mathbb T}_{(I, J)}$ as a sub-theory of a theory of which it is the cartesianization and in which computations are easier. A quotient of ${\mathbb T}_{(I, J)}$ satisfying this requirement (cf. Proposition \ref{pro: (Loc_V)c=T_V} below) is the theory $\mathbb{L}oc_V^1$ obtained from ${\mathbb T}_{(I, J)}$ by adding the following sequents:
\begin{itemize}
\item[$\sigma_n$:] $(\top\vdash_{x} ((n+1)x)^2=0\vee (n+1)x=1)$;
\item[NT:] $(0=1\vdash \perp)$. 
\end{itemize}

We use the notation $\mathbb{L}oc_V^1$ because, as we shall see in section \ref{sct:local_varieties} (cf. Proposition \ref{pro:Lov_V->local}), the models of $\mathbb{L}oc_V^1$ in \textbf{Set} are precisely the local MV-algebras in $V$ (at least non-constructively).

As a quotient of $\mathbb{T}_V$, the theory $\mathbb{L}oc_V^1$ is associated with a Grothendieck topology $J_1$ on the category $\textrm{f.p.}\mathbb{T}_V$-mod$(\mathbf{Set})^{\textrm{op}}$. 

\begin{proposition}\label{pro: J subcanonical}
The Grothendieck topology associated with $\mathbb{L}oc_V^1$ as a quotient of $\mathbb{T}_V$ is subcanonical.
\end{proposition}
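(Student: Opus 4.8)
The plan is to describe $J_1$ through its generating covering sieves, obtained from the two added axioms by the standard procedure for computing the Grothendieck topology associated with a quotient of a theory of presheaf type (the method of D3.1.10 \cite{SE}, generalized in \cite{Caramello4}), and then to show that each generating sieve is effective-epimorphic \emph{and} stable under pullback. Since the sieves that are effective-epimorphic and pullback-stable are exactly the covering sieves of the canonical (largest subcanonical) topology $J_{\mathrm{can}}$, and $J_{\mathrm{can}}$ is itself a Grothendieck topology, this will yield $J_1\subseteq J_{\mathrm{can}}$ and hence the subcanonicity of $J_1$.

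First I would read off the generating sieves. The axiom NT, being of the form $(\phi\vdash\perp)$ with $\phi$ the formula $0=1$ in the empty context, contributes the empty sieve on the finitely presentable model presented by $0=1$, namely the trivial MV-algebra $\mathbf 1$. As $\mathbf 1$ is the terminal object of $\mathbb{T}_V\text{-mod}(\mathbf{Set})$, it is (strict) initial in $\textrm{f.p.}\mathbb{T}_V$-mod$(\mathbf{Set})^{\mathrm{op}}$, so the empty sieve on it is immediately seen to be universally effective-epimorphic and this case requires no work. The axiom $\sigma_n$, being of the form $(\top\vdash_x\theta_1\vee\theta_2)$, contributes on the object represented by the free $\mathbb{T}_V$-algebra $F$ on one generator $x$ the sieve generated by the two quotient maps $q_1\colon F\to F_1$ and $q_2\colon F\to F_2$, where $F_1=F/(((n+1)x)^2=0)$ and $F_2=F/((n+1)x=1)$.

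The crux is to show that this latter sieve is effective-epimorphic. Writing $e=((n+1)x)^2$, Proposition \ref{prop:twosequents}(ii) tells us that $e$ is a Boolean element of $F$ (as $e\oplus e=e$); moreover a direct MV-algebra computation gives $e=1$ if and only if $(n+1)x=1$, so that $F_2=F/(\neg e=0)$. A Boolean element induces a direct product decomposition $F\cong F/\langle e\rangle\times F/\langle\neg e\rangle=F_1\times F_2$, whose two projections are precisely $q_1$ and $q_2$. By the Yoneda lemma, the sieve generated by $q_1,q_2$ is effective-epimorphic in $\textrm{f.p.}\mathbb{T}_V$-mod$(\mathbf{Set})^{\mathrm{op}}$ exactly when $F\cong F_1\times_{F_1\amalg_F F_2}F_2$ in $\mathbb{T}_V\text{-mod}(\mathbf{Set})$. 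Now $q_1$ sends $e$ to $0$ while $q_2$ sends $e$ to $1$, so in the pushout $F_1\amalg_F F_2$ the two images of $e$ are identified and $0=1$; hence $F_1\amalg_F F_2=\mathbf 1$ and the fibre product over it is simply the product $F_1\times F_2$, which is $F$ by the decomposition above.

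Finally, for pullback-stability I would rerun this argument after pushing the cover out along an arbitrary homomorphism $F\to B$: the image of $e$ in $B$ is again Boolean, so $B$ splits as the corresponding product of the two pushout quotients and the pulled-back sieve is again effective-epimorphic. This places the generating sieves of $J_1$ inside $J_{\mathrm{can}}$ and concludes the argument. The main obstacle I anticipate is not any single computation but the careful translation of ``effective-epimorphic sieve in the opposite category'' into the concrete pullback/pushout condition in $\mathbb{T}_V\text{-mod}(\mathbf{Set})$, together with the verification that the Boolean element $e$ genuinely yields the product decomposition $F\cong F_1\times F_2$ and that the relevant pushout collapses to $\mathbf 1$; it is precisely here that Proposition \ref{prop:twosequents}(ii) is indispensable.
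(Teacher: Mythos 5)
Your proof is correct and follows essentially the same route as the paper's: both read off the generating covers from NT and $\sigma_n$, invoke Proposition \ref{prop:twosequents}(ii) to see that $((n+1)x)^2$ is a Boolean element, and deduce effective-epimorphicity from the resulting direct product decomposition of the algebra as the two quotients (the paper's appeal to the pushout-pullback lemma of Dubuc--Poveda). The only packaging difference is that you check universal effective-epimorphicity of the two generating sieves and conclude by minimality of $J_1$ inside the canonical topology, collapsing the pushout $F_1\amalg_F F_2$ to the trivial algebra to identify the limit of the cosieve, whereas the paper describes the closure of $J_1$ explicitly (all finitely presentable $A$, all $x\in A$, finite multicompositions) and settles the sieve-versus-family point via the observation that an arrow $A/(x)\to A/(\neg x)$ over $A$ exists iff $x=0$.
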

\begin{proof}
The topology $J_1$ associated with the quotient $\mathbb{L}oc_V^1$ of $\mathbb{T}_V$ can be calculated as follows. The sequent NT produces the empty cocovering on the trivial algebra, while the sequent $\sigma_n$ produces, for every $A\in \textrm{f.p.}\mathbb{T}_V$-mod$(\mathbf{Set})$, the cosieve generated by finite multicompositions of diagrams of the following form:

\begin{center}
\begin{tikzpicture}
\node (0) at (0,0) {$A$};
\node (1) at (2,1) {$A/((n+1)x)^2)$};
\node (3) at (2,-1) {$A/(\neg((n+1)x)^2)$};
\draw[->] (0) to node [above] {} (1);
\draw[->] (0) to node [above] {} (3);
\end{tikzpicture}
\end{center}

By Proposition \ref{prop:twosequents}(ii), the elements of the form $((n+1)x)^2$ are Boolean elements of $A$. Thus, by the pushout-pullback Lemma (Lemma 7.1 in \cite{DubucPoveda}) we have that $A$ is a direct product of $A/((n+1)x)^2)$ and $A/(\neg((n+1)x)^2)$. We can repeat the same reasoning for each pair of arrows in a finite multicomposition; each $J_1$-multicomposition thus yields a representation of $A$ as a direct product of the algebras appearing as codomains of the arrows in it. Finally, for every Boolean element $x$ of an MV-algebra $A$, there is an arrow $A/(x)\to A/(\neg x)$ over $A$ if and only if $x=0$, whence the sieve in $\textrm{f.p.}\mathbb{T}_V$-mod$(\mathbf{Set})^{\textrm{op}}$ generated by the family $\{A/(x)\to A, A/(\neg x)\to A\}$ is effective epimorphic if and only if $\{A\to A/(x), A\to A/(\neg x)\}$ is a product diagram in $\textrm{f.p.}\mathbb{T}_V$-mod$(\mathbf{Set})$. This proves our statement.
\end{proof}

In the sequel we shall refer to multicompositions of diagrams as in the proof of Proposition \ref{pro: J subcanonical} as to $J_1$-multicompositions.

\begin{proposition}\label{pro: (Loc_V)c=T_V}
The cartesianization of the theory $\mathbb{L}oc^1_V$ is the theory $\mathbb{T}_V$.
\end{proposition}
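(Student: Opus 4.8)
The plan is to establish the two inclusions of cartesian theories separately. One direction is immediate: since $\mathbb{T}_V$ is cartesian (indeed algebraic) and $\mathbb{L}oc^1_V$ is a quotient of it, every cartesian sequent provable in $\mathbb{T}_V$ is a fortiori provable in $\mathbb{L}oc^1_V$, so $\mathbb{T}_V$ is contained in the cartesianization $(\mathbb{L}oc^1_V)_c$. The entire content of the proposition thus lies in the converse: every cartesian sequent $(\phi\vdash_{\vec x}\psi)$ provable in $\mathbb{L}oc^1_V$ is already provable in $\mathbb{T}_V$. I would approach this through the classifying toposes, realising both theories on a common site. Since $\mathbb{T}_V$ is of presheaf type, its classifying topos is $[\mathcal{C}^{\mathrm{op}},\mathbf{Set}]$ with $\mathcal{C}=\textrm{f.p.}\mathbb{T}_V\text{-mod}(\mathbf{Set})^{\mathrm{op}}$, while by Theorem \ref{thm:DualityTheorem} the quotient $\mathbb{L}oc^1_V$ is classified by $\mathbf{Sh}(\mathcal{C},J_1)$, the associated geometric inclusion having as inverse image the sheafification functor $a_{J_1}$. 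The single external input is Proposition \ref{pro: J subcanonical}, telling us that $J_1$ is subcanonical.

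The heart of the argument exploits that the universal model $U_{\mathbb{L}oc^1_V}$ is the image $a_{J_1}(U_{\mathbb{T}_V})$ of the universal $\mathbb{T}_V$-model, and that $a_{J_1}$, being an inverse image functor, is cartesian and commutes with the interpretation of geometric formulae. For a formula $\phi(\vec x)$ cartesian relative to $\mathbb{T}_V$, its interpretation $[[\vec x.\phi]]$ in $U_{\mathbb{T}_V}$ is the representable presheaf $\mathcal{C}(-,F_\phi)$, where $F_\phi$ is the finitely presentable $\mathbb{T}_V$-model presented by $\phi$; this is the standard correspondence, recorded in Theorem 4.3 of \cite{Caramello2}, between cartesian (hence $\mathbb{T}_V$-irreducible) formulae and finitely presentable models. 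Subcanonicity of $J_1$ says precisely that every such representable presheaf is already a $J_1$-sheaf, so sheafification fixes it and $[[\vec x.\phi]]_{U_{\mathbb{L}oc^1_V}}\cong [[\vec x.\phi]]_{U_{\mathbb{T}_V}}$, and likewise for $\psi$. Since the sheaf inclusion $\mathbf{Sh}(\mathcal{C},J_1)\hookrightarrow[\mathcal{C}^{\mathrm{op}},\mathbf{Set}]$ is fully faithful and left exact, the factorisation witnessing $[[\phi]]\subseteq[[\psi]]$ exists among sheaves if and only if it exists among presheaves. Hence $(\phi\vdash_{\vec x}\psi)$ holds in $U_{\mathbb{L}oc^1_V}$ iff it holds in $U_{\mathbb{T}_V}$, that is, it is provable in $\mathbb{L}oc^1_V$ iff it is provable in $\mathbb{T}_V$, yielding the desired converse. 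This is exactly the general mechanism, foreshadowed after Remark \ref{obs:CartSheaves}, by which subcanonicity of the associated Grothendieck topology forces the cartesianization to coincide with the base cartesian theory.

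I expect the main obstacle to be the careful handling of the notion of cartesian formula together with the interpretation step. Two points deserve attention. First, one must justify rigorously that formulae cartesian relative to $\mathbb{T}_V$ are interpreted as \emph{representable} presheaves in $U_{\mathbb{T}_V}$, for it is only on representables that the subcanonicity hypothesis bites. Second, the cartesianization $(\mathbb{L}oc^1_V)_c$ is a priori assembled from formulae cartesian relative to $\mathbb{L}oc^1_V$, a possibly wider class than those cartesian relative to $\mathbb{T}_V$; to close this gap one observes that $(\mathbb{L}oc^1_V)_c$ and $\mathbb{T}_V$ are both cartesian theories over the same signature, so that it suffices to compare them on the cartesian sequents relative to $\mathbb{T}_V$ (equivalently, on their set-based models). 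The reverse inclusion established first, combined with the provability equivalence above, then pins down $(\mathbb{L}oc^1_V)_c=\mathbb{T}_V$. All the MV-algebraic input (in particular Proposition \ref{prop:twosequents}(ii), which furnishes the product decompositions underlying subcanonicity) enters only through Proposition \ref{pro: J subcanonical}, which may here be cited as a black box.
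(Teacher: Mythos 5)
Your proposal is correct and takes essentially the same route as the paper's proof: both realise $\mathbf{Set}[\mathbb{L}oc^1_V]$ as the subtopos $\mathbf{Sh}(\textrm{f.p.}\mathbb{T}_V\textrm{-mod}(\mathbf{Set})^{\textrm{op}},J_1)$ of the presheaf classifying topos of $\mathbb{T}_V$, use subcanonicity of $J_1$ to see that the universal model of $\mathbb{T}_V$ (equivalently, the representable interpretations of cartesian formulae in it) is a $J_1$-sheaf and hence also serves as the universal model of $\mathbb{L}oc^1_V$, and conclude by completeness of each theory with respect to its universal model. The only difference is one of phrasing: the paper transfers validity via the fact that the subtopos inclusion leaves interpretations of cartesian formulae unchanged, whereas you transfer it via sheafification fixing the representable interpretations, which is the same mechanism viewed from the adjoint side.
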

\begin{proof}
Since the theory $\mathbb{T}_V$ is algebraic, its universal model $U$ in its classifying topos $[\textrm{f.p.}\mathbb{T}_V$-mod$(\mathbf{Set}), \mathbf{Set}]$ is of the form $\textrm{Hom}_{\textrm{f.p.}\mathbb{T}_V\textrm{-mod}(\mathbf{Set})}(F,-)$, where $F$ is the free algebra in $V$ on one generator. By Proposition \ref{pro: J subcanonical}, the topology $J_1$ is subcanonical; hence, the model $U$ lies in the classifying topos of the the theory $\mathbb{L}oc_V^1$ and is, as such, also `the' universal model of $\mathbb{L}oc_V^1$. Now, given a cartesian sequent $\sigma$ in the language of MV-algebras, if $\sigma$ is provable in the theory $\mathbb{L}oc_V^1$, then it is valid in $U$, regarded as a model in the classifying topos of $\mathbb{L}oc_V^1$. Since $\mathbf{Set}[\mathbb{L}oc_V^1]$ is a subtopos of $\mathbf{Set}[\mathbb{T}_V]$ and the interpretations of cartesian formulas are the same in the two toposes, we have that $\sigma$ holds also in $U$ regarded as a structure in $\mathbf{Set}[\mathbb{T}_V]$, and hence that $\sigma$ is provable in $\mathbb{T}_V$.
\end{proof}

\subsection{Constructive definition of the radical}

Proposition \ref{pro: (Loc_V)c=T_V} allows us to establish the provability of cartesian sequents over the signature of $\mathbb{MV}$ in the theory $\mathbb{T}_V$ by showing it in the theory $\mathbb{L}oc_V^1$.

\begin{lemma}\label{lmm:rad oplus closed}
The following sequents are provable in the theory $\mathbb{T}_V$: 
\begin{enumerate}[(i)]
\item $(kx=1\vdash_{x} (n+1)x=1)$, for every $k\in \mathbb{N}$;
\item $(((n+1)x)^2=0\wedge y\leq x\vdash_{x,y} ((n+1)y)^2=0)$;
\item $(((n+1)x)^2=0\vdash_x ((n+1)kx)^2=0)$, for every $k\in \mathbb{N}$;
\item $(((n+1)x)^2=0\vdash_x (kx)^2=0)$, for every $k\in \mathbb{N}$;
\item $(((n+1)x)^2=0\vdash_x kx\leq \neg x)$, for every $k\in \mathbb{N}$;
\item $(((n+1)x)^2=0\wedge ((n+1)y)^2=0\vdash_{x,y}((n+1)(x\vee y))^2=0)$;
\item $(((n+1)x)^2=0\wedge ((n+1)y)^2=0\vdash_{x,y} ((n+1)(x\oplus y))^2=0)$;
\item $((n+1)x\leq \neg x\vdash_x ((n+1)x)^2=0)$.
\item $(((n+1)\neg x)^2=0 \vdash_{x}2x=1)$.
\end{enumerate}
\end{lemma}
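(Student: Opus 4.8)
The plan is to reduce everything to Proposition \ref{pro: (Loc_V)c=T_V}. Each of (i)--(ix) is a cartesian sequent over the signature of $\mathbb{MV}$ (every hypothesis and conclusion is a conjunction of equations, reading $u\le v$ as $\neg u\oplus v=1$), so by that proposition a sequent is provable in $\mathbb{T}_V$ as soon as it is provable in the geometric theory $\mathbb{L}oc^1_V$. Passing to $\mathbb{L}oc^1_V$ makes available the dichotomy $\sigma_n$, i.e. $(\top\vdash_x ((n+1)x)^2=0\vee (n+1)x=1)$, together with $\textrm{NT}$, which converts any derivation of $0=1$ into $\perp$ and hence into any desired conclusion. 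Accordingly I would prove the purely order-theoretic items directly in $\mathbb{T}_V$ from Proposition \ref{prop:twosequents}, and handle the remaining items in $\mathbb{L}oc^1_V$ by case analysis on $\sigma_n$, closing each branch $(n+1)(\cdots)=1$ by producing $0=1$.

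I would first dispatch the items requiring no dichotomy. Item (iii) is exactly Proposition \ref{prop:twosequents}(i). For (iv) and (v) I would use the inequalities $kx\le (n+1)kx$ and $x\le (n+1)kx$ (valid since $mz\ge z$ for $m\ge 1$) together with the facts that $\neg$ reverses the order and that $z^2=0$ is equivalent to $z\le\neg z$. From $((n+1)kx)^2=0$, which holds by (iii), one gets $(n+1)kx\le\neg((n+1)kx)$, and then the chains $kx\le (n+1)kx\le\neg((n+1)kx)\le\neg(kx)$ and $kx\le (n+1)kx\le\neg((n+1)kx)\le\neg x$ give $(kx)^2=0$ and $kx\le\neg x$ respectively. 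Finally (ix) follows from (v) applied to $\neg x$ with $k=1$: it yields $\neg x\le x$, which is precisely $2x=1$. These items hold already in $\mathbb{T}_V$.

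The items (i), (ii) and (viii) I would prove in $\mathbb{L}oc^1_V$ by a single application of $\sigma_n$. For (i) the branch $(n+1)x=1$ is the conclusion, while in the branch $((n+1)x)^2=0$ I combine the hypothesis $kx=1$ with (iii) to obtain $((n+1)kx)^2=0$; but $kx=1$ forces $(n+1)kx=1$ and hence $((n+1)kx)^2=1$, so $0=1$. For (ii) I apply $\sigma_n$ to $y$: in the branch $(n+1)y=1$ the hypothesis $y\le x$ gives $(n+1)x=1$ and thus $((n+1)x)^2=1$, contradicting $((n+1)x)^2=0$. For (viii) the branch $(n+1)x=1$ combined with $(n+1)x\le\neg x$ forces $\neg x=1$, i.e. $x=0$, whence $(n+1)x=0$ and again $0=1$; the other branch is the conclusion.

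The crux, and the step I expect to be the main obstacle, is (vii); granting it, (vi) is immediate, since $x\vee y\le x\oplus y$ and (ii) applied with $x\oplus y$ in place of $x$ then yields $((n+1)(x\vee y))^2=0$. For (vii) I would once more argue in $\mathbb{L}oc^1_V$, applying $\sigma_n$ to $x\oplus y$. The branch $((n+1)(x\oplus y))^2=0$ is the conclusion. In the branch $(n+1)(x\oplus y)=1$ I rewrite $(n+1)(x\oplus y)=(n+1)x\oplus (n+1)y$ and set $a=(n+1)x$, $b=(n+1)y$, so that the hypotheses read $a\le\neg a$, $b\le\neg b$ and $a\oplus b=1$. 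A short order computation then forces $2a=1$: indeed $a\oplus b=1$ means $\neg a\le b$, so $\neg a\le b\le\neg b$ gives $b\le a$, and combined with $a\le\neg a\le b$ this yields $a=b=\neg a$, whence $2a=a\oplus\neg a=1$. Thus $2(n+1)x=1$, and applying (i) with $k=2(n+1)$ gives $(n+1)x=1$, contradicting $((n+1)x)^2=0$. The delicate point is that this argument genuinely requires both the local dichotomy and the ambient variety: the variety enters through (i) and through (iii) (i.e. the $\ell$-group computation underlying Proposition \ref{prop:twosequents}), and the conclusion is false outside a proper variety---in $[0,1]$, for instance, $a=b=\tfrac12$ satisfies $a\le\neg a$, $b\le\neg b$ and $a\oplus b=1$ with no contradiction. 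Checking that the order manipulations and the bookkeeping of the multiples $k$ go through uniformly is the routine but essential remaining work.
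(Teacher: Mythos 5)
Your proof is correct, and its skeleton coincides with the paper's: all nine sequents are cartesian, so by Proposition \ref{pro: (Loc_V)c=T_V} it suffices to prove them in $\mathbb{L}oc_V^1$, where Proposition \ref{prop:twosequents} and case analysis on $\sigma_n$ (closed off by NT) become available. Items (i), (iii), (iv), (v), (viii), (ix) are handled just as in the paper up to trivial rearrangement; your (ii) uses the dichotomy where the paper gives the one-line chain $(n+1)y \leq (n+1)x \leq \neg (n+1)x \leq \neg (n+1)y$, valid without any case split (so (ii) in fact holds already in $\mathbb{MV}$). The genuine divergence is in the pair (vi)/(vii), where you reverse the paper's order of deduction. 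The paper proves (vi) first, closing the branch $(n+1)(x \vee y)=1$ by means of the identity $(n+1)(x\vee y)=1 \Leftrightarrow ((n+1)x)^2 \vee ((n+1)y)^2=1$ (citing Theorem 3.7 of \cite{Chang}), and then deduces (vii) from (vi), (iii) and (ii) via the inequality $x \oplus y \leq 2(x \vee y)$. You instead prove (vii) first: in the branch $(n+1)(x \oplus y)=1$, writing $a=(n+1)x$ and $b=(n+1)y$, the conditions $a \leq \neg a$, $b \leq \neg b$ and $a \oplus b=1$ force $a=b=\neg a$, hence $2(n+1)x=1$, and a cut with the already-proved item (i) --- which is exactly where the variety axioms enter, through Proposition \ref{prop:twosequents}(i) --- yields $(n+1)x=1$, contradicting $((n+1)x)^2=0$; then (vi) follows from (vii) and (ii) via $x \vee y \leq x \oplus y$. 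Your route trades the appeal to Chang's lattice-theoretic theorem for a self-contained order computation plus a second use of $\sigma_n$, while the paper's route avoids that extra case analysis at the cost of the external citation; both are legitimate coherent-logic derivations, and your observation that the computation for (vii) admits no contradiction in $[0,1]$ (take $a=b=\tfrac{1}{2}$) correctly pinpoints why properness of $V$, mediated by item (i), is indispensable.
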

\begin{proof}
By Proposition \ref{pro: (Loc_V)c=T_V}, every cartesian sequent that is provable in $\mathbb{L}oc_V^1$ is also provable in $\mathbb{T}_V$. Since (i)-(ix) are cartesian sequents, it is therefore sufficient to show that they are provable in $\mathbb{L}oc_V^1$. We argue (informally) as follows. First of all, we notice that 
$$((n+1)x)^2=0\Leftrightarrow (n+1)x\leq \neg (n+1)x.$$
\begin{enumerate}[(i)]
\item Let us suppose that $kx=1$. By $\sigma_n$, we know that either $((n+1)x)^2=0$ or $(n+1)x=1$. If $((n+1)x)^2=0$ then by Proposition \ref{prop:twosequents}(i), $((n+1)kx)^2=0$. But $kx=1$, whence $1=0$, contradicting sequent NT. Therefore $(n+1)x=1$, as required. 
\item If $((n+1)x)^2=0$ and $y\leq x$, then 
$$(n+1)y\leq (n+1)x\leq \neg (n+1)x\leq \neg (n+1)y.$$ Therefore $((n+1)y)^2=0$, as required.
\item See Proposition \ref{prop:twosequents}(i).  
\item If $((n+1)x)^2=0$ then (by (iii)) for any $k\in \mathbb{N}$, $((n+1)kx)^2=0$, in other words $(n+1)kx\leq \neg (n+1)kx$. Thus,
\begin{center}
$kx\leq (n+1)kx\leq \neg (n+1)kx\leq \neg kx$,
\end{center}
whence $(kx)^2=0$ (for any $k\in \mathbb{N}$).

\item If $((n+1)x)^2=0$, then (by (iii)) 
\begin{center}
$(kx)^2=0$, for every $k\in \mathbb{N}.$
\end{center}
Thus,
$$kx\leq \neg kx\leq \neg x \textrm{, for every } k\in \mathbb{N}.$$
\item For any $x, y$,  we have that:
\begin{center}
$((n+1)(x\vee y))=1\Leftrightarrow (n+1)x\vee (n+1)y=1\Leftrightarrow $

$((n+1)x)^2\vee ((n+1)y)^2=1$ (cf. Theorem 3.7 \cite{Chang})
\end{center} 
If $((n+1)x)^2=0$ and $((n+1)y)^2=0$ it then follows from sequents $\sigma_n$ and NT that $$((n+1)(x\vee y))^2=0$$
\item If $((n+1)x)^2=0$ and $((n+1)y)^2=0$ then
\begin{center}
$((n+1)2(x\vee y))^2=0$ (by (vi) and (iii))
\end{center}
Since $x\oplus y\leq 2(x\vee y)$, it then follows from (ii) that $((n+1)(x\oplus y))^2=0$.

\item Let us suppose that $(n+1)x\leq \neg x$ and $(n+1)x=1$. By sequent $\sigma_n$, this means that $\neg x=1$ whence $x=0$. Sequent NT thus implies that $((n+1)x)^2=0$. 

\item If $((n+1)(\neg x))^2=0$ then
$$\neg x\leq (n+1)(\neg x)\leq \neg (n+1)(\neg x)\leq x$$
$$\Rightarrow 2x=1.$$
\end{enumerate}
\end{proof}

\begin{lemma}\label{lmm:rad=infinitesimal}
Given $A\in V$,  the set $K(A)=\{x\in A\mid ((n+1)x)^2=0\}$ is an ideal of $A$ and it coincides with the radical of $A$.
\end{lemma}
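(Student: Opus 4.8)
The plan is to verify the two assertions separately, drawing entirely on the cartesian sequents established in Lemma \ref{lmm:rad oplus closed}, which hold in every $A\in V$ since they are provable in $\mathbb{T}_V$. The whole argument is essentially a matter of reading off the right sequent for each required closure property.

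First I would show that $K(A)$ is an ideal. That $0\in K(A)$ is immediate, since $((n+1)\cdot 0)^2=0$. Downward closure is exactly the content of Lemma \ref{lmm:rad oplus closed}(ii): if $x\in K(A)$ and $y\leq x$, then $((n+1)y)^2=0$, so $y\in K(A)$. Closure under $\oplus$ is Lemma \ref{lmm:rad oplus closed}(vii): if $x,y\in K(A)$ then $((n+1)(x\oplus y))^2=0$, so $x\oplus y\in K(A)$. Together, these three facts say precisely that $K(A)$ is an ideal in the sense required (a downset containing $0$ and closed under $\oplus$).

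For the identification $K(A)=\textrm{Rad}(A)$, I would adopt the constructive definition of the radical as the set of infinitesimal elements together with $0$. For the inclusion $K(A)\subseteq \textrm{Rad}(A)$: given $x\in K(A)$, Lemma \ref{lmm:rad oplus closed}(v) yields $kx\leq \neg x$ for every $k\in \mathbb{N}$, so $x$ is either $0$ or infinitesimal, and hence lies in $\textrm{Rad}(A)$. For the reverse inclusion $\textrm{Rad}(A)\subseteq K(A)$: an element $x\in \textrm{Rad}(A)$ is either $0$ (and $0\in K(A)$) or infinitesimal; in the latter case, instantiating the infinitesimal condition $kx\leq \neg x$ at the single value $k=n+1$ gives $(n+1)x\leq \neg x$, and Lemma \ref{lmm:rad oplus closed}(viii) then delivers $((n+1)x)^2=0$, i.e. $x\in K(A)$.

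The substance of the argument has in fact already been discharged in Lemma \ref{lmm:rad oplus closed}, so I do not anticipate a serious obstacle. The only point requiring any care is the correct pairing of each direction with its sequent — (ii) and (vii) for the ideal structure, (v) for $K(A)\subseteq \textrm{Rad}(A)$, and the instance $k=n+1$ of the infinitesimal condition together with (viii) for the converse. It is worth emphasising that this identification, combined with the fact that $K(A)$ was shown to be an ideal by purely equational means, furnishes a \emph{constructive} proof that the radical of any algebra in $V$ is an ideal, which is the feature the infinitesimal definition alone does not supply.
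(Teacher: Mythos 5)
Your proposal is correct and follows essentially the same route as the paper's proof: the ideal structure of $K(A)$ from Lemma \ref{lmm:rad oplus closed}(ii) and (vii), the inclusion $K(A)\subseteq \textrm{Rad}(A)$ from part (v), and the converse by instantiating the infinitesimal condition at $k=n+1$ and applying part (viii), all with respect to the constructive (infinitesimal) definition of the radical. The concluding observation about constructiveness likewise matches the paper's own stated motivation for this result.
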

\begin{proof}
By Lemma \ref{lmm:rad oplus closed} (ii) and (vii), the set $K(A)$ is a $\leq$-downset and it is closed with respect to the sum. Clearly, it contains $0$; thus, it is an ideal of $A$. By Lemma \ref{lmm:rad oplus closed} (v), every element in $K(A)$ is either $0$ or an infinitesimal element. Vice versa, if $x$ is an infinitesimal element then in particular $(n+1)x\leq \neg x$, whence $x\in K(A)$ by Lemma \ref{lmm:rad oplus closed} (viii).
\end{proof}

\begin{obs}
The radical is defined equivalently by the equation $(kx)^2=0$, for any $k\geq (n+1)$. Indeed, by Lemma \ref{lmm:rad oplus closed}(iv) we have that if $((n+1)x)^2=0$ then $(kx)^2=0$ for every $k$. Vice versa, if $(kx)^2=0$ with $k\geq n+1$ then
$$(n+1)x\leq kx\leq \neg kx\leq \neg (n+1)x,$$ whence $((n+1)x)^2=0$.
\end{obs}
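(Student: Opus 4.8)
The plan is to reduce the statement to a single set equality and then verify two inclusions. The claim is that for every $k\geq n+1$ the set $R_k(A):=\{x\in A\mid (kx)^2=0\}$ coincides with $\mathrm{Rad}(A)$. Since Lemma~\ref{lmm:rad=infinitesimal} already identifies $\mathrm{Rad}(A)$ with $K(A)=\{x\in A\mid ((n+1)x)^2=0\}$, it suffices to prove $R_k(A)=K(A)$. Throughout I would use the reformulation, already exploited in the proof of Lemma~\ref{lmm:rad oplus closed}, that $y^2=0$ is equivalent to $y\leq\neg y$ in any MV-algebra (indeed $y^2=\neg(2\neg y)$, and $y\leq\neg y$ is by definition the condition $2\neg y=1$). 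Under this translation the two membership conditions become the order inequalities $kx\leq\neg(kx)$ and $(n+1)x\leq\neg((n+1)x)$.

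For the inclusion $K(A)\subseteq R_k(A)$ there is nothing to add: it is exactly Lemma~\ref{lmm:rad oplus closed}(iv), which provides $(((n+1)x)^2=0\vdash_x (kx)^2=0)$ for every $k\in\mathbb{N}$, hence in particular for $k\geq n+1$.

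For the reverse inclusion $R_k(A)\subseteq K(A)$ I would argue directly. Assume $(kx)^2=0$, i.e. $kx\leq\neg(kx)$. The one elementary fact needed is that $m\mapsto mx$ is weakly increasing in the natural order: since $a\leq a\oplus b$ holds in every MV-algebra (using $\neg a\oplus a=1$ together with $1\oplus b=1$), an immediate induction gives $(n+1)x\leq kx$ whenever $k\geq n+1$. Applying the order-reversing negation to this inequality yields $\neg(kx)\leq\neg((n+1)x)$, and chaining everything produces
\[
(n+1)x\;\leq\;kx\;\leq\;\neg(kx)\;\leq\;\neg((n+1)x).
\]
Hence $(n+1)x\leq\neg((n+1)x)$, that is $((n+1)x)^2=0$, so $x\in K(A)$.

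I do not expect a genuine obstacle here: the entire argument is a three-link inequality chain, and the only auxiliary facts are the equivalence $y^2=0\Leftrightarrow y\leq\neg y$ and the monotonicity of $m\mapsto mx$, both of which are standard (cf. \cite{CDM}) and one of which has already been used in the preceding lemma. The substance of the result lives in Lemma~\ref{lmm:rad oplus closed}; this remark simply records that the coefficient $n+1$ in the defining equation of the radical may be replaced by any larger integer without affecting the defined set.
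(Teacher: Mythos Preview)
Your proposal is correct and follows essentially the same approach as the paper: the forward inclusion is Lemma~\ref{lmm:rad oplus closed}(iv), and the reverse inclusion is the identical three-link chain $(n+1)x\leq kx\leq\neg kx\leq\neg(n+1)x$. You simply make explicit the two background facts (monotonicity of $m\mapsto mx$ and that negation reverses order) that the paper leaves implicit.
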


\begin{lemma}\label{lmm:cancellative_monoid}
Let $A$ be an MV-algebra in $V$. Then the structure 
$$(\textrm{Rad}(A),\oplus, \wedge,\vee,0)$$ 
is a cancellative lattice-ordered monoid.
\end{lemma}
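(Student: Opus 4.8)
The plan is to verify three ingredients separately: that $(\textrm{Rad}(A),\oplus,0)$ is a commutative monoid, that $(\textrm{Rad}(A),\wedge,\vee)$ is a sublattice of $A$ on which $\oplus$ is compatible with the order, and finally --- the only substantial point --- that the monoid $(\textrm{Rad}(A),\oplus,0)$ is cancellative. The first two are essentially matters of closure, since all the relevant equations already hold in $A$; the whole difficulty is concentrated in cancellativity, where one must rule out the ``truncation'' that normally prevents $\oplus$ from being cancellative in an MV-algebra.

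For the monoid structure I would first recall from Lemma \ref{lmm:rad=infinitesimal} that $\textrm{Rad}(A)=K(A)$ is an ideal, hence contains $0$ and is closed under $\oplus$; associativity, commutativity and the unit law are then inherited verbatim from the axioms MV 1)--MV 3). For the lattice structure I would use that $\textrm{Rad}(A)$ is a $\leq$-downset (Lemma \ref{lmm:rad oplus closed}(ii)), so it is closed under $\wedge$, while closure under $\vee$ is exactly Lemma \ref{lmm:rad oplus closed}(vi); thus $\textrm{Rad}(A)$ is a sublattice of the (distributive) lattice reduct of $A$. Compatibility of $\oplus$ with the order amounts to the distributive laws $x\oplus(y\vee z)=(x\oplus y)\vee(x\oplus z)$ and $x\oplus(y\wedge z)=(x\oplus y)\wedge(x\oplus z)$, which are standard MV-algebra identities and therefore hold a fortiori on $\textrm{Rad}(A)$, closed as it is under all the operations involved.

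The heart of the argument is cancellativity: given $x,y,z\in\textrm{Rad}(A)$ with $x\oplus z=y\oplus z$, I want $x=y$. The key is the unconditional MV-identity
$$(x\oplus z)\odot\neg z=x\wedge\neg z,$$
where $\odot$ is the derived operation $a\odot b=\neg(\neg a\oplus\neg b)$; this follows from the standard law $p\wedge q=p\odot(\neg p\oplus q)$ by setting $p=\neg z$ and $q=x$. It then suffices to show that any two radical elements satisfy $x\leq\neg z$, for in that case the right-hand side collapses to $x$, giving $(x\oplus z)\odot\neg z=x$ and symmetrically $(y\oplus z)\odot\neg z=y$, so that applying $(-)\odot\neg z$ to $x\oplus z=y\oplus z$ yields $x=y$. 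To obtain $x\leq\neg z$ I would chain
$$x\leq x\oplus z\leq\neg(x\oplus z)\leq\neg z,$$
where the first inequality is monotonicity of $\oplus$, the last is order-reversal of $\neg$ applied to $z\leq x\oplus z$, and the middle one holds because $x\oplus z$ again lies in $\textrm{Rad}(A)$, so that $w\leq\neg w$ for $w=x\oplus z$ by Lemma \ref{lmm:rad oplus closed}(v) with $k=1$.

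The step I expect to require the most care is precisely this cancellativity argument, and in particular isolating the right \emph{unconditional} identity so that no appeal to Chang's (non-constructive) completeness theorem is needed: one must express the fact that $\oplus$ ``does not overflow'' on the radical purely equationally, via $(x\oplus z)\odot\neg z=x\wedge\neg z$ together with the inequality $x\leq\neg z$, rather than by reasoning semantically in $[0,1]$. Everything else reduces to the closure properties already established in Lemmas \ref{lmm:rad oplus closed} and \ref{lmm:rad=infinitesimal}.
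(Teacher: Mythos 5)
Your proposal is correct, and its overall skeleton matches the paper's: both reduce the lemma to the ideal property of $\textrm{Rad}(A)$ (Lemma \ref{lmm:rad=infinitesimal}) for the lattice-ordered monoid structure, and both prove cancellativity by applying $(-)\odot\neg z$ to the equation $x\oplus z=y\oplus z$ and invoking the identity $\neg z\odot(z\oplus x)=\neg z\wedge x$ (which is exactly the Proposition 1.1.5 of \cite{CDM} cited in the paper) together with the inequality $x\leq\neg z$ for radical elements $x,z$. Where you genuinely diverge is in how that inequality is obtained. The paper proves it as its Claim 2, deduced from a stronger Claim 1 --- any two elements of the coradical $\neg\textrm{Rad}(A)$ sum to $1$ --- whose proof passes through the implication $\textrm{ord}(x\odot y)\leq 2\Rightarrow x\oplus y=1$, an external result (Theorem 3.8 of \cite{Chang}). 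You instead get $x\leq\neg z$ by the direct chain $x\leq x\oplus z\leq\neg(x\oplus z)\leq\neg z$, using only closure of the radical under $\oplus$, the property $w\leq\neg w$ of radical elements (Lemma \ref{lmm:rad oplus closed}(v) with $k=1$), and order-reversal of $\neg$. Your route is shorter and more self-contained: it stays entirely within the constructively established lemmas of the paper and needs no appeal to \cite{Chang}, which fits well with the paper's stated attention to constructiveness. What the paper's detour buys is the intermediate Claim 1, a slightly stronger structural fact about the coradical, but that fact is not needed for the cancellation argument itself. Your more explicit verification of the lattice-compatibility laws is also fine, though the paper compresses it into the single observation that an ideal is a lattice-ordered monoid.
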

\begin{proof}
By Lemma \ref{lmm:rad=infinitesimal}, $\textrm{Rad}(A)$ is an ideal. Thus, it is a lattice-ordered monoid. It remains to prove that it is cancellative. We shall deduce this as a consequence of the following two claims:

\textit{Claim 1.} Given $x,y\in \neg \textrm{Rad}(A)$, $x\oplus y=1$. 

Indeed, if $x,y\in \neg \textrm{Rad}(A)$, then $\neg x, \neg y\in \textrm{Rad}(A)$. Thus, $\neg x\oplus \neg y$ is an infinitesimal element by Lemma \ref{lmm:rad oplus closed}(iv). Hence, $\neg x\oplus \neg y\leq \neg (\neg x\oplus \neg y)$, equivalently $\neg (x\odot y)\leq x\odot y.$ But 
$$\neg (x\odot y)\leq x\odot y\Leftrightarrow (x\odot y)\oplus (x\odot y)=1\Leftrightarrow \textrm{ord}(x\odot y)\leq 2,$$ and $\textrm{ord}(x\odot y)\leq 2$ implies $x\oplus y=1 \textrm{ (see Theorem 3.8 \cite{Chang})}$.

\textit{Claim 2.} Given $x\in \textrm{Rad}(A)$ and $y\in \neg \textrm{Rad}(A)$, $x\leq y$.

This follows from Claim 1 since $\neg x\oplus y=1\Leftrightarrow x\leq y$. 

Given $x,y,a\in \textrm{Rad}(A)$ such that $x\oplus a=y\oplus a$, we clearly have that $\neg a\odot (x\oplus a)=\neg a\odot (y\oplus a)$. But by Proposition 1.1.5 \cite{CDM} this is equivalent to $\neg a\wedge x=\neg a\wedge y$.
By Claim 2, we can thus conclude that $x=y$.
\end{proof}

\section{Where local MV-algebras meet varieties}\label{sct:local_varieties}

In this section we study classes of local MV-algebras in proper subvarieties of the variety of MV-algebras and the theories that axiomatize them.

\begin{definition}
Let $n$ be a positive integer. A local MV-algebra $A$ is said to be of \emph{rank} $n$ if $A/\textrm{Rad}(A)\simeq S_n$ (where $S_n$ is the simple $n$-element MV-algebra) and it is said to be of \emph{finite rank} if $A$ is of rank $n$ for some integer $n$.
\end{definition}

The generators of Komori varieties are particular examples of local MV-algebras of finite rank.

In \cite{DiNola3} it is proved (in a non-constructive way) that every local MV-algebra in a Komori variety is of finite rank.

\begin{definition}\cite{DiNola3}
Let $I,J$ be finite subsets of $\mathbb{N}$. We denote by 
$$Fin\textrm{rank}(I,J)$$ 
the class of simple MV-algebras embeddable into a member of $\{S_i\mid i\in I\}$ and of local MV-algebras $A$ of finite rank such that $A/\textrm{Rad}(A)$ is embeddable into a member of $\{S_j\mid j\in J\}$.
\end{definition}

\begin{theorem*}[Theorem 7.2 \cite{DiNola3}]\label{thm:local_in_variety}
The class of local MV-algebras contained in the variety $V(\{S_i\}_{i\in I}, \{S_{j}^{\omega}\}_{j\in J})$ is equal to $Fin\textrm{rank}(I,J)$\footnote{The non-constructive part of this result concerns the fact that the rank of a local MV-algebra in $V$ is finite. On the other hand, the fact that the rank, if finite, divides the rank of one of the generators follows by Theorem 2.3 \cite{Komori}, which is constructive.}.
\end{theorem*}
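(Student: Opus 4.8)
The plan is to establish the two inclusions defining the equality, reducing the whole problem to the level of lattice-ordered abelian groups via Mundici's functor $\Gamma$ and the structure theory of local MV-algebras of finite rank. The crux will be a single lemma computing the subvariety $V(A)$ generated by a local algebra $A$ of finite rank $d$: I claim that $V(A)=V(S_d)$ if $A$ is simple, and $V(A)=V(S_{d}^{\omega})$ if $A$ has non-trivial radical. Granting this, Komori's classification \cite{Komori} translates membership of $A$ in $V$ into the arithmetic conditions encoded by $Fin\textrm{rank}(I,J)$.

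To prove the key lemma I would argue on the $\ell$-group side. Writing $(G,u)$ for the $\ell$-group with strong unit associated with a local algebra $A$ of rank $d$, the convex $\ell$-subgroup $H$ of infinitesimals (the Grothendieck group of the cancellative monoid $\textrm{Rad}(A)$, cf. Lemma \ref{lmm:cancellative_monoid}) sits in a short exact sequence $0\to H\to G\to \mathbb{Z}\to 0$ in which $u$ maps to $d$; convexity of $H$ and total order of the quotient force the order on $G$ to be lexicographic, so that $G\cong \mathbb{Z}\times_{\textrm{lex}}H$ and $A$ is of the form $\Gamma(\mathbb{Z}\times_{\textrm{lex}}H,(d,h_0))$. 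The point is then that $V(A)$ depends only on $d$ and on the $\ell$-group variety generated by $H$: this follows from the compatibility of the construction $H\mapsto \Gamma(\mathbb{Z}\times_{\textrm{lex}}H,(d,-))$ with the operators $H,S,P$, generalizing Di Nola--Lettieri's representation of perfect algebras \cite{PL} (the case $d=1$). Since the variety of abelian $\ell$-groups is the unique minimal non-trivial variety of $\ell$-groups and is generated by $\mathbb{Z}$, every non-zero $H$ generates exactly this variety; hence all non-simple local algebras of rank $d$ generate the same variety $V(\Gamma(\mathbb{Z}\times_{\textrm{lex}}\mathbb{Z},(d,0)))=V(S_{d}^{\omega})$, while $H=0$ gives $A\cong S_d$.

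The two inclusions then follow from Komori's arithmetic. For $Fin\textrm{rank}(I,J)\subseteq\{\textrm{local algebras in }V\}$: a simple algebra embeddable into some $S_i$ lies in $V$ by closure under subalgebras and is local; a local algebra $A$ of rank $d$ with $A/\textrm{Rad}(A)\hookrightarrow S_j$ satisfies $d\mid j$, whence $S_{d}^{\omega}$ embeds into $S_{j}^{\omega}\in V$ and the key lemma gives $A\in V(S_{d}^{\omega})\subseteq V$. Conversely, let $A$ be local in $V$; after proving that $A$ has finite rank $d$ (see below), the simple quotient $S_d=A/\textrm{Rad}(A)$ lies in $V$, so Komori's theorem forces $d$ to divide the rank of some generator. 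If $A$ is simple this already places $A\cong S_d$ in one of the two families; if $A$ is non-simple, the key lemma yields $S_{d}^{\omega}\in V(A)\subseteq V$, and Komori's theorem for Komori chains forces $d\mid j$ for some $j\in J$, so that $A/\textrm{Rad}(A)=S_d\hookrightarrow S_j$ and $A$ lies in the second family.

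The main obstacle is exactly the point flagged in the statement's footnote: the finiteness of the rank of a local algebra in $V$. I would derive it from the fact that the Di Nola--Lettieri axiom $(\top\vdash_{x}((n+1)x^{n})^2=2x^{n+1})$ with $n=\textrm{max}(I\cup J)$ (Theorem \ref{thm:axiomatization_varieties}) bounds the rank of any MV-chain in $V$ by $n$; since $A/\textrm{Rad}(A)$ is a simple, hence totally ordered, algebra in $V$, it is a chain of rank at most $n$ and therefore finite. The non-constructive content enters in passing to this simple quotient, i.e.\ in the identification of $\textrm{Rad}(A)$ as the unique maximal ideal. The second delicate ingredient is the functoriality of the lexicographic construction with respect to variety generation underlying the key lemma; this, rather than the arithmetic bookkeeping, is where the real work lies, and it is what cleanly separates the role of the simple generators $S_i$ (which contribute only simple local algebras) from that of the Komori chains $S_{j}^{\omega}$ (which alone can carry a non-trivial radical).
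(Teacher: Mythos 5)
Since the paper does not prove this statement (it imports it as Theorem 7.2 of \cite{DiNola3}), your proposal has to be judged on its own merits. Its architecture is sound: the finite-rank step via the Di Nola--Lettieri chain axiom and Lemma 8.4.1 of \cite{CDM} works classically, the Komori arithmetic is handled correctly, and the \emph{easy} half of your key lemma is fine --- the embedding $\Gamma(\mathbb{Z}\times_{\textrm{lex}}H,(d,h_0))\hookrightarrow\Gamma(\mathbb{Z}\times_{\textrm{lex}}H,(d,0))$, the $H,S,P$-compatibility of $H\mapsto\Gamma(\mathbb{Z}\times_{\textrm{lex}}H,(d,0))$, and Weinberg's theorem do give $V(A)\subseteq V(S_d^{\omega})$, which is all that the inclusion $Fin\textrm{rank}(I,J)\subseteq\{\textrm{local algebras in }V\}$ requires. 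The genuine gap is in the other half, $S_d^{\omega}\in V(A)$ for every non-simple local $A$ of rank $d$, which your second inclusion needs in an essential way (it is what forces $d\mid j$ for some $j\in J$, rather than merely $d$ dividing the rank of \emph{some} generator; e.g.\ it is what excludes a rank-$4$ non-simple local algebra from $V(S_4,S_2^{\omega})$). Your justification --- ``$V(A)$ depends only on $d$ and on the $\ell$-group variety of $H$, by compatibility with $H,S,P$'' --- does not deliver this: all the compatibility maps one can actually write down relate the algebras $\Gamma(\mathbb{Z}\times_{\textrm{lex}}H,(d,0))$ as $H$ varies, and the passage between the units $(d,h_0)$ and $(d,0)$ goes only one way (the embedding $f$ of the paper, from $(d,h_0)$ \emph{into} $(d,0)$). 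The dependence on $h_0$ is exactly the non-trivial content, and it is invisible in your argument; note that already for pointed structures the variety generated genuinely depends on the chosen point, so no formal functoriality can dispose of it.

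To see that something infinitary is unavoidable, take $A=\Gamma(\mathbb{Z}\times_{\textrm{lex}}\mathbb{Z},(d,1))$ with $d\geq 2$. There is no MV-embedding $S_d^{\omega}\to A$ (the element $(1,0)$ of $S_d^{\omega}$ satisfies $(d-1)x=\neg x$, i.e.\ $dx=u$ in the enveloping group, and $d(a,b)=(d,1)$ has no integral solution), no surjection $A\to S_d^{\omega}$ (the only quotients of $A$ are $A$, $S_d$ and the trivial algebra), and even in an ultrapower of $A$ the equation $dx=u$ remains unsolvable by \L o\'s's theorem; so $S_d^{\omega}\in V(A)$, though true, cannot be reached by the maps your sketch provides. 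The missing lemma can be proved as follows, and with it your proof goes through. Assume $h_0\neq 0$ (otherwise $S_d^{\omega}$ embeds directly via $(m,k)\mapsto(m,kh)$ for any $0<h\in H$). In the countable power $A^{\mathbb{N}}=\Gamma\bigl((\mathbb{Z}\times_{\textrm{lex}}H)^{\mathbb{N}},\bar u\bigr)$, let $C$ be the convex $\ell$-subgroup generated by the constant sequence with value $(0,|h_0|)$, and pass to the quotient by the corresponding ideal. The class $\bar e$ of the constant sequence $(1,0)$ then satisfies $d\bar e=\bar u$ (the defect, the constant sequence $(0,h_0)$, lies in $C$), while the class $\bar c$ of $\bigl((0,n|h_0|)\bigr)_{n\in\mathbb{N}}$ is a non-zero positive element with $k\bar c\leq\bar e$ for all $k$ (it escapes $C$ precisely because no uniform bound $n|h_0|\leq k|h_0|$ holds). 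One checks that $\mathbb{Z}\bar e+\mathbb{Z}\bar c$ is free of rank $2$ and lexicographically ordered, so it is a copy of $(\mathbb{Z}\times_{\textrm{lex}}\mathbb{Z},(d,0))$ containing the unit; hence $S_d^{\omega}$ is a subalgebra of a quotient of a power of $A$, i.e.\ $S_d^{\omega}\in V(A)$. In short: you located the crux correctly, but the mechanism you propose for it (functoriality of the lexicographic construction) cannot work; an infinite power followed by a quotient collapsing the convex hull of $h_0$ is what actually closes the argument.
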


The following theorem provides a representation for local MV-algebras of finite rank.

\begin{theorem*}[Theorem 5.5 \cite{DiNola3}]\label{thm:representation local finite rank}
Let $A$ be a local MV-algebra. Then the following are equivalent:
\begin{enumerate}[(i)]
\item $A$ is an MV-algebra of finite rank $n$;
\item $A\simeq \Gamma(\mathbb{Z}\times_{\textrm{lex}} G,(n,h))$ where $G$ is an $\ell$-group and $h\in G$.
\end{enumerate}
\end{theorem*}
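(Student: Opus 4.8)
The plan is to pass through Mundici's equivalence $\Gamma$ between $\ell$-groups with strong unit and MV-algebras, and then to recognize the claimed structure as a lexicographic extension. Throughout, write $A\cong\Gamma(H,u)$ for the (essentially unique) $\ell$-u group $(H,u)$ associated with $A$, recalling that ideals of $\Gamma(H,u)$ correspond to convex $\ell$-subgroups of $H$ and that $\Gamma$ commutes with the corresponding quotients.

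The implication (ii)$\Rightarrow$(i) is a direct computation. Suppose $A\cong\Gamma(\mathbb{Z}\times_{\textrm{lex}}G,(n,h))$. Since the first coordinate dominates in the lexicographic order and $n\geq 1$, the pair $(n,h)$ is a strong unit, so $A$ is a genuine MV-algebra; moreover $A$ is local, because any element $(a,g)$ either has $a\geq 1$, whence $\textrm{ord}((a,g))<\infty$, or has $a=0$, in which case $\neg(a,g)=(n,h-g)$ has first coordinate $n\geq 1$ and hence finite order. One then checks that $\textrm{Rad}(A)$ is exactly the set of infinitesimals $\{(0,g)\mid 0\leq g\}$, and that collapsing the second coordinate yields $A/\textrm{Rad}(A)\cong\Gamma(\mathbb{Z},n)=S_n$, so $A$ has rank $n$.

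For the harder implication (i)$\Rightarrow$(ii), assume $A$ is local of finite rank $n$ and set $A=\Gamma(H,u)$. Let $G\subseteq H$ be the convex $\ell$-subgroup corresponding to $\textrm{Rad}(A)$; by Lemma \ref{lmm:cancellative_monoid} the radical is a cancellative lattice-ordered monoid, and $G$ is precisely its group of differences, so $G$ is an $\ell$-group with positive cone $\textrm{Rad}(A)$. Since $\Gamma$ commutes with these quotients we get $A/\textrm{Rad}(A)\cong\Gamma(H/G,\bar u)$, which by hypothesis is $S_n=\Gamma(\mathbb{Z},n)$; the full faithfulness of $\Gamma$ then forces $(H/G,\bar u)\cong(\mathbb{Z},n)$. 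This is exactly where the finite-rank hypothesis is used: it guarantees that the quotient is isomorphic to $\mathbb{Z}$ rather than to a dense subgroup of $\mathbb{R}$. We thus obtain a short exact sequence $0\to G\to H\to\mathbb{Z}\to 0$ of abelian groups which, $\mathbb{Z}$ being free, splits; this yields a group isomorphism $H\cong\mathbb{Z}\oplus G$ under which $u$ corresponds to $(n,h)$ for some $h\in G$.

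The remaining task, and the main obstacle, is to verify that the order transported to $\mathbb{Z}\oplus G$ is the lexicographic one. Equivalently, one must show that $G$ is \emph{dominated} in $H$, i.e. that every element whose image in $\mathbb{Z}$ is strictly positive is already positive in $H$ and bounds $\textrm{Rad}(A)$ from above; the case of image $0$ and the reverse inclusions are immediate from convexity of $G$. This is precisely where the identification of the radical with the infinitesimal elements enters (Lemma \ref{lmm:rad=infinitesimal}, applicable since a local algebra of rank $n$ lies in the Komori variety $V(S_n^{\omega})$ by Theorem \ref{thm:local_in_variety}), giving for each $p\in G^{+}$ the bound $mp\leq u$ for all $m\in\mathbb{N}$. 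Concretely, for $h\in H$ with image $a\geq 1$ one notes that $h^{-}:=(-h)\vee 0$ lies in $G^{+}$ (its image is $(-a)\vee 0=0$) while $h^{+}:=h\vee 0$ is non-infinitesimal and disjoint from $h^{-}$, and one leverages these facts to force $h^{-}=0$, whence $h\geq 0$. Turning this domination argument into a clean derivation is the crux; once it is in place, the order on $\mathbb{Z}\oplus G$ is visibly lexicographic and $(n,h)$ is a strong unit, so applying $\Gamma$ gives $A\cong\Gamma(\mathbb{Z}\times_{\textrm{lex}}G,(n,h))$, as required.
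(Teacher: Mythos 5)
The paper itself offers no proof of this statement; it is quoted verbatim from \cite{DiNola3} (Theorem 5.5 there), so your attempt must stand on its own. Your direction (ii)$\Rightarrow$(i) is complete and correct, and your skeleton for (i)$\Rightarrow$(ii) --- pass to the $\ell$-u group $(H,u)$ via Mundici's equivalence, take the convex $\ell$-subgroup $G$ corresponding to the ideal $\mathrm{Rad}(A)$, identify $(H/G,\bar u)\cong(\mathbb{Z},n)$, split $0\to G\to H\to\mathbb{Z}\to 0$, and check that the transported order is lexicographic --- is the right one. But there is a genuine gap exactly where you flag it: you never derive $h^{-}=0$, and the two facts you propose to ``leverage'' (disjointness of $h^{+}$ and $h^{-}$, non-infinitesimality of $h^{+}$) do not suffice as stated, because non-infinitesimality gives no upper bound on anything. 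What locality actually buys you, and what you must invoke, is that every element \emph{outside the radical has finite order}: in a local algebra the set of infinite-order elements is an ideal, hence the unique maximal ideal, hence equals $\mathrm{Rad}(A)$. With that, the step closes routinely. Write $\pi:H\to H/G\cong\mathbb{Z}$ for the projection and suppose $\pi(h)\geq 1$; set $x=h^{+}\wedge u$ and $y=h^{-}\wedge u$. Then $x\wedge y=0$, and $x\notin\mathrm{Rad}(A)$ because $\pi(x)=\pi(h)\wedge n\geq 1$, so $\mathrm{ord}(x)=m<\infty$, which in $H$ means $mx\geq u$. Disjointness is preserved by multiples in an abelian $\ell$-group, so $mx\wedge y=0$; since $y\leq u\leq mx$ this gives $y=y\wedge mx=0$. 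Finally, $h^{-}\in G^{+}$ and $G$ is generated by $\mathrm{Rad}(A)\subseteq[0,u]$, so by convexity $0\leq h^{-}\leq ku$ for some $k$; from $h^{-}\wedge u=0$ one gets $h^{-}\wedge ku=0$, whence $h^{-}=h^{-}\wedge ku=0$ and $h\geq 0$. Without some such derivation your text is a plan, not a proof.

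A secondary point: your appeal to Theorem \ref{thm:local_in_variety} (Theorem 7.2 of \cite{DiNola3}) to place $A$ in $V(S_n^{\omega})$, so as to use Lemmas \ref{lmm:rad=infinitesimal} and \ref{lmm:cancellative_monoid}, is both unnecessary and risky: in \cite{DiNola3} that theorem lies downstream of the very representation theorem you are proving, so the route is circular as a reconstruction of that development. It is cleaner to use the classical (choice-dependent) facts directly: $\mathrm{Rad}(A)$ is an ideal (the intersection of the maximal ideals), it consists of $0$ and the infinitesimals, and the standard ideal/convex-$\ell$-subgroup correspondence under $\Gamma$ yields $G$ together with $\Gamma(H/G,\bar u)\cong A/\mathrm{Rad}(A)$, with no detour through Komori varieties.
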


Using this theorem, one can show that the theory $\mathbb{L}oc_V^1$ introduced in section \ref{sec:TheoryOfV} axiomatizes the local MV-algebras in $V$.

\begin{proposition*}\label{pro:Lov_V->local}
Let $A$ be an MV-algebra in $V$. Then $A$ is a model of $\mathbb{L}oc_V^1$ if and only if it is a local MV-algebra (i.e., a model of $\mathbb{L}oc$).
\end{proposition*}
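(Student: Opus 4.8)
The plan is to prove the two implications separately, in each case by directly verifying the additional axioms defining the target theory. Recall that $\mathbb{L}oc^1_V$ is obtained from $\mathbb{T}_V$ by adding $\sigma_n$ and NT, while $\mathbb{L}oc$ adds to $\mathbb{MV}$ the axiom $(\top\vdash_x\bigvee_{k=1}^{\infty}(kx=1\vee k(\neg x)=1))$ together with NT. Since $A$ already lies in $V$, it validates $\mathbb{T}_V$ and hence all the cartesian sequents of Lemma \ref{lmm:rad oplus closed}; and since NT is common to both theories, the whole content of the statement is the equivalence, for such an $A$, between the validity of $\sigma_n$ and that of the local disjunction.

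For the forward implication ($A\models\mathbb{L}oc^1_V\Rightarrow A\models\mathbb{L}oc$) I would argue constructively. Fix $x\in A$. By $\sigma_n$, either $(n+1)x=1$ or $((n+1)x)^2=0$. In the first case the disjunct $kx=1$ of the local axiom holds for $k=n+1$. In the second case I would apply Lemma \ref{lmm:rad oplus closed}(ix) after substituting $\neg x$ for its bound variable: since $\neg\neg x=x$, that sequent reads $(((n+1)x)^2=0\vdash_x 2(\neg x)=1)$, so the disjunct $k(\neg x)=1$ holds for $k=2$. Hence $A$ validates the local axiom, and being non-trivial by NT it is a model of $\mathbb{L}oc$. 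This direction needs no choice.

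For the converse I would invoke the (non-constructive) characterization of local MV-algebras recalled in Section \ref{sct:Local}, namely that every element of a local algebra lies in $\textrm{Rad}(A)$, in $\neg\textrm{Rad}(A)$, or in $\textrm{Fin}(A)$ (alternatively one may use the representation $A\simeq\Gamma(\mathbb{Z}\times_{\textrm{lex}}G,(r,h))$ afforded by Theorems \ref{thm:local_in_variety} and \ref{thm:representation local finite rank} and compute $\sigma_n$ directly). Fix $x\in A$ and distribute according to this trichotomy. If $x\in\textrm{Rad}(A)$ then, by Lemma \ref{lmm:rad=infinitesimal} identifying $\textrm{Rad}(A)$ with $K(A)$, we get $((n+1)x)^2=0$, the left disjunct of $\sigma_n$. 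If $x\in\textrm{Fin}(A)$ then $\textrm{ord}(x)<\infty$ by definition, while if $x\in\neg\textrm{Rad}(A)$ then $\neg x\in\textrm{Rad}(A)=K(A)$ and Lemma \ref{lmm:rad oplus closed}(ix) gives $2x=1$, so again $\textrm{ord}(x)<\infty$; in both of these cases $kx=1$ for some $k$, and Lemma \ref{lmm:rad oplus closed}(i) upgrades this to $(n+1)x=1$, the right disjunct of $\sigma_n$. Thus $\sigma_n$ holds and $A\models\mathbb{L}oc^1_V$.

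The hard part is the converse, and more precisely the step that places an element of infinite order into the radical: a constructive argument cannot decide whether $\textrm{ord}(x)$ is finite, so one is forced to appeal to the trichotomy (or to the representation theorem), which is exactly where the axiom of choice enters and why the statement is flagged with $*$. The points I would double-check are that the substitution $x\mapsto\neg x$ in Lemma \ref{lmm:rad oplus closed}(ix) is legitimate (it is, the lemma being a universally quantified sequent valid in every $\mathbb{T}_V$-model), and that coradical elements genuinely have finite order, so that the three cases of the trichotomy really do exhaust the two disjuncts of $\sigma_n$.
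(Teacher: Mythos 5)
Your proof is correct, but your converse takes a genuinely different route from the paper's. In the forward direction both arguments split on $\sigma_n$; where you dispatch the case $((n+1)x)^2=0$ by instantiating Lemma \ref{lmm:rad oplus closed}(ix) at $\neg x$ (getting $2(\neg x)=1$ outright), the paper instead applies $\sigma_n$ to $\neg x$ and excludes the subcase $((n+1)\neg x)^2=0$ by combining it with $((n+1)x)^2=0$ via Lemma \ref{lmm:rad oplus closed} (it cites item (vi), though the computation with $x\oplus\neg x$ is really item (vii)) to contradict NT; both variants are constructive and equally short. The real divergence is in the converse: you invoke the choice-dependent trichotomy $A=\textrm{Rad}(A)\cup\textrm{Fin}(A)\cup\neg\textrm{Rad}(A)$ recalled in section \ref{sct:Local}, identify $\textrm{Rad}(A)$ with $K(A)=\{x\in A \mid ((n+1)x)^2=0\}$ via Lemma \ref{lmm:rad=infinitesimal}, and upgrade finite orders to $(n+1)x=1$ via Lemma \ref{lmm:rad oplus closed}(i) and (ix); the paper instead uses Theorems \ref{thm:local_in_variety} and \ref{thm:representation local finite rank} to write $A\simeq\Gamma(\mathbb{Z}\times_{\textrm{lex}}G,(d,h))$ with $d$ dividing $n$, and verifies $\sigma_n$ by direct computation in the lexicographic product — the alternative you mention only parenthetically. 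Your route keeps the argument internal to $A$, reuses the constructive lemmas already established for all algebras in $V$, and concentrates the non-constructive content in the equivalence of the two definitions of ``local'' (the footnote in section \ref{sct:Local}); this is essential, since a naive appeal to the defining axiom of $\mathbb{L}oc$ fails — from $\textrm{ord}(\neg x)<\infty$ alone one cannot conclude either disjunct of $\sigma_n$ (e.g.\ $x=1/3$ in $S_3$ has both orders finite), so the trichotomy or the structure theorem is genuinely needed, exactly as you observe. The paper's route, for its part, exhibits the explicit structure of $A$ — the same lexicographic representation on which the radical classes $\textrm{Fin}_d^n$ and the theory $\mathbb{L}oc_V^2$ are later built — and makes visible where the invariant $n$ enters, namely through the divisibility of the rank $d$ by $n$. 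Both arguments are non-constructive precisely and only in this direction, in agreement with the Remark following the Proposition.
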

\begin{proof}
Let us suppose that $A$ is a model of the theory $\mathbb{L}oc_V^1$. Given $x\in A$, by sequent $\sigma_n$ either $(n+1)x=1$ or $((n+1)x)^2=0$. If $(n+1)x=1$ then the order of $x$ is finite. If $((n+1)x)^2=0$ then, by sequent $\sigma_n$, $(n+1)\neg x=1$ as $((n+1)x)^2=0$ and $((n+1)\neg x)^2=0$ imply by Lemma \ref{lmm:rad oplus closed}(vi) that $1=((n+1)(x\oplus \neg x))^2=0$, contradicting sequent NT.  

Conversely, suppose that $A$ is a local MV-algebra. By Theorems \ref{thm:local_in_variety} and \ref{thm:representation local finite rank}, $A\simeq \Gamma(\mathbb{Z}\times_{\textrm{lex}} G,(d,h))$, where $d$ divides $n$. It is easy to verify that the elements of $\textrm{Rad}(A)$ are precisely those whose first component is $0$, while any other element $x$ satisfies the equation $(n+1)x=1$. So $A$ is a model of $\mathbb{L}oc_V^1$, as required.  
\end{proof}

\begin{obs}
The non-constructive part of the proposition is the `if' direction; the `only if' part is constructive.
\end{obs}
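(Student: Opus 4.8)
The plan is to prove the two implications separately. I expect the forward (``only if'') direction to be elementary and fully constructive, resting only on the two extra axioms $\sigma_n$ and NT of $\mathbb{L}oc_V^1$ together with the already-established characterization of the radical, whereas the converse (``if'') direction will have to be routed through the non-constructive structure theory recorded in Theorems~\ref{thm:local_in_variety} and~\ref{thm:representation local finite rank}.

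For the ``only if'' direction I would assume that $A$ is a model of $\mathbb{L}oc_V^1$. Non-triviality of $A$ is immediate from NT, so it remains to show that each $x\in A$ satisfies $\textrm{ord}(x)<\infty$ or $\textrm{ord}(\neg x)<\infty$. First I would apply $\sigma_n$ to $x$: if $(n+1)x=1$ then $\textrm{ord}(x)\leq n+1$ and we are done; otherwise $((n+1)x)^2=0$, so $x\in\textrm{Rad}(A)$ by Lemma~\ref{lmm:rad=infinitesimal}. In this case I would apply $\sigma_n$ again, now to $\neg x$: if $(n+1)\neg x=1$ then $\textrm{ord}(\neg x)<\infty$, and if instead $((n+1)\neg x)^2=0$ then $\neg x\in\textrm{Rad}(A)$ as well. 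This last alternative is ruled out by the closure of the radical under $\oplus$ (Lemma~\ref{lmm:rad oplus closed}(vii)) together with NT: since $x\oplus\neg x=1$, it would give $1=((n+1)(x\oplus\neg x))^2=0$, a contradiction. Since every step only manipulates the axioms and the already-proved cartesian consequences, this argument is constructive.

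For the ``if'' direction I would assume that $A$ is local and lies in $V$; NT holds because $A$ is non-trivial, so the task reduces to verifying the single sequent $\sigma_n$, i.e.\ that each $x\in A$ satisfies $((n+1)x)^2=0$ or $(n+1)x=1$. Here I would pass to the explicit description of $A$: by Theorem~\ref{thm:local_in_variety}, $A\in Fin\textrm{rank}(I,J)$, so $A$ has finite rank $d$ with $d\mid n$, and by Theorem~\ref{thm:representation local finite rank} we may write $A\simeq\Gamma(\mathbb{Z}\times_{\textrm{lex}}G,(d,h))$ for some $\ell$-group $G$ and $h\in G$. I would then check $\sigma_n$ coordinatewise in $\mathbb{Z}\times_{\textrm{lex}}G$, splitting on the first coordinate. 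The radical consists exactly of the elements of the form $(0,g)$; for such an element $(n+1)x=(0,(n+1)g)$ stays strictly below the unit (because $d\geq 1$), whence $((n+1)x)^2=\sup(0,\,2(n+1)x-(d,h))=0$. If instead the first coordinate of $x$ is at least $1$, then the $(n+1)$-fold group multiple of $x$ already exceeds the unit $(d,h)$ lexicographically (because $d\leq n$), so the truncated sum collapses to $(n+1)x=1$. This exhausts all elements and settles $\sigma_n$, hence $A\models\mathbb{L}oc_V^1$.

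The hard part, and the only genuinely non-constructive ingredient, is the appeal to Theorem~\ref{thm:local_in_variety} to know that the rank of $A$ is \emph{finite} (and divides the invariant $n$); once this is granted, the representation of Theorem~\ref{thm:representation local finite rank} and the coordinate computation above are routine. I would therefore flag, as the statement's remark anticipates, that the forward implication is constructive while the backward one is not, the dividing line being precisely between the axioms $\sigma_n$/NT and the Di~Nola structure theory.
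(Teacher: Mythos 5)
Your proof is correct and follows essentially the same route as the paper: the `only if' direction is precisely the paper's constructive argument via $\sigma_n$, NT and the closure of the radical under $\oplus$, while the `if' direction invokes Theorems \ref{thm:local_in_variety} and \ref{thm:representation local finite rank} to represent $A$ as $\Gamma(\mathbb{Z}\times_{\textrm{lex}}G,(d,h))$ with $d\mid n$ and then checks $\sigma_n$ there, correctly locating the sole non-constructive ingredient in the finiteness of the rank. Your coordinatewise verification simply makes explicit what the paper dismisses as ``easy to verify''.
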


\begin{proposition}\label{cor:rad-maximal}
Let $A$ be a model of $\mathbb{L}oc_V^1$. Then the radical of $A$ is  the only maximal ideal of $A$.
\end{proposition}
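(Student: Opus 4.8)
The plan is to show that $\textrm{Rad}(A)$ is the largest proper ideal of $A$, from which the assertion that it is the unique maximal ideal follows immediately. I would rely on two facts established earlier: first, by Lemma \ref{lmm:rad=infinitesimal}, the radical coincides with $K(A)=\{x\in A\mid ((n+1)x)^2=0\}$ and is an ideal of $A$; second, the sequent $\sigma_n$ guarantees that for every $x\in A$, either $((n+1)x)^2=0$ (so that $x\in \textrm{Rad}(A)$) or $(n+1)x=1$.

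First I would verify that $\textrm{Rad}(A)$ is a \emph{proper} ideal. If $1\in \textrm{Rad}(A)$ then $((n+1)\cdot 1)^2=0$; since $(n+1)\cdot 1=1$, this yields $1=0$, contradicting the sequent NT. Hence $\textrm{Rad}(A)\neq A$.

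The core step is to prove that every proper ideal $I$ of $A$ is contained in $\textrm{Rad}(A)$. Here I would take an arbitrary $x\in I$ and suppose, for contradiction, that $x\notin \textrm{Rad}(A)$. By $\sigma_n$ together with the identification $\textrm{Rad}(A)=K(A)$, this forces $(n+1)x=1$. But $I$, being an ideal, is closed under $\oplus$, so $(n+1)x=x\oplus\cdots\oplus x\in I$; thus $1\in I$, whence $I=A$, contradicting the properness of $I$. Therefore $x\in \textrm{Rad}(A)$, and so $I\subseteq \textrm{Rad}(A)$.

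Combining these observations, $\textrm{Rad}(A)$ is a proper ideal that contains every proper ideal of $A$, and hence it is the unique maximal ideal: any maximal ideal $M$ is in particular proper, so $M\subseteq \textrm{Rad}(A)\subsetneq A$, and the maximality of $M$ forces $M=\textrm{Rad}(A)$. I do not anticipate any genuine obstacle, as the argument is short and fully constructive; the only point requiring a little care is the use of closure of ideals under $\oplus$ to pass from $x\in I$ to $(n+1)x\in I$, which is precisely what allows $\sigma_n$ to do all the work.
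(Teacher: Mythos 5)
Your proof is correct and follows essentially the same route as the paper: identify $\textrm{Rad}(A)$ with $\{x\in A\mid ((n+1)x)^2=0\}$, then use the dichotomy from $\sigma_n$ together with closure of ideals under $\oplus$ to show every proper ideal is contained in the radical. The only difference is that you spell out the properness of $\textrm{Rad}(A)$ via NT explicitly, which the paper leaves implicit.
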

\begin{proof}
By Lemma \ref{lmm:rad oplus closed}, $\textrm{Rad}(A)=\{x\in A \mid ((n+1)x)^2=0\}$.  Let $I$ be an ideal of $A$. If there exists $x\in I$ such that $(n+1)x=1$ then $I$ is equal to $A$. Otherwise $I\subseteq \textrm{Rad}(A)$ by sequent $\sigma_n$. Thus, the radical is the only maximal ideal of $A$. 
\end{proof}

We shall now proceed to identifying an axiomatization for the local MV-algebras in $V$ which will allow to constructively prove that the Grothendieck topology associated with it is rigid.  

We observe that if $A$ is a local MV-algebra in $V$ of finite rank $k$ and $n$ is the invariant of $V$ defined by Proposition \ref{pro:invariant_generator} then the rank of $A$ divides $n$. So, by Theorems 5.5 and 5.6 \cite{DiNola3}, we have embeddings of MV-algebras 

\begin{center}
$A\simeq\Gamma(\mathbb{Z}\times_{\textrm{lex}} G,(k,g))\stackrel{f}{\hookrightarrow} \Gamma(\mathbb{Z}\times_{\textrm{lex}} G,(k,0)) \stackrel{g}{\hookrightarrow}\Gamma(\mathbb{Z}\times_{\textrm{lex}} G,(n,0))$
\end{center}
for some $\ell$-group $G$. The embedding $f$ sends an element $(m, y)$ of $\Gamma(\mathbb{Z}\times_{\textrm{lex}} G,(k,g))$ to the element $(m, ny-mg)$ of $\Gamma(\mathbb{Z}\times_{\textrm{lex}} G,(n,0))$, while $g$ is the homomorphism of multiplication by the scalar $\frac{n}{k}$. Clearly, both $f$ and $g$ lift to unital group homomorphisms $(\mathbb{Z}\times_{\textrm{lex}} G,(k,g)) \to (\mathbb{Z}\times_{\textrm{lex}} G,(k,0))$ and $(\mathbb{Z}\times_{\textrm{lex}} G,(k,0)) \to (\mathbb{Z}\times_{\textrm{lex}} G,(n,0))$.

Identifying $A$ with its image $g(f(A))$, we can partition its elements into \emph{radical classes} (i.e. equivalence classes with respect to the relation induced by the radical), corresponding to the inverse images of the numbers $d=0, \ldots, n$ under the natural projection map $\phi:A\to {\mathbb Z}$. Note that, regarding $S_{n}$ as the simple $n$-element MV-algebra $\{0, 1, \ldots, n\}$, $\phi$ is an MV-algebra homomorphism $A\to S_{n}$. Moreover, we have that $\textrm{Rad}(A)=\phi^{-1}(0)$ and that $\phi(a)=\phi(a')$ if and only if $a \equiv_{\textrm{Rad}(A)} a'$. We shall write $\textrm{Fin}_{d}^{n}(A)$ for $\phi^{-1}(d)$. Notice that this is not really a partition in the strict sense of the term since some of the sets $\phi^{-1}(d)$ could be empty. 

We shall see below in this section that these radical classes can be defined by Horn formulae over the signature of $\mathbb{MV}$. 

An important feature of these radical classes is that they are compatible with respect to the MV-operations, in the sense that the radical class to which an element $t(x_{1},\ldots, x_{r})$ obtained by means of a term combination of elements $x_{1}, \ldots, x_{r}$ belongs is uniquely and canonically determined by the radical classes to which the elements $x_{1}, \ldots, x_{r}$ belong. Indeed, the conditions 
\begin{equation*}
(x\in \textrm{Fin}_{d}^{n} \wedge y\in \textrm{Fin}_{b}^{n} \vdash_{x, y} x\oplus y \in \textrm{Fin}_{d\oplus b}^{n})
\end{equation*}
(for each $d, b\in \{0, \dots, n\}$ and where with $d\oplus b$ we indicate the sum in $S_n=\{0,1, \ldots, n\}$) and 
\begin{equation*}
(x\in \textrm{Fin}_{d}^{n} \vdash_{x} \neg x \in \textrm{Fin}_{n-d}^{n})
\end{equation*} 
(for each $d\in \{0, \ldots, n\}$) are valid in every MV-algebra $A$ in $V$.   

Notice that, for a local MV-algebra of finite rank $A$ in $V$, neither the three-element partition $$A=\textrm{Rad}(A)\cup \textrm{Fin}(A)\cup \neg \textrm{Rad}(A)$$ nor the two-element partition $$A=\textrm{Rad}(A)\cup (\textrm{Fin}(A)\cup \neg \textrm{Rad}(A))$$ satisfy this compatibility property. Indeed, the sum of two elements in $\textrm{Fin}(A)$ can be in $\textrm{Fin}(A)$ or in $\neg \textrm{Rad}(A)$, and the negation of an element in $(\textrm{Fin}(A)\cup \neg \textrm{Rad}(A))$ can be either in $\textrm{Rad}(A)$ or in $(\textrm{Fin}(A)\cup \neg \textrm{Rad}(A))$. 

The compatibility property of the partition 
$$A\subseteq\mathbin{\mathop{\textrm{\huge$\cup$}}\limits_{d\in\{0, \ldots, n\}}}x\in \textrm{Fin}_{d}^{n}(A),$$
together with the definability of the radical classes by Horn formulae, will be the key for designing an axiomatization for the local MV-algebras in $V$ such that the corresponding Grothendieck topology is rigid.

\begin{center}
\begin{tikzpicture}
\coordinate[fill,circle,inner sep=1.5pt] (0) at (0,0);
\node (1) at (-1,2) {};
\node (2) at (1,2) {};
\node (a) at (0,-3) {$((n+1)x)^2=0$};
\node (z) at (0.5,0) {$0$};
\draw[-] (0) to node [above] {} (1);
\draw[-] (0) to node [above] {} (2);
\node (3) at (2,0) {};
\node (4) at (2,2) {};
\node (5) at (4,2) {};
\node (6) at (2,-2) {};
\node (7) at (4,-2) {};
\node (w) at (3.5,0) {$1$};
\coordinate[fill,circle,inner sep=1.5pt] (0) at (3,0);
\draw[-] (4) to node [above] {} (7);
\draw[-] (5) to node [above] {} (6);
\node (8) at (5,0) {$\cdots$};
\node (9) at (6,2) {};
\node (10) at (8,2) {};
\node (11) at (6,-2) {};
\node (12) at (8,-2) {};
\coordinate[fill,circle,inner sep=1.5pt] (0) at (7,0);
\node (y) at (7.8,0) {$(n-1)$};
\node (c) at (3,-3) {$x\in \textrm{Fin}_{1}^{n}$};
\node (u) at (7,-3) {$x\in \textrm{Fin}_{n-1}^{n}$};
\draw[-] (9) to node [above] {} (12);
\draw[-] (10) to node [above] {} (11); 
\node (13) at (9,-2) {};
\coordinate (14) at (10,0);
\node (15) at (11,-2) {};
\coordinate[fill,circle,inner sep=1.5pt] (0) at (10,0);
\node (b) at (10,-3) {$((n+1)\neg x)^2=0$};
\node (x) at (10.5,0) {$n$};
\draw[-] (13) to node [above] {}(14);
\draw[-] (14) to node [above]{}(15);
\end{tikzpicture}
\end{center}

To the end of obtaining definitions within geometric logic of the predicates $x\in \textrm{Fin}_{d}^{n}$, we recall the following version of Bezout's identity.

\begin{theorem}[B\'ezout's identity]\label{thm:bezout}
Let $a$ and $b$ be natural numbers. Then, denoting by $D$ the greatest common divisor of $a$ and $b$, there exist exactly one natural number $0\leq \xi_{(a, b)}\leq \frac{b}{D}$ and one natural number $0\leq \chi_{(a, b)}\leq \frac{a}{D}$ such that $D=\xi_{(a, b)}a-\chi_{(a, b)}b$.
\end{theorem}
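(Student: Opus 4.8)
The plan is to reduce the statement to the coprime case and then solve a single modular congruence. Writing $a = D a'$ and $b = D b'$ with $\gcd(a', b') = 1$ (and reading ``natural numbers'' here as positive integers, so that $D \ge 1$), the sought identity $D = \xi a - \chi b$ is equivalent, after dividing through by $D$, to
\[
\xi a' - \chi b' = 1, \qquad 0 \le \xi \le b', \quad 0 \le \chi \le a'.
\]
So it suffices to prove existence and uniqueness of a pair $(\xi,\chi)$ of natural numbers satisfying these three conditions whenever $\gcd(a',b') = 1$, and then to transport the bounds back via $b' = b/D$ and $a' = a/D$.

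For existence, I would exploit that, since $a'$ is a unit modulo $b'$, multiplication by $a'$ permutes $\mathbb{Z}/b'\mathbb{Z}$; hence $\{a', 2a', \ldots, b' a'\}$ is a complete residue system modulo $b'$, and there is exactly one $\xi \in \{1, \ldots, b'\}$ with $\xi a' \equiv 1 \pmod{b'}$. Setting $\chi = (\xi a' - 1)/b'$ then produces an integer, which is $\ge 0$ because $\xi a' \ge 1$, and which satisfies $\chi \le (b' a' - 1)/b' < a'$; together with $\xi \le b'$ this gives all the required bounds. Choosing the representative $\xi$ in $\{1, \ldots, b'\}$ rather than in $\{0, \ldots, b'-1\}$ is what makes $\chi \ge 0$ automatic and what simultaneously covers the degenerate cases $b' = 1$ and $a' = 1$ without a separate discussion.

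For uniqueness, suppose $(\xi_1, \chi_1)$ and $(\xi_2, \chi_2)$ both satisfy the three conditions. Subtracting the two identities yields $(\xi_1 - \xi_2) a' = (\chi_1 - \chi_2) b'$, and coprimality forces $b' \mid (\xi_1 - \xi_2)$. Since both $\xi_i$ lie in $[0, b']$, the difference $\xi_1 - \xi_2$ lies in $\{-b', 0, b'\}$; the two nonzero options would require one of the $\xi_i$ to equal $0$, whence $-\chi_i b' = 1$, impossible for $\chi_i \ge 0$ and $b' \ge 1$. Hence $\xi_1 = \xi_2$, and then $\chi_1 = \chi_2$ follows immediately. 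The only genuinely delicate point is the boundary bookkeeping around the inclusive bounds $\xi \le b'$ and $\chi \le a'$ together with the extremal values $a' = 1$ and $b' = 1$; the uniform choice of residue range above is precisely what disposes of these cases cleanly.
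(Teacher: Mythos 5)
The paper offers no proof of this theorem to compare against: it is quoted as the classical B\'ezout identity (in a normalized form, with a subtraction and two-sided bounds), followed only by the remark that $\xi_{(a,b)}=1$, $\chi_{(a,b)}=0$ when $a$ divides $b$. Your proposal therefore has to stand on its own, and structurally it does. The reduction to the coprime case is correct, and your decision to read ``natural numbers'' as positive integers is not merely convenient but forced: for $b=0<a$ the statement is false, since the bound $\xi\leq b/D=0$ would give $D=\xi a-\chi b=0\neq a=D$ (and the paper only ever applies the theorem with $a=d\geq 1$, $b=n\geq 1$). The choice of representatives $\{1,\dots,b'\}$ does dispose of the cases $a'=1$ and $b'=1$ uniformly, the bound bookkeeping $0\leq\chi<a'$ is right, and the uniqueness argument --- $b'\mid(\xi_1-\xi_2)$, difference in $\{-b',0,b'\}$, exclusion of $\xi_i=0$ because $-\chi_i b'=1$ is impossible --- is airtight.

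The one step you must repair is the justification of existence. You write ``since $a'$ is a unit modulo $b'$, multiplication by $a'$ permutes $\mathbb{Z}/b'\mathbb{Z}$''. But ``$a'$ is a unit modulo $b'$'' is precisely the assertion that some $\xi$ satisfies $\xi a'\equiv 1 \pmod{b'}$, i.e.\ the existence half of B\'ezout in the coprime case --- the very thing being proved. As written, this is circular. The standard repair is to derive the permutation property from injectivity rather than from invertibility: if $a'x\equiv a'y \pmod{b'}$, then $b'\mid a'(x-y)$, hence $b'\mid(x-y)$ by Euclid's lemma (using $\gcd(a',b')=1$), so multiplication by $a'$ is injective on the finite set $\mathbb{Z}/b'\mathbb{Z}$ and therefore bijective; here Euclid's lemma must be taken from a development not itself resting on B\'ezout (unique factorization, or strong induction), which is unproblematic. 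Note also that your uniqueness step silently invokes the same lemma (``coprimality forces $b'\mid(\xi_1-\xi_2)$''), so it is worth stating it once and citing it in both places. With that single repair the proof is complete and correct.
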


Notice that if $a$ divides $b$ then $D=a$ and $\xi_{(a, b)}=1$, $\chi_{(a, b)}=0$.

Given $d\in \{1, \ldots, n\}$, we set $D=\textrm{g.c.d.}(d, n)$ and consider the following Horn formula over the signature of $\mathbb{MV}$ (where we write $ky$ for $y \oplus \cdots \oplus y$ $k$ times): 
\[
\alpha^{n}_{d}(x):=(x \equiv^{n}_{Rad} \frac{d}{D}D_{d,n}^{x}) \wedge (\bigwedge_{k=0}^{\frac{n}{D}}\neg kD_{d,n}^{x}\equiv^{n}_{Rad}(\frac{n}{D}-k) D_{d,n}^{x}),
\]
where $\equiv^{n}_{Rad}$ is the equivalence relation defined by $z \equiv^{n}_{Rad} w$ if and only if $((n+1)d(z, w))^{2}=0$ and $D_{d,n}^{x}$ is the MV-algebraic term in $x$ obtained in the following way. We would like $D_{d,n}^{x}$ to be equal to the element $\xi_{(d, n)}x-\chi_{(d, n)}u$ in the unit interval of the $\ell$-u group $(\tilde{A}, u)=(\mathbb{Z}\times_{\textrm{lex}} G,(k,g))$ corresponding to the MV-algebra $A=\Gamma(\mathbb{Z}\times_{\textrm{lex}} G,(k,g))$ if $\phi(x)=d$. To this end, we show that, given $x\in A$, if $\phi(x)=d$ then the element $\xi_{(d, n)}x-\chi_{(d, n)}u$ belongs to $A$, that is, $0\leq \xi_{(d, n)}x-\chi_{(d, n)}u \leq u$ in the group $\mathbb{Z}\times_{\textrm{lex}} G$. Indeed, since $f$ and $g$ are unital group homomorphisms, $g(f(\xi_{(d, n)}x-\chi_{(d, n)}u))=\xi_{(d, n)}g(f(x))-\chi_{(d, n)}(n, 0)=(D, y)$ for some element $y\in G$. Now, there are two cases: either $d$ divides $n$ or $d$ does not divide $n$. In the first case, $\xi_{(d, n)}=1$ and $\chi_{(d, n)}=0$, so $\xi_{(d, n)}x-\chi_{(d, n)}u=x$ and we are done since $x\in A$. In the second case, $D=\textrm{g.c.d}(d, n)$ is strictly less than $d$, whence $0\leq g(f(\xi_{(d, n)}x-\chi_{(d, n)}u))\leq (n,0)=g(f(u))$ by definition of the lexicografic ordering on $\mathbb{Z}\times_{\textrm{lex}} G$; so, since $f$ and $g$ reflect the order, $0\leq \xi_{(d, n)}x-\chi_{(d, n)}u\leq u$, that is, $\xi_{(d, n)}x-\chi_{(d, n)}u\in A$, as required.   

To express $D_{d,n}^{x}$ as a term in the language of $\mathbb{MV}$, we recall that the elements of the positive cone of the $\ell$-u group associated with an MV-algebra $A$ can be represented as `good sequences' (in the sense of section 2.2 of \cite{Mundici}) of elements of $A$ and that the elements $a$ of $A$ correspond to the good sequences of the form $(a, 0, 0, \ldots)$. Let us identify $x$ with the good sequence $(x)=(x,0,\dots,0,\dots)$ and $u$ with the good sequence $(1)=(1,0,\dots,0,\dots)$. 

\begin{definition}
Given two (good) sequences $a=(a_1,\dots,a_r)$ and $b=(b_1,\dots, b_t)$, their sum $a+b$ is the (good) sequence $c$ whose components are defined as follows:
$$c_i=a_i\oplus (a_{i-1}\odot b_1)\oplus \dots \oplus (a_1\odot b_{i-1})\oplus b_i.$$
\end{definition}

Note that if $a=(a_1,\dots,a_r)$ and $b=(b_1,\dots,b_t)$ are two good sequences, we can suppose without loss of generality that $r=t$. Indeed,
$$(a_1,\dots,a_r)=(a_1,\dots,a_r,0^{m})$$
for every natural number $m\geq 1$.

Let $a=(a_1,\dots,a_r)$ be a good sequence. With the symbol $a^*$ we indicate the sequence $(a_r,\dots,a_1)$. Note that this sequence is not necessarily a good sequence.

\begin{proposition}[Proposition 2.3.4 \cite{CDM}]\label{pro:subtraction_monoid}
Let $a=(a_1,\dots,a_r)$ and $b=(b_1,\dots,b_r)$ be two good sequences. If $a\leq b$ then there is a unique good sequence $c$ such that $a+c=b$, denoted by $b-a$ and given by:
$$c=(b_1,\dots,b_r)+(\neg a_r,\dots,\neg a_1)=b+(\neg a)^*.$$
\end{proposition}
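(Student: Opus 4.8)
The plan is to separate the statement into three parts: the uniqueness of $c$, the existence of some good sequence satisfying $a+c=b$, and the identification of this $c$ with the explicit expression $b+(\neg a)^{*}$.

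First I would dispatch uniqueness and abstract existence together, which is cleanest if one passes through Mundici's representation (section 2.2 of \cite{Mundici}): good sequences of $A$ correspond bijectively to the elements of the positive cone of the associated $\ell$-group via $(a_{1},\dots,a_{r})\mapsto \sum_{i}a_{i}$, the sum of good sequences corresponding to addition in the group and the relation $a\leq b$ to the group order. Since the positive cone of an $\ell$-group is cancellative, $a+c=a+c'=b$ forces $c=c'$; and since $a\leq b$ makes the group element $\sum_{i}b_{i}-\sum_{i}a_{i}$ non-negative, it is represented by a unique good sequence $c$, which by construction satisfies $a+c=b$. This settles the existence and uniqueness clauses without yet touching the formula. (If one prefers to avoid invoking the group, uniqueness can instead be read off from the cancellativity of the good-sequence monoid established earlier, and existence from the inductive construction of a solution; the group picture is only a conceptual shortcut here.)

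It then remains to check that this $c$ is the one displayed in the statement, and this is where I expect the real work to lie. I would compute $b+(\neg a)^{*}$ directly from the recursive definition of the sum of sequences recalled above, using the MV-identities $x\oplus \neg x=1$ and $x\odot \neg x=0$ together with the goodness conditions on $a$ and $b$. The delicate point, and the main obstacle, is that $(\neg a)^{*}$ is itself \emph{not} a good sequence: reversing $\neg a$ turns the trailing zeros of $a$ into leading entries equal to $1$, so one must carefully track the convolution-type carries in the sum formula, verify that the outcome is again a good sequence, and confront the $r$ surplus leading units that the reversal contributes; reconciling these with the defining equation $a+c=b$, so that the remaining entries telescope to the intended difference $\sum_{i}b_{i}-\sum_{i}a_{i}$, is the crux of the computation. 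A fully self-contained variant replaces the whole argument by an induction on the common length $r$, reducing goodness and the equation $a+c=b$ to the one-step identities for the sum of two sequences; the computational heart—the interaction of the reversal with the goodness condition—is identical, and is precisely the step I would spend the most care on.
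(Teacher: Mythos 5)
The first thing to note is that the paper contains no proof of this statement: it is quoted, with attribution, as Proposition 2.3.4 of \cite{CDM} and used as a black box (to define the terms $D_{d,n}^{x}$), so there is no ``paper proof'' to compare yours against; your proposal has to stand on its own, and as it stands it has a genuine gap. The split into uniqueness, existence and the explicit formula is sensible, and uniqueness via cancellativity of the good-sequence monoid is fine. But your preferred route to existence --- the bijection between good sequences and the positive cone of the associated $\ell$-group, under which the sum of good sequences corresponds to addition in the group --- is circular here: that bijection \emph{is} Chang's--Mundici's construction of the enveloping $\ell$-group, which is obtained as the group of differences of the good-sequence monoid and rests on precisely this subtraction/cancellation proposition. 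Your non-group fallback for existence, ``the inductive construction of a solution,'' is not an argument but a restatement of what must be proved. Most importantly, the entire content of the proposition is the displayed identity $c=b+(\neg a)^{*}$, and this is exactly the step you acknowledge as ``the crux'' and never carry out; identifying where the work lies is not doing the work.

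Two concrete remarks about the computation you deferred, to show the gap is not merely one of labour. First, your stated obstacle is incorrect: $(\neg a)^{*}$ \emph{is} a good sequence whenever $a$ is (it is $a^{*}$ that need not be good, which is what the paper's preceding remark says). Indeed, goodness of $a$ gives $a_{i}\oplus a_{i+1}=a_{i}$, hence $a_{i}\odot a_{i+1}=a_{i+1}$ --- a quasi-identity valid in every MV-algebra, e.g.\ by subdirect representation into chains --- and the latter is exactly the goodness condition for the consecutive entries $\neg a_{i+1},\neg a_{i}$ of $(\neg a)^{*}$. Second, and more seriously, your worry about the ``$r$ surplus leading units'' is well founded, and it is where a naive verification fails: with the paper's (full convolution) definition of the sum, $b+(\neg a)^{*}$ represents $(b-a)+ru$ in the enveloping group, i.e.\ it is the good sequence $(1,\dots,1,c_{1},\dots,c_{r})$ with $r$ leading units, \emph{not} the difference $c$ itself. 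For instance, in $[0,1]$ with $r=1$, $a=(0.3)$, $b=(0.7)$, one computes $b+(\neg a)^{*}=(0.7)+(0.7)=(1,0.4)$, whereas $b-a=(0.4)$, and indeed $a+(1,0.4)=(1,0.7)\neq b$. So the identity as displayed holds only under the convention that the first $r$ components are discarded (equivalently, that $c_{i}$ is the $(r+i)$-th component of the convolution); a complete proof must make this indexing precise and then verify $a+c=b$ from the convolution formula, using the goodness of $a$ and $b$. That verification --- not existence or uniqueness --- is the proposition, and it is absent from your proposal.
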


We define the term $D_{d,n}^{x}$ as the first component of the following sequence:
$$\xi_{(d,n)}(x)-\chi_{(d,n)}(1):=\xi_{(d,n)}(1) +(\neg (\chi_{(d,n)}(x)))^*.$$

By the proposition and the above remarks, if $\phi(x)=d$ then $\xi_{(d,n)}(x)-\chi_{(d,n)}(1)$ is actually a good sequence equal to $(D_{d,n}^{x}, 0, 0,  \ldots)$, since $0\leq \xi_{(d,n)}x-\chi_{(d,n)}u\leq u$. 

From now on we abbreviate the formula $\alpha^{n}_{d}(x)$ by the expression $x\in \textrm{Fin}_d^{n}$; if $d=0$, we set $x\in \textrm{Fin}_{0}^{n}$ as an abbreviation for the expression $((n+1)x)^2=0$. This is justified by the following 

\begin{proposition}\label{pro:alpha-fin}
Let $A$ be a local MV-algebra of finite rank in a Komori variety $V$, and $n$ be the invariant of $V$ as defined in Proposition \ref{pro:invariant_generator}. Then an element $x$ of $A$ satisfies the formula $\alpha_{d}^{n}$ if and only if it belongs to $\textrm{Fin}_{d}^{n}(A)$.
\end{proposition}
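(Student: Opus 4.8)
The plan is to reduce the statement, through the projection homomorphism $\phi\colon A\to S_n$, to the corresponding statement in the single chain $S_n$, and then to settle it there by elementary truncated-integer arithmetic. First I would record the dictionary between the relation $\equiv^{n}_{Rad}$ and $\phi$: by Lemma \ref{lmm:rad=infinitesimal} one has $z\equiv^{n}_{Rad}w$ (i.e. $((n+1)d(z,w))^2=0$) if and only if $d(z,w)\in\textrm{Rad}(A)=\phi^{-1}(0)$, that is, if and only if $\phi(z)=\phi(w)$ in $S_n$ (using that $S_n$ has trivial radical, so that $d(a,b)=0$ iff $a=b$). Since $\phi$ is a homomorphism of MV-algebras and every term occurring in $\alpha^{n}_{d}$, in particular $D^{x}_{d,n}$, commutes with $\phi$, each conjunct $s(x)\equiv^{n}_{Rad}t(x)$ of $\alpha^{n}_{d}(x)$ holds in $A$ exactly when $s(\phi(x))=t(\phi(x))$ holds in $S_n$. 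Hence $A\models\alpha^{n}_{d}(x)$ if and only if $S_n\models\alpha^{n}_{d}(\phi(x))$; since moreover $x\in\textrm{Fin}^{n}_{d}(A)$ iff $\phi(x)=d$ iff $\phi(x)\in\textrm{Fin}^{n}_{d}(S_n)=\{d\}$, the proposition for $A$ follows from the proposition for $S_n$. The case $d=0$ is immediate from Lemma \ref{lmm:rad=infinitesimal}, since $\alpha^{n}_{0}$ abbreviates $((n+1)x)^2=0$; so from now on I assume $d\geq 1$ and argue inside $S_n$.

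In $S_n=\{0,1,\dots,n\}$ the only value of the term $D^{e}_{d,n}$ that I actually need to compute is at $e=d$: by B\'ezout's identity (Theorem \ref{thm:bezout}) the integer $\xi_{(d,n)}d-\chi_{(d,n)}n$ equals $D=\textrm{g.c.d.}(d,n)$ and lies in $[0,n]$, so the good-sequence subtraction defining $D^{x}_{d,n}$ is valid at $e=d$ and yields $D^{d}_{d,n}=D$ (equivalently, this is the value $\phi(D^{x}_{d,n})$ for any $x\in A$ with $\phi(x)=d$, which was computed right before the definition of $D^{x}_{d,n}$). For the `if' direction I set $e=d$, so that $D^{e}_{d,n}=D$, and, writing $M=\tfrac{n}{D}$, I verify the two conjuncts by truncated integer arithmetic: the first reads $d=\min(\tfrac{d}{D}\,D,\,n)=\min(d,n)=d$, while the second reads, for each $k\in\{0,\dots,M\}$, $\neg(\min(kD,n))=\min((M-k)D,n)$, both sides equalling $n-kD$ because $kD\leq MD=n$. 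Thus $e=d$ satisfies $\alpha^{n}_{d}$.

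For the `only if' direction I suppose $S_n\models\alpha^{n}_{d}(e)$ and set $p=D^{e}_{d,n}\in\{0,\dots,n\}$ and $M=\tfrac{n}{D}$. Unwinding negation and truncated multiplication in $S_n$, the second conjunct asserts that $\min(kp,n)+\min((M-k)p,n)=n$ for every $k\in\{0,\dots,M\}$. Taking $k=0$ gives $\min(Mp,n)=n$, hence $Mp\geq n=MD$ and so $p\geq D\geq 1$; if $M\geq 2$, taking $k=1$ gives $p+\min((M-1)p,n)=n$, which forces $(M-1)p<n$ and therefore $Mp=n$, i.e. $p=D$, while for $M=1$ the case $k=0$ already forces $p=n=D$. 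With $p=D$ in hand, the first conjunct gives $e=\min(\tfrac{d}{D}\,p,\,n)=\min(d,n)=d$, as required. The feature that makes the argument go through cleanly is that this combinatorial constraint on $p$ never requires knowing the value of the term $D^{e}_{d,n}$ for $e\neq d$ (which would entail a delicate good-sequence computation): it only uses that $p$ is \emph{some} element of $\{0,\dots,n\}$. The main, though entirely elementary, step is therefore the verification that these $M+1$ identities force $p=D$; everything else is bookkeeping with the homomorphism $\phi$ and with truncated sums in $S_n$.
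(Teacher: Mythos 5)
Your proof is correct and follows essentially the same route as the paper's: both reduce the statement through the projection homomorphism $\phi\colon A\to S_n$ (using that $\equiv^{n}_{Rad}$ becomes equality in $S_n$ and that the term $D^{x}_{d,n}$ commutes with $\phi$) and then settle it by arithmetic in $S_n$, with B\'ezout giving the value $D$ of the term in the forward direction and the $k=0,1$ conjuncts forcing the unknown value $p=\phi(D^{x}_{d,n})$ to equal $D$ in the converse. The only difference is one of care rather than substance: you treat the truncated operations and the edge cases ($M=1$, $d=0$) explicitly, where the paper's computation is written as if the sums were untruncated.
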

\begin{proof}
Let us use the notation introduced before the statement of the Proposition.  
If $x\in \textrm{Fin}_d^n(A)$, that is, $\phi(x)=d$, then 
$$\phi(D_{d,x}^{n})=\xi_{(d,n)}d-\chi_{(d,n)}n=D.$$
Thus,
$$\phi(x)=\frac{d}{D}\phi(D_{d,n}^{x}).$$
Further,
$$\phi(\neg k D_{d,n}^{x})=n-kD=(\frac{n}{D}-k)\phi(D_{d,n}^{x})$$
for every $k=0,\dots,\frac{n}{D}$. So, by the above remarks, $x$ satisfies $\alpha_{d}^{n}$. 

Conversely, let $x=(m,g)$ be an element of $A$ (regarded as a subalgebra of $\mathbb{Z}\times_{\textrm{lex}} G$ via the embedding $g\circ f$). If $x$ satisfies $\alpha_{d}^{n}$ then, since $\phi:A\to S_{n}$ is a MV-algebra homomorphism, we have that 
$$m=\frac{d}{D}\phi(D_{d,n}^{x}) \textrm{ and } n-k\phi(D_{d,n}^{x})=(\frac{n}{D} - k)\phi(D_{d,n}^{x})$$
in $S_{n}$ for every $k=0,\dots,\frac{n}{D}$. In particular, 
$$m=\frac{d}{D}\phi(D_{d,n}^{x}) \textrm{ and } n-\phi(D_{d,n}^{x})=\frac{n}{D}\phi(D_{d,n}^{x})-\phi(D_{d,n}^{x})\Rightarrow$$
$$m=\frac{d}{D}\phi(D_{d,n}^{x}) \textrm{ and } D=\phi(D_{d,n}^{x})\Rightarrow$$
$$m=d.$$
Hence, the element $x$ is in $\textrm{Fin}_d^n(A)$.
\end{proof}

Given an arbitrary MV-algebra $A$, we use the expression $x\in \textrm{Fin}_{d}^n(A)$ as an abbreviation for the condition $x\in [[x.\alpha_{d}^{n}(x)]]_{A}$. By the proposition, this notation agrees with the other notation $\textrm{Fin}_{d}^n(A)=\phi^{-1}(d)$ introduced above for a local MV-algebra $A$ in $V$. 

\begin{obs}
It is important to notice that, unless $n$ is the rank of $A$, the condition $x\in \textrm{Fin}_{d}^n(A)$ is \emph{not} equivalent to the condition $(\exists y)(y\in \textrm{Fin}_{1}^n \wedge x=dy)$. Indeed, $A$ is only \emph{contained} in $\Gamma(\mathbb{Z}\times_{\textrm{lex}} G,(n,0))$ so $\phi^{-1}(1)$ could for instance be empty.
\end{obs}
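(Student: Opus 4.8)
The right-to-left implication is unconditional and should be dispatched first: if $y\in \textrm{Fin}_1^n(A)$ and $x=dy$, then since $\phi\colon A\to S_n$ is an MV-algebra homomorphism we have $\phi(x)=d\cdot\phi(y)=d$, so $x\in\textrm{Fin}_d^n(A)$ by Proposition \ref{pro:alpha-fin}. The substance of the remark is therefore the failure of the converse, and by Proposition \ref{pro:alpha-fin} this amounts to exhibiting a local MV-algebra $A$ in some Komori variety $V$ with invariant $n$, an index $d$, and an element $x$ with $\phi(x)=d$ for which there is \emph{no} $y\in A$ with $\phi(y)=1$; that is, for which $\phi^{-1}(1)=\textrm{Fin}_1^n(A)=\emptyset$.

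The structural reason such an $A$ exists is exactly the one sketched in the remark. If $A$ has finite rank $k$ with $k<n$, then by Theorem \ref{thm:representation local finite rank} and the embeddings $A\simeq\Gamma(\mathbb{Z}\times_{\textrm{lex}}G,(k,g))\stackrel{f}{\hookrightarrow}\Gamma(\mathbb{Z}\times_{\textrm{lex}}G,(k,0))\stackrel{g}{\hookrightarrow}\Gamma(\mathbb{Z}\times_{\textrm{lex}}G,(n,0))$ described above, the composite $g\circ f$ identifies $A$ with a subalgebra of $\Gamma(\mathbb{Z}\times_{\textrm{lex}}G,(n,0))$ in which $g$ is multiplication by the scalar $\frac{n}{k}$. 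Consequently the first component of every element of $g(f(A))$ is a multiple of $\frac{n}{k}$, so the image of $\phi$ is contained in $\{0,\frac{n}{k},\frac{2n}{k},\dots,n\}$. Since $k<n$ forces $\frac{n}{k}\geq 2$, the value $1$ lies outside this image, whence $\phi^{-1}(1)=\emptyset$, exactly as required. This is also why the hypothesis $n\neq\textrm{rank}(A)$ cannot be dropped: when $n=k$ the scalar $\frac{n}{k}$ equals $1$ and this obstruction disappears.

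It remains to exhibit a concrete witness. I would take $V=V(S_n)$ for some $n>1$ (whose invariant is $n$) and $A=S_k$ with $k\mid n$ and $k<n$; by Komori's characterization of subvarieties (cf. \cite{Komori}) $S_k$ lies in $V(S_n)$, and it is local of rank $k$. The simplest instance is $n=2$, $k=1$: here $A=S_1=\{0,1\}$, the $\ell$-group $G$ is trivial, and $g\circ f$ embeds $A$ into $S_2=\{0,1,2\}$ by $0\mapsto 0$, $1\mapsto 2$. Thus $\phi^{-1}(0)=\{0\}$, $\phi^{-1}(2)=\{1\}$ and $\phi^{-1}(1)=\emptyset$. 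Taking $d=2$ and $x=1$, we have $\phi(x)=2$, so $x\in\textrm{Fin}_2^2(A)$, whereas the condition $(\exists y)(y\in\textrm{Fin}_1^2\wedge x=2y)$ fails for want of any $y$ with $\phi(y)=1$. This establishes the claimed non-equivalence. There is no serious obstacle here; the only point requiring care is to keep track of the scalar $\frac{n}{k}$ introduced by $g$, since it is precisely this scaling that empties $\textrm{Fin}_1^n(A)$ while leaving higher classes such as $\textrm{Fin}_d^n(A)$ inhabited.
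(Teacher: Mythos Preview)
Your argument is correct and follows exactly the approach indicated in the paper's one-line justification: the paper merely observes that $A$ is only \emph{contained} in $\Gamma(\mathbb{Z}\times_{\textrm{lex}} G,(n,0))$ so that $\phi^{-1}(1)$ may be empty, and you spell this out by tracking the scalar $\frac{n}{k}$ through the embedding $g$ and supplying the explicit witness $A=S_1\subset S_2$. There is nothing to add.
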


\subsection{The theory $\mathbb{L}oc_V^2$}

Let us consider the geometric sequent
$$\rho_n:(\top\vdash_{x} \bigvee_{d=0}^{n}x\in \textrm{Fin}_d^n),$$
and call $\mathbb{L}oc_V^2$ the quotient of $\mathbb{T}_V$ obtained by adding the sequents $\rho_n$ and NT. This notation is justified by the following

\begin{theorem*}\label{thm:axiom2_V}
Let $A$ be an MV-algebra in $V$. Then the following conditions are equivalent:
\begin{enumerate}[(i)]
\item $A$ is a local MV-algebra;
\item $A$ is a model of $\mathbb{L}oc_V^2$.
\end{enumerate}
\end{theorem*}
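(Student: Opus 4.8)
The plan is to prove the two implications separately, drawing on the representation of local MV-algebras of finite rank (Theorems \ref{thm:local_in_variety} and \ref{thm:representation local finite rank}), the fact that the Horn formula $\alpha_d^n$ captures the radical class $\textrm{Fin}_d^n=\phi^{-1}(d)$ (Proposition \ref{pro:alpha-fin}), and the closure properties of the radical collected in Lemma \ref{lmm:rad oplus closed}. Throughout, $n$ denotes the invariant of $V$ and, when $A$ is local of finite rank, $\phi\colon A\to S_n$ the canonical projection obtained by embedding $A$ into $\Gamma(\mathbb{Z}\times_{\textrm{lex}}G,(n,0))$.

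For the implication (i)$\Rightarrow$(ii), suppose $A$ is local. Then $A$ is non-trivial, so NT holds. By Theorem \ref{thm:local_in_variety} the algebra $A$ has finite rank dividing $n$, so by Theorem \ref{thm:representation local finite rank} the projection $\phi\colon A\to S_n$ is a well-defined (total) MV-algebra homomorphism. Hence every $x\in A$ satisfies $\phi(x)=d$ for a unique $d\in\{0,\ldots,n\}$, and by Proposition \ref{pro:alpha-fin} this is equivalent to $x\in\textrm{Fin}_d^n$. Therefore the disjunction in $\rho_n$ is satisfied at every $x$, so $A$ is a model of $\mathbb{L}oc_V^2$.

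For the converse (ii)$\Rightarrow$(i), I would verify the geometric characterization of locality directly: $A$ is non-trivial by NT, and I must show that each $x\in A$ satisfies $\textrm{ord}(x)<\infty$ or $\textrm{ord}(\neg x)<\infty$. Fix $x$; by $\rho_n$ there is $d\in\{0,\ldots,n\}$ with $x\in\textrm{Fin}_d^n$. If $d=0$, then by definition $((n+1)x)^2=0$, i.e. $x\in\textrm{Rad}(A)$, and applying Lemma \ref{lmm:rad oplus closed}(ix) to $\neg x$ gives $2\neg x=1$, so $\textrm{ord}(\neg x)\leq 2$. If $d\geq 1$, I claim $\textrm{ord}(x)<\infty$. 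Setting $D=\textrm{g.c.d.}(d,n)$ and unwinding $\alpha_d^n$, the conjunct indexed by $k=\frac{n}{D}$ asserts $\neg(\frac{n}{D}D_{d,n}^x)\equiv^n_{Rad}0$, i.e. $\frac{n}{D}D_{d,n}^x$ lies in the coradical, so Lemma \ref{lmm:rad oplus closed}(ix) and (i) yield $(n+1)D_{d,n}^x=1$; writing $w=\frac{d}{D}D_{d,n}^x$ and using $\frac{d}{D}\geq 1$ and monotonicity we get $(n+1)w=1$. The remaining conjunct $x\equiv^n_{Rad}w$ means that the MV-distance $d(x,w)$ lies in $\textrm{Rad}(A)$; since $x\oplus d(x,w)\geq x\vee w\geq w$ and $(n+1)(a\oplus b)=(n+1)a\oplus(n+1)b$, we obtain $(n+1)x\oplus(n+1)d(x,w)=1$ with $(n+1)d(x,w)\in\textrm{Rad}(A)$ by Lemma \ref{lmm:rad oplus closed}(vii). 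Passing to $A/\textrm{Rad}(A)$ forces $(n+1)x$ into the coradical, whence $2(n+1)x=1$ by Lemma \ref{lmm:rad oplus closed}(ix); thus $\textrm{ord}(x)\leq 2(n+1)<\infty$. In every case the dichotomy holds, so $A$ is local.

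The main obstacle is the case $d\geq 1$ of the converse: one must extract the order information encoded in the B\'ezout term $D_{d,n}^x$ and the relation $\equiv^n_{Rad}$ while working in an \emph{arbitrary} algebra of $V$, without the finite-rank representation (which is precisely what we are trying to conclude). A subtle point worth flagging is that this argument yields only that $(n+1)x$ belongs to the coradical, not the stronger equality $(n+1)x=1$ of the sequent $\sigma_n$ axiomatizing $\mathbb{L}oc_V^1$; indeed the latter can fail when $d\nmid n$. This is why I prefer to establish locality directly through the order dichotomy rather than attempt to derive $\sigma_n$ from $\rho_n$; the coincidence with the first axiomatization, and hence with the set-based models of $\mathbb{L}oc_V^1$ described in Proposition \ref{pro:Lov_V->local}, then follows a posteriori.
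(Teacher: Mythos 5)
Your proof is correct in both directions. The implication (i)$\Rightarrow$(ii) is essentially the paper's: Theorems \ref{thm:local_in_variety} and \ref{thm:representation local finite rank} plus Proposition \ref{pro:alpha-fin}. For (ii)$\Rightarrow$(i) you take a genuinely different route: you verify the order dichotomy defining locality directly from $\rho_n$, whereas the paper derives the sequent $\sigma_n$ from $\rho_n$ (using the $k=0$ conjunct of $\alpha_d^n$, the fact that $\equiv^n_{Rad}$ is a congruence, and Lemma \ref{lmm:rad oplus closed}(ix) and (i)) and then invokes the constructive direction of Proposition \ref{pro:Lov_V->local}. Both arguments extract the same information from $\alpha_d^n$; yours is self-contained at the level of elements, while the paper's yields the stronger and reusable fact that $\mathbb{L}oc_V^2$ is constructively a quotient of $\mathbb{L}oc_V^1$, which is exploited later (Remark (a) following Theorem \ref{thm:ax_1=ax_2}, and the identification of the Grothendieck topologies $J_1=J_2$). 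Your route loses this constructive by-product: with your proof, the comparison of the two axiomatizations would rest entirely on the non-constructive completeness argument of Theorem \ref{thm:ax_1=ax_2}.

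However, your closing ``subtle point'' is false and should be deleted. You claim the argument yields only that $(n+1)x$ lies in the coradical, and that the equality $(n+1)x=1$ ``can fail when $d\nmid n$''. This contradicts Lemma \ref{lmm:rad oplus closed}(i), which is provable in $\mathbb{T}_V$ (hence valid in \emph{every} algebra of $V$, with no locality assumption) and which you yourself apply to $D_{d,n}^x$ earlier in the same paragraph: from your own conclusion $2(n+1)x=1$, Lemma \ref{lmm:rad oplus closed}(i) with $k=2(n+1)$ gives exactly $(n+1)x=1$. So $\sigma_n$ \emph{is} derivable from $\rho_n$ over $\mathbb{T}_V$ --- this is precisely the paper's proof --- and your stated reason for preferring the direct verification rests on a false premise. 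What genuinely fails (as the paper notes in Remark (b) following Theorem \ref{thm:ax_1=ax_2}) is the converse entailment $\sigma_n \vdash \rho_n$ over $\mathbb{T}_V$, witnessed by $x=(\frac{1}{2},\frac{1}{4})$ in $S_4\times S_4$; that example does not bear on the direction at issue here.
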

\begin{proof}
The direction (i) $ \Rightarrow$ (ii) follows from Theorem \ref{thm:representation local finite rank} and the discussion following it. 

To prove the (ii) $ \Rightarrow$ (i) direction, we have to verify that if $A$ is a model of $\mathbb{L}oc_V^2$ then it is local. For this, it suffices to verify, thanks to Proposition \ref{pro:Lov_V->local}, that the theory $\mathbb{L}oc_V^2$ is a quotient of $\mathbb{L}oc_V^1$, in other words that the sequent $\sigma_n$ is provable in $\mathbb{L}oc_V^2$. We argue informally as follows.
If $x\in \textrm{Fin}^n_0$, then by definition $((n+1)x)^2=0$. If $x\in \textrm{Fin}^n_d$ with $d\neq 0$ then by definition
\[
(x \equiv^{n}_{Rad} \frac{d}{D}D_{d,n}^{x}) \wedge \bigwedge_{k=0}^{\frac{n}{D}}(\neg kD_{d,n}^{x}\equiv^{n}_{Rad}(\frac{n}{D}-k) D_{d,n}^{x}),
\]
where $D=\textrm{g.c.d.}(d,n)$. In particular, taking $k=0$, we have that 
$$\frac{n}{D}D_{d,n}^{x}\equiv_{Rad}^{n}1.$$ It follows that $$\frac{n}{D}x\equiv_{Rad}^{n} \frac{d}{D}(\frac{n}{D}D_{d,n}^{x}) \equiv_{Rad}^{n}1.$$

So $\frac{n}{D}x \equiv_{Rad}^{n}1$, whence by Lemma \ref{lmm:rad oplus closed}(ix) $2\frac{n}{D}x=1$, which in turn implies, by Lemma \ref{lmm:rad oplus closed}(i), that $(n+1)x=1$.

This shows that the algebra $A$ is local. 
\end{proof}

As shown by the following theorem, the two axiomatizations $\mathbb{L}oc_V^1$ and $\mathbb{L}oc_V^2$ for the class of local MV-algebras in a Komori variety $V$ are actually equivalent.

\begin{theorem*}\label{thm:ax_1=ax_2}
The theory $\mathbb{L}oc_V^1$ is equivalent to the theory $\mathbb{L}oc_V^2$.
\end{theorem*}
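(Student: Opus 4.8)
The plan is to prove that the two quotients $\mathbb{L}oc_V^1$ and $\mathbb{L}oc_V^2$ of $\mathbb{T}_V$ prove exactly the same geometric sequents; by the duality theorem (Theorem \ref{thm:DualityTheorem}) this is the same as saying that they are associated with the same Grothendieck topology on $\textrm{f.p.}\mathbb{T}_V$-mod$(\mathbf{Set})^{\textrm{op}}$, which is the equality of Grothendieck topologies referred to in the introduction. Since both theories are obtained from $\mathbb{T}_V$ by adjoining the axiom NT together with $\sigma_n$ (resp. $\rho_n$), it suffices to show that $\sigma_n$ is provable in $\mathbb{L}oc_V^2$ and that $\rho_n$ is provable in $\mathbb{L}oc_V^1$.

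The first of these two facts is constructive and has essentially already been established: in the proof of Theorem \ref{thm:axiom2_V} we verified that the sequent $\sigma_n$ is provable in $\mathbb{L}oc_V^2$, so that $\mathbb{L}oc_V^2$ is a quotient of $\mathbb{L}oc_V^1$. Thus only the reverse inclusion, namely the provability of $\rho_n$ in $\mathbb{L}oc_V^1$, remains.

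For this converse I would argue semantically, via a completeness theorem. The key observation is that both $\mathbb{L}oc_V^1$ and $\mathbb{L}oc_V^2$ are \emph{coherent} theories: indeed $\mathbb{T}_V$ is algebraic, the axiom $\sigma_n$ is a disjunction of two equations, NT is coherent, and $\rho_n$ is a \emph{finite} disjunction $\bigvee_{d=0}^{n} x\in\textrm{Fin}_d^n$ of Horn formulae. By Deligne's theorem, the classifying topos of a coherent theory has enough set-points, so a geometric sequent is provable in such a theory if and only if it is valid in all of its set-based models. Now, by Proposition \ref{pro:Lov_V->local} the set-based models of $\mathbb{L}oc_V^1$ are exactly the local MV-algebras in $V$, and by the implication (i) $\Rightarrow$ (ii) of Theorem \ref{thm:axiom2_V} (or directly, using Proposition \ref{pro:alpha-fin} together with the fact that the projection $\phi:A\to S_n$ sends each element into some $\textrm{Fin}_d^n(A)$) every such algebra is a model of $\mathbb{L}oc_V^2$ and hence satisfies $\rho_n$. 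Therefore $\rho_n$ holds in all set-based models of $\mathbb{L}oc_V^1$ and is accordingly provable in it. Combining the two directions, the theories prove the same geometric sequents and are thus equivalent.

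The main obstacle, and the reason the statement is marked with a $*$, is precisely the passage from ``same set-based models'' to ``same provable sequents'': it rests on the availability of enough set-points for the relevant coherent classifying toposes, i.e. on Deligne's theorem, which is non-constructive, and on the non-constructive ``if'' direction of Proposition \ref{pro:Lov_V->local} (that every set-based model of $\mathbb{L}oc_V^1$ is genuinely local, of finite rank, so that Proposition \ref{pro:alpha-fin} applies). The direction $\mathbb{L}oc_V^2\vdash\sigma_n$, by contrast, is entirely constructive, so it is only the inclusion $\mathbb{L}oc_V^1\vdash\rho_n$ that genuinely requires the axiom of choice.
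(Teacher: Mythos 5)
Your proof is correct and takes essentially the same route as the paper: the paper likewise deduces the equivalence from the fact that $\mathbb{L}oc_V^1$ and $\mathbb{L}oc_V^2$ are coherent theories with the same set-based models (via Proposition \ref{pro:Lov_V->local} and Theorem \ref{thm:axiom2_V}), invoking the non-constructive classical completeness of coherent logic (Corollary D1.5.10 of \cite{SE}, of which your appeal to Deligne's theorem on enough points is just the topos-theoretic formulation). Your explicit splitting into the constructive direction ($\sigma_n$ provable in $\mathbb{L}oc_V^2$) and the non-constructive one ($\rho_n$ provable in $\mathbb{L}oc_V^1$) matches exactly the remark the paper makes immediately after the theorem.
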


\begin{proof}
By Theorem \ref{thm:axiom2_V} and Proposition \ref{pro:Lov_V->local}, the theories $\mathbb{L}oc_V^1$ and $\mathbb{L}oc_V^2$ have the same set-based models. Since they are both coherent theories, it follows from the classical (non-constructive) completeness for coherent logic (cf. Corollary D1.5.10 \cite{SE}) that they are syntactically equivalent (i.e., any coherent sequent over the signature of $\mathbb MV$ which is provable in $\mathbb{L}oc_V^1$ is provable in $\mathbb{L}oc_V^2$ and vice versa). 
\end{proof}

\begin{remarks}
\begin{enumerate}[(a)]
\item The non-constructive part of the theorem is the statement that the theory $\mathbb{L}oc_V^1$ is a quotient of $\mathbb{L}oc_V^2$, while the fact that $\mathbb{L}oc_V^2$ is a quotient of $\mathbb{L}oc_V^1$ is fully constructive (cf. the proof of Theorem \ref{thm:axiom2_V}). 

\item The sequent
\[
(((n+1)x)^2=0\vee (n+1)x=1) \vdash_{x} \bigvee_{d=0}^{n}x\in \textrm{Fin}_d^n)
\]
is \emph{not} provable in ${\mathbb T}_{V}$ in general. Indeed, take for instance $V=V(S_4)$ and the element $x:=(\frac{1}{2}, \frac{1}{4})$ of the algebra $A:=S_4 \times S_4$ in $V$. Note that $\textrm{Fin}_{d}(A)=\{(d,d)\}$ for any $d$. We clearly have that $(n+1)x=1$ but $x\notin \textrm{Fin}_{d}(A)$ for all $d$.    

\end{enumerate}
\end{remarks}

\subsection{Rigidity of the Grothendieck topology associated with $\mathbb{L}oc_V^2$}

In this section, we shall prove that the Grothendieck topology associated with the theory $\mathbb{L}oc_V^2$ as a quotient of the theory ${\mathbb T}_{V}$ is rigid. From this we shall deduce that the theory $\mathbb{L}oc_V^2$ is of presheaf type and that its finitely presentable models are precisely the local MV-algebras that are finitely presented as models of the theory ${\mathbb T}_{V}$.

Let us begin by proving that the partition determined by the sequent $\rho_n$ is `compatible' with respect to the MV-operations. In this respect, the following lemma is useful.

\begin{lemma}\label{lmm:divisibility}
Let $A$ be an MV-algebra and $(G, u)$ be the $\ell$-group with strong unit corresponding to it via Mundici's equivalence. Then, for any natural number $m$, an element $x$ of $A$ satisfies the condition $\neg x=(m-1)x$ in $A$ if and only if $mx=u$ in $G$ (where the addition here is taken in the group $G$). In this case, for every $k=0, \ldots, m$, $\neg (kx)=(m-k)x$ in $A$. 
\end{lemma}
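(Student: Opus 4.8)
The plan is to pass through Mundici's equivalence and reduce everything to elementary computations in the $\ell$-group $G$. Writing $A=\Gamma(G,u)$, we have $0\leq x\leq u$, $\neg x=u-x$ and $x\oplus y=\inf(x+y,u)$, all the sums on the right being taken in $G$. The one preliminary fact I would record is that the $k$-fold MV-sum $x\oplus\cdots\oplus x$ equals $\inf(kx,u)$, where from now on $kx$ denotes the $k$-fold sum computed in $G$; this follows by induction on $k$ from the identities $\inf(kx,u)+x=\inf((k+1)x,u+x)$ and $u\leq u+x$. With this identity in hand, the hypothesis that $\neg x=(m-1)x$ holds in $A$ becomes the equation $u-x=\inf((m-1)x,u)$ in $G$.

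The reverse implication is then immediate: if $mx=u$ in $G$, then $(m-1)x=u-x\leq u$ (as $x\geq 0$), so $\inf((m-1)x,u)=(m-1)x=u-x=\neg x$, which is exactly the required MV-identity.

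For the forward implication the real content, and the only genuinely delicate point, is to show that the group multiple $(m-1)x$ does not overshoot $u$. Setting $a=(m-1)x$ in $G$, from $u-x=\inf(a,u)$ I would first deduce $x=u-\inf(a,u)=\sup(u-a,0)$, the positive part $(u-a)^{+}$. Writing $q=\sup(a-u,0)\geq 0$ for the corresponding negative part $(u-a)^{-}$, the elements $x=(u-a)^{+}$ and $q$ are disjoint, i.e.\ $\inf(x,q)=0$. I would then invoke the standard $\ell$-group fact that disjoint positive elements remain disjoint under positive integer multiples, which gives $\inf(a,q)=\inf((m-1)x,q)=0$. On the other hand, since $u\geq 0$ and $a\geq 0$ we have $q=\sup(a-u,0)\leq a$, so $\inf(a,q)=q$. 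Comparing the two equalities forces $q=0$, that is $a=(m-1)x\leq u$; hence $\inf(a,u)=a=u-x$, which rearranges to $mx=u$ in $G$, as desired.

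Finally, for the last assertion of the lemma, once $mx=u$ in $G$ we have $kx\leq mx=u$ for every $0\leq k\leq m$ (again using $x\geq 0$), so the $k$-fold MV-sum $\inf(kx,u)$ equals the group element $kx$. Consequently $\neg(kx)=u-kx=mx-kx=(m-k)x$, and since $(m-k)x\leq u$ this group element is in turn the $(m-k)$-fold MV-sum; thus $\neg(kx)=(m-k)x$ holds in $A$, completing the proof.
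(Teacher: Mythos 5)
Your proof is correct and follows essentially the same route as the paper's: both pass to the $\ell$-u group via Mundici's equivalence, translate the MV-identity $\neg x=(m-1)x$ into a lattice-group equation, and then conclude via the standard fact that disjoint positive elements remain disjoint under positive integer multiples. The only difference is bookkeeping: the paper rewrites the hypothesis as $\inf(mx,u+x)=u$ and applies disjointness of the pair $(mx-u,\,x)$ to their $m$-fold multiples, whereas you split $u-(m-1)x$ into its positive and negative parts $x$ and $q$ and show $q=0$; these are two renderings of the same idea that the overshoot past $u$ must vanish.
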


\begin{proof}
The MV-algebra $A$ can be identified with the unit interval $[0,u]$ of the group $G$. Recall that $x\oplus y=\inf(x+y ,u)$, for any $x, y\in A$. Now, $\neg x=(m-1)x$ in $A$ if and only if $\inf((m-1)x, u)=u-x$, equivalently if and only if $\inf(mx, u+x)=u$. Consider the Horn sequent 
\[
\sigma:=(0\leq x\leq u\wedge\inf(mx, u+x)=u \vdash_{x} mx=u)
\]
in the theory of $\ell$-u groups. Let us show that it is provable in the theory of $\ell$-u groups.\footnote{In fact, the following proof does not actually use the hypothesis that the element $u$ is a strong unit, but only the fact that $u\geq 0$.} Let us argue informally in terms of elements. Given an element $x$ such that $0\leq x \leq u$, if $\inf(mx,u+x)=u$ then $mx\geq u$, that is, $mx-u\geq 0$. Further,
$$\inf(mx,u+x)=u\Leftrightarrow \inf(mx-u,x)=0\Leftrightarrow$$
$$\inf(k(mx-u),kx)=0\textrm{, for every }k\in \mathbb{N}.$$
Since $mx-u\geq 0$, we have that $mx-u\leq k(mx-u)$, for every $k\in \mathbb{N}$. Applying this in the case $k=m$, we obtain that
 $$mx-u\leq m(mx-u)\quad \textrm{    and         }\quad mx-u\leq mx$$
$$\Rightarrow mx-u\leq \inf(m(mx-u),mx)=0$$
$$\Rightarrow mx-u=0$$
$$\Rightarrow mx=u.$$

Now, if $mx=u$ then for any $k=0, \ldots, m$, $\neg(kx)=u-kx=mx-kx=(m-k)x$. This completes the proof of the lemma.
\end{proof}

\begin{remarks}\label{rem:lemma}
\begin{enumerate}[(a)]

\item The lemma clearly admits a syntactic formulation in terms of the interpretation functor from MV-algebras to $\ell$-u groups defined in section 4.2 of \cite{Russo}.

\item By the lemma, the formula
\[
\alpha^{n}_{d}(x):=(x \equiv^{n}_{Rad} \frac{d}{D}D_{d,n}^{x}) \wedge (\bigwedge_{k=0}^{\frac{n}{D}}(\neg kD_{d,n}^{x}\equiv^{n}_{Rad}(\frac{n}{D}-k) D_{d,n}^{x}))
\]
is provably equivalent in ${\mathbb T}_{V}$ to the simpler formula
\[
(x \equiv^{n}_{Rad} \frac{d}{D}D_{d,n}^{x}) \wedge (\neg D_{d,n}^{x}\equiv^{n}_{Rad}(\frac{n}{D}-1) D_{d,n}^{x}).
\]
\end{enumerate}
\end{remarks}

\begin{proposition}\label{pro:compatability}
The sequents

\begin{equation}
(x\in \textrm{Fin}_{d}^{n} \wedge y\in \textrm{Fin}_{b}^{n} \vdash_{x, y} x\oplus y \in \textrm{Fin}_{d\oplus b}^{n})
\end{equation}
(for each $d, b\in \{0, \dots, n\}$ and where with $d\oplus b$ we indicate the sum in $S_n=\{0,1,\ldots, n\}$) and 
\begin{equation}
(x\in \textrm{Fin}_{d}^{n} \vdash_{x} \neg x \in \textrm{Fin}_{n-d}^{n})
\end{equation} 
(for each $d\in \{0, \ldots, n\}$) are provable in the theory $\mathbb{T}_V$.
\end{proposition}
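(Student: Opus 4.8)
The plan is to reduce the provability of the two sequents in $\mathbb{T}_V$ to a purely semantic verification in local MV-algebras in $V$, exploiting the fact established in Proposition \ref{pro: (Loc_V)c=T_V} that $\mathbb{T}_V$ is the cartesianization of $\mathbb{L}oc_V^1$. The first observation is that each formula $\alpha_d^n(x)$ is a finite conjunction of equations (the relation $\equiv^n_{Rad}$ being defined by the equation $((n+1)d(z,w))^2=0$), hence a Horn formula; consequently the sequents $(1)$ and $(2)$ are cartesian sequents over the signature of $\mathbb{MV}$. Therefore, to prove that they are provable in $\mathbb{T}_V$ it suffices, by Proposition \ref{pro: (Loc_V)c=T_V}, to prove that they are provable in $\mathbb{L}oc_V^1$; and since $\mathbb{L}oc_V^1$ is a coherent theory, by the classical completeness theorem for coherent logic it is enough to check that they hold in every set-based model of $\mathbb{L}oc_V^1$, that is, by Proposition \ref{pro:Lov_V->local}, in every local MV-algebra in $V$.

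Next I would carry out this verification. Let $A$ be a local MV-algebra in $V$. By Theorems \ref{thm:local_in_variety} and \ref{thm:representation local finite rank}, $A$ has finite rank and can be realised as a subalgebra of $\Gamma(\mathbb{Z}\times_{\textrm{lex}}G,(n,0))$ in such a way that the natural projection $\phi\colon A\to S_n$ is an MV-algebra homomorphism with $\textrm{Rad}(A)=\phi^{-1}(0)$. By Proposition \ref{pro:alpha-fin}, for every $x\in A$ one has $x\in\textrm{Fin}_d^n(A)$ if and only if $\phi(x)=d$. Both sequents then follow at once from $\phi$ being a homomorphism: if $\phi(x)=d$ and $\phi(y)=b$, then $\phi(x\oplus y)=\phi(x)\oplus\phi(y)=d\oplus b$ in $S_n$, so $x\oplus y\in\textrm{Fin}_{d\oplus b}^n(A)$; and $\phi(\neg x)=\neg\phi(x)=n-d$ in $S_n$, so $\neg x\in\textrm{Fin}_{n-d}^n(A)$. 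This establishes $(1)$ and $(2)$ in every local MV-algebra in $V$, which by the reduction above yields their provability in $\mathbb{T}_V$.

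The step I expect to be the main obstacle is not the semantic computation, which is immediate once Proposition \ref{pro:alpha-fin} is available, but rather making the logical reduction airtight: one must check that $\alpha_d^n$ is genuinely Horn so that $(1)$ and $(2)$ count as cartesian sequents, and one must be careful to use the characterisation $\textrm{Fin}_d^n(A)=\phi^{-1}(d)$ of Proposition \ref{pro:alpha-fin} rather than the naive existential description (which, as noted in the remark following that proposition, may fail when $n$ is not the rank of $A$). A secondary point is that this argument is non-constructive, since it passes through the finite-rank representation and classical completeness; a constructive proof would instead argue syntactically inside $\mathbb{T}_V$, replacing $\alpha_d^n$ by the simpler equivalent formula of Remark \ref{rem:lemma}(b) and computing the radical classes of $x\oplus y$ and $\neg x$ directly by means of Lemma \ref{lmm:divisibility} and the good-sequence description of $D_{d,n}^x$.
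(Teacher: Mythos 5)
Your proof is correct, but it takes a genuinely different route from the paper's. The paper does not pass through $\mathbb{L}oc_V^1$ at all: it invokes the completeness of the presheaf-type (indeed algebraic) theory $\mathbb{T}_V$ with respect to its set-based models and then verifies sequents $(1)$ and $(2)$ semantically in \emph{every} MV-algebra $A$ in $V$, not just the local ones. That verification is the substantive part of the paper's proof: since the formulas only involve equalities of radical classes, one reasons in $A/\textrm{Rad}(A)$ and carries out a direct computation in the associated $\ell$-u group (using Lemma \ref{lmm:divisibility}, the B\'ezout coefficients defining $D_{d,n}^{x}$, and torsion-freeness of $\ell$-groups) to show that $D_{d\oplus b,n}^{x\oplus y}$ and $D_{n-d,n}^{\neg x}$ behave as required. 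Your route instead reduces provability in $\mathbb{T}_V$ to provability in $\mathbb{L}oc_V^1$ via the cartesianization result (Proposition \ref{pro: (Loc_V)c=T_V}) --- the same reduction pattern the paper uses for Lemma \ref{lmm:rad oplus closed} --- then to validity in all local MV-algebras in $V$ via classical coherent completeness and Proposition \ref{pro:Lov_V->local}, where the check trivializes because $\textrm{Fin}_d^n(A)=\phi^{-1}(d)$ for an MV-homomorphism $\phi:A\to S_n$ (Proposition \ref{pro:alpha-fin}). What your approach buys is brevity: the semantic verification collapses to the observation that $\phi$ preserves $\oplus$ and $\neg$. What it costs is constructiveness: it is irreducibly non-constructive, since it needs the classical completeness theorem for coherent logic together with the non-constructive facts that every local MV-algebra in $V$ has finite rank and admits the representation of Theorems \ref{thm:local_in_variety} and \ref{thm:representation local finite rank} (already needed for Proposition \ref{pro:alpha-fin} and for the `if' half of Proposition \ref{pro:Lov_V->local}). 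This matters because Proposition \ref{pro:compatability} feeds into the rigidity proof of Theorem \ref{thm:rigid_topology}, which the paper claims to be constructive; the paper's semantic argument over all of $V$ avoids any appeal to the classification of local algebras and, as the authors note, can be replaced by an entirely syntactic proof, whereas your reduction cannot. Your closing remark acknowledges exactly this, so the proposal is sound as a classical proof but would weaken the paper's constructiveness claims if substituted for the original.
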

\begin{proof}
Since the theory $\mathbb{T}_V$ is of presheaf type, we can show  the provability in $\mathbb{T}_V$ of the sequents of type $(1)$ and $(2)$ by verifying semantically their validity in every MV-algebra $A$ in $V$. In fact, the proposition also admits an entirely syntactic proof; we argue semantically just for the sake of better readability. Since sequents $(1)$ and $(2)$ involve equalities between radical classes, we reason as we were in the quotient $A\slash \textrm{Rad}(A)$ but, with an abuse of notation, we indicate radical classes by avoiding the standard notation with square brackets.
\begin{itemize}
\item[(1)] By definition and Remark \ref{rem:lemma}(b), we have that
$$x\in \textrm{Fin}_d^n \Leftrightarrow x=\frac{d}{D}D_{d,n}^{x} \textrm{ and } \neg D_{d,n}^{x}=(\frac{n}{D}-1)D_{d,n}^{x}$$
$$y\in \textrm{Fin}_b^n \Leftrightarrow y=\frac{b}{B}D_{b,n}^{y} \textrm{ and } \neg D_{b,n}^{y}=(\frac{n}{B}-1)D_{b,n}^{y},$$
where $D=\textrm{g.c.d.}(d,n)$ and $B=\textrm{g.c.d.}(b,n)$. 

If $x\in \textrm{Fin}_d^n(A)$ and $y\in \textrm{Fin}_b^n(A)$ then Lemma \ref{lmm:divisibility} implies that 
$$\frac{n}{D}D_{d,n}^{x}=u=\frac{n}{B}D_{b,n}^{y}$$
in the $\ell$-u group associated with $A\slash \textrm{Rad}(A)$ (where all the sums are taken in the $\ell$-u group). It follows in particular that $\frac{d}{D}D_{d,n}^{x} \leq u$ and $\frac{b}{B}D_{b,n}^{y} \leq u$. Now, for any element $z$ of an MV-algebra $M$ with associated $\ell$-u group $(\tilde{M}, u)$, if $kz\leq u$ in the group $\tilde{M}$ then the element $kz=z\oplus \cdots \oplus z$ $k$ times, where the sum is taken in the MV-algebra $M$, coincides with the element $kz=z+ \cdots +z$ $k$ times, where the sum is taken in $\tilde{M}$. Thus, we have that
$$nx=d\frac{n}{D}D_{d,n}^{x} \textrm{ and } ny=b\frac{n}{B}D_{b,n}^{y},$$
where all the sums are taken in the $\ell$-u group. This in turn implies that $$n(x\oplus y)=(d\oplus b)u.$$ Indeed, 
$n(x\oplus y)=\inf(nx+ny, nu)=\inf((d+b)u, nu)=\inf((d+b), n)u,$
where the last equality follows from the fact that the order on $S_{n}$ is total.

By definition, the element $D_{d\oplus b,n}^{x\oplus y}$ is equal to
$$D_{d\oplus b,n}^{x\oplus y}=\xi_{(d\oplus b,n)}(x\oplus y)-\chi_{(d\oplus b,n)}u$$ 
if this element is in $[0, u]$, where $\xi_{(d\oplus b,n)}$ and $\chi_{(d\oplus b,n)}$ are the B\'ezout coefficients of the g.c.d.  of $(d\oplus b)$ and $n$, which we call $C$. To see this, we calculate in the $\ell$-group
$$\frac{n}{C}(\xi_{(d\oplus b,n)}(x\oplus y)-\chi_{(d\oplus b,n)}u) = \frac{\xi_{(d\oplus b,n)}}{C}n(x\oplus y)-\frac{\chi_{(d\oplus b,n)}}{C}nu=$$
$$\frac{\xi_{(d\oplus b,n)}}{C}(d\oplus b)u-\frac{\chi_{(d\oplus b,n)}}{C}nu=$$
$$\frac{\xi_{(d\oplus b,n)}(d\oplus b)-\chi_{(d\oplus b,n)}n}{C}u=u,$$ whence in particular $0\leq \xi_{(d\oplus b,n)}(x\oplus y)-\chi_{(d\oplus b,n)}u \leq u$ since $C\leq n$ and $\ell$-groups are torsion-free.

We can thus conclude that
$$\frac{n}{C} D_{d\oplus b,n}^{x\oplus y}=u,$$
whence by Lemma \ref{lmm:divisibility}, 
$$\neg  D_{d\oplus b,n}^{x\oplus y}=(\frac{n}{C}-1)D_{d\oplus b,n}^{x\oplus y}.$$
Finally, from the equality $n(x\oplus y)=(d\oplus b)u$ it follows that
$$n(x\oplus y)=(d\oplus b)\frac{n}{C}D_{d\oplus b,n}^{x\oplus y},$$ whence, since $\ell$-groups are torsion-free, we have that
$$x\oplus y=\frac{d\oplus b}{C}D_{d\oplus b,n}.$$
So $x\oplus y\in \textrm{Fin}_{d\oplus b}^n(A)$, as required.

\item[(2)] As before, we have that
$$x\in \textrm{Fin}_d^n \Leftrightarrow x=\frac{d}{D}D_{d,n}^{x} \textrm{ and } \neg D_{d,n}^{x}=(\frac{n}{D}-1)D_{d,n}^{x},$$
(where $D=\textrm{g.c.d.}(d, n)$), which implies that
$$\frac{n}{D}D_{d,n}^{x}=u \textrm{ and }nx=du.$$
Thus, if $x\in \textrm{Fin}_d^n(A)$ then 
$$n(\neg x)=n(u-x)=nu-nx=nu-ud=(n-d)u.$$
Further, we have that
$$\frac{n}{D}(\xi_{(n-d,n)}(\neg x)-\chi_{(n-d,n)}u) =\frac{\xi_{(n-d,n)}}{D}n(\neg x)-\frac{\chi_{(n-d,n)}}{D}nu=$$
$$=\frac{\xi_{(n-d,n)}}{D}(n-d)u-\frac{\chi_{(n-d,n)}}{D}nu=$$
$$=\frac{\xi_{(n-d,n)}(n-d)-\chi_{(n-d,n)}n}{D}u=u,$$
where the last equality follows from the fact that $D=g.c.d(n-d, d)$. It follows in particular that $\xi_{(n-d,n)}\neg x-\chi_{(n-d,n)}u\in [0,u]$ and hence that $$\frac{n}{D}D_{n-d,n}^{\neg x}=u.$$
 
By Lemma \ref{lmm:divisibility}, this means that $\neg D_{n-d,n}^{\neg x}=(\frac{n}{D}-1)D_{n-d,n}^{\neg x}$.
Finally, since $\ell$-groups are torsion-free, we have that 
$$n(\neg x)=(n-d)\frac{n}{D}D_{n-d,n}^{\neg x}\Rightarrow \neg x=\frac{n-d}{D}D_{n-d,n}^{\neg x}.$$
Hence, $\neg x\in \textrm{Fin}_{n-d}^n(A)$, as required.

\end{itemize}
\end{proof}

In \cite{DiNola3}, the authors proved, by using the axiom of choice, that every MV-algebra has a greatest local subalgebra (cf. Theorem 3.19 therein). The following proposition represents a constructive version of this result holding for MV-algebras in   a Komori variety $V$.

\begin{proposition}\label{pro:biggest local}
Let $A$ be an MV-algebra in a Komori variety $V$ with invariant $n$. The biggest subalgebra $A_{\textrm{loc}}$ of $A$ that is a set-based model of ${L}oc_V^2$ is given by:
$$A_{\textrm{loc}}=\{x\in A\mid x\in \textrm{Fin}_d^{n}(A) \textrm{ for some }d\in \{0,\dots, n\}\}$$
\end{proposition}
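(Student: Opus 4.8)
The plan is to establish three things: that $A_{\textrm{loc}}=\bigcup_{d=0}^{n}\textrm{Fin}_d^{n}(A)$ is a subalgebra of $A$, that it is a set-based model of $\mathbb{L}oc_V^2$, and that it contains every subalgebra of $A$ which is a model of $\mathbb{L}oc_V^2$; the last two together say that it is the biggest such subalgebra. The entire argument rests on two ingredients that are already available: the compatibility sequents of Proposition \ref{pro:compatability}, and the observation that each formula $\alpha_{d}^{n}(x)$ defining the predicate $x\in \textrm{Fin}_d^{n}$ is a \emph{Horn} formula --- indeed, once the relation $\equiv^{n}_{Rad}$ and the term $D_{d,n}^{x}$ are unwound, it is a finite conjunction of equations between MV-terms in the single variable $x$. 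This is decisive because a conjunction of equations is both preserved and reflected by any MV-subalgebra inclusion $B\hookrightarrow A$, the latter being an injective homomorphism: for $b\in B$ one has $B\models \alpha_{d}^{n}(b)$ if and only if $A\models \alpha_{d}^{n}(b)$. In particular, for any subalgebra $B$ of $A$, membership of an element of $B$ in $\textrm{Fin}_d^{n}(B)$ coincides with its membership in $\textrm{Fin}_d^{n}(A)$.

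First I would verify that $A_{\textrm{loc}}$ is a subalgebra. It contains $0$, since $((n+1)\cdot 0)^2=0$ gives $0\in \textrm{Fin}_0^{n}(A)$; closure under $\neg$ and under $\oplus$ is precisely the content of sequents $(2)$ and $(1)$ of Proposition \ref{pro:compatability}, which, being provable in $\mathbb{T}_V$, hold in $A$. In passing one gets $1=\neg 0\in \textrm{Fin}_n^{n}(A)$, so $A_{\textrm{loc}}$ contains both $0$ and $1$.

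Next I would check that $A_{\textrm{loc}}$ models $\mathbb{L}oc_V^2$, i.e.\ $\mathbb{T}_V$ together with $\rho_n$ and NT. Since $V$ is a variety and hence closed under subalgebras, $A_{\textrm{loc}}\in V$ and so models $\mathbb{T}_V$. Assuming $A$ non-trivial (otherwise no subalgebra can satisfy NT and there is nothing to prove), the elements $0$ and $1$ remain distinct in $A_{\textrm{loc}}$, so NT holds. For $\rho_n$, given $x\in A_{\textrm{loc}}$ we have $x\in \textrm{Fin}_d^{n}(A)$ for some $d$ by definition, and the reflection of the Horn formula $\alpha_{d}^{n}$ along $A_{\textrm{loc}}\hookrightarrow A$ yields $x\in \textrm{Fin}_d^{n}(A_{\textrm{loc}})$; hence $\rho_n$ is valid in $A_{\textrm{loc}}$.

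Finally, for maximality, let $B\subseteq A$ be any subalgebra that models $\mathbb{L}oc_V^2$ and let $x\in B$. Since $B\models \rho_n$, there is some $d$ with $B\models \alpha_{d}^{n}(x)$, and preservation of $\alpha_{d}^{n}$ along $B\hookrightarrow A$ gives $A\models \alpha_{d}^{n}(x)$, that is $x\in \textrm{Fin}_d^{n}(A)$, whence $x\in A_{\textrm{loc}}$. Thus $B\subseteq A_{\textrm{loc}}$, completing the proof. I expect the only genuinely delicate point to be confirming that $\alpha_{d}^{n}$ really is a conjunction of atomic formulae and that, consequently, the interpretation of $\textrm{Fin}_d^{n}$ is insensitive to passage to a subalgebra; granting this, the whole statement reduces to Proposition \ref{pro:compatability} and elementary model-theoretic preservation.
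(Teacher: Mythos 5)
Your proof is correct and follows essentially the same route as the paper's: Proposition \ref{pro:compatability} gives closure of $A_{\textrm{loc}}$ under the operations, and maximality follows from the fact that any subalgebra modelling $\mathbb{L}oc_V^2$ satisfies $\rho_n$. The only difference is that you spell out what the paper dismisses as trivial --- namely that the Horn formulae $\alpha_d^n$ are preserved and reflected by subalgebra inclusions, so that $\textrm{Fin}_d^n(A_{\textrm{loc}})$ and $\textrm{Fin}_d^n(B)$ agree with the corresponding traces in $A$ --- which is a worthwhile clarification but not a different argument.
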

\begin{proof}
We know from Proposition \ref{pro:compatability} that $A_{\textrm{loc}}$ is a subalgebra of $A$; trivially, $A_{\textrm{loc}}$ is a model of $\mathbb{L}oc_V^{2}$. Now, let $B$ be a set-based model of $\mathbb{L}oc_V^{2}$ that is a subalgebra of $A$. By Theorem \ref{thm:axiom2_V}, the algebra $B$ satisfies the sequent $\rho_n$; thus, it is contained in $A_{\textrm{loc}}$, as required.
\end{proof}

\begin{theorem}\label{thm:rigid_topology}
The theory $\mathbb{L}oc_V^{2}$ is of presheaf type and the Grothen\-dieck topology associated with it as a quotient of the theory ${\mathbb T}_{V}$ is rigid. In particular, the finitely presentable models of $\mathbb{L}oc_V^{2}$ are precisely the models of $\mathbb{L}oc_V^{2}$ that are finitely presentable as models of the theory ${\mathbb T}_{V}$. 
\end{theorem}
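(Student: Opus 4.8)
The plan is to prove the rigidity of the Grothendieck topology $J_2$ associated with $\mathbb{L}oc_V^2$ as a quotient of $\mathbb{T}_V$; the fact that $\mathbb{L}oc_V^2$ is of presheaf type will then follow at once from the general principle recalled in section \ref{sct:topos_theory}, namely that a rigid topology $J$ on a category $\mathcal{C}$ makes $\mathbf{Sh}(\mathcal{C}, J)$ equivalent to the presheaf topos $[\mathcal{C}_i^{\textrm{op}}, \mathbf{Set}]$ on the full subcategory $\mathcal{C}_i$ of $J$-irreducible objects. Recall that $J_2$ lives on $\mathcal{C}=\textrm{f.p.}\mathbb{T}_V\textrm{-mod}(\mathbf{Set})^{\textrm{op}}$ and, exactly as in the proof of Proposition \ref{pro: J subcanonical}, is generated by the cocoverings induced by the two axioms of $\mathbb{L}oc_V^2$: the sequent NT yields the empty cocovering on the trivial algebra, while $\rho_n$ yields, for every $A\in \textrm{f.p.}\mathbb{T}_V\textrm{-mod}(\mathbf{Set})$ and every $a\in A$, the family $\{A\to A/\alpha_d^n(a)\mid d=0,\ldots,n\}$ of quotient maps imposing the (quantifier-free Horn) conditions $a\in \textrm{Fin}_d^n$. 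I would identify at the outset the candidate $J_2$-irreducible objects as the finitely presentable $\mathbb{T}_V$-models that happen to be models of $\mathbb{L}oc_V^2$, i.e. the local MV-algebras in $V$ finitely presented as $\mathbb{T}_V$-models.

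For the covering half of rigidity, I would take a finitely presentable $\mathbb{T}_V$-model $A$ with generators $x_1,\ldots,x_m$, apply the $\rho_n$-cocovering successively to $x_1,\ldots,x_m$, and form the resulting finite multicomposition; by the transitivity of $J_2$ this produces the $J_2$-covering family $\{A\to A_{\vec d}\mid \vec d\in\{0,\ldots,n\}^m\}$, where $A_{\vec d}$ is the quotient of $A$ forcing $x_i\in \textrm{Fin}_{d_i}^n$ for each $i$. Each $A_{\vec d}$ is again finitely presentable, being a quotient of a finitely presentable algebra by the finitely many equations occurring in the formulae $\alpha_{d_i}^n$; and by the compatibility of the radical classes with the MV-operations (Proposition \ref{pro:compatability}), together with $0\in \textrm{Fin}_0^n$, every term in the generators—hence every element of $A_{\vec d}$—lies in a definite class $\textrm{Fin}_d^n$, so that $A_{\vec d}$ satisfies $\rho_n$ and, when non-trivial, is a model of $\mathbb{L}oc_V^2$. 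Using NT to discard the trivial quotients, $A$ is thus $J_2$-covered by finitely presentable local MV-algebras.

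It remains to prove the irreducibility half: that a finitely presentable $\mathbb{T}_V$-model $B$ which is a model of $\mathbb{L}oc_V^2$ admits no proper $J_2$-covering sieve. The key observation is that in a local algebra the classes $\textrm{Fin}_d^n=\phi^{-1}(d)$ are the disjoint fibres of the projection $\phi:B\to S_n$, so each $b\in B$ lies in exactly one class $\textrm{Fin}_{d_0}^n$; consequently $B$ already satisfies $\alpha_{d_0}^n(b)$, the quotient map $B\to B/\alpha_{d_0}^n(b)$ is an isomorphism, and, since a sieve containing an isomorphism is maximal, the generating $\rho_n$-cocovering at $b$ generates the maximal sieve. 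As $B$ is non-trivial, the NT-cocovering does not apply to it. I would then conclude that every $J_2$-covering sieve of $B$ is maximal by induction on the generation of $J_2$ from this coverage: the base covers of $B$ are maximal by the previous sentence, and in the local-character (transitivity) step any covering sieve $R$ of $B$ available at an earlier stage is maximal by the inductive hypothesis, so it contains $\textrm{id}_B$; applying the local-character condition to $\textrm{id}_B$ then exhibits the refined sieve itself as a cover of $B$ already obtained earlier, whence it too is maximal. This shows that the covering objects of the second paragraph are $J_2$-irreducible, so $J_2$ is rigid.

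I expect this last point—controlling the transitivity of the generated topology so as to rule out proper covers of local algebras—to be the main obstacle, since a priori the local-character axiom could manufacture new covering sieves; the argument hinges on the fact that a local $B$ has \emph{no} proper generating cover, which in turn rests on the disjointness of the radical classes and on each $\alpha_d^n$ being a Horn (hence quotient-defining) formula. Once rigidity is established, the presheaf-type conclusion is immediate, and the final clause follows from Theorem \ref{thm:RigFinPre} applied to $\mathbb{T}=\mathbb{T}_V$ and $\mathbb{T}'=\mathbb{L}oc_V^2$: rigidity forces every finitely presentable $\mathbb{L}oc_V^2$-model to be finitely presentable already as a $\mathbb{T}_V$-model, while the converse inclusion is precisely the identification of the $J_2$-irreducible objects carried out above.
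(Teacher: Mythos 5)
Your overall strategy coincides with the paper's: cover an arbitrary finitely presentable $\mathbb{T}_V$-model by the multicomposition of the $\rho_n$-cocoverings at a finite set of generators, use Proposition \ref{pro:compatability} (together with $0\in \textrm{Fin}_0^n$) to see that the resulting quotients satisfy $\rho_n$, deduce the presheaf-type conclusion from rigidity, and get the final clause from Theorem \ref{thm:RigFinPre}. The one structural difference is that you treat the axiom NT inside the rigidity argument (using the empty cocovering on the trivial algebra to discard trivial leaves, which is a legitimate use of local character), whereas the paper first proves rigidity of the topology $J_2'$ associated with $\mathbb{T}_V\cup\{\rho_n\}$ --- for which the trivial algebra is itself an irreducible object --- and then imports NT via Theorem \ref{thm:quotient_negation}. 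That difference is harmless.

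The genuine gap is in your irreducibility argument, i.e.\ the claim that a finitely presentable $\mathbb{T}_V$-model $B$ which is a model of $\mathbb{L}oc_V^2$ admits no non-maximal $J_2$-covering sieve. Your induction on the generation of $J_2$ treats only two closure operations: sieves on $B$ containing a generating cover, and local character applied to sieves on $B$. But the Grothendieck topology generated by a coverage is also closed under pullback stability, so a covering sieve on $B$ may a priori arise as $h^*(S)$ for $S$ a covering sieve on some \emph{other} object $C$ (possibly a non-local algebra, about which your inductive hypothesis says nothing) and $h$ an arrow of $\mathcal{C}$, that is, an algebra homomorphism $C\to B$. Your induction never addresses this case, and it is exactly the point where, as you yourself suspected, ``the local-character axiom could manufacture new covering sieves''. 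There are two ways to close the hole. (a) Check that the generating coverage is stable: since each $\alpha_d^n$ is a Horn formula, forcing it commutes with pushing forward along homomorphisms, so the pullback of the $\rho_n$-cocovering at $c\in C$ along $h:C\to B$ is (the cosieve generated by) the $\rho_n$-cocovering at $h(c)\in B$, while a homomorphism out of the trivial algebra exists only into trivial algebras; for a stable coverage, closing under maximal sieves and local character alone does yield the whole generated topology, and then your induction goes through. (b) More simply, invoke the standard fact that underlies the paper's own (implicit) step: a set-based model $B$ of the quotient $\mathbb{L}oc_V^2$ corresponds to a $J_2$-continuous flat functor $\textrm{Hom}_{\mathbb{T}_V\textrm{-mod}}(-,B)$ on $\textrm{f.p.}\mathbb{T}_V$-mod$(\mathbf{Set})$, so for any $J_2$-covering cosieve $S$ on $B$ the identity of $B$ factors through some arrow of $S$; since cosieves are closed under postcomposition, $\textrm{id}_B\in S$, i.e.\ $S$ is maximal. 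Without one of these two supplements the rigidity proof is incomplete. A minor further remark: for the converse inclusion in the final clause, the cleanest argument is not the identification of the irreducibles but the observation that $\mathbb{L}oc_V^2$-mod$(\mathbf{Set})$ is closed under filtered colimits in $\mathbb{T}_V$-mod$(\mathbf{Set})$, so any model of $\mathbb{L}oc_V^2$ that is finitely presentable as a $\mathbb{T}_V$-model is finitely presentable as a $\mathbb{L}oc_V^2$-model.
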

\begin{proof}
To prove that the theory $\mathbb{L}oc_V^{2}$ is of presheaf type it is sufficient to show that the topology associated with the quotient $\mathbb{T}_V\cup\{\rho_n\}$ is rigid. Indeed, this implies that the theory ${\mathbb T}_V \cup \{\rho_n \}$ is of presheaf type (cf. Theorem \ref{thm:RigFinPre}). From Theorems \ref{thm:RigFinPre} and \ref{thm:quotient_negation} it will then  follow that the finitely presentable models of $\mathbb{L}oc_V^{2}$ are precisely the models of $\mathbb{L}oc_V^{2}$ that are finitely presentable as models of the theory ${\mathbb T}_{V}$, and hence (again, by Theorem \ref{thm:RigFinPre}) that the topology associated with $\mathbb{L}oc_V^{2}$ as a quotient of $\mathbb{T}_V$ is rigid as well. 

Let $J_2'$ be the topology on ${\textrm{f.p.}\mathbb{T}_{V}\textrm{-mod}(\mathbf{Set})}^{\textrm{op}}$ associated with the theory ${\mathbb T}_V \cup \{\rho_n \}$ as a quotient of $\mathbb{T}_V$. Any $J_2'$-covering sieve contains a finite multicomposition of families of arrows of the following form:
\begin{center}
\begin{tikzpicture}
\node (0) at (0,0) {$A$};
\node (1) at (4,1.5) {$A/(x\in \textrm{Fin}_0^n(A))$};
\node (2) at (4,0) {$A/(x\in \textrm{Fin}_d^n(A))$};
\node (3) at (4,-1.5) {$A/(x\in \textrm{Fin}_n^n(A)),$};
\node (a) at (4,0.9) {$\vdots$};
\node (b) at (4,-0.6) {$\vdots$};
\draw[->] (0) to node [above] {} (1);
\draw[->] (0) to node [above] {} (2);
\draw[->] (0) to node [above] {} (3);
\end{tikzpicture}
\end{center}
where $x$ is any element of $A$ and the expression $(x\in \textrm{Fin}_d^n(A))$ denotes the congruence on $A$ generated by the condition $x\in \textrm{Fin}_d^n(A)$ (this congruence actually exists since this condition amounts to a finite conjunction of equational conditions in the language of MV-algebras). Indeed, ${\mathbb T}_{V}$ is an algebraic theory, so each of the quotients $A/(x\in \textrm{Fin}_i^n(A))$ are finitely presentable models of ${\mathbb T}_{V}$ if $A$ is. The arrows $A\to A/(x\in \textrm{Fin}_i^n(A))$ that occur in the above diagram are therefore surjective (as they are canonical projections). It follows that every $J_2'$-covering sieve contains a family of arrows generating a $J_2'$-covering sieve (given by a finite multicomposition of diagrams of the above form), all of which are surjective. Thus, given a family of generators for $A$, if we choose one of them at each step, the resulting multicomposite family will generate a $J_2'$-covering cosieve and the codomains of all the arrows in it will be generated by elements $x$ each of which is in $\textrm{Fin}_d^n$ for some $d=0,\dots, n$. Because of the compatibility property of the partition induced by the sequent $\rho_n$ (cf. Proposition \ref{pro:compatability}), these algebras are models of the theory $\mathbb{T}_V\cup\{\rho_n\}$ in $\mathbf{Set}$ whence the topology $J_{2}'$ is rigid. 
\end{proof}

\begin{obs}
If $\{\vec{x}.\phi\}$ is a formula presenting a model of $\mathbb{L}oc_V^{2}$, where $\vec{x}=(x_1,\dots,x_k)$, there exists $d_1,\dots,d_k$ natural numbers such that the following sequent is provable in the theory $\mathbb{L}oc_V^{2}$:
$$(\phi\vdash_{\vec{x}} x_1\in \textrm{Fin}_{d_1}^{n}\wedge \dots \wedge x_k\in \textrm{Fin}_{d_k}^{n}).$$
This is a consequence of the fact that the formula $\{\vec{x}.\phi\}$ is $\mathbb{L}oc_V^{2}$-irreducible. 
\end{obs}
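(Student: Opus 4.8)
The plan is to extract the numbers $d_1, \ldots, d_k$ from the covering of $\{\vec{x}.\phi\}$ induced by the axiom $\rho_n$, and then use the $\mathbb{L}oc_V^2$-irreducibility of $\{\vec{x}.\phi\}$ to collapse this covering to a single disjunct. First I would apply $\rho_n$ separately to each variable: since $(\top \vdash_x \bigvee_{d=0}^n x \in \textrm{Fin}_d^n)$ is provable in $\mathbb{L}oc_V^2$, so is $(\phi \vdash_{\vec{x}} \bigvee_{d=0}^n x_i \in \textrm{Fin}_d^n)$ for each $i=1, \ldots, k$. Conjoining these $k$ sequents and using the distributivity of finite conjunctions over disjunctions, which is valid in geometric logic, I obtain the provable sequent
$$(\phi \vdash_{\vec{x}} \bigvee_{\vec{d} \in \{0, \ldots, n\}^k} \psi_{\vec{d}}), \qquad \psi_{\vec{d}} := \bigwedge_{i=1}^k x_i \in \textrm{Fin}_{d_i}^n,$$
where each $\psi_{\vec{d}}$ is a finite conjunction of the Horn formulae $\alpha_{d_i}^n$, hence a geometric (indeed coherent) formula over the signature of $\mathbb{MV}$.

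Next I would read this sequent as a covering in the syntactic category $\mathcal{C}_{\mathbb{L}oc_V^2}$. For each $\vec{d}$ the entailment $\phi \wedge \psi_{\vec{d}} \vdash_{\vec{x}} \phi$ yields a monomorphism $m_{\vec{d}} \colon \{\vec{x}.\, \phi \wedge \psi_{\vec{d}}\} \rightarrowtail \{\vec{x}.\phi\}$ (the canonical subobject inclusion, both formulae living in the same context $\vec{x}$), and the displayed sequent says precisely that the family $\{m_{\vec{d}}\}_{\vec{d}}$ generates a covering sieve on $\{\vec{x}.\phi\}$. Since $\{\vec{x}.\phi\}$ presents a model of the presheaf-type theory $\mathbb{L}oc_V^2$, it is $\mathbb{L}oc_V^2$-irreducible, so its only covering sieve is the maximal one. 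Hence the identity of $\{\vec{x}.\phi\}$ belongs to the sieve generated by the $m_{\vec{d}}$, which means that one of them, say $m_{\vec{d}}$ for a fixed tuple $\vec{d}=(d_1, \ldots, d_k)$, is a split epimorphism.

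Finally, I would observe that a morphism which is simultaneously a monomorphism and a split epimorphism is an isomorphism. Thus for this $\vec{d}$ the inclusion $m_{\vec{d}}$ is an isomorphism of subobjects of $\{\vec{x}.\phi\}$, that is, $\phi$ and $\phi \wedge \psi_{\vec{d}}$ are provably equivalent in $\mathbb{L}oc_V^2$; in particular $(\phi \vdash_{\vec{x}} \psi_{\vec{d}})$ is provable, which is exactly the asserted sequent $(\phi \vdash_{\vec{x}} x_1 \in \textrm{Fin}_{d_1}^n \wedge \cdots \wedge x_k \in \textrm{Fin}_{d_k}^n)$.

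The argument is essentially formal once the covering is in place, so there is no computational obstacle; the point requiring the most care is the handling of irreducibility. I expect the crux to be justifying that the covering sieve produced by $\rho_n$ together with distributivity must be maximal, and that the resulting split epimorphism, being monic, forces a genuine provable entailment $\phi \vdash_{\vec{x}} \psi_{\vec{d}}$ rather than a mere factorization through some proper quotient. This is where the precise definition of $\mathbb{L}oc_V^2$-irreducibility (the absence of non-trivial covering sieves in $\mathcal{C}_{\mathbb{L}oc_V^2}$) must be invoked, and where one should verify that $\psi_{\vec{d}}$ is indeed a legitimate geometric formula so that the $m_{\vec{d}}$ are bona fide morphisms of the syntactic site.
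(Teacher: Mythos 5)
Your proof is correct and follows exactly the route the paper intends: the paper's justification is the single remark that $\{\vec{x}.\phi\}$ is $\mathbb{L}oc_V^{2}$-irreducible, and your argument (building the covering sieve from $\rho_n$ applied to each variable, distributing conjunction over disjunction, then using irreducibility to force the identity to factor through one of the monomorphisms $m_{\vec{d}}$, which is therefore an isomorphism) is precisely the fleshed-out version of that one-line proof. The details you supply — that each $\psi_{\vec{d}}$ is a Horn, hence geometric, formula, and that a monic split epimorphism is an isomorphism, yielding the provable entailment $(\phi \vdash_{\vec{x}} \psi_{\vec{d}})$ — are all accurate.
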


\begin{obs*}\label{rmk:J1=J2}
In Theorem \ref{thm:ax_1=ax_2} we proved that the theories $\mathbb{L}oc_V^1$ and $\mathbb{L}oc_V^{2}$ are equivalent. By Theorem \ref{thm:DualityTheorem}, this means that the Grothen\-dieck topologies $J_1$ and $J_2$ associated with these theories as quotients of $\mathbb{T}_V$ are equal. By Proposition \ref{pro: J subcanonical} and Theorem \ref{thm:rigid_topology}, these topologies are subcanonical and rigid. 
\end{obs*}

\subsection{Representation results for the finitely presentable MV-algebras in $V$}

In \cite{Russo2} we proved that every finitely presentable MV-algebra in the variety $V(S_{1}^{\omega})$ is a direct product of a finite family of perfect MV-algebras (cf. Theorem 10.2 therein). An analogous result holds for the finitely presentable algebras in $V$. However, there are differences with the case of perfect MV-algebras. Recall that an arbitrary family of generators $\{x_1, \ldots, x_n\}$ for an algebra $A$ in Chang's variety yields a decomposition of $A$ as a finite product of perfect MV-algebras: more specifically, $A$ decomposes as the finite product of algebras arising as the leaves of diagrams obtained from multicompositions of diagrams of the form
\begin{center}
\begin{tikzpicture}
\node (0) at (0,0) {$A$};
\node (1) at (2,1) {$A/((2x)^2)$};
\node (3) at (2,-1) {$A/(\neg(2x)^2)$};
\draw[->] (0) to node [above] {} (1);
\draw[->] (0) to node [above] {} (3);
\end{tikzpicture}
\end{center}
where at each step one selects as $x$ (the image in the relevant quotient of) one of the generators $\{x_1, \ldots, x_n\}$. This is no longer true for finitely generated algebras in an arbitrary Komori variety; only special sets of generators give the desired decomposition result (cf. Theorem \ref{thm:finiteproduct}(b) below). For example, let us consider the algebra $A=S_7\times S_7$. This is generated by the element $x=(2/7, 3/7)$ and also by the elements $\{x_1=(1/7,0), x_2=(0,1/7)\}$. The $J_1$-multicomposition corresponding to the choice of $x$ is the following:

\begin{center}
\begin{tikzpicture}
\node (0) at (0,0) {$A$};
\node (1) at (2,1) {$A/((n+1)x)^2)\cong \{0\}$};
\node (3) at (2,-1) {$A/(\neg((n+1)x)^2)\cong A$};
\draw[->] (0) to node [above] {} (1);
\draw[->] (0) to node [above] {} (3);
\end{tikzpicture}
\end{center}

On the other hand, the second generating system yields a decomposition of $A$ as a product of local (or trivial) MV-algebras:

\begin{center}
\begin{tikzpicture}
\node (0) at (0,0) {$A$};
\node (1) at (2.5,1.5) {$A_1=A/((n+1)x_1)^2)=\{0\}\times S_7$};
\node (3) at (2.5,-1.5) {$A_2=A/(\neg((n+1)x_1)^2)\cong S_7\times \{0\}$};
\node (4) at (7.5, 2.5) {$A_1/(((n+1)[x_2]_1)^2)\cong \{0\}$};
\node (5) at (7.5,0.5) {$A_1/(\neg((n+1)[x_2]_1)^2)\cong S_7$};
\node (6) at (7.5,-0.5) {$A_2/(((n+1)[x_2]_2)^2)\cong S_7$};
\node (7) at (7.5,-2.5) {$A_2/(\neg((n+1)[x_2]_2)^2)\cong\{0\}$};
\draw[->] (0) to node [above] {} (1);
\draw[->] (0) to node [above] {} (3);
\draw[->] (1) to node [above] {} (4);
\draw[->] (1) to node [above] {} (5);
\draw[->] (3) to node [above] {} (6);
\draw[->] (3) to node [above] {} (7);
\end{tikzpicture}
\end{center}
(where the subscript notation $[...]_i$ means that the given equivalence class is taken in $A_i$). Indeed, for the first step we have: 
$$((n+1)x_1)^2=(1,0)\Rightarrow((1,0))=S_7\times \{0\}\Rightarrow A_1=A/(S_7\times \{0\})\cong \{0\}\times S_7$$
$$\neg((n+1)x_1)^2=(0,1)\Rightarrow ((0,1))=\{0\}\times S_7\Rightarrow A_2=A/(\{0\}\times S_7)\cong S_7\times \{0\}$$
while for the second step we have:
$$[x_2]_1=(0,\frac{1}{7})\Rightarrow ((n+1)[x_2]_1)^2=(0,1)\textrm{ and }\neg ((n+1)[x_2]_1)^2=(0,0)$$
$$\Rightarrow A_1/(((n+1)[x_2]_1)^2)\cong \{0\} \textrm{ and }A_1/(\neg((n+1)[x_2]_1)^2)\cong S_7;$$
$$[x_2]_2=(0,0)\Rightarrow ((n+1)[x_2]_1)^2=(0,0)\textrm{ and }\neg ((n+1)[x_2]_1)^2=(1,0)$$
$$\Rightarrow A_2/(((n+1)[x_2]_2)^2)\cong S_7\textrm{ and }A_2/(\neg((n+1)[x_2]_2)^2)\cong\{0\}.$$

\begin{theorem*}\label{thm:finiteproduct}\footnote{This theorem requires the axiom of choice to ensure that the topologies $J_1$ and $J_2$ coincide.} \begin{enumerate}[(i)]
\item Every finitely presentable non-trivial algebra in $V$ is a finite direct product of finitely presentable local MV-algebras in $V$;

\item Given a set of generators $\{x_1,\dots,x_m\}$ for $A$, the $J_1$-multicomposition obtained by choosing at each step one of the generators gives a representation of $A$ as a product of local MV-algebras (i.e. the codomains of the arrows in the resulting product diagram are local MV-algebras) if and only if the image of each generator under the projections to the product factors satisfies the sequent $\rho_n$.
\end{enumerate} 
\end{theorem*}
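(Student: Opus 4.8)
The plan is to exploit the two complementary features of the Grothendieck topology $J_1=J_2$ established above: its subcanonicity (Proposition \ref{pro: J subcanonical}), by which every $J_1$-multicomposition exhibits $A$ as the direct product of its leaves, and its rigidity (Theorem \ref{thm:rigid_topology} together with Remark \ref{rmk:J1=J2}), by which $A$ admits a $J_1$-covering sieve generated by arrows whose codomains are local; the identification $J_1=J_2$ is precisely where the axiom of choice enters. The elementary observation underpinning both parts is that the sequent $\sigma_n$ is preserved by surjective homomorphisms, being of the form $(\top\vdash_{x}\phi)$ with $\phi$ a disjunction of equations: if $h\colon L\to B$ is a surjection and $L\models\sigma_n$, then lifting any $b\in B$ to some $x$ and applying $h$ yields $((n+1)b)^2=0$ or $(n+1)b=1$. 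Hence any non-trivial quotient of a local MV-algebra in $V$ is again local (a model of $\mathbb{L}oc_V^1$, by Proposition \ref{pro:Lov_V->local}), and $V$, being a variety, is closed under quotients.

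For part (i), since $A$ is a finitely presentable $\mathbb{T}_V$-model, rigidity of $J_1$ furnishes a $J_1$-covering sieve $R$ on $A$ generated by arrows $A\to L_i$ with each $L_i$ a local MV-algebra in $V$ that is finitely presentable as a $\mathbb{T}_V$-model (these being the $J_1$-irreducible objects, by Theorem \ref{thm:rigid_topology}). By the very description of $J_1$ (Proposition \ref{pro: J subcanonical}), a sieve is $J_1$-covering exactly when it contains a finite $J_1$-multicomposition; so $R$ contains one, with leaves $B_1,\ldots,B_r$ and canonical surjections $A\twoheadrightarrow B_j$. Each such surjection lies in $R$, hence factors through some $L_i$, and since $A\twoheadrightarrow B_j$ is surjective so is the induced map $L_i\to B_j$; thus $B_j$ is a surjective image of the local algebra $L_i$, hence local or trivial by the observation above. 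Subcanonicity then gives $A\cong\prod_{j=1}^r B_j$; discarding the trivial factors (at least one factor is non-trivial, as $A$ is) leaves a finite product of local MV-algebras, each a retract of the finitely presentable $A$ and therefore finitely presentable, hence finitely presentable as a model of $\mathbb{L}oc_V^2$ by Theorem \ref{thm:rigid_topology}.

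For part (ii), I would fix generators $\{x_1,\ldots,x_m\}$ and consider the $J_1$-multicomposition that splits successively by each generator; subcanonicity again yields $A\cong\prod_j B_j$ with surjections $\pi_j\colon A\twoheadrightarrow B_j$, so that the images $\pi_j(x_i)$ generate $B_j$. The forward implication is immediate: if every $B_j$ is local then $B_j\models\mathbb{L}oc_V^2$ (Theorem \ref{thm:axiom2_V}), so it validates $\rho_n$ at every element, in particular at each $\pi_j(x_i)$. For the converse I would invoke the compatibility of the partition induced by $\rho_n$ (Proposition \ref{pro:compatability}): the set of elements of an algebra in $V$ lying in some $\textrm{Fin}_d^n$ is closed under the MV-operations, hence is a subalgebra, so if the generators $\pi_j(x_i)$ of $B_j$ all satisfy $\rho_n$ then so does every element of $B_j$; thus $B_j\models\rho_n$ and $B_j$ is local (when non-trivial) by Theorem \ref{thm:axiom2_V}.

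The main obstacle, and the conceptual heart of the argument, lies in part (i), where one must simultaneously secure the \emph{finiteness} of the product, the \emph{locality} of each factor, and the product decomposition itself. These are exactly the three ingredients supplied by, respectively, the finitary generation of $J_1$ by multicompositions, the rigidity of $J_1$ (through $J_1=J_2$) combined with the stability of locality under quotients, and the subcanonicity of $J_1$. The example $A=S_7\times S_7$ with the generator $(2/7,3/7)$ explains why part (ii) cannot be extended to arbitrary generating sets: a single generator whose image fails $\rho_n$ in some leaf already destroys the locality of that leaf, in keeping with the stated equivalence.
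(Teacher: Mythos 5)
Your proof is correct and follows essentially the same route as the paper's: for (i), rigidity of $J_1=J_2$ gives a covering sieve generated by arrows into local finitely presentable algebras, the finite $J_1$-multicomposition contained in it has surjective projections that factor through these local algebras, so each leaf is a quotient of a local algebra and hence local, and subcanonicity yields the product decomposition; for (ii), both directions are handled exactly as in the paper, with Proposition \ref{pro:compatability} supplying the substantive implication. The only differences are that you spell out details the paper leaves implicit (preservation of $\sigma_n$ under surjections, discarding trivial factors), which is a welcome refinement rather than a different approach.
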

\begin{proof}
\begin{enumerate}[(i)]
\item From Theorem \ref{thm:rigid_topology} we know that the topology $J_2$, and hence the topology $J_1$, is rigid. This means that for every $A$ in $\textrm{f.p.}\mathbb{T}_V$-mod$(\mathbf{Set})$, the family of arrows $f:A\to B$, where $B$ is a local MV-algebra in $\textrm{f.p.}\mathbb{T}_V$-mod$(\mathbf{Set})$, generates a $J_1$-covering sieve $S$. By definition of the topology $J_1$, $S$ contains a family of arrows $\{\pi_i:A\to A_i\mid i=1,\dots,r\}$ obtained by a finite $J_1$-multicomposition relative to certain elements $x_1,\dots,x_m\in A$. We know from the proof of Proposition \ref{pro: J subcanonical} that $A$ is the product of the algebras $A_1,\dots,A_r$. It follows that for every $i=1,\dots,r$, $\pi_i$ factors through an arrow $f_i:A\to B_i$ whose codomain $B_i$ is a local MV-algebra: 
\begin{center}
\begin{tikzpicture}
\node (A) at (0,0) {$A$};
\node (Bi) at (1.5,-1.5) {$B_i$};
\node (Ai) at (3,0) {$A_i$};
\draw[->] (A) to node [above] {$\pi_i$} (Ai);
\draw[->] (A) to node [left] {$f_i$} (Bi);
\draw[->, dashed] (Bi) to node [right] {$g_i$} (Ai);
\end{tikzpicture}
\end{center}
We know that the $\pi_i$ are surjective maps; thus, the arrows $g_i$ are surjective too. Thus, for every $i=1,\dots, r$, the algebra $A_i$ is a homomorphic image of a local MV-algebra and hence it is local. So $A$ is a finite product of local MV-algebras. 

\item Suppose that in the decomposition considered in (i) corresponding to a family of generators $\{x_1,\dots,x_m\}$ for $A$, the projection of every element $x_i$ in any product factor satisfies the sequent $\rho_n$. Since every arrow in a $J_1$-multicomposition is surjective, it sends a family of generators of $A$ to a family of generators of its codomain. These codomains are thus MV-algebras whose generators satisfy the sequent $\rho_n$. From Proposition \ref{pro:compatability} we can then conclude that these algebras are local MV-algebras. The  other direction is trivial.

\end{enumerate}

\end{proof}

\begin{proposition*}\label{pro:subproduct}
Every algebra $A$ in $\textrm{f.p.}\mathbb{T}_V$-mod$(\mathbf{Set})$ with generators $x_1, \ldots, x_n$ forms a limit cone over the diagram consisting of the algebras appearing as codomains of the arrows in the $J_2$-multicomposition relative to the generators $x_1, \ldots, x_n$, and all the homomorphisms over $A$ between them.
\end{proposition*}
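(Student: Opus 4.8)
The plan is to derive the statement from the subcanonicity of the Grothendieck topology associated with $\mathbb{L}oc_V^2$ as a quotient of $\mathbb{T}_V$. By Remark \ref{rmk:J1=J2} this topology coincides with $J_1$ and is, in particular, subcanonical. Recall that subcanonicity means that every covering sieve is effective-epimorphic, i.e. that the morphisms in it form a colimit cone in $\textrm{f.p.}\mathbb{T}_V\textrm{-mod}(\mathbf{Set})^{\textrm{op}}$; dualizing, the associated cosieve in $\textrm{f.p.}\mathbb{T}_V\textrm{-mod}(\mathbf{Set})$ exhibits $A$ as a limit cone over the diagram of its codomains together with all the homomorphisms over $A$ between them. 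The argument is thus a matter of identifying the sieve generated by the $J_2$-multicomposition and of recognising the diagram named in the statement inside the effective-epimorphic diagram.

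First I would check that the family of canonical projections obtained from the $J_2$-multicomposition relative to $x_1, \ldots, x_n$ does generate a $J_2$-covering sieve on $A$. This is precisely what is proved in the course of Theorem \ref{thm:rigid_topology}: selecting one generator at each branching step yields a finite multicomposition of families of the shape $\{A\to A/(x\in \textrm{Fin}_d^n)\}_{d=0}^{n}$, and the resulting multicomposite family generates a $J_2$-covering cosieve whose codomains are the algebras appearing in the multicomposition. Feeding this covering sieve into the subcanonicity of $J_2$ then yields that $A$ is the limit, in $\textrm{f.p.}\mathbb{T}_V\textrm{-mod}(\mathbf{Set})$, of the diagram formed by all codomains occurring in the sieve and the homomorphisms over $A$ between them.

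The delicate, and main, point is to pass from this limit over the entire covering sieve to the limit over the finite diagram mentioned in the statement, consisting only of the codomains that actually occur in the multicomposition. I would establish this by a cofinality (limit-initiality) argument: by construction of the generated sieve, every algebra $W$ arising as a codomain in it receives a homomorphism over $A$ from one of the nodes of the multicomposition, so the finite multicomposition diagram sits limit-initially inside the full sieve diagram; the connectedness of the comma categories required for limit-initiality is then read off from the tree shape of the multicomposition together with the compatibility of the partition with the MV-operations (Proposition \ref{pro:compatability}), which ensures that the cross-branch homomorphisms over $A$ needed to glue distinct branches are already present among the homomorphisms of the diagram. I expect this reduction to be the principal obstacle, as it is the only place where the concrete combinatorics of the multicomposition, rather than the abstract subcanonicity, enter.

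Should the cofinality bookkeeping prove cumbersome, an alternative is to use the coincidence $J_1=J_2$ (Remark \ref{rmk:J1=J2}) together with the product decompositions produced by the $J_1$-multicomposition (Proposition \ref{pro: J subcanonical} and Theorem \ref{thm:finiteproduct}), which present $A$ as an honest finite product of the leaf algebras via Boolean elements. This immediately makes the projections in the $J_2$-diagram jointly monic, so that $A$ embeds into the product of the leaves; it would then remain to identify $A$ with the subobject cut out by the homomorphisms over $A$ between the codomains --- the `subproduct' of the statement --- thereby realising $A$ as the asserted limit cone.
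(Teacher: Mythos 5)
Your starting point coincides with the paper's proof, which is precisely the one\-/line deduction from the subcanonicity of $J_2$ (via Proposition \ref{pro: J subcanonical} and $J_1=J_2$, Remark \ref{rmk:J1=J2}), and you are right that, read literally, subcanonicity only exhibits $A$ as a limit over the diagram of the \emph{entire} generated sieve, so that some argument is needed to cut down to the finite multicomposition diagram. The problem is that the bridge you propose fails: the inclusion of the multicomposition diagram into the sieve diagram is in general \emph{not} limit-initial, and the ``cross-branch homomorphisms over $A$'' you want to extract from Proposition \ref{pro:compatability} do not exist. Indeed, a homomorphism over $A$ between two distinct leaves $A/\theta_i\to A/\theta_j$ forces $\theta_i\subseteq\theta_j$, whereas for distinct multi-indices $\theta_i\vee\theta_j$ is the \emph{total} congruence: no element of a non-trivial algebra $B$ in $V$ can lie in both $\textrm{Fin}_d^n(B)$ and $\textrm{Fin}_e^n(B)$ for $d\neq e$, since in the $\ell$-u group of $B/\textrm{Rad}(B)$ the two conditions give $n\bar{x}=d\bar{u}$ and $n\bar{x}=e\bar{u}$, whence $\bar{u}=0$ by torsion-freeness. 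So cross-branch maps exist only into trivial leaves. Concretely, take $V=V(S_1)$ (so $n=1$) and $A=S_1\times S_1$, the free algebra on the generator $x_1=(1,0)$: here $x\in\textrm{Fin}_0^1$ means $x=0$ and $x\in\textrm{Fin}_1^1$ means $x=1$, so the two leaves are non-trivial copies of $S_1$ with \emph{no} homomorphisms over $A$ between them, yet both map to the trivial algebra, which is a codomain of an arrow of the generated cosieve; the comma category over that arrow is a two-object discrete category, hence disconnected. (The proposition itself holds there: the diagram is discrete and its limit is the product $S_1\times S_1=A$; this also shows initiality is sufficient but not necessary, so its failure does not sink the statement, only your route to it.)

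The correct bridge uses the disjointness just mentioned in a different way. Given a cone $(\psi_i\colon B\to A_i)$ over the finite diagram, define a family on the generated cosieve by $x_{k\circ f_i}:=k\circ\psi_i$. This is well defined: if $k\circ f_i=k'\circ f_j$ with $i\neq j$, the kernel of this arrow contains $\theta_i\vee\theta_j$, the total congruence, so its codomain is the trivial algebra and the two composites agree for trivial reasons; if $i=j$, then $k=k'$ because $f_i$ is surjective, hence epic. The family so obtained is compatible by construction, and subcanonicity of $J_2$ then provides a unique $x\colon B\to A$ with $f_i\circ x=\psi_i$ (uniqueness, i.e.\ joint monicity of the $f_i$, also follows from subcanonicity); this is exactly the asserted limit property. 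Finally, your fallback route does not close the gap either: it terminates with the assertion that it ``remains to identify $A$ with the subobject cut out by the homomorphisms over $A$ between the codomains'', which is not a proof step but a restatement of the proposition.
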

\begin{proof}
Our thesis follows from the subcanonicity of the topology $J_2$, which is given by Proposition \ref{pro: J subcanonical} in light of the fact that $J_1=J_2$. 
\end{proof}

Let us now describe an algorithm which, starting from a representation of an algebra $A$ in $V$ as a finite subproduct of a family of local MV-algebras
$$A\hookrightarrow A_1\times\cdots \times A_r,$$
produces a decomposition of $A$ as a finite product of local MV-algebras. This can for instance be applied to the representations of algebras $A$ in $\textrm{f.p.}\mathbb{T}_V$-mod$(\mathbf{Set})$ provided by Proposition \ref{pro:subproduct}. 

First, we observe that, given an embedding $f:A\to B$ of MV-algebras and an ideal $I$ of $A$, $f$ yields an embedding $A/I \to B/(f(I))$, where $f(I)$ is the ideal of $B$ generated by the subset $f(I)$. Indeed, every embedding of MV-algebras reflects the order relation since the latter is equationally definable. 

Given a representation
$$A\hookrightarrow A_1\times\cdots \times A_r$$ of an algebra $A$ in $V$ as a finite subproduct of a family $\{A_i\}_{i=1}^{r}$ of local MV-algebras, if $A$ is not local, there exists $x\in A$ such that $x$ is neither in the radical, nor in the coradical nor it is finite. This means that $x=(x_1, \ldots, x_r)$ has at least a component $x_i$ which is in the radical of $A_i$ (otherwise $x$ would have finite order) and at least a component $x_j$ which is in the radical of $A_j$ (otherwise $x$ would belong to $\textrm{Rad}(A)$). We know from Proposition \ref{prop:twosequents}(ii) that $((n+1)x)^2$  is a Boolean element whence it is a sequence of 0 and 1 since Boolean elements in local MV-algebras are just the trivial ones. Thus, the ideal generated by $((n+1)x)^2$ in $A_1\times \cdots \times A_r$ is the product of the algebras $\bar{A}_1\times \cdots \times \bar{A}_r$, where
\begin{center}
$\bar{A}_i= \begin{cases}
A_i \textrm{ if } ((n+1)x_i)^2=1\\
\{0\} \textrm{ otherwise }
\end{cases}
$
\end{center} 
Thus the quotient $B_1=A/(((n+1)x)^2)$ embeds in the finite product of the algebras $\{{A}_i'\}_{i=1}^{r}$ defined by: 
\begin{center}
${A}_i'= \begin{cases}
A_i \textrm{ if } ((n+1)x_i)^2=0\\
\{0\} \textrm{ otherwise }
\end{cases}
$
\end{center} 
Similarly, the quotient $B_2=A/(\neg((n+1)x)^2)$ embeds in the finite product of the algebras $\{A_i'\}_{i=1}^{r}$ defined by: 
\begin{center}
$A_i'= \begin{cases}
A_i \textrm{ if } ((n+1)x_i)^2=1\\
\{0\} \textrm{ otherwise }
\end{cases}
$
\end{center} 
If the number of non-trivial factors of the product in which $B_1$ embeds is strictly bigger than $1$ then $B_1$ is not local and we repeat the same process, and similarly for the algebra $B_2$. Since the initial product is finite and the number of non-trivial factors strictly decreases at each step, this process must end after a finite number of steps. This means that after a finite number of iterations of our `algorithm' the resulting quotients embed into products with only one non-trivial factor and hence are local MV-algebras. The pushout-pullback lemma recalled in the proof of Proposition \ref{pro: J subcanonical} thus yields the desired representation of $A$ as a finite product of local MV-algebras. 

We shall now present an alternative approach, based on the consideration of the Boolean skeleton of $A$, to the representation of $A$ as a finite product of local MV-algebras. 

\begin{proposition}\label{pro:generators B(A)}
Let $A$ be an MV-algebra in $V$ and $\{y_1,\dots,y_m\}$ a set of Boolean elements of $A$. Then the following conditions are equivalent:
\begin{enumerate}[(i)]
\item The elements $\{y_1,\dots,y_m\}$ generate the Boolean skeleton of $A$;

\item The non-trivial algebras $A_1, \ldots, A_{2^m}$ appearing as terminal leaves of the following diagram are local:
\begin{center}
\begin{tikzpicture}
\node (1) at (0,0) {$A$};
\node (2) at (2.5,2) {$A/_{(y_1)}$};
\node (3) at (2.5,-2) {$A/_{(\neg y_1)}$};
\node (4) at (5,3) {$A/_{(y_1)}/_{([y_2])}$};
\node (5) at (5,1) {$A/_{(y_1)}/_{(\neg[y_2])}$};
\node (6) at (5,-1) {$A/_{(\neg y_1)}/_{([y_2])}$};
\node (7) at (5,-3) {$A/_{(\neg y_1)}/_{(\neg[y_2])}$};
\node (8) at (6.5,3) {...} ;
\node (9) at (6.5,1) {...};
\node (10) at (6.5,-1) {...};
\node (11) at (6.5,-3) {...};
\node (12) at (10,3.5) {$(A/_{(y_1)}/...)/_{([...[y_m]...])}(=:A_1)$};
\node (13) at (10,2.5) {$(A/_{(y_1)}/...)/_{(\neg[...[y_m]...])}(=:A_2)$};
\node (14) at (10,-2.5) {$(A/_{(\neg y_1)}/...)/_{([...[y_m]...])}(=:A_{2^{m}-1})$};
\node (15) at (10,-3.5) {$(A/_{(\neg y_1)}/...)/_{(\neg[...[y_m]...])}(=:A_{2^{m}})$};
\draw[->](1) to node [below]{} (2);
\draw[->](1) to node [below]{} (3);
\draw[->](2) to node [below]{} (4);
\draw[->](2) to node [below]{} (5);
\draw[->](3) to node [below]{} (6);
\draw[->](3) to node [below]{} (7);
\draw[->](8) to node [below]{} (12);
\draw[->](8) to node [below]{} (13);
\draw[->](11) to node [below]{} (14);
\draw[->](11) to node [below]{} (15);
\end{tikzpicture}
\end{center}

\end{enumerate}
\end{proposition}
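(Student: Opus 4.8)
The plan is to exploit that each $y_i$ is Boolean, so that every arrow in the diagram splits its domain as a binary product via the pushout-pullback lemma (Lemma 7.1 of \cite{DubucPoveda}, as used in the proof of Proposition \ref{pro: J subcanonical}); the whole tree thus exhibits $A$ as a finite product $A\cong \prod_{\epsilon\in\{0,1\}^m}A_\epsilon$ of its $2^m$ leaves, the trivial leaves being the zero factors. First I would make precise the correspondence between these leaves and the Boolean subalgebra $B_0:=\langle y_1,\ldots,y_m\rangle$ generated by the $y_i$ inside the Boolean skeleton $B(A)$. Indexing the leaves by $\epsilon\in\{0,1\}^m$ (with $\epsilon_i=0$ the branch $-/(y_i)$, forcing $y_i=0$, and $\epsilon_i=1$ the branch $-/(\neg y_i)$, forcing $y_i=1$), the leaf $A_\epsilon$ is the localization of $A$ at the minterm $c_\epsilon:=\bigwedge_{\epsilon_i=0}\neg y_i\wedge\bigwedge_{\epsilon_i=1}y_i$; it is non-trivial precisely when $c_\epsilon\neq 0$, and under the induced isomorphism $B(A)\cong\prod_{\epsilon}B(A_\epsilon)$ the element $c_\epsilon$ is the identity of the factor $B(A_\epsilon)$. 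Since the nonzero minterms are exactly the atoms of $B_0$, this bookkeeping already yields the pivot fact that $c_\epsilon$ is an atom of $B(A)$ if and only if $B(A_\epsilon)=\{0,1\}$.

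The crux is the following lemma, which I would isolate and prove first: a non-trivial MV-algebra $B$ in $V$ with $B(B)=\{0,1\}$ is local. This is where membership in a proper variety enters. By Proposition \ref{prop:twosequents}(ii) the element $((n+1)x)^2$ is Boolean for every $x$, hence lies in $\{0,1\}$ by indecomposability; as $((n+1)x)^2=1$ is equivalent to $(n+1)x=1$, the algebra $B$ validates $(\top\vdash_{x}((n+1)x)^2=0\vee(n+1)x=1)$, that is the sequent $\sigma_n$. Being non-trivial it also satisfies NT, so $B$ is a model of $\mathbb{L}oc_V^1$ and is therefore local by (the constructive direction of) Proposition \ref{pro:Lov_V->local}. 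I expect this lemma to be the main point; everything else is Boolean-algebra manipulation.

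With the lemma available, both implications follow. For (i)$\Rightarrow$(ii): if the $y_i$ generate $B(A)$ then $B_0=B(A)$, so each nonzero minterm $c_\epsilon$ is an atom of $B(A)$, whence $B(A_\epsilon)=\{0,1\}$ and $A_\epsilon$ is local by the lemma. For (ii)$\Rightarrow$(i): if every non-trivial leaf $A_\epsilon$ is local then, local MV-algebras having trivial Boolean skeleton, each $B(A_\epsilon)=\{0,1\}$; hence $B(A)\cong\{0,1\}^{E}$ with $E=\{\epsilon\mid c_\epsilon\neq 0\}$ is finite and has the $c_\epsilon$ as its atoms. Since these minterms lie in $B_0$ and a finite Boolean algebra is the join-closure of its atoms, we conclude $B_0=B(A)$, i.e. the $y_i$ generate the Boolean skeleton.

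The step I expect to demand the most care is the bookkeeping of the first paragraph: identifying the iterated quotient $A_\epsilon$ with the localization at $c_\epsilon$ and checking that the successive quotients and the liftings $[y_i]$ are compatible with the product decomposition $B(A)\cong\prod_\epsilon B(A_\epsilon)$. This is routine once one decomposes a single Boolean element at a time and uses that the image of a Boolean element along a quotient map remains Boolean, so that the construction can be carried out inductively on $m$.
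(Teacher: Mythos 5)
Your proof is correct, and its overall skeleton coincides with the paper's: both decompose $A$ as the product of the $2^m$ leaves via the pushout-pullback lemma (as in the proof of Proposition \ref{pro: J subcanonical}), and both prove (ii)$\Rightarrow$(i) by observing that the nonzero minterms in the $y_i$ are precisely the atoms of $B(A)$, which generate it since $B(A)$ is finite. The write-ups diverge on (i)$\Rightarrow$(ii): the paper avoids the atom bookkeeping altogether, lifting an arbitrary $\xi\in B(A_j)$ to the Boolean element of $B(A)$ supported in the $j$-th coordinate, writing that lift as an MV-term $t(y_1,\dots,y_m)$ using (i), and projecting to get $\xi=t(\pi_j(y_1),\dots,\pi_j(y_m))\in\{0,1\}$, whereas you pass through the equivalence ``$c_\epsilon$ is an atom of $B(A)$ iff $B(A_\epsilon)=\{0,1\}$''; both routes are sound. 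Where your proposal genuinely improves on the paper is the bridging lemma you isolate and prove: that a non-trivial algebra in $V$ whose Boolean skeleton is $\{0,1\}$ is local, obtained from Proposition \ref{prop:twosequents}(ii) (which forces $((n+1)x)^2\in\{0,1\}$ for every $x$, hence validates $\sigma_n$) together with the constructive direction of Proposition \ref{pro:Lov_V->local}. The paper uses this implication silently --- it announces that showing the leaves have Boolean skeleton contained in $\{0,1\}$ proves them ``local or trivial'', without justifying why that suffices --- so your argument is self-contained exactly where the paper's is thinnest, and it makes visible where the hypothesis $A\in V$ enters (for general MV-algebras, direct indecomposability does not imply locality).
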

\begin{proof}
By the pushout-pullback lemma recalled in the proof of Proposition \ref{pro: J subcanonical}, we have that 
$$A=A_1\times\cdots\times A_{2^m}.$$
\begin{itemize}
\item[(i) $\Rightarrow$ (ii)] We shall prove that the algebras $A_1,\dots,A_{2^m}$ are local or trivial by showing that their Boolean skeleton is contained in $\{0,1\}$. Given $\xi\in B(A_j)$, with $j\in\{1,\dots,2^m\}$, there exists $x\in B(A)$ such that $\pi_j(x)=\xi$ (indeed, we can take $x$ equal to the sequence whose components are all $0$ except for the $j$th-component that is equal to $\xi$). Since $B(A)$ is generated by $\{y_1,\dots,y_m\}$, we have that $x$ is equal to $t(y_1,\dots,y_m)$ for some term $t$ over the signature of the theory $\mathbb{MV}$. Thus,
$$\xi=\pi_j(x)=\pi_j(t(y_1,\dots,y_m))=t(\pi_j(y_1),\dots,\pi_j(y_m)).$$
By construction, we have that $\pi_j(y_i)\in \{0,1\}$ for every $j=1,\dots,2^m$ and $i=1,\dots,m$. Hence, $\xi\in \{0,1\}$ for each $\xi\in B(A_j)$ for every $j=1,\dots,2^m$, as required. 

\item[(ii) $\Rightarrow$ (i)] The elements $\{y_1,\dots,y_m\}$ have the following form:
$$y_1=(0,\dots,0,1,\dots,1)$$
$$y_2=(0,\dots,0,1,\dots,1,0,\dots,0,1,\dots,1)$$
$$\vdots$$
$$y_m=(0,1,0,1,\dots,0,1)$$
By our hypothesis, every $A_j$ (where $j=1,\dots,2^m$) is either a local or a trivial MV-algebra. Thus, the Boolean kernel $B(A)$ is a finite product of subalgebras of $\{0,1\}$. Now, for every $i\in \{1,\dots,2^m\}$, the element $e_i=(0,\dots,0,1,0\dots,0)$, where the $1$ is in the position $i$, is equal to $\inf(\bar{y}_1,\dots,\bar{y}_m)$, where
\begin{center}
$\bar{y}_k=\begin{cases}
y_k \textrm{ if } (y_k)_i=1\\
\neg y_k \textrm{ otherwise }
\end{cases}$
\end{center}
The elements $e_1,\dots,e_{2^m}$ are the atoms of $B(A)$. Since they are contained in the algebra generated by $\{y_1,\dots,y_m\}$, it follows that this algebra coincides with $B(A)$, as required.
\end{itemize}
\end{proof}

\begin{remarks}
\begin{enumerate}[(a)]
\item It is known that every algebra in a Komori variety is quasilocal, i.e. it is a weak Boolean product of local MV-algebras (cf. section 9 of \cite{DiNola3}). Proposition \ref{pro:generators B(A)} gives a concrete representation result for the algebras in a Komori variety whose Boolean skeleton is finite (recall that every finite product can be seen as a weak Boolean product, cf. section 6.5 of \cite{CDM});

\item The Boolean skeleton of an MV-algebra in $\textrm{f.p.}\mathbb{T}_V$-mod$(\mathbf{Set})$ is finitely generated as it is finite (by Theorem \ref{thm:finiteproduct} or  Proposition \ref{pro:subproduct}). Still, this result is non-constructive as it relies on the non-constructive equivalence between the axiomatizations $\mathbb{L}oc_V^1$ and $\mathbb{L}oc_V^2$;

\item If $V$ is Chang's variety, the Boolean skeleton of a finitely generated MV-algebra $A$ in $V$ is finitely generated since there exists an isomorphism between $A/\textrm{Rad}(A)$ and $B(A)$ induced by the following homomorphism:
$$f:x\in A\to (2x)^2\in B(A).$$ The following proposition shows that this cannot be generalized to the setting of an arbitrary Komori variety $V$.
\end{enumerate}
\end{remarks}

\begin{proposition*}\label{pro:nothomomorphism}
Let $A$ be a local MV-algebras in $V$. If the map $f:x\in A\to ((n+1)x)^2\in B(A)$ is a homomorphism then $A$ is in Chang's variety.
\end{proposition*}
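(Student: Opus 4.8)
The plan is to argue by contraposition, showing that if $A$ is \emph{not} in Chang's variety then the map $f$ cannot commute with negation. First I would unwind what $f$ actually computes. Since $A$ is local, its only Boolean elements are $0$ and $1$, so the codomain $B(A)$ is just $\{0,1\}$. By Proposition \ref{prop:twosequents}(ii) the element $((n+1)x)^2$ is genuinely Boolean, and by Lemma \ref{lmm:rad=infinitesimal} we have $((n+1)x)^2=0$ exactly when $x\in\textrm{Rad}(A)$. Combining these two facts, $f(x)=0$ iff $x\in\textrm{Rad}(A)$ and $f(x)=1$ iff $x\notin\textrm{Rad}(A)$; in other words, $f$ is nothing but the indicator of non-membership in the radical.

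Next I would recall the standard characterization underlying the statement: among local MV-algebras, those lying in Chang's variety are precisely the perfect ones, and a local algebra is perfect if and only if $A=\textrm{Rad}(A)\cup\neg\textrm{Rad}(A)$ (equivalently, it is generated by its radical). Thus, assuming $A$ is \emph{not} in Chang's variety, there must exist an element $x\in A$ lying neither in $\textrm{Rad}(A)$ nor in $\neg\textrm{Rad}(A)$, i.e. a finite element satisfying $x\notin\textrm{Rad}(A)$ and $\neg x\notin\textrm{Rad}(A)$. For such an $x$, the first paragraph gives $f(x)=1$ and $f(\neg x)=1$. On the other hand, if $f$ is a homomorphism it preserves negation, whence $f(\neg x)=\neg f(x)=\neg 1=0$ in $B(A)$. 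Since $A$ is non-trivial we have $0\neq 1$, so this is a contradiction; therefore no such $x$ exists, $A=\textrm{Rad}(A)\cup\neg\textrm{Rad}(A)$ is perfect, and hence $A$ lies in Chang's variety.

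All the computations are immediate once $f$ has been identified with the non-radicality indicator, so the only genuine point needing care is the passage between the algebraic hypothesis ``$A$ is in Chang's variety'' and the internal condition $A=\textrm{Rad}(A)\cup\neg\textrm{Rad}(A)$, for which I would invoke the characterization of perfect MV-algebras as the local algebras in Chang's variety. I expect the main (if modest) obstacle to be checking that negation alone (rather than $\oplus$) already forces the contradiction, and verifying that the argument nowhere secretly requires $A$ to be of finite rank, so that it genuinely applies to every local $A$ in $V$. Should a more structural rendering be preferred, one may instead use the representation $A\simeq\Gamma(\mathbb{Z}\times_{\textrm{lex}}G,(k,h))$ of Theorem \ref{thm:representation local finite rank}: then $f$ factors through $A/\textrm{Rad}(A)\cong S_k$ as the map $S_k\to\{0,1\}$ sending $0\mapsto 0$ and every $d\geq 1$ to $1$, and an elementary check shows this map preserves negation precisely when $k=1$, i.e. precisely when $A$ has rank $1$ and is therefore perfect.
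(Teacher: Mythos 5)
Your proof is correct, and its main line takes a genuinely different route from the paper's. The paper argues ``top-down'': since $f$ kills the radical (Lemma \ref{lmm:rad=infinitesimal}), it induces a homomorphism $\bar f\colon A/\textrm{Rad}(A)\to B(A)=S_1$; since every local MV-algebra in a Komori variety has finite rank (Theorem \ref{thm:local_in_variety}), $A/\textrm{Rad}(A)\cong S_m$, and the existence of a homomorphism $S_m\to S_1$ (necessarily injective, as $S_m$ is simple) forces $m=1$, hence $A$ perfect and in Chang's variety. You argue ``bottom-up'': you identify $f$ with the indicator of non-membership in $\textrm{Rad}(A)$ --- which is exactly right, by Proposition \ref{prop:twosequents}(ii), Lemma \ref{lmm:rad=infinitesimal} and the triviality of $B(A)$ for local $A$ --- and then, assuming $A$ is not perfect, you exhibit a witness $x$ with $x,\neg x\notin\textrm{Rad}(A)$ at which preservation of negation visibly fails. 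What your route buys: the main argument never invokes the (non-constructive) finite-rank theorem or the representation of Theorem \ref{thm:representation local finite rank}; it only needs that $A=\textrm{Rad}(A)\cup\neg\textrm{Rad}(A)$ (with $A$ non-trivial) forces $A$ into Chang's variety, i.e. the identification of perfect algebras with the local algebras in Chang's variety, which the paper itself takes as known (and which can be checked directly: such an $A$ satisfies $2x^2=(2x)^2$, or equivalently has rank $1$, so Theorem \ref{thm:local_in_variety} applies). What the paper's route buys: once the finite-rank machinery is available it is shorter, and it explains structurally \emph{why} negation must fail --- there is simply no MV-homomorphism $S_m\to S_1$ for $m>1$. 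Note that your closing ``structural rendering'' is essentially the paper's proof, with the induced map $S_k\to\{0,1\}$ (sending $0$ to $0$ and every $d\geq 1$ to $1$) made explicit; so your write-up in fact contains both arguments.
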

\begin{proof}
If $A$ is local then its Boolean skeleton is $\{0,1\}=S_1$. If $f:x\in A\to ((n+1)x)^2\in S_1$ is a homomorphism, then we have an induced homomorphism $\bar{f}:A/\textrm{Rad}(A)\to S_1$ (since the radical of $S_1$ is trivial).  Since every local MV-algebra in $V$ has finite rank, we have that $A/\textrm{Rad}(A)\cong S_m$ for some $m\in\mathbb{N}$. The map $\bar{f}$ is thus a homomorphism from $S_m$ to $S_1$. This clearly implies that $m=1$. So $A$ is in Chang's variety (cf. Theorem \ref{thm:local_in_variety}).
\end{proof}

\section{A new class of Morita-equivalences}\label{sct:Morita-equivalence}

In this section we shall introduce, for each Komori variety $V$ axiomatized as above by the algebraic theory ${\mathbb T}_{V}$, a geometric theory extending that of $\ell$-groups which will be Morita-equivalent to the theory $\mathbb{L}oc_V^{2}$.

We borrow the notation from section \ref{sec:TheoryOfV}. We shall work with varieties $V$ generated by simple MV-algebras $\{S_i\}_{i\in I}$ and Komori chains $\{S_{j}^{\omega}\}_{j\in J}$. We indicate with the symbol $\delta(I)$ (resp. $\delta(J)$) and $\delta(n)$ the set of divisors of a number in $I$ (resp. in $J$) and the set of divisors of $n$.

We observe that, by Theorem \ref{thm:local_in_variety}, the set-based models of the theory $\mathbb{L}oc_V^{2}$, that is, the local MV-algebras in $V$, are precisely the local MV-algebras $A$ of finite rank $\textrm{rank}(A)\in \delta(I)\cup \delta(J)$ such that if $\textrm{rank}(A)\in \delta(I)\setminus \delta(J)$ then $A$ is simple.     On the other hand, by Theorem \ref{thm:representation local finite rank}, for any $\ell$-group $G$, element $g\in G$ and natural number $k$, the algebra $\Gamma({\mathbb Z}\times_{\textrm{lex}} G, (k, g))$ is local of rank $k$ and hence belongs to $\mathbb{L}oc_V^{2}$-mod$(\mathbf{Set})$ if $k\in \delta(I)\cup \delta(J)$ and $\Gamma({\mathbb Z}\times_{\textrm{lex}} G, (k, g))$ is simple in case $\textrm{rank}(A)\in \delta(I)\setminus \delta(J)$.

In order to obtain an expansion of the theory $\mathbb L$ of $\ell$-groups which is Morita-equivalent to our theory $\mathbb{L}oc_V^{2}$, we should thus be able to talk in some way about the ranks of the corresponding algebras inside such a theory. So we expand the signature of $\mathbb L$  by taking a $0$-ary relation symbol $R_{k}$ for each $k\in \delta(n)$. The predicate $R_{k}$ has the meaning that the rank of the corresponding MV-algebra is a multiple of $k$ (notice that we cannot expect the property `to have rank \emph{equal} to $k$' to be definable by a geometric formula since it is not preserved by homomorphisms of local MV-algebras in $V$). 

To understand which axioms to put in our theory, the following lemma is useful.

\begin{lemma}\label{lmm:simple_in_V}
For any $\ell$-group $G$, any element $g\in G$ and any natural number $k$, the algebra $\Gamma({\mathbb Z}\times_{\textrm{lex}} G, (k, g))$ is simple if and only if $G=\{0\}$.
\end{lemma}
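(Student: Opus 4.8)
The plan is to realise $A:=\Gamma(\mathbb{Z}\times_{\textrm{lex}} G, (k,g))$ concretely as the unit interval $[(0,0),(k,g)]$ of the lexicographic product, and to read off its radical directly. First I would record that $k\geq 1$: indeed $(k,g)$ must be a strong unit of $\mathbb{Z}\times_{\textrm{lex}} G$, and no element with first coordinate $0$ can dominate $(1,0)$ in the lexicographic order, so the first coordinate of the unit is strictly positive. By Theorem \ref{thm:representation local finite rank}, $A$ is local of rank $k$; hence, since a local MV-algebra is simple precisely when its radical is $\{0\}$ (a local semisimple MV-algebra is simple, and conversely the radical of a simple algebra is trivial), it suffices to determine when $\textrm{Rad}(A)$ is trivial.

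For the direction $G=\{0\}\Rightarrow A$ simple, I would simply observe that then $g=0$ and $A=\Gamma(\mathbb{Z},(k,0))=S_k$, which is the simple $(k+1)$-element chain.

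For the converse, given $G\neq\{0\}$ I would pick any $h\neq 0$ and set $y:=h\vee(-h)>0$, so that $a:=(0,y)$ lies in $A$ (we have $(0,0)\leq(0,y)\leq(k,g)$ in the lexicographic order, using $y\geq 0$ and $k\geq 1$) and $a\neq 0$. The key computation is that the MV-theoretic multiple $na$ coincides with the group multiple $(0,ny)$: since $k\geq 1$, the element $(0,ny)$ never reaches the cut-off $(k,g)$, so $\inf((0,ny),(k,g))=(0,ny)$. As $\neg a=(k,g)-(0,y)=(k,g-y)$ and $(0,ny)\leq(k,g-y)$ for every $n$ (again because the first coordinates are $0<k$), the element $a$ is a nonzero infinitesimal, hence a nonzero element of $\textrm{Rad}(A)$. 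Since $A$ is local, $\textrm{Rad}(A)$ is its unique maximal, and therefore proper, ideal, so $\{0\}\subsetneq\textrm{Rad}(A)\subsetneq A$ exhibits three distinct ideals and $A$ fails to be simple.

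The only point requiring genuine care is the identification of the MV-multiples with the group multiples, which is exactly where the hypothesis $k\geq 1$ and the lexicographic ordering enter; everything else is a routine unwinding of Mundici's functor $\Gamma$. Alternatively, one can bypass locality in the converse direction by checking directly that $\{(0,z)\mid z\geq 0\}$ is a proper nonzero ideal of $A$, which again hinges only on $k\geq 1$ and the definition of the lexicographic order.
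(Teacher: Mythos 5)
Your proof is correct and takes essentially the same route as the paper: the paper's proof simply asserts that $\textrm{Rad}(\Gamma(\mathbb{Z}\times_{\textrm{lex}} G,(k,g)))=\{(0,h)\mid h\geq 0\}$ and concludes that simplicity is equivalent to $G^{+}=\{0\}$, i.e.\ to $G=\{0\}$, while you verify this identification of the radical in detail by exhibiting the elements $(0,y)$, $y>0$, as infinitesimals. Your additional observations (that the strong-unit condition forces $k\geq 1$, and that locality can be bypassed by checking directly that the first-coordinate-zero elements form a proper nonzero ideal) make explicit points the paper leaves implicit, but they do not change the underlying argument.
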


\begin{proof}
It is clear that $\textrm{Rad}(\Gamma({\mathbb Z}\times_{\textrm{lex}} G, (k, g)))=\{(0, h) \mid h\geq 0  \}$. So $\Gamma({\mathbb Z}\times_{\textrm{lex}} G, (k, g))$ is simple if and only if $G^+=\{0\}$, that is, if and only if $G=\{0\}$. 
\end{proof}

We add the following axioms to the theory $\mathbb{L}$ of $\ell$-groups:
\begin{enumerate}[(1)]
\item $(\top \vdash R_{1})$;
\item $(R_{k}\vdash R_{k'})$, for each $k'$ which divides $k$;
\item $(R_{k} \wedge R_{k'} \vdash R_{\textrm{l.c.m.}(k, k')})$, for any $k, k'$;
\item $(R_{k} \vdash_{g} g=0)$, for every $k\in \delta(I)\setminus \delta(J)$;
\item $(R_{k}\vdash \bot)$, for any $k\notin \delta(I)\cup \delta(J)$.
\end{enumerate}
We also add a constant to our language to be able to name the unit of ${\mathbb Z}\times_{\textrm{lex}} G$ necessary to define the corresponding MV-algebra.

Let us denote by ${\mathbb G}_{(I, J)}$ the resulting theory.

\begin{obs}
We can equivalently define the theory $\mathbb{G}_{(I,J)}$ by considering a $0$-ary relation symbol $T_k$ for each $k\in \delta(I)\cup \delta(J)$ and by adding the following axioms.
\begin{enumerate}[(1)]
\item $(\top \vdash T_{1})$;
\item $(T_{k}\vdash T_{k'})$, for each $k'$ which divides $k$;
\item $(T_{k} \wedge T_{k'} \vdash T_{\textrm{l.c.m.}(k, k')})$, for any $k, k'$ such that $\textrm{l.c.m.}(k,k')\in \delta(I)\cup\delta(J)$;
\item $(T_{k} \vdash_{g} g=0)$, for every $k\in \delta(I)\setminus \delta(J)$;
\item $(T_{k} \wedge T_{k'} \vdash \bot)$, for any $k, k'$ such that $\textrm{l.c.m.}(k,k')\notin \delta(I)\cup\delta(J)$.
\end{enumerate}
This theory is clearly bi-interpretable with the previous axiomatization.
\end{obs}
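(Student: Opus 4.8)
The plan is to write down the bi-interpretation explicitly, using the fact that both theories share the same underlying signature of $\ell$-groups together with the added constant and differ only in their supply of $0$-ary relation symbols; consequently both interpretations will act as the identity on the group part, and the entire content lies in the translation of the propositional symbols. Write $\mathbb{G}'$ for the alternative theory. The guiding observation is that $\delta(I)\cup\delta(J)\subseteq\delta(n)$, since $n=\textrm{l.c.m.}(I\cup J)$ and so every divisor of a member of $I\cup J$ divides $n$; thus the symbols $T_k$ of $\mathbb{G}'$ are indexed by a subset of the indices of the symbols $R_k$ of $\mathbb{G}_{(I,J)}$, and the `superfluous' symbols $R_k$ (those with $k\in\delta(n)\setminus(\delta(I)\cup\delta(J))$) are exactly the ones forced to be false by axiom (5) of $\mathbb{G}_{(I,J)}$.

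First I would define the interpretation $I\colon\mathbb{G}_{(I,J)}\to\mathbb{G}'$ by translating $R_k\mapsto T_k$ for $k\in\delta(I)\cup\delta(J)$ and $R_k\mapsto\bot$ otherwise, and the interpretation $I'\colon\mathbb{G}'\to\mathbb{G}_{(I,J)}$ by translating $T_k\mapsto R_k$ (legitimate since each such $k$ lies in $\delta(n)$); both are the identity on the $\ell$-group sort, its operations, and the constant. To see that these are genuine interpretations one checks that every axiom of the target theory is provable in the source theory after translation. Most are immediate matches of corresponding axioms; the only two needing a word are axiom (5) of $\mathbb{G}'$ under $I'$, which follows by first invoking axiom (3) of $\mathbb{G}_{(I,J)}$ to produce $R_{\textrm{l.c.m.}(k,k')}$ (available because $\textrm{l.c.m.}(k,k')\mid n$) and then axiom (5) of $\mathbb{G}_{(I,J)}$ to reach $\bot$, and axiom (3) of $\mathbb{G}_{(I,J)}$ under $I$, which I would split into the case where some index leaves $\delta(I)\cup\delta(J)$ (then a $\bot$ conjunct appears and the sequent is trivial) and the case where both indices remain inside (handled by axiom (3) or axiom (5) of $\mathbb{G}'$ according as the l.c.m.\ survives in $\delta(I)\cup\delta(J)$ or not).

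Finally I would verify that the two round-trips are the identity up to provable equivalence. The composite $I\circ I'$ sends each $T_k$ to $R_k$ and back to $T_k$, so it is literally the identity on $\mathbb{G}'$; the composite $I'\circ I$ fixes $R_k$ for $k\in\delta(I)\cup\delta(J)$ and sends $R_k\mapsto\bot$ for the remaining indices, and since axiom (5) of $\mathbb{G}_{(I,J)}$ makes $R_k$ provably equivalent to $\bot$ for precisely those indices, $I'\circ I$ is provably equivalent to the identity. As both interpretations preserve the group part on the nose, these propositional equivalences supply the required isomorphisms and the bi-interpretation is established. I do not expect a genuine obstacle here: the only thing demanding care is the bookkeeping of the index sets, together with the observation that axiom (3) of $\mathbb{G}_{(I,J)}$ is stated for all pairs in $\delta(n)$, so that the l.c.m.\ closure needed to derive the falsity axioms is already at hand.
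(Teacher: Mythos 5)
Your proof is correct and is precisely the argument the paper leaves implicit: the Remark offers no justification beyond the word ``clearly'', and your translations $R_k \mapsto T_k$ for $k\in\delta(I)\cup\delta(J)$, $R_k\mapsto\bot$ otherwise, and $T_k\mapsto R_k$, with the two non-trivial axiom checks routed through the fact that $\delta(n)$ is closed under l.c.m.\ while $\delta(I)\cup\delta(J)$ need not be, constitute the intended bi-interpretation. One cosmetic slip: you write that one checks every axiom of the \emph{target} theory is provable in the \emph{source} theory after translation, whereas what you then (correctly) verify is the reverse --- each axiom of the source theory of an interpretation must translate to a sequent provable in its target theory.
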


\begin{theorem}\label{thm:presheaf2}
The theory $\mathbb{G}_{(I,J)}$ is of presheaf type.
\end{theorem}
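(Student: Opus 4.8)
The plan is to realise $\mathbb{G}_{(I,J)}$ as a quotient of a cartesian theory obtained by adding \emph{only} sequents whose conclusion is $\bot$, so that the presheaf-type property follows from the fact that cartesian theories are of presheaf type together with Theorem \ref{thm:quotient_negation}.

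First I would isolate the subtheory $\mathbb{G}_0$ of $\mathbb{G}_{(I,J)}$ whose axioms are those of the theory $\mathbb{L}$ of $\ell$-groups, the added constant, and axioms (1)--(4). The crucial observation is that each of these axioms is a \emph{cartesian} (indeed Horn) sequent. The $\ell$-group axioms are equational; the constant (naming the element $g\in G$ appearing in $\Gamma(\mathbb{Z}\times_{\textrm{lex}}G,(k,g))$) carries no axioms, so it merely enlarges the algebraic signature; the sequents $(\top\vdash R_{1})$, $(R_{k}\vdash R_{k'})$ and $(R_{k}\wedge R_{k'}\vdash R_{\textrm{l.c.m.}(k,k')})$ have antecedents and consequents that are finite conjunctions of the atomic $0$-ary relation symbols $R_k$; and axiom (4), $(R_{k}\vdash_{g} g=0)$, has the atomic antecedent $R_k$ (in the context $g$) and the atomic consequent $g=0$. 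Since no disjunctions, infinitary connectives, or non-provably-unique existential quantifiers occur, $\mathbb{G}_0$ is a cartesian theory, and is therefore of presheaf type.

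Next I would observe that $\mathbb{G}_{(I,J)}$ is exactly the quotient of $\mathbb{G}_0$ obtained by adjoining the family of axioms (5), i.e.\ the sequents $(R_{k}\vdash \bot)$ for $k\notin \delta(I)\cup\delta(J)$. Each of these is of the form $(\phi\vdash_{\vec{x}}\bot)$ with $\phi=R_k$ a geometric (in fact atomic) formula over the signature of $\mathbb{G}_0$. Theorem \ref{thm:quotient_negation} then applies verbatim and shows that $\mathbb{G}_{(I,J)}$ is classified by the presheaf topos $[\mathcal{T},\mathbf{Set}]$, where $\mathcal{T}$ is the full subcategory of $\textrm{f.p.}\mathbb{G}_0$-mod$(\mathbf{Set})$ on the $\mathbb{G}_{(I,J)}$-models; in particular $\mathbb{G}_{(I,J)}$ is of presheaf type.

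I do not anticipate a genuine obstacle here; the only point requiring care is the bookkeeping in the splitting, namely checking that once the $0$-ary relation symbols $R_k$ are treated as atomic formulae, axioms (1)--(4) are all Horn (hence cartesian) and that the constant is genuinely axiom-free, so that $\mathbb{G}_0$ remains cartesian over the equational theory $\mathbb{L}$. Once this is verified, the conclusion is immediate. (The alternative axiomatization with the symbols $T_k$ noted in the Remark could be handled identically, the only change being that axioms (3) and (5) there are already phrased so as to keep the cartesian/falsity separation intact.)
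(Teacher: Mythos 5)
Your proof is correct and follows essentially the same route as the paper: the authors likewise observe that axioms (1)--(4), added to the theory of $\ell$-groups with an arbitrary constant, form a cartesian theory (hence of presheaf type), and then invoke Theorem \ref{thm:quotient_negation} for the axioms of the form $(R_k \vdash \bot)$. Your version merely spells out the bookkeeping (that the sequents are Horn and the constant is axiom-free) which the paper leaves implicit.
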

\begin{proof}
Every axiom of $\mathbb{G}_{(I,J)}$, except for the last one, is cartesian. Thus, the theory obtained by adding the axioms (1)-(4) to the theory of $\ell$-groups with an arbitrary constant is cartesian and hence of presheaf type. The thesis then follows from Theorem \ref{thm:quotient_negation}.
\end{proof}

In section \ref{sct:topos_theory} we have observed that two theories of presheaf type are Morita-equivalent if and only if they have equivalent categories of set-based models. Thanks to Theorems \ref{thm:rigid_topology} and \ref{thm:presheaf2}, we can apply this to our theories $\mathbb{G}_{(I,J)}$ and $\mathbb{L}oc_V^2$. 

To prove that $\mathbb{L}oc_V^2$ and $\mathbb{G}_{(I,J)}$ have equivalent categories of set-based models, we start by characterizing the set-based models of the latter theory.

\begin{proposition}\label{pro:model_G_(I,J)}
The models of $\mathbb{G}_{(I,J)}$ in $\mathbf{Set}$ are triples $(G,g,R)$, where $G$ is an $\ell$-group, $g$ is an element of $G$ and $R$ is a subset of $\delta(n)$, which satisfy the following properties:
\begin{enumerate}[(i)]
\item $R$ is an ideal of $\delta(n)$;
\item if $R\subseteq \delta(I)\setminus \delta(J)$, then the $\ell$-group $G$ is the trivial one;
\item $R\subseteq \delta(I)\cup\delta(J)$.
\end{enumerate}

\end{proposition}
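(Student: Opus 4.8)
The plan is to unwind the definition of a set-based model of $\mathbb{G}_{(I,J)}$ and to read off each axiom as a condition on the data $(G,g,R)$. A model in $\mathbf{Set}$ consists of an $\ell$-group $G$ interpreting the signature of $\mathbb{L}$, an element $g\in G$ interpreting the adjoined constant, and, for each $k\in\delta(n)$, a value for the $0$-ary relation symbol $R_{k}$ in the (two-element) lattice of subterminal objects of $\mathbf{Set}$, as recalled in Example \ref{ex:prop_theories}; recording the indices $k$ for which $R_{k}$ is true as a subset $R\subseteq\delta(n)$ produces exactly the triple of the statement. Since each $R_{k}$ takes one of two truth values, a sequent between $0$-ary relations holds iff the associated implication of truth values does, whereas a sequent carrying a free group variable, such as axiom (4), holds iff its hypothesis forces $G=\{0\}$. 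The proof then reduces to matching axioms (1)--(5) against conditions (i)--(iii) and checking both implications.

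The straightforward part concerns the cartesian axioms. Axiom (1) says $1\in R$; axiom (2) says that $R$ is a down-set for the divisibility order; and axiom (3) says that $R$ is closed under $\mathrm{l.c.m.}$ Together these three assert exactly that $R$ is an ideal of the finite lattice $(\delta(n),\mid)$, which is condition (i). Axiom (5), imposing $(R_{k}\vdash\bot)$ for every $k\in\delta(n)\setminus(\delta(I)\cup\delta(J))$, says precisely that $R$ omits every such $k$, i.e.\ $R\subseteq\delta(I)\cup\delta(J)$, which is condition (iii); here one uses that $\delta(I)\cup\delta(J)\subseteq\delta(n)$ because $n=\mathrm{l.c.m.}(I\cup J)$. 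Each of these is a direct reading-off, so no real work is required.

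The one step demanding care is the translation of axiom (4) into condition (ii), and I expect this to be the crux. Axiom (4) forces $G=\{0\}$ as soon as $R$ contains \emph{some} $k\in\delta(I)\setminus\delta(J)$, so I must reconcile this with a condition phrased in terms of $R$ as a whole. The key observations are that $\delta(I)$ and $\delta(J)$ are down-sets of $\delta(n)$, and that $R$, being an ideal of the \emph{finite} lattice $\delta(n)$, is principal: $R=\delta(m)$ with $m:=\max R\in R$ (the join of its finitely many elements). Consequently $R\cap(\delta(I)\setminus\delta(J))\neq\emptyset$ iff $m\notin\delta(J)$, and, invoking condition (iii) to place $m$ in $\delta(I)\cup\delta(J)$, this is equivalent to $R\not\subseteq\delta(J)$; thus axiom (4) says exactly that $G$ is trivial whenever $R\not\subseteq\delta(J)$, which is the content of condition (ii). With this equivalence in hand both directions follow: from a model, conditions (i)--(iii) are obtained by the translations above; conversely, any triple satisfying (i)--(iii) validates every axiom, the reduction of (4) to the top element $m$ being the only non-formal verification. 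The subtlety to watch is precisely that (ii) must be read through $m=\max R$ (equivalently, as $R\not\subseteq\delta(J)$): for a principal ideal $R=\delta(m)$ the bare containment $R\subseteq\delta(I)\setminus\delta(J)$ holds only in the degenerate case $J=\emptyset$, since $1\in R$ always lies in $\delta(J)$ once $J\neq\emptyset$.
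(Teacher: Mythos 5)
Your proposal is correct, and its overall shape---reading each axiom of $\mathbb{G}_{(I,J)}$ off as a condition on the triple $(G,g,R)$---is the same as the paper's. The difference lies at the point you yourself identify as the crux, and there your treatment is genuinely more careful than the paper's. The paper's proof disposes of axiom (4) in two sentences: it observes (correctly) that axiom (4) says ``if some $a\in\delta(I)\setminus\delta(J)$ lies in $R$ then $G$ is trivial'' and then simply asserts that this ``corresponds to condition (ii)''. Taken literally, that correspondence fails: as you note, when $J\neq\emptyset$ the containment $R\subseteq\delta(I)\setminus\delta(J)$ is impossible (since $1\in R\cap\delta(J)$), so the literal condition (ii) is vacuous while axiom (4) is not; concretely, for $I=\{2\}$, $J=\{3\}$, $n=6$, the triple $(\mathbb{Z},0,\{1,2\})$ satisfies (i)--(iii) as written but fails axiom (4) at $k=2$, so it is not a model. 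Your proof supplies the missing reconciliation: by (i) and finiteness, $R=\delta(\max R)$ is principal, and by (iii) together with the fact that $\delta(J)$ is a down-set, $R$ meets $\delta(I)\setminus\delta(J)$ iff $\max R\in\delta(I)\setminus\delta(J)$ iff $R\not\subseteq\delta(J)$. This is exactly the reading of condition (ii) that the paper itself adopts downstream: both Remark \ref{rem:explicitmod} and the proof of Theorem \ref{thm:cat_equi} invoke (ii) in the form ``if $\max(R)\in\delta(I)\setminus\delta(J)$ then $G$ is trivial''. In short, your argument proves the proposition under its intended interpretation and, as a bonus, makes explicit (and repairs) an imprecision in the statement that the paper's own proof passes over.
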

\begin{proof}
The interpretation of the propositional symbols over the signature of $\mathbb{G}_{(I,J)}$ can be identified with a subset $R$ of $\delta(n)$ satisfying particular properties. Axioms (1)-(3) assert that $R$ is an ideal of $(\delta(n),/)$ (recall that an ideal of a sup-semilattice with bottom element is a lowerset which contains the bottom element and which is closed with respect to the sup operation). Axiom (4) asserts that for any $a\in \delta(I)\setminus \delta(J)$, if $a\in R$ then the group $G$ is the trivial one. This corresponds to condition (ii). Lastly, axiom (5) asserts that $R$ is contained in $\delta(I)\cup\delta(J)$.
\end{proof}

\begin{lemma}\label{lmm:ideal-element}
There is a bijection between the elements of $\delta(I)\cup\delta(J)$ and the ideals of $\delta(n)$ contained in $\delta(I)\cup\delta(J)$.
\end{lemma}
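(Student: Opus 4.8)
The plan is to exhibit an explicit bijection by sending each $a\in\delta(I)\cup\delta(J)$ to the principal ideal it generates in $\delta(n)$, namely the set of its divisors $\delta(a)=\{d\in\delta(n)\mid d \textrm{ divides } a\}$. First I would check that this assignment is well defined. The set $\delta(a)$ is a lowerset of $(\delta(n),/)$ which contains the bottom element $1$ and is closed under $\textrm{l.c.m.}$ (since the $\textrm{l.c.m.}$ of two divisors of $a$ again divides $a$), hence it is an ideal of $\delta(n)$; and since $a$ divides some $m\in I\cup J$, every divisor of $a$ divides $m$, so $\delta(a)\subseteq\delta(m)\subseteq\delta(I)\cup\delta(J)$. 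Thus $a\mapsto\delta(a)$ does land in the set of ideals of $\delta(n)$ contained in $\delta(I)\cup\delta(J)$.

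Injectivity is immediate from the antisymmetry of divisibility: if $\delta(a)=\delta(b)$ then $a\in\delta(b)$ and $b\in\delta(a)$, that is $a$ divides $b$ and $b$ divides $a$, whence $a=b$.

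The crux of the argument is surjectivity, and here the key observation is that every ideal of the \emph{finite} lattice $\delta(n)$ is principal. Given an ideal $K$ of $\delta(n)$ contained in $\delta(I)\cup\delta(J)$, I would set $a=\textrm{l.c.m.}(K)$. Since $K$ is finite, nonempty (it contains $1$) and closed under the binary $\textrm{l.c.m.}$, this iterated $\textrm{l.c.m.}$ is itself an element of $K$, so $a\in K\subseteq\delta(I)\cup\delta(J)$ is a legitimate preimage candidate. By definition of $\textrm{l.c.m.}$ every element of $K$ divides $a$, giving $K\subseteq\delta(a)$; conversely, since $a\in K$ and $K$ is a lowerset in $\delta(n)$, every divisor of $a$ (all of which lie in $\delta(n)$ because $a$ divides $n$) belongs to $K$, giving $\delta(a)\subseteq K$. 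Hence $K=\delta(a)$, and the map is onto. The inverse is therefore $K\mapsto\textrm{l.c.m.}(K)$, the greatest element of $K$ with respect to divisibility.

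I expect the only delicate point to be ensuring that the top element $a=\textrm{l.c.m.}(K)$ does not escape $\delta(I)\cup\delta(J)$. This is guaranteed not by any closure property of $\delta(I)\cup\delta(J)$ itself — which in general is \emph{not} closed under $\textrm{l.c.m.}$ — but by the closure of the ideal $K$ under $\textrm{l.c.m.}$ together with the containment $K\subseteq\delta(I)\cup\delta(J)$: the element $a$ lands inside $K$ and hence inside $\delta(I)\cup\delta(J)$. Everything else reduces to the routine fact that divisibility on $\delta(n)$ is a finite lattice ordering in which ideals coincide with principal downsets.
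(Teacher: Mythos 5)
Your proof is correct and follows essentially the same route as the paper's: the paper also sends each $k\in\delta(I)\cup\delta(J)$ to the principal ideal it generates and, in the other direction, sends an ideal $R$ to its maximal element (which exists because $R$ is finite and closed under l.c.m., and lies in $\delta(I)\cup\delta(J)$ because $R$ is contained in that set), checking that the two assignments are mutually inverse. Your write-up merely spells out the well-definedness, injectivity and surjectivity checks that the paper leaves as "easy to prove".
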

\begin{proof}
Let $k$ be an element in $\delta(I)\cup\delta(J)$. The ideal $\shpos k$ generated by $k$ is contained in $\delta(I)\cup\delta(J)$ since this set is a lowerset. On the other hand, given an ideal $R$ of $\delta(n)$ contained in $\delta(I)\cup\delta(J)$, its maximal element, which always exists since $R$ is finite and closed with respect to the least commom multiple, belongs to $\delta(I)\cup \delta(J)$. This correspondence yields a bijection. Indeed, it is easy to prove that 
\begin{center}
$R=\shpos \textrm{max}(R)$ and $\textrm{max}(\shpos k)=k$
\end{center}
for every ideal $R$ of $\delta(n)$ contained in $\delta(I)\cup \delta(J)$ and every $k\in \delta(I)\cup\delta(J)$.
\end{proof}

\begin{obs}\label{rem:explicitmod}
By Lemma \ref{lmm:ideal-element}, a set-based model of the theory $\mathbb{G}_{(I,J)}$ can be identified with a triple $(G, g, k)$, where $G$ is a $\ell$-group, $g$ is an element of $G$ and $k$ is an element of $\delta(I)\cup \delta(J)$, such that if $k\in \delta(I)\setminus \delta(J)$ then $G$ is the trivial group.  
\end{obs}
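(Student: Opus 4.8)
The plan is to read off the statement directly from the explicit description of the set-based models provided by Proposition \ref{pro:model_G_(I,J)}, transporting the data and the defining conditions along the bijection of Lemma \ref{lmm:ideal-element}. By Proposition \ref{pro:model_G_(I,J)}, a model of $\mathbb{G}_{(I,J)}$ in $\mathbf{Set}$ is a triple $(G,g,R)$ where $G$ is an $\ell$-group, $g\in G$, and $R$ is an ideal of $(\delta(n),/)$ with $R\subseteq\delta(I)\cup\delta(J)$, subject to the triviality requirement coming from axiom (4) (as made explicit in the proof of that proposition): if $R$ contains some element of $\delta(I)\setminus\delta(J)$, then $G$ is trivial. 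Lemma \ref{lmm:ideal-element} supplies a bijection between the ideals $R$ of $\delta(n)$ contained in $\delta(I)\cup\delta(J)$ and the elements $k$ of $\delta(I)\cup\delta(J)$, sending $R$ to $k=\textrm{max}(R)$ and $k$ to the principal ideal $\shpos k$. First I would use this bijection to replace the datum $R$ by the single number $k=\textrm{max}(R)\in\delta(I)\cup\delta(J)$; conditions (i) and (iii) of Proposition \ref{pro:model_G_(I,J)} then amount precisely to the requirement that $k$ be a well-defined element of $\delta(I)\cup\delta(J)$.

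The only point genuinely requiring an argument is the translation of the triviality condition. Writing $R=\shpos k=\{d\in\delta(n)\mid d \textrm{ divides } k\}$, I would establish the equivalence
\[
R\cap(\delta(I)\setminus\delta(J))\neq\emptyset \iff k\in\delta(I)\setminus\delta(J).
\]
The right-to-left implication is immediate, since $k=\textrm{max}(R)\in R$. For the converse, suppose $d\in R$ lies in $\delta(I)\setminus\delta(J)$; then $d$ divides $k$, and since $\delta(J)$ is a lowerset for the divisibility order, $k\in\delta(J)$ would force $d\in\delta(J)$, a contradiction; hence $k\notin\delta(J)$, and as $k\in\delta(I)\cup\delta(J)$ by condition (iii) we conclude $k\in\delta(I)\setminus\delta(J)$. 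With this equivalence in hand, the triviality requirement on $(G,g,R)$ becomes exactly \emph{if $k\in\delta(I)\setminus\delta(J)$ then $G$ is trivial}, which yields the claimed description of models as triples $(G,g,k)$.

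This downset computation is the crux, and it is mild: it rests solely on the fact that $\delta(J)$ (and likewise $\delta(I)\cup\delta(J)$) is closed downward under divisibility. I expect no serious obstacle beyond it; the two care points are that the correspondence $R\mapsto\textrm{max}(R)$ be well defined, i.e. that $R$, being an ideal of the finite lattice $\delta(n)$, is finite and closed under l.c.m.\ and so has a greatest element (as recorded in the proof of Lemma \ref{lmm:ideal-element}), and that one uses the form of axiom (4) involving membership of a single generator in $\delta(I)\setminus\delta(J)$ rather than an inclusion of all of $R$. Once these are in place, the remark follows simply by rewriting Proposition \ref{pro:model_G_(I,J)} through the bijection.
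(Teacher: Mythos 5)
Your proof is correct and takes essentially the same route the paper implicitly intends: the remark is just Proposition \ref{pro:model_G_(I,J)} rewritten through the bijection of Lemma \ref{lmm:ideal-element}, and your downset argument settles the only point that actually needs checking, namely that the triviality requirement coming from axiom (4) (that $R$ \emph{meets} $\delta(I)\setminus\delta(J)$, which as you note is the accurate reading of condition (ii)) corresponds exactly to $\textrm{max}(R)\in\delta(I)\setminus\delta(J)$. Nothing is missing.
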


Let $(G,g,R)$ and $(H,h,P)$ be two models of $\mathbb{G}_{(I,J)}$ in $\mathbf{Set}$. The $\mathbb{G}_{(I,J)}$-model homomorphisms $(G,g,R)\to (H,h,P)$ are pairs of the form $(f, i)$, where $f$ is an $\ell$-group homomorphism $G\to H$ such that $f(g)=h$ and $i$ is an inclusion $R\subseteq P$.

\begin{theorem*}\label{thm:cat_equi}
Let $V=V(\{S_i\}_{i\in I}, \{S_{j}^{\omega}\}_{j\in J})$ be a Komori variety. Then the category of set-based models of the theory $\mathbb{L}oc_{V}^2$ is equivalent to the category of set-based models of the theory ${\mathbb G}_{(I, J)}$.  
\end{theorem*}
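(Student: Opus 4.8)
The plan is to reduce everything to a comparison of set-based models. Both theories are of presheaf type — $\mathbb{L}oc_V^2$ by Theorem \ref{thm:rigid_topology} and $\mathbb{G}_{(I,J)}$ by Theorem \ref{thm:presheaf2} — so by the criterion recalled in Section \ref{sct:topos_theory}, that two theories of presheaf type are Morita-equivalent if and only if their categories of set-based models are equivalent, it suffices to exhibit an equivalence
$\mathbb{L}oc_V^2\textrm{-mod}(\mathbf{Set}) \simeq \mathbb{G}_{(I,J)}\textrm{-mod}(\mathbf{Set})$. I would therefore do no further topos theory and work entirely with the two explicitly described model categories: local MV-algebras in $V$ on one side (via Theorems \ref{thm:local_in_variety} and \ref{thm:representation local finite rank}), and the triples $(G,g,k)$ of Remark \ref{rem:explicitmod} on the other.

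First I would set up a functor $F$ sending $(G,g,k)$ to $\Gamma(\mathbb{Z}\times_{\textrm{lex}} G,(k,g))$. By Theorem \ref{thm:representation local finite rank} this is local of rank $k$, and I would verify membership in $V$ through Theorem \ref{thm:local_in_variety}: this amounts to $k\in\delta(I)\cup\delta(J)$ together with simplicity whenever $k\in\delta(I)\setminus\delta(J)$, the latter being precisely guaranteed by condition (ii) of the model via Lemma \ref{lmm:simple_in_V}. On arrows, a $\mathbb{G}_{(I,J)}$-homomorphism $(f,\subseteq)$ with $f(g)=h$ and $k\mid k'$ lifts to the unital $\ell$-group homomorphism $(m,y)\mapsto(\tfrac{k'}{k}m, f(y))$ from $(\mathbb{Z}\times_{\textrm{lex}} G,(k,g))$ to $(\mathbb{Z}\times_{\textrm{lex}} H,(k',h))$, hence through Mundici's equivalence \cite{Mundici} to an MV-homomorphism, and functoriality is routine.

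For the quasi-inverse I would proceed through the radical. Given a local MV-algebra $A$ in $V$, Lemma \ref{lmm:cancellative_monoid} makes $(\textrm{Rad}(A),\oplus,\wedge,\vee,0)$ a cancellative lattice-ordered monoid, and I take $G$ to be its enveloping $\ell$-group; the rank $k$ is read off from $A/\textrm{Rad}(A)\simeq S_k$, and Theorem \ref{thm:representation local finite rank} furnishes an element $g\in G$ with $A\simeq\Gamma(\mathbb{Z}\times_{\textrm{lex}} G,(k,g))$. On morphisms I would again pass through Mundici's equivalence, taking an MV-homomorphism to the induced unital homomorphism of the associated $\ell$-u groups and restricting it to the convex $\ell$-subgroups $\{0\}\times G$ of infinitesimals; these are canonically identified with the $\ell$-groups of the radicals, so the restriction is the underlying $\ell$-homomorphism, while the effect on the quotients $S_k\to S_{k'}$ records the rank-divisibility datum.

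The main obstacle is showing that $F$ is an equivalence, i.e. fully faithful and essentially surjective, which is where the entire content sits. Essential surjectivity is immediate from Theorem \ref{thm:representation local finite rank}, and faithfulness is clear; the heart of the matter is the bijection on hom-sets. The tool for this is the analysis of an arbitrary unital $\ell$-group homomorphism $\Theta\colon(\mathbb{Z}\times_{\textrm{lex}} G,(k,g))\to(\mathbb{Z}\times_{\textrm{lex}} H,(k',h))$: the order-theoretic characterisation of the elements infinitesimal relative to the strong unit forces $\Theta(\{0\}\times G)\subseteq\{0\}\times H$, so that $\Theta(0,y)=(0,f(y))$ for a unique $\ell$-homomorphism $f$, while the induced map on the quotient copies of $\mathbb{Z}$ is multiplication by $k'/k$, forcing $k\mid k'$. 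Reconciling this data with a $\mathbb{G}_{(I,J)}$-model homomorphism — in particular tracking how the $G$-component $g$ of the unit is transported under the scaling by $k'/k$ — is the delicate bookkeeping I would treat with the greatest care, since it is exactly here that the expansion of the $\ell$-group signature by the predicates $R_k$ and the constant naming the unit is forced to pull its weight.
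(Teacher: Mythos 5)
Your proposal reproduces the architecture of the paper's own proof: the same functor $(G,g,k)\mapsto\Gamma(\mathbb{Z}\times_{\textrm{lex}}G,(k,g))$ (the paper's $M_{(I,J)}$, with $k=\textrm{max}(R)$), the same well-definedness checks via Theorems \ref{thm:local_in_variety} and \ref{thm:representation local finite rank} and Lemma \ref{lmm:simple_in_V}, the same formula on arrows, and the same reduction of essential surjectivity to Remark \ref{rem:explicitmod}. The difference is that where the paper disposes of full faithfulness in two sentences, you defer it as ``delicate bookkeeping''. That deferral is a genuine gap, and it is not bookkeeping: carrying out the very analysis you propose makes the argument break. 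An arbitrary unital $\ell$-homomorphism $\Theta\colon(\mathbb{Z}\times_{\textrm{lex}}G,(k,g))\to(\mathbb{Z}\times_{\textrm{lex}}H,(k',h))$ does satisfy $\Theta(0,y)=(0,f(y))$ and does scale the $\mathbb{Z}$-component by $k'/k$, but it carries one further datum, the element $w\in H$ with $\Theta(1,0)=(k'/k,\,w)$; then $\Theta(m,y)=((k'/k)m,\,mw+f(y))$ and unitality forces only $kw+f(g)=h$. Such a $\Theta$ arises from a $\mathbb{G}_{(I,J)}$-model homomorphism $(f,i)$ --- which must preserve the constant, i.e.\ satisfy $f(g)=h$ exactly --- precisely when $w=0$, and nothing forces $w=0$.

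Concretely, take $V=V(S_2^{\omega})$, so $I=\emptyset$, $J=\{2\}$, $n=2$. The triples $(\mathbb{Z},0,\{1,2\})$ and $(\mathbb{Z},2,\{1,2\})$ are both models of $\mathbb{G}_{(I,J)}$, and there is \emph{no} model homomorphism from the first to the second, since a group homomorphism must send $0$ to $0\neq 2$. Yet their images $\Gamma(\mathbb{Z}\times_{\textrm{lex}}\mathbb{Z},(2,0))=S_2^{\omega}$ and $\Gamma(\mathbb{Z}\times_{\textrm{lex}}\mathbb{Z},(2,2))$ are isomorphic MV-algebras: the map $(m,y)\mapsto(m,m+y)$ is a unital $\ell$-isomorphism (here $f=\mathrm{id}$ and $w=1$), hence induces an MV-isomorphism via Mundici's functor $\Gamma$. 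So the functor sends non-isomorphic objects to isomorphic ones and is not full; as it stands it cannot be an equivalence, and the same objection applies to your ``quasi-inverse'' on morphisms, since the $f$ obtained by restriction to the radicals need not satisfy $f(g)=h$. You should be aware that the paper's own proof suffers from exactly the same defect --- it recovers $f$ from the restriction to the radicals and deduces $R\subseteq P$ from rank divisibility, but never verifies $f(g)=h$, which the example above shows can fail. So your instinct that the transport of the constant $g$ is the crux was correct, but the conclusion is that this step cannot be completed for the functor and the theory $\mathbb{G}_{(I,J)}$ as defined: one must either enlarge the morphisms on the group side to pairs $(f,w)$ with $kw=h-f(g)$ (which are no longer structure homomorphisms for this signature) or modify the theory $\mathbb{G}_{(I,J)}$ itself before the claimed equivalence can be established.
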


\begin{proof}
We shall define a functor
\begin{center}
$M_{(I,J)}:\mathbb{G}_{(I,J)}$-mod$(\mathbf{Set})\to\mathbb{L}oc_{V}^2$-mod$(\mathbf{Set})$
\end{center}
and prove that it is a categorical equivalence, i.e. that it is full and faithful and essentially surjective.

\begin{itemize}
\item[Objects:] Let $(G,g,R)$ be a model of $\mathbb{G}_{(I,J)}$ in $\mathbf{Set}$. We set
$$M_{(I,J)}(G,g,R):=\Gamma(\mathbb{Z}\times_{\textrm{lex}} G, (\textrm{max}(R),g))$$
By Proposition \ref{pro:model_G_(I,J)}(iii) and Lemma \ref{lmm:ideal-element}, $\textrm{max}(R)$ belongs to $\delta(I) \cup \delta(I)$. By Proposition \ref{pro:model_G_(I,J)}(ii), if $\textrm{max}(R)\in \delta(I)\setminus \delta(J)$ then the $\ell$-group $G$ is trivial and $M_{(I,J)}(G,g,R)$ is a simple MV-algebra. We can thus conclude from Theorem \ref{thm:local_in_variety} that the algebra $M_{(I,J)}(G,g,R)$ lies in $V$. 
\item[Arrows:] Let $(G,g,R)$ and $(H,h,P)$ be two models of $\mathbb{G}_{(I,J)}$ in $\mathbf{Set}$ and $(f,i:R\subseteq P)$ a homomorphism between them. Since $R\subseteq P$, $\textrm{max}( R)$ divides $\textrm{max}(P)$. We set
\begin{align*}
M_{(I,J)}(f, i): \Gamma(\mathbb{Z}\times_{\textrm{lex}} G, (\textrm{max}(R), g)) &\rightarrow \Gamma(\mathbb{Z}\times_{\textrm{lex}} H, (\textrm{max}(P), h))\\
(i,x) &\mapsto  (\frac{\textrm{max}(P)}{\textrm{max}(R)}i, f(x))
\end{align*}
Since $M_{(I,J)}(f, i)$ is the result of applying the functor $\Gamma$ to a unital $\ell$-group homomorphism, it is an MV-algebra homomorphism. 
\end{itemize}
The functoriality of the assignment $(f, i) \to M_{(I,J)}(f, i)$ is clear.

Let us now prove that the functor $M_{(I,J)}$ is full and faithful and essentially surjective. The fact that it is essentially surjective follows at once from Theorems \ref{thm:representation local finite rank} and \ref{thm:local_in_variety} in light of Remark \ref{rem:explicitmod}. The fact that it is full and faithful follows from the fact that one can recover any $f$ from $M_{(I,J)}(f, i)$ as the $\ell$-group homomorphism induced by the monoid homomorphism $$G^+=\textrm{Rad}(\Gamma(\mathbb{Z}\times_{\textrm{lex}} G, (\textrm{max}(R), g))) \to \textrm{Rad}(\Gamma(\mathbb{Z}\times_{\textrm{lex}}(H), (\textrm{max}(P), h)))=H^+$$ (cf. the proof of Lemma \ref{lmm:simple_in_V}) and that the existence of an MV-algebra homomorphism $\Gamma(\mathbb{Z}\times_{\textrm{lex}} G, (\textrm{max}(R), g))) \to \Gamma(\mathbb{Z}\times_{\textrm{lex}} H, (\textrm{max}(P), h))$ implies that $\textrm{rank}(\Gamma(\mathbb{Z}\times_{\textrm{lex}} G, (\textrm{max}(R), g)))=\textrm{max}(R)$ divides $\textrm{rank}(\Gamma(\mathbb{Z}\times_{\textrm{lex}} H, (\textrm{max}(P), h))= \textrm{max}(P)$ and hence that $R\subseteq P$.   
\end{proof}

\begin{corollary*}\label{cor:Morita-equivalence}
Let $V=V(\{S_i\}_{i\in I}, \{S_{j}^{\omega}\}_{j\in J})$ be a Komori variety. Then the theory $\mathbb{L}oc_V^2$ of local MV-algebras in $V$ and the theory $\mathbb{G}_{(I,J)}$ are Morita-equivalent.
\end{corollary*}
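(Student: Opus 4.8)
The plan is to deduce this corollary directly from the category equivalence just established, using the special characterization of Morita-equivalence available for theories of presheaf type. Recall from section \ref{sct:topos_theory} that, in general, being Morita-equivalent (i.e. having equivalent classifying toposes) is strictly stronger than merely having equivalent categories of set-based models; the two notions coincide precisely when both theories are of presheaf type. So the entire argument reduces to checking that both $\mathbb{L}oc_V^2$ and $\mathbb{G}_{(I,J)}$ are of presheaf type, and then feeding the equivalence of their set-based model categories into this criterion.

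First I would invoke the two presheaf-type results already in hand: $\mathbb{L}oc_V^2$ is of presheaf type by Theorem \ref{thm:rigid_topology}, and $\mathbb{G}_{(I,J)}$ is of presheaf type by Theorem \ref{thm:presheaf2}. Since both theories are thus classified by presheaf toposes, the criterion recalled in section \ref{sct:topos_theory} applies verbatim: for theories of presheaf type $\mathbb{T}$ and $\mathbb{S}$, one has that $\mathbb{T}$ is Morita-equivalent to $\mathbb{S}$ if and only if $\mathbb{T}$-mod$(\mathbf{Set})\cong \mathbb{S}$-mod$(\mathbf{Set})$.

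The equivalence of categories of set-based models is exactly the content of the preceding theorem, namely the construction of the functor $M_{(I,J)}$ together with the verification that it is full, faithful and essentially surjective. Applying the above criterion with $\mathbb{T}=\mathbb{L}oc_V^2$ and $\mathbb{S}=\mathbb{G}_{(I,J)}$ then yields immediately the claimed Morita-equivalence, so the corollary is really just a packaging of Theorem \ref{thm:cat_equi} with the two presheaf-type statements.

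I do not expect any serious obstacle here; the step requiring the most care is conceptual rather than computational. The only reason the corollary is not a triviality for arbitrary geometric theories is that the passage from ``equivalent $\mathbf{Set}$-model categories'' to ``Morita-equivalent'' genuinely depends on the presheaf-type hypothesis on \emph{both} sides: without it, an equivalence of categories of set-based models need not lift to an equivalence of models in every Grothendieck topos, and hence need not induce an equivalence of the classifying toposes. It is therefore essential that Theorems \ref{thm:rigid_topology} and \ref{thm:presheaf2} be in place before the criterion is applied.
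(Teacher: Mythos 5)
Your proposal is correct and is exactly the paper's argument: the authors state the corollary with no further proof precisely because, as noted just before Theorem \ref{thm:cat_equi}, the presheaf-type results (Theorems \ref{thm:rigid_topology} and \ref{thm:presheaf2}) reduce Morita-equivalence to the equivalence of set-based model categories established by the functor $M_{(I,J)}$. Your remark on why the presheaf-type hypothesis on both sides is essential matches the paper's own framing of the criterion.
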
\qed

\subsection{Non-triviality of the Morita-equivalences}\label{sct:bi-interpretability}

If two theories are bi-intepretable, that is, their syntactic categories are equivalent, then they are trivially Morita-equivalent. In \cite{Russo} and \cite{Russo2} we proved that the Morita-equivalence lifting Di Nola-Lettieri's equivalence was non-trivial; in this section we shall see that this is true more generally for all the Morita-equivalences of Corollary \ref{cor:Morita-equivalence}.

Recall that, for any geometric theories $\mathbb T$ and $\mathbb S$, any set-based model $M$ of $\mathbb{T}$ (resp. of $\mathbb{S}$) corresponds to a geometric functor $F_M$ from the syntactic category $\mathcal{C}_{\mathbb T}$ of $\mathbb{T}$ (resp. $\mathcal{C}_{\mathbb S}$ of $\mathbb{S}$) to $\mathbf{Set}$ which assigns to every formula $\{\vec{x}.\phi\}$ over the signature of $\mathbb{T}$ (resp. of $\mathbb{S}$) its interpretation $[[\vec{x}.\phi]]_M$ in the model $M$. So any interpretation $I:\mathcal{C}_{\mathbb T} \to \mathcal{C}_{\mathbb S}$ between a theory $\mathbb{T}$ and a theory $\mathbb{S}$ induces a functor
$$s_I: \mathbb{S}\textrm{-mod}(\mathbf{Set})\to \mathbb{T}\textrm{-mod}(\mathbf{Set})$$
such that, for any set-based model $M$ of $\mathbb{S}$, $N=s_I(M)$ if and only if $F_N=F_M\circ I$.

Let $V=V(\{S_i\}_{i\in I}, \{S_{j}^{\omega}\}_{j\in J})$ be a Komori variety. Suppose that we have an interpretation $I$ of $\mathbb{G}_{(I,J)}$ into $\mathbb{L}oc_V^{2}$. Then the induced functor 
$$s_I: \mathbb{G}_{(I,J)}\textrm{-mod}(\mathbf{Set})\to \mathbb{L}oc_V^{2}\textrm{-mod}(\mathbf{Set})$$
sends the model $M:=(\{0\},0,\shpos 1)$ of the theory $\mathbb{G}_{(I,J)}$ to a model $N$ of $\mathbb{L}oc_V^2$. If $I(\{x.\top\})=\{\vec{y}.\psi\}$ we have that 
$$F_{N}(\{x.\top\})\cong F_M(\{\vec{y}.\psi\})$$
$$N\cong [[x. \top]]_{N}\cong[[\vec{y}.\psi]]_{M}\subseteq M^k\cong M=\{0\}.$$
By axiom NT of $\mathbb{L}oc_V^{2}$, this is not possible. So we have the following result.

\begin{proposition}
Let $V=V(\{S_i\}_{i\in I}, \{S_{j}^{\omega}\}_{j\in J})$ be a Komori variety. Then the theories $\mathbb{L}oc_V^{2}$ and $\mathbb{G}_{(I,J)}$ are not bi-interpretable. 
\end{proposition}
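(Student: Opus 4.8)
The plan is to refute bi-interpretability directly, by producing a single set-based model of $\mathbb{G}_{(I,J)}$ whose image under the model-level functor induced by a hypothetical interpretation is forced to be trivial, in violation of the axiom NT of $\mathbb{L}oc_V^2$. Since bi-interpretability of $\mathbb{L}oc_V^2$ and $\mathbb{G}_{(I,J)}$ means that their geometric syntactic categories are equivalent, it would in particular supply an interpretation $I$ inducing, via the correspondence recalled above (sending $I$ to the functor $s_I$ with $F_{s_I(M)} = F_M \circ I$), a functor
$$s_I : \mathbb{G}_{(I,J)}\textrm{-mod}(\mathbf{Set}) \to \mathbb{L}oc_V^2\textrm{-mod}(\mathbf{Set}).$$
It therefore suffices to exhibit one model $M$ of $\mathbb{G}_{(I,J)}$ for which $s_I(M)$ cannot be a model of $\mathbb{L}oc_V^2$.

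First I would fix the model $M := (\{0\}, 0, \{1\})$ of $\mathbb{G}_{(I,J)}$, whose underlying $\ell$-group is the one-element group and whose propositional part $R = \{1\}$ is the principal ideal of $\delta(n)$ generated by $1$; by Proposition \ref{pro:model_G_(I,J)} this is a bona fide model, since $\{1\}$ is an ideal of $\delta(n)$ contained in $\delta(I)\cup\delta(J)$ and the group is trivial. Writing $I(\{x.\top\}) = \{\vec{y}.\psi\}$ for the translation of the unique sort of $\mathbb{L}oc_V^2$, the defining property $F_{s_I(M)} = F_M\circ I$ identifies the carrier of $N := s_I(M)$ with the interpretation $[[\vec{y}.\psi]]_M$, which is a subset of a finite power of the carrier of $M$.

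The crux is then immediate: the carrier of $M$ is the one-element set $\{0\}$, so every finite power of it is again a one-element set, and hence $N$ has at most one element. But any model of $\mathbb{L}oc_V^2$ satisfies NT, that is $(0 = 1 \vdash \bot)$, and so must have $0 \neq 1$, i.e. at least two elements. This contradiction shows that no interpretation $I$ of the required form exists, whence $\mathbb{L}oc_V^2$ and $\mathbb{G}_{(I,J)}$ are not bi-interpretable.

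The one point needing care is the identification of the carrier of $s_I(M)$ with a definable subset of a finite power of the carrier of $M$. This rests on two facts: that an interpretation translates the single MV-sort of $\mathbb{L}oc_V^2$ into a geometric formula $\{\vec{y}.\psi\}$ whose free variables $\vec{y}$ all lie in the $\ell$-group sort (the only non-propositional sort of $\mathbb{G}_{(I,J)}$, so that $[[\vec{y}.\psi]]_M \subseteq G^k$ for $G$ the $\ell$-group of $M$), and that $s_I$ computes the carrier of the image model as exactly this interpretation. Granting this, the argument is robust, and it pinpoints precisely the asymmetry being exploited: the Morita-correspondent of $M$ is $M_{(I,J)}(M) \cong S_1$, a genuinely two-element algebra that an interpretation, being constrained to definable subsets of powers of a one-element carrier, can never reproduce.
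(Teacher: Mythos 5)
Your proof is correct and is essentially the paper's own argument: the paper likewise takes the model $(\{0\},0,\shpos 1)$ of $\mathbb{G}_{(I,J)}$, notes that a hypothetical interpretation $I$ with $I(\{x.\top\})=\{\vec{y}.\psi\}$ forces the carrier of $s_I(M)$ to be $[[\vec{y}.\psi]]_M\subseteq M^k\cong\{0\}$, and derives a contradiction with axiom NT. The only difference is presentational—you make explicit the sort-analysis justifying $[[\vec{y}.\psi]]_M\subseteq G^k$, which the paper leaves implicit.
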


\subsection{When is $\mathbb{L}oc_{V}^2$-mod$(\mathbf{Set})$ algebraic?} \label{sct:algebraicity}

By Theorem \ref{thm:rigid_topology}, the theory $\mathbb{L}oc_{V}^2$ is of presheaf type, whence its category $\mathbb{L}oc_{V}^2$-mod$(\mathbf{Set})$ of set-based models is finitely accessible, i.e. it is the ind-completion of its full subcategory on the finitely presentable objects. It is natural to wonder under which conditions this category is also algebraic (i.e. equivalent to the category of finite-product-preserving functors from a small category with finite products to $\mathbf{Set}$, cf. Chapter 1 of \cite{Vitale}). Indeed, in \cite{Russo2} we proved that the theory of perfect MV-algebras is Morita-equivalent to an algebraic theory, namely the theory of $\ell$-groups, whence its category of set-based models is algebraic.

As shown by the following proposition, the category $\mathbb{L}oc_{V}^2$-mod$(\mathbf{Set})$ cannot be algebraic for an arbitrary proper subvariety $V$. 

\begin{proposition}
Let $(I, J)$ be a reduced pair such that $I\neq \emptyset$ and $J\neq \emptyset$ and $V=V(\{S_i\}_{i\in I}, \{S_{j}^{\omega}\}_{j\in J})$ the corresponding variety. Then $\mathbb{L}oc_{V}^2$-mod$(\mathbf{Set})$ is not algebraic.
\end{proposition}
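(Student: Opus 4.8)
The plan is to transport the question, via the Morita-equivalence of Corollary \ref{cor:Morita-equivalence}, to the category $\mathbb{G}_{(I,J)}$-mod$(\mathbf{Set})$, and then to exploit the fact that a (one-sorted) algebraic category is monadic over $\mathbf{Set}$: it admits a regular projective regular generator $P$ — the free algebra on one generator — for which $\hom(P,-)$ is both \emph{faithful} and \emph{conservative} (indeed, in a variety a bijective homomorphism is an isomorphism, and $U=\hom(P,-)$ is the underlying-set functor). I would show that \emph{no} object of $\mathbb{G}_{(I,J)}$-mod$(\mathbf{Set})$ can play the role of such a generator, so that this category, and hence $\mathbb{L}oc_V^2$-mod$(\mathbf{Set})$, is not algebraic.

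Recall from Remark \ref{rem:explicitmod} and Proposition \ref{pro:model_G_(I,J)} that a model may be written as a triple $(G,g,k)$ with $k\in\delta(I)\cup\delta(J)$, that a homomorphism $(G,g,k)\to(H,h,k')$ exists only when $k\mid k'$, and that for every $m\in\delta(I)\cup\delta(J)$ the triple $S_m:=(\{0\},0,m)$ is a model. First I would record two features of the category. On the one hand, between rank-$1$ models there are distinct parallel morphisms: the two projections $(\mathbb{Z}^2,0,1)\to(\mathbb{Z},0,1)$ are distinct $\ell$-group homomorphisms fixing the constant. On the other hand, since $J\neq\emptyset$ the reducedness of $(I,J)$ forces $1\notin I$ (as $1$ would divide some element of $(I\setminus\{1\})\cup J$), so any $m\in I$ satisfies $m\geq 2$; and the zero map yields a homomorphism $f\colon S_1\to S_m$ (legitimate since $1\mid m$) which is \emph{not} an isomorphism, because $1\neq m$.

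Suppose now, for contradiction, that $\mathbb{G}_{(I,J)}$-mod$(\mathbf{Set})$ is algebraic, and let $P=(G_P,g_P,r)$ be a regular generator with $\hom(P,-)$ faithful and conservative. Faithfulness applied to the parallel pair above forces $\hom\bigl(P,(\mathbb{Z}^2,0,1)\bigr)\neq\emptyset$ (otherwise both projections would be sent to the unique empty map), whence $r\mid 1$, i.e. $r=1$. But when $r=1$ both $\hom(P,S_1)$ and $\hom(P,S_m)$ are singletons — the only $\ell$-group homomorphism into the trivial group is the zero map, which automatically fixes the constant $0$ — so $\hom(P,f)\colon\hom(P,S_1)\to\hom(P,S_m)$ is a bijection while $f$ is not an isomorphism. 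This contradicts the conservativity of $\hom(P,-)$, and the proof is complete.

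The step I would take most care over is the precise categorical input: one must invoke the characterisation of algebraic categories that guarantees a \emph{single} regular projective generator whose induced functor reflects isomorphisms, rather than merely a generating \emph{family} (a family would include objects of various ranks, which jointly \emph{do} detect $f$, so no contradiction would ensue). The remainder is a direct computation with the explicit models and their homomorphisms, the only case-dependent ingredient being the availability of a rank $m\geq 2$, which is supplied uniformly by $I\neq\emptyset$ together with $J\neq\emptyset$ through reducedness. It is instructive to contrast this with the perfect case $I=\emptyset$, $J=\{1\}$, where rank $1$ is the sole available rank and the obstruction evaporates, in agreement with the algebraicity of the theory of $\ell$-groups.
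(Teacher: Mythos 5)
Your proof takes a different route from the paper's, and it contains a genuine gap at exactly the point you yourself single out as delicate. The proposition uses the notion of algebraic category fixed in Section \ref{sct:algebraicity}: a category equivalent to the category of finite-product-preserving functors from a small category with finite products to $\mathbf{Set}$ (Chapter 1 of \cite{Vitale}). This is the possibly \emph{many-sorted} notion, and the only characterization available for it --- Theorem \ref{thm:characterization algebraic theory}, i.e.\ Theorem 6.9 of \cite{Vitale} --- provides a strong generator consisting of a \emph{set} of perfectly presentable objects. The stronger statement you invoke, namely the existence of a \emph{single} regular projective generator $P$ with $\hom(P,-)$ faithful and conservative, characterizes one-sorted varieties (categories monadic over $\mathbf{Set}$ via a representable forgetful functor) and fails for algebraic categories in the sense used here: $\mathbf{Set}\times\mathbf{Set}$ is algebraic and admits no such object, and --- internally to this very paper --- the finite poset $(\delta(n),/)$ is algebraic by Proposition \ref{thm:algebraic}, yet for $n>1$ no object of it has a conservative hom functor (faithfulness is automatic in a poset, while $\hom(1,-)$ sends every arrow, invertible or not, to a bijection between singletons). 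Applied verbatim, your criterion would thus ``refute'' Proposition \ref{thm:algebraic}. Consequently your argument only shows that $\mathbb{G}_{(I,J)}$-mod$(\mathbf{Set})$ is not a one-sorted variety, which is strictly weaker than the proposition; and, as you observe in your last paragraph, once only a generating \emph{family} is guaranteed (which is all the paper's definition yields), objects of different ranks jointly detect your morphism $f\colon S_1\to S_m$ and no contradiction arises. The strategy cannot be patched from within: one needs an obstruction that is insensitive to the number of sorts.

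The paper uses precisely such an obstruction: cocompleteness. Every algebraic category, many-sorted or not, has binary coproducts, and the paper argues that $S_n$ and $S_m^{\omega}$ (for $n\in I$, $m\in J$) admit no coproduct in $\mathbb{L}oc_{V}^2$-mod$(\mathbf{Set})$: any local MV-algebra $A$ in $V$ receiving homomorphisms from both would have rank divisible by both $n$ and $m$, and Theorem \ref{thm:local_in_variety} combined with the reducedness of $(I,J)$ rules this out (the rank of $A$ is forced to equal $m$, whence $n\mid m$, contradicting reducedness). Your concrete computations in $\mathbb{G}_{(I,J)}$-mod$(\mathbf{Set})$ --- the distinct projections $(\mathbb{Z}^2,0,1)\rightrightarrows(\mathbb{Z},0,1)$, the forcing of $r=1$, the non-invertible morphism $S_1\to S_m$ --- are correct as far as they go, but they detect the wrong invariant; to prove the statement in the sense the paper intends, you must exhibit, as the paper does, a finite colimit (or some other piece of structure possessed by every algebraic category, of any number of sorts) that this category lacks.
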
 

\begin{proof}
Given $n\in I$ and $m\in J$, we have that $S_{n}, S^{\omega}_{m}\in \mathbb{L}oc_{V}^2$-mod$(\mathbf{Set})$. If $\mathbb{L}oc_{V}^2$-mod$(\mathbf{Set})$ is algebraic then there exists the coproduct $A$ of $S_{n}$ and $S^{\omega}_{m}$ in $\mathbb{L}oc_{V}^2$-mod$(\mathbf{Set})$. Since $A$ belongs to $\mathbb{L}oc_{V}^2$-mod$(\mathbf{Set})$, it has finite rank. By Theorem \ref{thm:local_in_variety}, either $A$ is simple or there exists $j\in J$ such that $\textrm{rank}(A)/j$. Since there is an MV-algebra homomorphism $S^{\omega}_{m}\to A$, $A$ cannot be simple. So $\textrm{rank}(A)/j$ for some $j\in J$. But $m/ \textrm{rank}(A)$ and $(I, J)$ is a reduced pair, so $m=j$ and hence $\textrm{rank}(A)=m$. On the other hand, $n$ divides $\textrm{rank}(A)$ since there is an MV-algebra homomorphism $S_n \to A$, so $n/m$. Since $(I, J)$ is a reduced pair, this implies that $n=m$; but this is absurd since in a reduced pair $(I, J)$, $I\cap J=\emptyset$.   
\end{proof}

On the other hand, as we shall prove below, for varieties $V$ generated by a single chain (which can be either a finite simple algebra or a Komori chain), the corresponding category $\mathbb{L}oc_{V}^2$-mod$(\mathbf{Set})$ is algebraic. 
 
If $V$ is generated by one simple MV-algebra $S_n$, the models of the theory $\mathbb{G}_{(\{n\},\emptyset)}$ in $\mathbf{Set}$ are the triples of the form $(\{0\},0,\shpos k)$, where $k\in \delta(n)$. Thus, we have that
$$\mathbb{G}_{(\{n\},\emptyset)}\textrm{-mod}(\mathbf{Set})\simeq (\delta(n),/).$$
Hence, the category $\mathbb{L}oc_{V}^2$-mod$(\mathbf{Set})$ is algebraic if and only if the poset of divisors of $n$ is an algebraic category.

If instead $V$ is generated by one Komori chain $S_n^\omega$, we have that 
$$\mathbb{G}_{(\emptyset,\{n\})}\textrm{-mod}(\mathbf{Set})\cong \mathbb{L}'\textrm{-mod}(\mathbf{Set})\times (\delta(n),/)$$
where $\mathbb{L}'$ is the theory of $\ell$-groups with an arbitrary constant. Since the theory $\mathbb{L}'$ is algebraic, the category $\mathbb{L}oc_{V}^2$-mod$(\mathbf{Set})$ is algebraic if the poset category $(\delta(n),/)$ is (cf. Proposition \ref{pro:prodalg} below).

In Chapter 4 of \cite{Vitale}, the authors characterized the algebraic categories as the free cocompletions under sifted colimits (i.e. those colimits which commute with all finite products in \textbf{Set}) of a small category (which can be recovered from it as the full subcategory on the \emph{perfectly presentable} objects i.e. those objects whose corresponding covariant representable functor preserves sifted colimits). In Chapter 6 of \emph{op. cit.}, they gave an alternative characterization in terms of strong generators of perfectly presentable objects:

\begin{definition}[Definition 6.1 \cite{Vitale}]
A set of objects $\mathcal{G}$ in a category $\mathcal{A}$ is called a \emph{generator} if two morphisms
$x, y: A \to B$ are equal whenever $x \circ g = y \circ g$ for every morphism $g:G \to A$ with domain $G$ in $\mathcal{G}$. A generator $\mathcal{G}$ is called \emph{strong} if a monomorphism $m: A \to B$ is an isomorphism
whenever every morphism $g: G \to B$ with domain $G$ in $\mathcal{G}$ factors through $m$.
\end{definition}

\begin{theorem}[Theorem 6.9 \cite{Vitale}]\label{thm:characterization algebraic theory}
The following conditions on a category $\mathcal{A}$ are equivalent:
\begin{enumerate}[(i)]
\item $\mathcal{A}$ is algebraic;
\item $\mathcal{A}$ is cocomplete and has a strong generator of perfectly presentable objects.
\end{enumerate}
\end{theorem}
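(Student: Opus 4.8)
The plan is to prove the two implications separately, treating $\textbf{(i)}\Rightarrow\textbf{(ii)}$ as the routine direction and $\textbf{(ii)}\Rightarrow\textbf{(i)}$ as the substantial one. Throughout I write $\mathcal{A}_{pp}$ for the full subcategory of $\mathcal{A}$ on the perfectly presentable objects, i.e. those $A$ for which $\mathcal{A}(A,-)\colon\mathcal{A}\to\mathbf{Set}$ preserves sifted colimits. The guiding idea is that an algebraic category is precisely the free cocompletion of a small category with finite products under sifted colimits, and that the perfectly presentable objects are exactly the building blocks of this cocompletion.

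For $\textbf{(i)}\Rightarrow\textbf{(ii)}$ I would assume $\mathcal{A}\simeq\mathbb{T}\textrm{-mod}(\mathbf{Set})$ for an algebraic theory $\mathbb{T}$ (a small category with finite products), models being finite-product-preserving functors $\mathbb{T}\to\mathbf{Set}$. First I would recall that such a category is cocomplete, being reflective in the presheaf category $[\mathbb{T},\mathbf{Set}]$ and reflective subcategories of cocomplete categories being cocomplete. Next, the free algebras, i.e. the representables $\mathbb{T}(t,-)$, form a strong generator: jointly the evaluation functors $\mathcal{A}\to\mathbf{Set}$ at the objects $t$ of $\mathbb{T}$ are faithful and conservative, which gives both the generator property and the strength condition on monomorphisms. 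Finally, each free algebra is perfectly presentable because sifted colimits in $\mathbb{T}\textrm{-mod}(\mathbf{Set})$ are computed pointwise as in $\mathbf{Set}$ (this is exactly where the defining property of sifted colimits, their commutation with finite products, enters), so that $\mathcal{A}(\mathbb{T}(t,-),-)$ is evaluation at $t$ and hence preserves them.

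For $\textbf{(ii)}\Rightarrow\textbf{(i)}$, given a cocomplete $\mathcal{A}$ with a strong generator $\mathcal{G}$ of perfectly presentable objects, the strategy is to manufacture the algebraic theory directly. The key preliminary is that $\mathcal{A}_{pp}$ is closed under finite coproducts (the hom-functor out of a finite coproduct is a finite product of hom-functors, and finite products of sifted-colimit-preserving functors still preserve sifted colimits, since in $\mathbf{Set}$ finite products commute with sifted colimits); thus $\mathbb{T}:=(\mathcal{A}_{pp})^{\textrm{op}}$ has finite products and is an algebraic theory. One then considers the canonical nerve functor
\[
E\colon\mathcal{A}\to\mathbb{T}\textrm{-mod}(\mathbf{Set}),\qquad E(A)=\mathcal{A}(-,A)\big|_{\mathcal{A}_{pp}},
\]
which lands among finite-product-preserving functors precisely because coproducts in $\mathcal{A}_{pp}$ are sent to products in $\mathbf{Set}$, and the task reduces to showing that $E$ is an equivalence.

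The main obstacle is establishing that $E$ is full, faithful and essentially surjective, and this is where both hypotheses on $\mathcal{G}$ are genuinely needed. Faithfulness follows from $\mathcal{G}$ being a generator. For essential surjectivity the plan is to use that $\mathcal{A}$, being cocomplete, has all sifted colimits and that $\mathcal{G}$ (hence $\mathcal{A}_{pp}$) generates $\mathcal{A}$ under sifted colimits: every object should be exhibited as the sifted colimit of the canonical diagram of perfectly presentable objects mapping into it, so that $\mathbb{T}\textrm{-mod}(\mathbf{Set})$, being the free sifted-colimit cocompletion of $\mathbb{T}^{\textrm{op}}=\mathcal{A}_{pp}$, receives an essentially unique sifted-colimit-preserving functor from it extending the inclusion, inverse to $E$. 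The delicate points, which I expect to consume most of the work, are precisely this density statement and the verification that the \emph{strong} generator property, and not merely the generator property, is what upgrades the canonical comparison into a genuine equivalence: intuitively, strength rules out a proper subobject through which all maps from $\mathcal{G}$ factor, and thereby forces the relevant counit to be an isomorphism. A secondary technical point is identifying $\mathcal{A}_{pp}$ with the retracts of finite coproducts of objects of $\mathcal{G}$, which is what keeps $\mathbb{T}$ essentially small.
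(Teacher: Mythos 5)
The paper offers no proof of this theorem to compare yours against: it is quoted verbatim, with its original numbering, from the book \cite{Vitale} of Ad\'amek, Rosick\'y and Vitale, and is used in the paper purely as a black box (together with Remark \ref{rem:strong generator}) to establish Propositions \ref{pro:prodalg} and \ref{thm:algebraic}. Judged on its own terms, your direction (i)$\Rightarrow$(ii) is correct and essentially complete. The problem lies in (ii)$\Rightarrow$(i): the statement you flag as a ``delicate point expected to consume most of the work'' --- that every object of $\mathcal{A}$ is the colimit of its canonical diagram over the closure $\mathcal{A}_0$ of $\mathcal{G}$ under finite coproducts, this diagram being sifted --- is not a technical verification to be deferred; it is the entire mathematical content of the hard direction. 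Every other step in your plan (full fidelity of the nerve $E$, essential surjectivity by lifting the canonical sifted presentation of a model, and even the smallness of your theory via the identification of $\mathcal{A}_{pp}$ with retracts of finite coproducts of objects of $\mathcal{G}$) reduces to exactly this density lemma. As written, the proposal is a correct roadmap with its central lemma missing.

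Moreover, the one mechanism you do indicate for how strength enters (``strength rules out a proper subobject through which all maps from $\mathcal{G}$ factor, and thereby forces the relevant counit to be an isomorphism'') fails if applied literally: the canonical comparison $k\colon \mathrm{colim}\,D_A\to A$, where $D_A\colon \mathcal{A}_0\downarrow A\to\mathcal{A}$ is the canonical diagram, is \emph{not} known to be a monomorphism, and the definition of strong generator only allows you to conclude that a mono through which all maps from $\mathcal{G}$ factor is invertible. The standard way to close the gap is as follows. First check that $\mathcal{G}$ is a strong generator if and only if the functors $\mathcal{A}(G,-)$, for $G\in\mathcal{G}$, are jointly faithful and jointly conservative (joint injectivity on homs from $\mathcal{G}$ forces a map to be monic, and then strength applies). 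Then prove that each $\mathcal{A}(G,k)$ is bijective: surjectivity is immediate, every $g\colon G\to A$ being itself an object of $\mathcal{A}_0\downarrow A$; for injectivity, use that $G$ is perfectly presentable and that $\mathcal{A}_0\downarrow A$ is sifted (it has finite coproducts), so that $\mathcal{A}(G,\mathrm{colim}\,D_A)\cong\mathrm{colim}\,\mathcal{A}(G,D_A(-))$, and observe that in this colimit of sets any class represented by a pair $((B,u),\,g\colon G\to B)$ equals the class of $((G,ug),\,\mathrm{id}_G)$ via the morphism $g$ of $\mathcal{A}_0\downarrow A$; hence two classes with the same image $ug=u'g'$ in $\mathcal{A}(G,A)$ coincide. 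Joint conservativity then makes $k$ an isomorphism, and the remainder of your plan goes through. Finally, take $\mathbb{T}=\mathcal{A}_0^{\mathrm{op}}$ rather than $(\mathcal{A}_{pp})^{\mathrm{op}}$: the former is small by construction, whereas your proposed identification of $\mathcal{A}_{pp}$ with retracts of finite coproducts of objects of $\mathcal{G}$ is itself a consequence of the density lemma and so cannot be invoked before proving it.
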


\begin{obs}\label{rem:strong generator}
By Corollary 6.5 \cite{Vitale}, if $\mathcal{A}$ has coproducts and every object of $\mathcal{A}$ is a colimit of objects from $\mathcal{G}$, then $\mathcal{G}$ is a strong generator.
\end{obs}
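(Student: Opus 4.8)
The plan is to verify the two clauses of Definition 6.1 directly, using only the standing hypothesis that each object of $\mathcal{A}$ admits a presentation as a colimit of a diagram of objects from $\mathcal{G}$; this is precisely the content of Corollary 6.5 of \cite{Vitale}, which one may simply invoke, but the underlying categorical argument is short enough that I would record it.

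First I would establish that $\mathcal{G}$ is a generator. Given $x,y\colon A\to B$ agreeing after precomposition with every arrow $g\colon G\to A$ with $G\in\mathcal{G}$, I would write $A$ as a colimit of objects $G_i\in\mathcal{G}$ with structure maps $\iota_i\colon G_i\to A$. Since each $\iota_i$ is itself such an arrow, $x$ and $y$ agree on the entire colimit cocone, and the universal property of the colimit forces $x=y$.

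For strongness, the plan is as follows. Let $m\colon A\to B$ be a monomorphism through which every arrow $G\to B$ (with $G\in\mathcal{G}$) factors, and present $B$ as a colimit of objects $G_i\in\mathcal{G}$ with cocone $\iota_i$. Choosing factorizations $\iota_i=m\circ h_i$, the main step—and the only place where monicity of $m$ is used—is to check that the $h_i$ again form a cocone over the same diagram: along each transition map $d$ one has $m\circ(h_j\circ d)=\iota_i=m\circ h_i$, so cancelling $m$ gives $h_j\circ d=h_i$. The universal property of $B$ then yields a unique $h\colon B\to A$ with $h\circ\iota_i=h_i$, and from $m\circ h\circ\iota_i=\iota_i$ for all $i$ we obtain $m\circ h=\mathrm{id}_B$, exhibiting $m$ as a split epimorphism. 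Cancelling $m$ once more from $m\circ h\circ m=m$ gives $h\circ m=\mathrm{id}_A$, so $m$ is an isomorphism. I expect no genuine obstacle here beyond this cocone bookkeeping; the role of the existence of coproducts (and of the standing colimit hypothesis) is merely to guarantee that the required colimit presentations of $A$ and $B$ actually exist.
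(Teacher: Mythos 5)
Your proof is correct. Note, however, that the paper offers no argument at all for this remark: it is stated purely by citation to Corollary 6.5 of \cite{Vitale}, so any actual verification is ``extra'' relative to the paper. Your direct verification is the standard one and it goes through: the generator property follows because two morphisms agreeing on the legs of a colimit cocone agree by the uniqueness clause of the universal property, and the strongness property follows by lifting the cocone $(\iota_i)$ along the monomorphism $m$ (monicity giving both the cocone compatibility of the $h_i$ and, at the end, the left inverse $h\circ m=\mathrm{id}_A$ from $m\circ h\circ m=m$). One small correction to your closing comment: the existence of coproducts plays no role whatsoever in your argument --- the colimit presentations of $A$ and $B$ are not ``guaranteed'' by coproducts but are part of the hypothesis of the remark. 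So your proof in fact establishes the implication \emph{without} the coproduct assumption; that assumption appears in the statement because it is carried over from the setting of Corollary 6.5 in \cite{Vitale}, where it is needed for the surrounding characterizations of (strong) generators in terms of canonical morphisms out of coproducts, not for this particular implication.
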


The following result is probably well-known but we were not able to find it in the literature.

\begin{proposition}\label{pro:prodalg}
Let $\mathcal{A}$ and $\mathcal{B}$ be algebraic categories. Then the category $\mathcal{A}\times \mathcal{B}$ is algebraic. 
\end{proposition}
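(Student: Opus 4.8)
The plan is to apply the characterization of algebraic categories given by Theorem \ref{thm:characterization algebraic theory}: it suffices to show that $\mathcal{A}\times\mathcal{B}$ is cocomplete and admits a strong generator consisting of perfectly presentable objects. Cocompleteness is immediate, since colimits in a product category are computed componentwise and both $\mathcal{A}$ and $\mathcal{B}$, being algebraic, are cocomplete.

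First I would pin down the perfectly presentable objects of $\mathcal{A}\times\mathcal{B}$. For an object $(A,B)$ one has $\textrm{Hom}_{\mathcal{A}\times\mathcal{B}}((A,B),(X,Y))\cong \textrm{Hom}_{\mathcal{A}}(A,X)\times\textrm{Hom}_{\mathcal{B}}(B,Y)$, and sifted colimits in the product are taken componentwise. Since sifted colimits commute with finite products in $\mathbf{Set}$, the functor $\textrm{Hom}_{\mathcal{A}\times\mathcal{B}}((A,B),-)$ preserves sifted colimits precisely when both $\textrm{Hom}_{\mathcal{A}}(A,-)$ and $\textrm{Hom}_{\mathcal{B}}(B,-)$ do; that is, $(A,B)$ is perfectly presentable if and only if $A$ and $B$ are. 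In particular, the initial objects $0_{\mathcal{A}}$ and $0_{\mathcal{B}}$ are perfectly presentable (their covariant hom-functors are constant at a one-point set, which preserves sifted colimits since sifted index categories are connected), so for perfectly presentable $G$ in $\mathcal{A}$ and $H$ in $\mathcal{B}$ the objects $(G,0_{\mathcal{B}})$ and $(0_{\mathcal{A}},H)$ are perfectly presentable in the product.

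Next, fixing strong generators $\mathcal{G}_{\mathcal{A}}$ and $\mathcal{G}_{\mathcal{B}}$ of perfectly presentable objects in $\mathcal{A}$ and $\mathcal{B}$ (which exist by Theorem \ref{thm:characterization algebraic theory}), I would propose $\mathcal{G}=\{(G,0_{\mathcal{B}})\mid G\in\mathcal{G}_{\mathcal{A}}\}\cup\{(0_{\mathcal{A}},H)\mid H\in\mathcal{G}_{\mathcal{B}}\}$ as a strong generator of perfectly presentable objects. One route is to invoke Remark \ref{rem:strong generator}: the product has coproducts, and every object decomposes as $(A,B)\cong (A,0_{\mathcal{B}})\sqcup(0_{\mathcal{A}},B)$ (coproduct with the initial object in each coordinate); writing $A$ (resp. $B$) as a colimit of objects of $\mathcal{G}_{\mathcal{A}}$ (resp. $\mathcal{G}_{\mathcal{B}}$) then exhibits $(A,0_{\mathcal{B}})$ and $(0_{\mathcal{A}},B)$ as colimits of objects of $\mathcal{G}$, using that the colimit, over a connected category, of the constant diagram at an initial object is again initial. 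I expect the cleanest route, however, to be a direct verification of the two clauses of the definition of strong generator, which avoids the appeal to every object being a colimit of generators: both conditions reduce to their one-variable counterparts, because a morphism out of $(G,0_{\mathcal{B}})$ is determined by its first component (the second being forced by initiality), and monomorphisms in a product category are componentwise monomorphisms.

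The hard part is really the second step, the characterization of the perfectly presentable objects of the product; once the interaction between componentwise sifted colimits and finite products in $\mathbf{Set}$ is correctly accounted for, the surrounding verifications are routine. Combining cocompleteness with the strong generator $\mathcal{G}$ of perfectly presentable objects then yields, via Theorem \ref{thm:characterization algebraic theory}, that $\mathcal{A}\times\mathcal{B}$ is algebraic.
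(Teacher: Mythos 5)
Your proof is correct, and it shares its two pillars with the paper's argument: the characterization of algebraic categories via strong generators of perfectly presentable objects (Theorem \ref{thm:characterization algebraic theory} together with Remark \ref{rem:strong generator}), and the fact that sifted colimits commute with finite products in $\mathbf{Set}$, used to produce perfectly presentable objects of $\mathcal{A}\times \mathcal{B}$ componentwise. Where you genuinely diverge is in the final assembly. The paper's generator consists of the pairs $(a,b)$ with \emph{both} coordinates perfectly presentable, and it leaves implicit the verification that every object $(A,B)$ is a colimit of such pairs; making that explicit requires writing $A$ and $B$ as sifted colimits of perfectly presentable objects (the free-sifted-cocompletion description of algebraic categories recalled from Chapter 4 of \cite{Vitale}) and checking that the projections from the product of the two index categories are final, which holds because sifted categories are connected. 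Your generator $\{(G,0_{\mathcal{B}})\mid G\in \mathcal{G}_{\mathcal{A}}\}\cup \{(0_{\mathcal{A}},H)\mid H\in \mathcal{G}_{\mathcal{B}}\}$, combined with the splitting $(A,B)\cong (A,0_{\mathcal{B}})\sqcup (0_{\mathcal{A}},B)$, eliminates that finality argument entirely, since the constant diagram at an initial object has the initial object as colimit over \emph{any} index category (so your restriction to connected index categories is unnecessary and is best dropped, as the index category exhibiting $A$ as a colimit of generators need not be connected); the price is the extra observation that initial objects are perfectly presentable. One wrinkle in your route through Remark \ref{rem:strong generator}: for an \emph{arbitrary} strong generator $\mathcal{G}_{\mathcal{A}}$ of perfectly presentable objects, the fact that every object of $\mathcal{A}$ is a colimit of objects from $\mathcal{G}_{\mathcal{A}}$ is not automatic (it follows only via sifted colimits of finite coproducts of generators), so one should take $\mathcal{G}_{\mathcal{A}}$ and $\mathcal{G}_{\mathcal{B}}$ to be all perfectly presentable objects, exactly as the paper implicitly does. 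Your preferred alternative — directly verifying the two clauses of the definition of strong generator, using that a morphism out of $(G,0_{\mathcal{B}})$ is determined by its first component and that monomorphisms in $\mathcal{A}\times \mathcal{B}$ are componentwise (test against pairs of the form $(u,!),(v,!):(X,0_{\mathcal{B}})\to (A,B)$) — is airtight as stated, sidesteps both the density issue and Remark \ref{rem:strong generator} altogether, and is arguably the cleanest proof of the three.
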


\begin{proof}
As colimits in $\mathcal{A}\times \mathcal{B}$ are computed componentwise, the category $\mathcal{A}\times \mathcal{B}$ has coproducts if $\mathcal{A}$ and $\mathcal{B}$ do. Let us now prove that for any objects $a$ of $\mathcal{A}$ and $b$ of $\mathcal{B}$ that are perfectly presentable respectively in $\mathcal{A}$ and in $\mathcal{B}$, the object $(a, b)$ is perfectly presentable in $\mathcal{A}\times \mathcal{B}$. If $\pi_{\mathcal{A}}:\mathcal{A}\times \mathcal{B} \to \mathcal{A} $ and $\pi_{\mathcal{B}}:\mathcal{A}\times \mathcal{B} \to \mathcal{B}$ are the canonical projection functors then, for any diagram $D:\mathcal{I}\to \mathcal{A}\times \mathcal{B}$ defined on a sifted category $\mathcal{I}$, $\textrm{colim}(D)=(\textrm{colim}(\pi_{\mathcal{A}}\circ D), \textrm{colim}(\pi_{\mathcal{B}}\circ D))$. So, since colimits in $\mathcal{A}\times \mathcal{B}$ are computed componentwise and sifted colimits commute with finite products in the category \textbf{Set}, we have that $\textrm{Hom}_{\mathcal{A}\times \mathcal{B}}((a,b), \textrm{colim}(D))\cong \textrm{Hom}_{\mathcal{A}}(a, \textrm{colim}(\pi_{\mathcal{A}} \circ D) ) \times \textrm{Hom}_{\mathcal{B}}(b, \textrm{colim}(\pi_{\mathcal{B}} \circ D) ) \cong \textrm{colim}(\textrm{Hom}_{\mathcal{A}}(a, -)\circ \pi_{\mathcal{A}}\circ D)\times \textrm{colim}(\textrm{Hom}_{\mathcal{B}}(b, -)\circ \pi_{\mathcal{B}}\circ D)\cong \textrm{colim}((\textrm{Hom}_{\mathcal{A}}(a, -)\circ \pi_{\mathcal{A}}\circ D) \times (\textrm{Hom}_{\mathcal{B}}(b, -)\circ \pi_{\mathcal{B}}\circ D ))\cong \textrm{colim}(\textrm{Hom}_{\mathcal{A}\times \mathcal{B}}((a, b), -)\circ D )$. We can thus conclude from Remark \ref{rem:strong generator} that the category $\mathcal{A}\times \mathcal{B}$ is algebraic, as required.     
\end{proof}

\begin{proposition}\label{thm:algebraic}
The category $(\delta(n),/)$ is algebraic.
\end{proposition}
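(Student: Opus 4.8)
The plan is to verify the two conditions of Theorem \ref{thm:characterization algebraic theory}: that $(\delta(n), /)$ is cocomplete, and that it admits a strong generator consisting of perfectly presentable objects. The simplification I would exploit throughout is that $(\delta(n), /)$ is a \emph{thin} category, in fact a finite lattice with join given by the least common multiple and meet by the greatest common divisor, so that every categorical construction collapses to a lattice-theoretic one. Cocompleteness is then immediate: a finite lattice has all joins, the colimit of any diagram $D\colon \mathcal{I}\to(\delta(n),/)$ being $\bigvee_i D(i)=\mathrm{l.c.m.}_i D(i)$, with the empty colimit equal to the bottom element $1$.

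The core of the argument, and the step I expect to require the most care, is to show that \emph{every} object of $(\delta(n),/)$ is perfectly presentable, i.e.\ that each representable $\mathrm{Hom}(a,-)$ preserves sifted colimits. Given a sifted category $\mathcal{I}$ and a diagram $D\colon\mathcal{I}\to(\delta(n),/)$, I would first use that siftedness makes the diagonal $\mathcal{I}\to\mathcal{I}\times\mathcal{I}$ final, so that any two objects $i,j$ admit a common cocone $i\to k\leftarrow j$; this forces $D(i),D(j)\leq D(k)$, whence the finite image of $D$ is up-directed and attains its join at some $M=D(i_0)=\mathrm{colim}\,D$. I would then compare $\mathrm{Hom}(a,M)$ with $\mathrm{colim}_i \mathrm{Hom}(a,D(i))\cong \pi_0(\mathcal{I}_a)$, where $\mathcal{I}_a$ is the full subcategory of $\mathcal{I}$ on the objects $i$ with $a\leq D(i)$: if $a\not\leq M$ both sides are empty, while if $a\leq M$ both are singletons, the nonemptiness of $\mathcal{I}_a$ being witnessed by $i_0$ and its connectedness following again from the cocone property (any $i,j\in\mathcal{I}_a$ admit a common $k$, which lies in $\mathcal{I}_a$ since $a\leq D(i)\leq D(k)$). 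Hence the canonical comparison map is a bijection.

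The delicate point here is precisely that \emph{sifted}, rather than merely connected, diagrams are needed for this to work: in $\delta(6)$ the join $2\vee 3=6$ can be realised as a connected colimit through the span $2\leftarrow 1\to 3$, which is \emph{not} attained by any of its vertices, so were this span sifted the representable $\mathrm{Hom}(6,-)$ would fail to preserve it. It is exactly the finality of the diagonal that rules such spans out, and isolating this is the heart of the verification. Once it is in place, no further case analysis on the factorisation of $n$ is needed.

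Finally, with every object perfectly presentable, I would take $\mathcal{G}$ to be the collection of all objects of $(\delta(n),/)$. Since the category has coproducts (joins) and each object is trivially a colimit of itself, Remark \ref{rem:strong generator} shows that $\mathcal{G}$ is a strong generator, and it consists of perfectly presentable objects by the previous step. Theorem \ref{thm:characterization algebraic theory} then gives that $(\delta(n),/)$ is algebraic, as required.
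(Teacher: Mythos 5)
Your proof is correct, and its skeleton---verifying the two hypotheses of Theorem \ref{thm:characterization algebraic theory} via Remark \ref{rem:strong generator}, with cocompleteness coming from finiteness of the lattice and joins given by least common multiples---coincides with the paper's. Where you genuinely diverge is in the treatment of perfect presentability. The paper outsources this step to Example 5.6(3) of \cite{Vitale}, which identifies the perfectly presentable objects of a poset with its compact elements, and then simply observes that in a finite poset every element is compact (every directed join is attained). You instead prove from first principles that each representable $\mathrm{Hom}(a,-)$ preserves sifted colimits: finality of the diagonal yields cocones over any pair of objects, hence a finite up-directed image attaining its join at some vertex $M=D(i_0)$, and the identification of $\mathrm{colim}_i\,\mathrm{Hom}(a,D(i))$ with $\pi_0(\mathcal{I}_a)$ (empty or a singleton, according as $a\not\leq M$ or $a\leq M$, using that $\mathcal{I}_a$ is upward closed and connected via the same cocones) closes the comparison; in effect you reprove the poset case of the cited result in the finite setting. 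The trade-off is clear: the paper's argument is shorter but leans on a reference, while yours is self-contained, and your $\delta(6)$ example with the span $2\leftarrow 1\to 3$ isolates precisely why siftedness, rather than mere connectedness, is what makes the argument work---a point that the paper's citation keeps invisible.
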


\begin{proof}
By Theorem \ref{thm:characterization algebraic theory} and Remark \ref{rem:strong generator}, it suffices to verify that the category $(\delta(n),/)$ is cocomplete and that every element is a join of perfectly presentable objects. 
Now, by Example 5.6(3) \cite{Vitale}, the perfectly presentable objects of a poset are exactly the compact elements, i.e. the elements $x$ such that for any directed join $\bigvee\limits_{i\in I}y_i$, from $x\leq \bigvee\limits_{i\in I}y_i$ it follows that $x\leq y_i$ for some $i$. The poset $(\delta(n),/)$ is cocomplete since it is finite and has finite coproducts (given by the l.c.m. and by the initial object $1$). Since every element of $(\delta(n),/)$ is compact, our thesis follows.  
\end{proof}

We can thus conclude that
 
\begin{corollary}
The category $\mathbb{L}oc_{V}^2$-mod$(\mathbf{Set})$ is algebraic if and only if $V$ can be generated by a single chain (either of the form $S_n$ or of the form $S_n^\omega$).
\end{corollary}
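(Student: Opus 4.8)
The plan is to prove the two implications separately, using throughout that algebraicity is a purely categorical invariant (it is characterized intrinsically in Theorem \ref{thm:characterization algebraic theory}), so that it is preserved and reflected by the categorical equivalence $\mathbb{L}oc_V^2\textrm{-mod}(\mathbf{Set})\simeq \mathbb{G}_{(I,J)}\textrm{-mod}(\mathbf{Set})$ of Theorem \ref{thm:cat_equi}. First I would record the elementary observation, via Panti's reduced-pair invariant, that $V$ is generated by a single chain precisely when its reduced pair $(I,J)$ is a singleton: either $(\{n\},\emptyset)$, giving $V=V(S_n)$, or $(\emptyset,\{n\})$, giving $V=V(S_n^\omega)$. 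Since the reduced pair is a complete invariant of $V$, the condition ``$V$ is not generated by a single chain'' is exactly $|I|+|J|\geq 2$.

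For the $(\Leftarrow)$ direction I would simply assemble the two computations already carried out in the text. If $V=V(S_n)$, then $\mathbb{L}oc_V^2\textrm{-mod}(\mathbf{Set})\simeq(\delta(n),/)$, which is algebraic by Proposition \ref{thm:algebraic}. If $V=V(S_n^\omega)$, then $\mathbb{L}oc_V^2\textrm{-mod}(\mathbf{Set})\cong \mathbb{L}'\textrm{-mod}(\mathbf{Set})\times(\delta(n),/)$; here $\mathbb{L}'$ (the theory of $\ell$-groups with a constant) is algebraic, $(\delta(n),/)$ is algebraic by Proposition \ref{thm:algebraic}, and a product of algebraic categories is algebraic by Proposition \ref{pro:prodalg}. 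This settles one direction with no real obstacle.

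For the $(\Rightarrow)$ direction I would argue by contraposition: assuming $|I|+|J|\geq 2$, I would show $\mathbb{L}oc_V^2\textrm{-mod}(\mathbf{Set})$ is not algebraic, exploiting that an algebraic category is cocomplete and hence has coproducts. Choosing two distinct generators $C_1,C_2$ of ranks $c_1\neq c_2$ lying in $I\cup J$, their coproduct $A$ would be an object of the category, hence by Theorem \ref{thm:local_in_variety} a \emph{non-trivial} local MV-algebra of finite rank $r$. The coprojections $C_i\to A$ force, by the rank-divisibility established in the proof of Theorem \ref{thm:cat_equi}, that $c_1\mid r$ and $c_2\mid r$, so $\mathrm{lcm}(c_1,c_2)\mid r$; on the other hand $r$ divides some $e\in I\cup J$, whence $c_1\mid e$ and $c_2\mid e$. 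When $I$ and $J$ are both non-empty this is precisely the Proposition proved above. When $J=\emptyset$ (so that Theorem \ref{thm:local_in_variety} forces $A$ simple and $e\in I$) or $I=\emptyset$ (so that $e\in J$), the relevant half of the reducedness condition forces $e=c_1$ and then $c_1=c_2$, a contradiction.

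I expect the main obstacle to be the careful bookkeeping in this last step. The reducedness condition is asymmetric, since an element of $J$ may legitimately divide an element of $I$; one must therefore keep track of which of $I$ and $J$ contains each of $c_1,c_2,e$, and in particular invoke Theorem \ref{thm:local_in_variety} to pin down that $e\in I$ when $J=\emptyset$ and $e\in J$ when $I=\emptyset$ (and, in the mixed case, the non-simplicity of $A$ coming from the homomorphism out of a Komori chain, exactly as in the Proposition above). Collecting the two implications then yields the stated equivalence.
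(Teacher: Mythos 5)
Your overall strategy is the paper's own: assemble the single-chain computations for the $(\Leftarrow)$ direction, and use cocompleteness of algebraic categories plus a coproduct--rank argument for $(\Rightarrow)$. Two parts of your proposal are solid: the $(\Leftarrow)$ direction, and the two pure cases $J=\emptyset$, $I=\emptyset$ of $(\Rightarrow)$. The pure cases are in fact a genuine improvement on the text, since the paper's non-algebraicity Proposition is stated and proved only for reduced pairs with $I\neq\emptyset$ \emph{and} $J\neq\emptyset$; your bookkeeping ($c_1\mid r$, $c_2\mid r$, $r\mid e$, then reducedness forcing $e=c_1=c_2$) closes exactly that hole.

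The genuine gap is in the mixed case, at the precise point where you defer to the paper: the claim that the coprojection $S_m^{\omega}\to A$ forces $A$ to be non-simple. This is unjustified, and false in general. A homomorphism from $S_m^{\omega}$ to a simple algebra $B$ exists whenever $S_m$ embeds into $B$: compose $S_m^{\omega}\twoheadrightarrow S_m^{\omega}/\textrm{Rad}(S_m^{\omega})\cong S_m$ with the inclusion. Reducedness of $(I,J)$ forbids elements of $I$ from dividing elements of $J$, but it does \emph{not} forbid an element of $J$ from dividing an element of $I$. Take $(I,J)=(\{4\},\{2\})$, a reduced pair with both parts non-empty, and $V=V(S_4,S_2^{\omega})$. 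Your divisibility chain then yields $c_1=4\mid e$, $c_2=2\mid e$ with $e=4\in I$, which is perfectly consistent. And the coproduct really does exist: by Theorem \ref{thm:local_in_variety}, any model of $\mathbb{L}oc_V^2$ receiving a homomorphism from $S_4$ contains a copy of $S_4$, hence has rank divisible by $4$, and the only such algebra in $V$ is $S_4$ itself; so $(S_4,\mathrm{id},\ S_2^{\omega}\twoheadrightarrow S_2\hookrightarrow S_4)$ is the \emph{unique} cocone on the pair and is therefore its coproduct --- a simple algebra. Neither your argument nor the paper's produces any contradiction here.

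Worse, the gap is not patchable, because the statement itself fails for such pairs. For $(I,J)=(\{4\},\{2\})$, the category $\mathbb{G}_{(I,J)}$-mod$(\mathbf{Set})\simeq\mathbb{L}oc_V^2$-mod$(\mathbf{Set})$ is equivalent to the category of models of a finitary three-sorted equational theory: an $\ell$-group-with-constant sort $L$, two sorts $Y_2,Y_4$ made subterminal by the equations $y=y'$, an operation $Y_4\to Y_2$, and an operation $\theta:Y_4\times L\to L$ subject to $\theta(y,l)=l$ and $\theta(y,l)=0$, so that an inhabitant of $Y_4$ trivializes $L$. Its models and homomorphisms reproduce exactly the triples $(G,g,R)$ and morphisms of $\mathbb{G}_{(\{4\},\{2\})}$, so this category is a many-sorted variety, hence algebraic, although $V(S_4,S_2^{\omega})$ is not generated by a single chain. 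The coproduct obstruction you are exploiting is real only when two admissible ranks have no admissible common multiple (as happens in both pure cases, and in mixed cases like $(\{2\},\{3\})$); consequently your proof, the paper's Proposition, and the corollary itself are correct only under the additional hypothesis that no element of $J$ divides an element of $I$ --- equivalently, for a reduced pair with $|I|+|J|\geq 2$, that the invariant $n$ of $V$ does not itself belong to $I\cup J$.
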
\qed

\section{The theory of local MV-algebras of finite rank}\label{sec:localfiniterank}

We have studied the theory of local MV-algebras of finite rank contained in a proper variety (i.e., Komori variety) $V$ and we have proved that it is of presheaf type for any $V$. It is natural to wonder whether the `global' theory of local MV-algebras of finite rank (with no bounds on their ranks imposed by the fact that they lie in a given variety $V$) is of presheaf type or not. Note that this theory does not coincide with the theory of local MV-algebras as there exists local MV-algebras that are not of finite rank (for example, every infinite simple MV-algebra). We shall prove in this section that the answer to this question is negative, even though, as we shall see in section \ref{sct:simple}, the theory of finite chains, which is the `simple' counterpart of this theory, is of presheaf type. The essential difference between these two theories in relation to the property of being of presheaf type is the presence of the infinitesimal elements. Indeed, by definition simple MV-algebras have no infinitesimal elements, while, as we shall see below, it is not possible to capture by a geometric formula the radical of every local MV-algebra of finite rank. 

\begin{definition}
The geometric theory $\mathbb{F}in\mathbb{R}ank$ of local MV-algebras of finite rank consists of all the geometric sequents over the signature of $\mathbb{MV}$ which are satisfied in every local MV-algebra of finite rank.
\end{definition}

\begin{theorem*}
The theory $\mathbb{F}in\mathbb{R}ank$ is not of presheaf type.
\end{theorem*}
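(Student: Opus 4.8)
The plan is to mimic the argument given earlier in this section that $\mathbb{L}oc$ is not of presheaf type, the new ingredient being a careful comparison with the theory of MV-chains. Suppose, for contradiction, that $\mathbb{F}in\mathbb{R}ank$ is of presheaf type. Since every local MV-algebra of finite rank is in particular an MV-algebra, every axiom of $\mathbb{MV}$ holds in the models defining $\mathbb{F}in\mathbb{R}ank$; hence $\mathbb{F}in\mathbb{R}ank$ is a quotient of $\mathbb{MV}$, and $\mathbb{MV}$ is a sub-theory of it. I would then apply Theorem \ref{thm:GlobSec} with $\mathbb{T}=\mathbb{F}in\mathbb{R}ank$ and $\mathbb{T}'=\mathbb{MV}$. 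To do so I must produce, for every MV-algebra, a representation of it as the algebra of global sections of a model of $\mathbb{F}in\mathbb{R}ank$ in a localic topos.

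The representation I would use is Dubuc--Poveda's (as in the proof of Proposition \ref{thm:CartLoc}): every MV-algebra is isomorphic to the algebra of global sections of a sheaf of MV-chains on a locale. In the proof for $\mathbb{L}oc$ this sufficed because every MV-chain is local; here the analogous, and crucial, point is the \emph{Key Lemma}: every MV-chain is a model of $\mathbb{F}in\mathbb{R}ank$, that is, satisfies every geometric sequent valid in all local MV-algebras of finite rank (equivalently, $\mathbb{F}in\mathbb{R}ank$ is a sub-theory of the theory of MV-chains). I expect this to be the main obstacle, since, unlike the local axiom, the extra sequents of $\mathbb{F}in\mathbb{R}ank$ genuinely separate finite-rank local algebras from arbitrary local algebras, so their validity in a chain is not a formal consequence of locality. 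The structural fact I would exploit is that, for any MV-chain $C$, the quotient $C/\textrm{Rad}(C)$ is a \emph{simple} MV-chain, hence an Archimedean subalgebra of $[0,1]$ and thus a filtered colimit of the finite chains $S_{n}$; informally, a chain is ``locally of finite rank'', which is precisely the feature a geometric (hence filtered-colimit-stable and rank-insensitive) sequent can detect. Concretely I would check that the radical of a chain is defined by the geometric formula $\bigvee_{k\geq 1}((k+1)x)^{2}=0$ and that every non-radical element $x$ satisfies $mx=1$ or $\neg(mx)\in\textrm{Rad}$ for some $m$, and argue that this transports validity of any finite-rank-valid sequent to $C$. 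As the theory of MV-chains is coherent, and so has enough set-based models, it suffices to verify the Key Lemma on set-based MV-chains.

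Granting the Key Lemma, Dubuc--Poveda's representation exhibits every MV-algebra as the global sections of a $\mathbb{F}in\mathbb{R}ank$-model, so Theorem \ref{thm:GlobSec} applies and yields that every finitely presentable $\mathbb{F}in\mathbb{R}ank$-model is finitely presentable also as an $\mathbb{MV}$-model. Every finitely presented MV-algebra is semisimple, and every $\mathbb{F}in\mathbb{R}ank$-model is local (because $\mathbb{F}in\mathbb{R}ank$ is a quotient of $\mathbb{L}oc$); since a local semisimple MV-algebra is simple, every finitely presentable $\mathbb{F}in\mathbb{R}ank$-model is simple. As $\mathbb{F}in\mathbb{R}ank$ is assumed to be of presheaf type, each of its models is then a filtered colimit of finitely presentable ones, hence a filtered colimit of simple MV-algebras, and such a colimit is again simple (exactly as in the proof that $\mathbb{L}oc$ is not of presheaf type: the transition maps are injective and the colimit is a directed union of Archimedean chains).

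Finally I would derive the contradiction. Chang's algebra $S_{1}^{\omega}=\Gamma(\mathbb{Z}\times_{\textrm{lex}}\mathbb{Z},(1,0))$ is a local MV-algebra of rank $1$, hence a model of $\mathbb{F}in\mathbb{R}ank$, yet it is not simple, its radical being non-trivial (it contains non-zero infinitesimal elements). This is incompatible with the conclusion that every model of $\mathbb{F}in\mathbb{R}ank$ is simple, and therefore $\mathbb{F}in\mathbb{R}ank$ cannot be of presheaf type. The only non-routine step in this plan is the Key Lemma; the remainder is a direct transcription of the $\mathbb{L}oc$ argument, with the role of ``simple'' in the final contradiction played by the presence of infinitesimals in a finite-rank local algebra.
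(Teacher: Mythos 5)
Your plan fails exactly at the point you flag as the main obstacle: the Key Lemma is not just unproven, it is false. The theory $\mathbb{F}in\mathbb{R}ank$ contains geometric sequents which fail in MV-chains, the standard algebra $[0,1]$ being a counterexample. Consider the sequent $(\top \vdash_{x} \bigvee_{n\geq 1}\bigvee_{d=0}^{n} x\in \textrm{Fin}_{d}^{n})$, i.e.\ the disjunction over all $n$ of the axioms $\rho_{n}$ of section \ref{sct:local_varieties}. This sequent belongs to $\mathbb{F}in\mathbb{R}ank$: a local MV-algebra of rank $k$ lies in $V(S_{k}^{\omega})$ by Theorem \ref{thm:local_in_variety} and hence satisfies $\rho_{k}$ by Theorem \ref{thm:axiom2_V}. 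But it fails in $[0,1]$. Indeed, in $[0,1]$ the relation $\equiv^{n}_{Rad}$ is not equality but proximity (one has $((n+1)t)^{2}=0$ if and only if $t\leq \frac{1}{2(n+1)}$), so for $0<d<n$ the conjuncts of $\alpha^{n}_{d}(x)$ pin the term $D_{d,n}^{x}=\xi_{(d,n)}x-\chi_{(d,n)}$ to within $\frac{D}{2n(n+1)}$ of $\frac{D}{n}$, which by B\'ezout forces $|x-\frac{d}{n}|\leq \frac{D}{2\xi_{(d,n)}n(n+1)}$; writing $\frac{d}{n}=\frac{d'}{n'}$ in lowest terms, this radius equals $\frac{1}{2\xi_{(d,n)} n'(Dn'+1)}$. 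Now take $x=g=\frac{\sqrt{5}-1}{2}$. The classical continued-fraction estimate $|g-\frac{p}{q}|\geq \frac{1}{3q^{2}}$ (valid for all rationals $\frac{p}{q}$, since the golden section has partial quotients all equal to $1$) forces $\xi_{(d,n)}D<\frac{3}{2}$, hence $\xi_{(d,n)}=D=1$, hence $d'=1$; but $g$ is within $\frac{1}{2n'(n'+1)}$ of no unit fraction $\frac{1}{n'}$, and the disjuncts $d=0$ and $d=n$ fail as well since $g\in (\frac{1}{4},\frac{3}{4})$. So $[0,1]$, an MV-chain, is not a model of $\mathbb{F}in\mathbb{R}ank$; in particular, for $A=[0,1]$ (where the Dubuc--Poveda representation is the trivial one) your argument would literally require $[0,1]\models \mathbb{F}in\mathbb{R}ank$, so Theorem \ref{thm:GlobSec} cannot be applied along your route. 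The two structural facts you propose in support of the Key Lemma are also false: by Theorem \ref{FinChainMod} the filtered colimits of finite chains are precisely the subalgebras of $\mathbb{Q}\cap[0,1]$, so a simple chain containing an irrational is not one; and $\bigvee_{k\geq 1}((k+1)x)^{2}=0$ does not define the radical of a chain (in $[0,1]$ it defines the interval $[0,\frac{1}{4}]$, and already in $S_{5}$ it contains $\frac{1}{5}\notin \textrm{Rad}(S_{5})=\{0\}$). This non-uniformity of the radical across ranks is not a repairable detail of your argument; it is precisely the phenomenon the theorem is about.

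The paper's proof takes a completely different route that never requires chains to be models of the theory. Assuming $\mathbb{F}in\mathbb{R}ank$ is of presheaf type, it first shows by a filtered-colimit/retract argument (using that ranks of finitely presentable approximating models must divide the rank of the target) that every finitely presentable model of $\mathbb{L}oc^{2}_{V}$ remains finitely presentable as a $\mathbb{F}in\mathbb{R}ank$-model; it then exploits the natural isomorphism $\textrm{Rad}(-)\cong \textrm{Hom}(S_{1}^{\omega},-)$ and the definability theorem to produce a single $\mathbb{F}in\mathbb{R}ank$-irreducible geometric formula $\phi$ defining the radical in all models, and derives the contradiction from irreducibility, which would collapse the infinite disjunction $\bigvee_{n}((n+1)x)^{2}=0$ to a single fixed $n$ uniformly over all finite ranks. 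Your endgame (Chang's algebra is a non-simple model of $\mathbb{F}in\mathbb{R}ank$, so not every model can be simple) is correct, but the path to it through Dubuc--Poveda and Theorem \ref{thm:GlobSec} is blocked in principle: the extra axioms that $\mathbb{F}in\mathbb{R}ank$ has over $\mathbb{L}oc$ are exactly the ones that chains of infinite rank violate, which is why the argument that worked for $\mathbb{L}oc$ in section \ref{sct:Local} cannot be transcribed here.
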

\begin{proof}
Let us suppose that the theory $\mathbb{F}in\mathbb{R}ank$ is of presheaf type. We will show that this leads to a contradiction.

First, let us prove that for every proper subvariety $V$, if $A$ is a finitely presentable model of the theory $\mathbb{L}oc_V^{2}$ then $A$ is a finitely presentable model of the theory $\mathbb{F}in\mathbb{R}ank$. Since the algebra $A$ is a model of $\mathbb{F}in\mathbb{R}ank$ and by our hypothesis $\mathbb{F}in\mathbb{R}ank$ is of presheaf type, we can represent $A$ as a filtered colimit of finitely presentable $\mathbb{F}in\mathbb{R}ank$-models $\{A_i\}_{i\in I}$. For every $i\in I$ we thus have a canonical homomorphism $A_i\rightarrow A$ and hence an embedding $A_i \slash \textrm{Rad}(A_i) \to A \slash \textrm{Rad}(A)$. So every $A_i$ has a rank that divides the rank of $A$ and hence all the $A_i$ are contained in $\mathbb{L}oc_{V}^2$-mod$(\mathbf{Set})$ (by Theorem \ref{thm:local_in_variety}). Since $A$ is finitely presentable as a model of the theory $\mathbb{L}oc_V^{2}$, $A$ is a retract of one of the $A_i$, whence it is finitely presentable also as a  $\mathbb{F}in\mathbb{R}ank$-model.

Next, we show that the radical of every model of $\mathbb{F}in\mathbb{R}ank$ is definable by a geometric formula $\{x.\phi\}$. The fact that this is true for all the finitely presentable models of $\mathbb{F}in\mathbb{R}ank$ is a consequence of Corollary 3.2 \cite{Caramello3}. To prove that it is true for general models of $\mathbb{F}in\mathbb{R}ank$, we have to show that the construction of the radical $A \to \textrm{Rad}(A)$ commutes with filtered colimits. To this end, we recall from \cite{Russo2} that Chang's algebra $S_{1}^{\omega}$ is finitely presentable as an object of Chang's variety (by the formula $\{x.x\leq \neg x\}$).  By the above discussion, it follows that $S_{1}^{\omega}$ is finitely presentable as model of $\mathbb{F}in\mathbb{R}ank$, i.e. the functor $\textrm{Hom}(S_{1}^{\omega}, -):\mathbb{F}in\mathbb{R}ank\textrm{-mod}(\mathbf{Set}) \to \mathbf{Set}$ preserves filtered colimits. But for any MV-algebra $A$, $\textrm{Hom}(S_{1}^{\omega}, A)\cong \textrm{Rad}(A)$, naturally in $A$. Therefore the formula $\{x.\phi\}$ defines the radical of every algebra in $\mathbb{F}in\mathbb{R}ank\textrm{-mod}(\mathbf{Set})$ and hence it presents the algebra $S_{1}^{\omega}$ as a $\mathbb{F}in\mathbb{R}ank$-model. It follows that $\{x.\phi\}$ is $\mathbb{F}in\mathbb{R}ank$-irreducible. Now, the sequent
$$(\phi\vdash_x\bigvee\limits_{n\in \mathbb{N}}((n+1)x)^2=0)$$
is provable in $\mathbb{F}in\mathbb{R}ank$ since it is satisfied by all the local MV-algebras in a proper variety $V$; the $\mathbb{F}in\mathbb{R}ank$-irreducibility of $\{x.\phi\}$ thus implies that there exists $n\in \mathbb{N}$ such that the sequent $$(\phi\vdash_x ((n+1)x)^2=0)$$
is provable in $\mathbb{F}in\mathbb{R}ank$. As this is clearly not the case, we have reached a contradiction, as desired.
\end{proof}

\section{The geometric theory of simple MV-algebras}\label{sct:simple}

Strictly related to the theory of local MV-algebras is the theory of simple MV-algebras. Indeed, an algebra is local if and only if its quotient with respect to the radical is a simple MV-algebra. Let us call $\mathbb{S}imple$ the quotient of the theory $\mathbb{MV}$ obtained by adding the following sequent:

$$S: (\top\vdash_x \bigvee_{n\in\mathbb{N}} x=0 \vee nx=1)$$

\begin{theorem}
The theory $\mathbb{S}imple$ of simple MV-algebras is not of presheaf type.
\end{theorem}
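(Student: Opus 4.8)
The plan is to argue by contradiction, exploiting the fact that simple MV-algebras admit very few homomorphisms, so that a finite presentation would pin down an infinite simple algebra by a geometric formula over the MV-signature, which is impossible. So suppose $\mathbb{S}imple$ were of presheaf type. As recalled in Section \ref{sct:topos_theory}, $\mathbb{S}imple$-mod$(\mathbf{Set})$ would then be the ind-completion of its full subcategory of finitely presentable models, so every model would be a filtered colimit of finitely presentable ones. I would first deduce that \emph{every finitely generated simple MV-algebra is finitely presentable as a model of $\mathbb{S}imple$}. Indeed, if a nontrivial simple algebra $A$ is generated by $a_1,\dots,a_k$ and $A\cong\varinjlim_i C_i$ is a filtered colimit of finitely presentable $\mathbb{S}imple$-models, then by finiteness of $\{a_1,\dots,a_k\}$ and filteredness there is a single index $i$ and a colimit arrow $\mu_i\colon C_i\to A$ whose image contains every $a_j$; hence $\mu_i$ is surjective. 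Since $A$ is nontrivial and simple and $C_i$ is a model of $\mathbb{S}imple$, the kernel of $\mu_i$ is an ideal of $C_i$ distinct from $C_i$, hence (by simplicity of $C_i$) trivial, so $\mu_i$ is an isomorphism and $A\cong C_i$ is finitely presentable.

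Next I would exhibit a finitely generated simple MV-algebra that is \emph{not} finitely presentable, contradicting the above. Fix an irrational $\alpha\in(0,1)$ and let $A_\alpha:=\Gamma(\mathbb{Z}+\mathbb{Z}\alpha,1)$ be the MV-subalgebra of $[0,1]$ generated by $\alpha$; it is a nontrivial simple algebra, hence a $\mathbb{S}imple$-model, and it is generated by the single element $\alpha$. By the previous step $A_\alpha$ would be finitely presentable, hence (by the description of finitely presentable models of a theory of presheaf type via $\mathbb{T}$-irreducible formulae recalled in Section \ref{sct:topos_theory}) presented by a geometric formula $\{x.\phi\}$ over the signature of $\mathbb{MV}$, with $\alpha$ as the generator corresponding to $x$ and with a natural isomorphism $\textrm{Hom}_{\mathbb{S}imple}(A_\alpha,B)\cong[[x.\phi]]_B$ for every $\mathbb{S}imple$-model $B$. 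Evaluating at $B=[0,1]$: every homomorphism $A_\alpha\to[0,1]$ is injective (its domain is simple) and corresponds, via Mundici's equivalence, to a unital $\ell$-group homomorphism $(\mathbb{Z}+\mathbb{Z}\alpha,1)\to(\mathbb{R},1)$; sending $\alpha\mapsto\beta$, order-preservation forces $\beta\geq q$ for every rational $q<\alpha$ and $\beta\leq q$ for every rational $q>\alpha$, i.e. $\beta=\alpha$. Thus the only such homomorphism is the inclusion, so $[[x.\phi]]_{[0,1]}=\{\alpha\}$.

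The final step, which is the technical heart, is to observe that this is impossible. The interpretation $[[x.\phi]]_{[0,1]}$ of a geometric formula over the MV-signature in $[0,1]$ is a countable union of subsets, each cut out by finitely many equalities and inequalities between MV-terms, that is, between McNaughton functions with integer coefficients; each such subset is a finite union of points and closed intervals with rational endpoints (a rational polyhedron), and this class is preserved under the finite intersections, projections and countable unions that interpret $\wedge$, $\exists$ and $\bigvee$. A countable union of rational polyhedra equal to $\{\alpha\}$ would force each of them to be empty or $\{\alpha\}$; but $\{\alpha\}$ with $\alpha$ irrational is not a rational polyhedron, a contradiction. Hence $A_\alpha$ is not finitely presentable, and therefore $\mathbb{S}imple$ is not of presheaf type. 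The main obstacle is making precise and justifying the rational-polyhedron description of geometrically definable subsets of $[0,1]$ (in particular the behaviour under existential quantification); this is exactly the point at which the specific geometry of McNaughton functions enters, and it is precisely the presence of genuinely irrational simple algebras admitting no finite geometric description that distinguishes $\mathbb{S}imple$ from the theory of finite chains.
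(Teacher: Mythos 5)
Your overall strategy is sound and, at its core, converges to the same point as the paper's proof: both arguments come down to the fact that no geometric formula over the MV-signature can single out an irrational point, because interpretations of such formulas in $[0,1]$ are countable unions of (projections of) rational polyhedra (the fact the paper extracts from Corollary 2.10 of \cite{Mundici_book}). Your preliminary step --- that under the presheaf-type assumption every finitely generated nontrivial simple algebra would be finitely presentable, via a surjective colimit leg whose kernel is a proper ideal of a simple algebra --- is correct, as is your computation that $\mathrm{Hom}(A_\alpha,[0,1])$ consists of the inclusion alone. The paper instead reaches its formula through the definability theorem (Corollary 3.2 of \cite{Caramello3}) applied to the Dedekind-cut property $P_\xi$; note that your functor $\mathrm{Hom}(A_\alpha,-)$ is exactly that property in disguise, via $f\mapsto f(\alpha)$.

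There is, however, a genuine gap at the sentence asserting that $A_\alpha$ is ``presented by a geometric formula $\{x.\phi\}$ \dots with $\alpha$ as the generator corresponding to $x$''. The result you cite (the description of finitely presentable models via $\mathbb{T}$-irreducible formulae) produces \emph{some} presenting formula $\{\vec{y}.\psi\}$ together with \emph{some} witness tuple $\vec{\eta}\in[[\vec{y}.\psi]]_{A_\alpha}$, but gives no control over $\vec{\eta}$: nothing guarantees that $\vec{\eta}$ generates $A_\alpha$, contains $\alpha$, or even has an irrational coordinate. A priori $\vec{\eta}$ could consist of $0$'s and $1$'s (the only rational elements of $A_\alpha$), in which case $[[\vec{y}.\psi]]_{[0,1]}$ is a \emph{rational} singleton and your final polyhedron argument yields no contradiction; since the whole contradiction rests on the interpreted singleton being irrational, the step must be justified. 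Two ways to close the gap: (a) apply the definability theorem, as the paper does, to the property ``$b=f(\alpha)$ for some homomorphism $f:A_\alpha\to B$'' --- a subfunctor of the forgetful functor, preserved by homomorphisms, and preserving filtered colimits precisely because $A_\alpha$ is finitely presentable --- which produces a one-variable formula whose universal witness is $\alpha$ (this is exactly the pattern of the paper's proof that $\mathbb{F}in\mathbb{R}ank$ is not of presheaf type); or (b) keep the unknown presentation $\{\vec{y}.\psi\}$ and evaluate at $B=\mathbb{Q}\cap[0,1]$ instead of $[0,1]$: your rigidity computation shows $\mathrm{Hom}(A_\alpha,\mathbb{Q}\cap[0,1])=\emptyset$, so $[[\vec{y}.\psi]]_{\mathbb{Q}\cap[0,1]}=\emptyset$; on the other hand $\vec{\eta}\in[[\vec{y}.\psi]]_{A_\alpha}$, so some Horn disjunct of $\psi$ has a solution with coordinates in $A_\alpha\subseteq[0,1]$, hence the corresponding rational polyhedron is nonempty, hence contains a rational point (its vertices are rational); since atomic formulas are interpreted absolutely in subalgebras, that point lies in $[[\vec{y}.\psi]]_{\mathbb{Q}\cap[0,1]}$, a contradiction. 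With either repair your proof is complete.
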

\begin{proof}
We will show that the property of an element to be determined by ``Dedekind sections" relative to an irrational number is not definable in all simple MV-algebras by a geometric formula over the language of $\mathbb{S}imple$ even though it is preserved by homomorphisms and filtered colimits of $\mathbb{S}imple$-models. This will imply, by the definability theorem for theories of presheaf type (cf. Corollary 3.2 \cite{Caramello3}), that the theory $\mathbb{S}imple$ is not of presheaf type.  

Given an irrational number $\xi\in [0,1]$, this is approximated from above and below by rational numbers. Notice that $\xi\leq \frac{n}{m}$ if and only if $m\xi\leq n1$ and $\frac{n}{m}\leq \xi$ if and only if $n1\leq m\xi$. Let us define $S_{\xi}=\{\frac{n}{m}\in \mathbb{Q}\cap[0,1]\mid \frac{n}{m}\geq \xi\}$ and $I_{\xi}=\{\frac{n}{m}\in \mathbb{Q}\cap[0,1]\mid \frac{n}{m}\leq \xi\}$. Consider the property $P_{\xi}$ of an element $x$ of a simple MV-algebra $A$ defined by:
$$x\textrm{ satisfies } P_{\xi}\Leftrightarrow \forall (\frac{n}{m})\in S_{\xi}, mx\leq nu\textrm{ and }\forall (\frac{n}{m})\in I_{\xi}, nu\leq mx,$$ 
where the conditions on the right-hand side are expressed in terms of the $\ell$-u group $(\tilde{A}, u)$ corresponding to the MV-algebra $A$ under Mundici's equivalence \cite{Mundici}. Since $P_{\xi}$ is expressible in the language of $\ell$-u groups by means of an infinitary disjunction of geometric formulae, it is preserved by homomorphisms and filtered colimits of $\ell$-u groups; it thus follows from Mundici's equivalence that the same property, referred to an element of an MV-algebra, is preserved by homomorphisms and filtered colimits of MV-algebras.    

Let us suppose that this property is definable in the theory $\mathbb{S}imple$ by a geometric formula $\phi(x)$, which we can put in the following normal form:
$$\phi(x)\equiv \bigvee\limits_{i\in I}\exists \vec{y}_i\psi_i(x,\vec{y}_i),$$
where $\psi_i(x,\vec{y}_i)$ are Horn formulas over the signature of MV-algebras.
This means that for every simple MV-algebra $A$ and every $a\in A$
$$A\models_a\phi(x)\Leftrightarrow a \textrm{ satisfies } P_{\xi}.$$

We can take in particular $A$ equal to the standard MV-algebra $[0,1]$.

Now, every Horn formula $\psi_i(x,\vec{y}_i)$ is a finite conjunction of formulae of the form $t_{i}^{j}(x, \vec{y_{i}})=1$, where $t_{i}^{j}$ is a term over the signature of $\mathbb{MV}$. Since in the theory of MV-algebras $x\odot (\neg x \oplus y)=1$ if and only if $x=1$ and $y=1$, we can suppose without loss of generality that $\psi_i(x,\vec{y}_i)$ is a formula of the form $t_{i}(x, \vec{y_{i}})=1$, where $t_i$ is a term over the signature of $\mathbb{MV}$.
 
Thus, an element $a\in A$ satisfies $\phi(x)$ if and only if there exists $i\in I$ and elements $(y_1,\ldots,y_{k})$ such that $t_{i}(a, y_{1}, \ldots, y_{k})=1$. Now, if $A=[0, 1]$ then $t_{i}^{-1}(1)$ is a rational polyhedron (cf. Corollary 2.10 \cite{Mundici_book}), whence either it consists of a single point whose coordinates are all rational numbers or it contains infinitely many solutions with a different first coordinate.  

We can thus conclude that the propriety $P_{\xi}$ is not definable by a geometric formula and that the theory $\mathbb{S}imple$ is not of presheaf type.
\end{proof}

\subsection{Local MV-algebras in varieties generated by simple algebras}

By Theorem \ref{thm:local_in_variety}, the local MV-algebras in varieties generated by simple MV-algebras $S_{n_1},\dots,S_{n_h}$ are just the simple chains that generate the variety and their subalgebras. In particular, the local MV-algebras in a variety $V(S_n)$ generated by a single finite chain are precisely the simple MV-algebras $S_k$ where $k$ divides $n$. 

Let us indicate with $\mathbb{T}_n$ the theory ${\mathbb L}oc_{V(S_n)}^2$ (for each $n\in {\mathbb N}$). It is clear that the theory ${\mathbb L}oc_{V(S_{n_1},\dots, S_{n_k})}^2$ is the infimum of the theories $\mathbb{T}_{n_1},\dots,\mathbb{T}_{n_k}$ (with respect to the natural ordering between geometric theories over a given signature introduced in \cite{Caramello4}). Indeed, the models of this theory are precisely $S_{n_1},\dots,S_{n_k}$ and their subalgebras, and each of the theories $\mathbb{T}_{n_1},\dots,\mathbb{T}_{n_k}$ and ${\mathbb L}oc_{V(S_{n_1},\dots, S_{n_k})}^2$ is of presheaf type (by Theorem \ref{thm:rigid_topology}) whence the validity of a geometric sequent over the signature of $\mathbb{MV}$ in all its set-based models amounts precisely to its provability in it.

So all the theories of the form $\mathbb{T}_{n_1}\wedge\dots\wedge\mathbb{T}_{n_k}$ are of presheaf type. It is natural to ask if this property  still holds for an infinite infimum, i.e. if the theory $\bigwedge\limits_{n\in\mathbb{N}}\mathbb{T}_n$
is also of presheaf type. We shall answer to this question in the affermative in the next section.  

\subsection{The geometric theory of finite chains}\label{sec:finitechains}

We observe that the theory $\bigwedge\limits_{n\in\mathbb{N}}\mathbb{T}_n$ introduced above is precisely the geometric theory $\mathbb F$ of finite chains, i.e. the theory consisting of all the geometric sequents over the signature of $\mathbb{MV}$ which are satisfied in every finite chain. 

\begin{theorem}\label{thm:finite chains presheaf type}
The geometric theory of finite chains is of presheaf type.
\end{theorem}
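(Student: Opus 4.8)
The plan is to realise $\mathbb{F}$ as the $\mathcal{A}$-completion of $\mathbb{MV}$ for a suitable full subcategory $\mathcal{A}$ of finitely presentable MV-algebras, and then to invoke Theorem \ref{thm:A-completion}. Concretely, I would take $\mathcal{A}$ to be the full subcategory of $\textrm{f.p.}\mathbb{MV}\textrm{-mod}(\mathbf{Set})$ whose objects are the finite chains $S_n$ (for $n\geq 1$). Since $\mathbb{MV}$ is an algebraic theory, it is of presheaf type and its finitely presentable models are exactly the finitely presented MV-algebras; every finite chain $S_n$ is a finite algebra and hence finitely presented, so $\mathcal{A}$ is indeed a small full subcategory of $\textrm{f.p.}\mathbb{MV}\textrm{-mod}(\mathbf{Set})$. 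By definition, the $\mathcal{A}$-completion $\mathbb{MV}_{\mathcal{A}}$ consists of all geometric sequents over the signature of $\mathbb{MV}$ which are valid in every object of $\mathcal{A}$, that is, in every finite chain; but this is precisely the defining description of the theory $\mathbb{F}$. Theorem \ref{thm:A-completion} then yields at once that $\mathbb{F}=\mathbb{MV}_{\mathcal{A}}$ is of presheaf type, classified by the topos $[\mathcal{A},\mathbf{Set}]$.

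The only points requiring genuine verification are therefore the two identifications underlying this argument: first, that the finite chains are finitely presentable as $\mathbb{MV}$-models (immediate, as noted above, using that $\mathbb{MV}$ is of presheaf type), and second, that the syntactic description of $\mathbb{F}$ in its definition matches verbatim the description of the $\mathcal{A}$-completion in Theorem \ref{thm:A-completion}. Both are routine, so I do not expect a serious obstacle: the substance of the result is packaged entirely into Theorem \ref{thm:A-completion}, and the work lies in recognising the right $\mathcal{A}$. It is also worth recording the shape of $\mathcal{A}$ for later use: there is an MV-homomorphism $S_m\to S_n$ if and only if $m$ divides $n$, and such a homomorphism is necessarily unique (it must preserve $\neg$, which forces $1/m\mapsto 1/m$), so $\mathcal{A}$ is equivalent to the poset $(\mathbb{N}_{\geq 1},\mid)$ of positive integers ordered by divisibility and the classifying topos is $[(\mathbb{N}_{\geq 1},\mid),\mathbf{Set}]$.

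As a by-product, Theorem \ref{thm:A-completion} also tells us that every finitely presentable $\mathbb{F}$-model is a retract of some $S_n$; since in the poset $(\mathbb{N}_{\geq 1},\mid)$ the only retracts are identities, the finitely presentable $\mathbb{F}$-models are exactly the finite chains themselves. Having established that $\mathbb{F}$ is of presheaf type in this way, one is then in a position to apply Theorem 6.32 of \cite{Caramello5} to the category $\mathcal{A}\simeq(\mathbb{N}_{\geq 1},\mid)$ in order to extract an explicit geometric axiomatization of $\mathbb{F}$ over the signature of $\mathbb{MV}$, which is the task taken up in the remainder of this section.
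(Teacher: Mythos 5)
Your proposal is correct and follows essentially the same route as the paper, whose entire proof reads ``Apply Theorem \ref{thm:A-completion} to the category of finite chains''; your verifications (finite chains are finitely presentable $\mathbb{MV}$-models, and the definition of $\mathbb{F}$ matches that of the $\mathcal{A}$-completion verbatim) are exactly the implicit content of that one-line proof. Your by-product observation that the finitely presentable $\mathbb{F}$-models are precisely the finite chains is likewise the paper's Corollary \ref{cor:FinChainf.p.Mod}, with the same retract argument.
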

\begin{proof}
Apply Theorem \ref{thm:A-completion} to the category of finite chains.
\end{proof}

\begin{corollary}\label{cor:FinChainf.p.Mod}
The finitely presentable models of the theory $\mathbb F$ are exactly the finite chains.
\end{corollary}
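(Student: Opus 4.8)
My plan is to combine Theorem \ref{thm:finite chains presheaf type} with the explicit description of finitely presentable models provided by Theorem \ref{thm:A-completion}. The theory $\mathbb{F}$ was shown to be of presheaf type by applying the $\mathcal{A}$-completion construction to the category $\mathcal{A}$ of finite chains, viewed as a full subcategory of $\textrm{f.p.}\mathbb{MV}$-mod$(\mathbf{Set})$; recall that every finite chain $S_k$ is indeed a finitely presentable MV-algebra. Thus $\mathbb{F}$ is precisely the $\mathcal{A}$-completion $\mathbb{MV}_{\mathcal{A}}$ of $\mathbb{MV}$, and Theorem \ref{thm:A-completion} tells us that $\mathbb{F}$ is classified by the presheaf topos $[\mathcal{A},\mathbf{Set}]$, with the additional information that every finitely presentable $\mathbb{F}$-model is a \emph{retract} of an object of $\mathcal{A}$, i.e.\ a retract of a finite chain.

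First I would record the trivial inclusion: every finite chain $S_k$ is a model of $\mathbb{F}$ (since $\mathbb{F}$ consists of sequents valid in all finite chains) and is finitely presentable as such. Indeed, by Theorem \ref{thm:A-completion} the objects of $\mathcal{A}$ embed as finitely presentable objects of the classifying topos $[\mathcal{A},\mathbf{Set}]$ via the Yoneda embedding, so each finite chain is finitely presentable as an $\mathbb{F}$-model. For the reverse inclusion, I would take an arbitrary finitely presentable $\mathbb{F}$-model $B$; by the retract clause of Theorem \ref{thm:A-completion}, $B$ is a retract of some finite chain $S_k$, meaning there are MV-homomorphisms $\iota\colon B\to S_k$ and $r\colon S_k\to B$ with $r\circ\iota=\mathrm{id}_B$. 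The key step is then to observe that a retract of a finite chain in the category of MV-algebras is again a finite chain.

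The only real point to check — and the step I expect to be the mild obstacle — is this closure of finite chains under retracts. Since $\iota$ is a split monomorphism it is injective, so $B$ is (isomorphic to) a subalgebra of $S_k$; but any subalgebra of the totally ordered finite algebra $S_k$ is itself totally ordered and finite, hence a finite chain. (Concretely, the MV-subalgebras of $S_k=\{0,1,\dots,k\}$ are exactly the $S_d$ for $d$ dividing $k$, by the constructive part of Theorem \ref{thm:local_in_variety}.) Therefore $B$ is a finite chain, which completes the argument. I would phrase the proof briefly, citing Theorem \ref{thm:finite chains presheaf type} and Theorem \ref{thm:A-completion} for the retract statement and then closing with the observation that subalgebras, and a fortiori retracts, of finite chains are finite chains.
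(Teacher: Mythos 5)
Your proposal is correct and follows essentially the same route as the paper: the paper's proof also invokes Theorem \ref{thm:A-completion} to identify the finitely presentable $\mathbb F$-models with the retracts of finite chains, and then observes that such retracts are again finite chains (the paper notes that any retraction of a finite chain is trivial, since a split surjection from the simple algebra $S_k$ must be an isomorphism, while you pass through the split monomorphism and the fact that subalgebras of $S_k$ are finite chains --- two equivalent one-line justifications). Your additional remark that the finite chains themselves are finitely presentable $\mathbb F$-models, via their status as objects of $\mathcal A$ in the presheaf topos $[\mathcal A, \mathbf{Set}]$, is implicit in the paper's reading of Theorem \ref{thm:A-completion} as well.
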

\begin{proof}
By Theorem \ref{thm:A-completion}, the finitely presentable models of $\mathbb F$ are precisely the retracts of finite chains, i.e. the finite chains (since any retract of a finite chain is trivial). 
\end{proof}

We can exhibit the formulas presenting these models.

\begin{lemma}\label{lmn:formula presents f.c.}
The finite chain $S_n$ is presented as an MV-algebra by the formula
\begin{center}
$\{x.(n-1)x=\neg x\}.$
\end{center}
\end{lemma}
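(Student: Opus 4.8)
The plan is to show that $\{x.(n-1)x=\neg x\}$ presents $S_n$ by exhibiting a natural isomorphism $\mathrm{Hom}_{\mathbb{MV}}(S_n,A)\cong [[x.(n-1)x=\neg x]]_A$ for every MV-algebra $A$; by Yoneda this is exactly the assertion that $S_n$ is the MV-algebra presented by one generator subject to the relation $(n-1)x=\neg x$. I would work throughout under Mundici's equivalence \cite{Mundici}, writing $(\tilde A,u)$ for the $\ell$-group with strong unit associated with $A$ and recalling that $S_n=\Gamma(\mathbb{Z},n)$ corresponds to the pair $(\mathbb{Z},n)$.

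First I would translate the defining condition into group-theoretic terms. By Lemma \ref{lmm:divisibility} (applied with $m=n$), an element $b$ of $A$ satisfies $(n-1)b=\neg b$ in $A$ if and only if $nb=u$ in $\tilde A$, so that $[[x.(n-1)x=\neg x]]_A$ is in natural bijection with $\{c\in\tilde A\mid nc=u\}$. Next I would compute the unital $\ell$-group homomorphisms $(\mathbb{Z},n)\to(\tilde A,u)$: such a homomorphism is determined by the image $c$ of the generator $1\in\mathbb{Z}$, and conversely any $c$ with $nc=u$ arises this way. Indeed $nc=u\geq 0$ forces $c\geq 0$ (since $\ell$-groups are torsion-free, $(nc)^-=nc^-$, so $nc\geq 0$ gives $c^-=0$), whence $0\le c\le nc=u$ and $c\in A$; and since $\mathbb{Z}$ is totally ordered, the assignment $k\mapsto kc$ with $c\geq 0$ automatically preserves $\wedge$ and $\vee$ and hence defines an $\ell$-group homomorphism, which is unital precisely because $nc=u$. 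This yields a bijection $\mathrm{Hom}_{\ell\text{-u}}((\mathbb{Z},n),(\tilde A,u))\cong\{c\in\tilde A\mid nc=u\}$, manifestly natural in $(\tilde A,u)$.

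Composing these two natural bijections with the isomorphism $\mathrm{Hom}_{\mathbb{MV}}(S_n,A)\cong\mathrm{Hom}_{\ell\text{-u}}((\mathbb{Z},n),(\tilde A,u))$ furnished by Mundici's equivalence gives the desired natural isomorphism $\mathrm{Hom}_{\mathbb{MV}}(S_n,A)\cong[[x.(n-1)x=\neg x]]_A$. I would finish by noting that the generic element is the atom of $S_n$ (the element $1\in\Gamma(\mathbb{Z},n)$), which satisfies the relation and generates $S_n$ since $\{k\cdot 1\mid 0\le k\le n\}=S_n$, and which corresponds to the identity homomorphism under the isomorphism.

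The only delicate point, and the one I would be most careful about, is the passage from an element $c$ with $nc=u$ to a genuine unital $\ell$-group homomorphism together with the verification that this correspondence is inverse to evaluation at $1$ and natural in $A$; once this is in place everything else is formal. An alternative, purely MV-algebraic, route would avoid Mundici's functor by defining $f\colon S_n\to A$ directly via $k\cdot 1\mapsto kb$ and checking well-definedness (using $nb=u$, so that $k\cdot 1=l\cdot 1$ with $k,l\ge n$ forces $kb=lb=1$) together with preservation of $\oplus$ and $\neg$; this is elementary but computationally heavier, so I would prefer the group-theoretic argument.
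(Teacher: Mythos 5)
Your proof is correct and takes essentially the same route as the paper: both rest on Lemma \ref{lmm:divisibility} to translate the formula $(n-1)x=\neg x$ into the condition $nc=u$ in the associated $\ell$-u group, and both obtain the required homomorphism $S_n\to A$ by working at the group level under Mundici's equivalence. The only difference is one of packaging: the paper defines $f(k):=ky$ directly on the elements of $S_n$ and verifies the homomorphism axioms in the group language, whereas you compute the set of unital $\ell$-group homomorphisms $(\mathbb{Z},n)\to(\tilde{A},u)$ as $\{c\in\tilde{A}\mid nc=u\}$ and descend via the full faithfulness of $\Gamma$.
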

\begin{proof}
The chain $S_n=\Gamma(\mathbb{Z}, n)$ is generated by the element $1$, which clearly satisfies the formula in the statement of the lemma. Let $A$ be an MV-algebra and $y\in [[x.(n-1)x=\neg x]]_A$. We want to prove that there exists a unique MV-algebra homomorphism $f$ from $S_n$ to $A$ such that $f(1)=y$. For every $k\in \{0,\dots,n\}$, we set $f(k):=ky$. By working in the language of the associated $\ell$-u groups, it is immediate to see that the map $f$ preserves the sum, the negation and $0$ (cf. Lemma \ref{lmm:divisibility}). Thus $f$ is a homomorphism and it is clearly the unique homomorphism that satisfies the property $f(1)=y$.
\end{proof}

We indicate the formula $\{x.(n-1)x=\neg x\}$ with the symbol $\{x.\phi_n\}$.

\begin{theorem}\label{FinChainMod}
The set-based models of the geometric theory $\mathbb F$ of finite chains are exactly the (simple) MV-algebras that can be embedded in the algebra $\mathbb{Q}\cap [0,1]$.
\end{theorem}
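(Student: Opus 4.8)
The plan is to prove both inclusions by identifying the models of $\mathbb{F}$ with the directed unions of finite chains. For the easy direction, suppose $A$ is a subalgebra of $\mathbb{Q}\cap[0,1]$. I would write $A$ as the directed union of its finitely generated subalgebras. If $B=\langle a_1,\dots,a_k\rangle$ is one of these, then writing each $a_i$ as $p_i/N$ over a common denominator $N$ shows that $B$ is contained in the finite chain $\frac1N\mathbb{Z}\cap[0,1]$; being a subalgebra of a finite chain, $B$ is itself a finite chain. Hence $A$ is a filtered colimit of finite chains. Since every finite chain is a model of $\mathbb{F}$ by the very definition of $\mathbb{F}$, and the class of set-based models of a geometric theory is closed under filtered colimits, it follows that $A\models\mathbb{F}$. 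Note that $A$ is automatically simple, being a subalgebra of the simple algebra $\mathbb{Q}\cap[0,1]$, which accounts for the parenthetical in the statement.

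For the converse, let $A$ be an arbitrary set-based model of $\mathbb{F}$. By Theorem \ref{thm:finite chains presheaf type} the theory $\mathbb{F}$ is of presheaf type, so $A$ is the filtered colimit of the canonical diagram of its finitely presentable submodels; by Corollary \ref{cor:FinChainf.p.Mod} these are exactly the finite chains. The key observation is that every MV-homomorphism between finite chains is injective, since a finite chain is simple (its only ideals are $\{0\}$ and the whole algebra, and a homomorphism cannot send $1$ to $0$). A filtered colimit in $\mathbf{Set}$ of a diagram whose transition maps are injective has injective colimit coprojections; therefore each finite chain in the diagram embeds into $A$, and $A$ is the directed union of these finite subchains.

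It then remains to embed such a directed union into $\mathbb{Q}\cap[0,1]$. For each finite chain $S_m$ occurring in the diagram, Lemma \ref{lmn:formula presents f.c.} presents $S_m$ by the formula $\{x.(m-1)x=\neg x\}$, so a homomorphism $S_m\to\mathbb{Q}\cap[0,1]$ is determined by the image of the generator, which must be the unique solution (namely $1/m$) of $(m-1)x=\neg x$ in the chain $\mathbb{Q}\cap[0,1]$. Thus there is a unique homomorphism $h_m\colon S_m\to\mathbb{Q}\cap[0,1]$, and it is injective by simplicity. Uniqueness guarantees that the $h_m$ are compatible with the transition maps of the diagram, so they assemble into a cocone and induce a homomorphism $h\colon A\to\mathbb{Q}\cap[0,1]$ whose restriction to each finite subchain is the corresponding $h_m$; since $A$ is the union of these subchains and each $h_m$ is injective, $h$ is itself an embedding.

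The step I expect to be most delicate is the passage from the abstract presheaf-type decomposition of $A$ as a filtered colimit of finite chains to an honest directed union of finite \emph{sub}chains, that is, the verification that the colimit coprojections are injective. This rests on the simplicity of finite chains together with the behaviour of filtered colimits of monomorphisms in $\mathbf{Set}$. Once this is in hand, the gluing of the canonical embeddings supplied by the presenting formula of Lemma \ref{lmn:formula presents f.c.} is routine, and the two inclusions together yield the claimed description of the models of $\mathbb{F}$.
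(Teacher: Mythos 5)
Your proof is correct and takes essentially the same route as the paper: both directions rest on the presheaf-type decomposition of a model of $\mathbb F$ as a filtered colimit of finite chains (via Theorem \ref{thm:finite chains presheaf type} and Corollary \ref{cor:FinChainf.p.Mod}), the uniqueness of the homomorphism $S_m\to \mathbb{Q}\cap[0,1]$ to obtain a compatible cocone inducing an embedding, and, conversely, the representation of a subalgebra of $\mathbb{Q}\cap[0,1]$ as the directed union of its finitely generated (hence finite) subchains. Your extra step verifying that the colimit coprojections are injective (via simplicity of finite chains) is a sound elaboration of what the paper leaves implicit when it asserts that every filtered colimit of finite chains ``can be embedded'' into $\mathbb{Q}\cap[0,1]$, not a genuinely different argument.
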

\begin{proof}
By Theorem 	\ref{thm:finite chains presheaf type} and Corollary \ref{cor:FinChainf.p.Mod}, the theory $\mathbb F$ of finite chains is of presheaf type and its finitely presentable models are precisely the finite chains. Hence every model of $\mathbb F$ is a filtered colimit of finite chains. Now, for every finite chain $S_n$ there exists a unique homomorphism $f:S_n\to \mathbb{Q}\cap[0,1]$ which assigns $1$ to the element $\frac{1}{n}$ in $\mathbb{Q}\cap [0,1]$. Thus, every finite chain can be embedded into the the algebra $\mathbb{Q}\cap [0,1]$. Since the MV-algebra homomorphisms $S_n \to S_m$ correspond precisely to the multiplication by the scalar $\frac{m}{n}$ if $n$ divides $m$ (and do not exist otherwise), it follows from the universal property of colimits that every filtered colimit of finite chains can be embedded into $\mathbb{Q}\cap [0,1]$. Vice versa, every subalgebra of $\mathbb{Q}\cap[0,1]$ is the directed union of all its finitely generated (that is, finite) subalgebras and hence it is a filtered colimit of finite chains.
\end{proof}

Let us now provide an explicit axiomatization for the theory $\mathbb F$.

\begin{lemma}\label{lmm:term}
For every $r\in \mathbb{N}$ and any term $t$ in the language of the MV-algebras, the following sequent is provable in $\mathbb F$:
\begin{center}
$(\phi_r(x)\vdash_x \bigvee\limits_{m\in \mathbb{N}}t(x)=mx).$
\end{center}
\end{lemma}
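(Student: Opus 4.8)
The plan is to reduce the provability of the sequent to a direct verification on finite chains, exploiting the fact that $\mathbb F$ is of presheaf type (Theorem \ref{thm:finite chains presheaf type}) and that its finitely presentable models are exactly the finite chains (Corollary \ref{cor:FinChainf.p.Mod}). The key reduction I would use is that a geometric sequent is provable in a theory of presheaf type precisely when it is valid in all of its finitely presentable models. This follows since the classifying topos is $[\textrm{f.p.}\mathbb F\textrm{-mod}(\mathbf{Set}), \mathbf{Set}]$, the universal model $U$ is the tautological diagram evaluating at each finite chain $S$ to $S$, and the interpretation of a geometric formula commutes with the inverse image functors $\textrm{ev}_S$ of the evaluation points; since a containment of subfunctors in a presheaf topos can be checked objectwise, validity of the sequent in $U$ (equivalently, by completeness, its provability in $\mathbb F$) amounts to the validity of $[[\phi_r]]_S \subseteq [[\bigvee_m t(x)=mx]]_S$ in every finite chain $S$.

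Granting this reduction, I would fix a finite chain $S_k = \Gamma(\mathbb Z, k)$ and an element $a \in S_k$ satisfying $\phi_r(a)$, i.e. $(r-1)a = \neg a$, and produce an $m$ with $t(a) = ma$. By Lemma \ref{lmm:divisibility}, the hypothesis $(r-1)a = \neg a$ is equivalent to $ra = u$ in the $\ell$-group $\mathbb Z$ attached to $S_k$; identifying $a$ with an integer $0 \le j \le k$ and $u$ with $k$, this forces $rj = k$, so $r$ divides $k$ and $a = k/r$ (and if $r \nmid k$ there is no such $a$, so the sequent holds vacuously in $S_k$). The decisive observation is then that the MV-subalgebra of $S_k$ generated by $a$ is exactly the image $\{0, a, 2a, \ldots, ra = u\}$ of the unique homomorphism $S_r \to S_k$ sending the generator to $a$ (which exists by Lemma \ref{lmn:formula presents f.c.}), and is therefore isomorphic to $S_r$. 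Since $t(a)$ lies in this subalgebra, we have $t(a) = ma$ for some $m \in \{0, 1, \ldots, r\}$, which witnesses the required disjunct.

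The only genuinely delicate point is the reduction in the first paragraph, \emph{not} the algebraic verification, which is routine once the subalgebra generated by $a$ has been identified with $S_r$. If one prefers to avoid the topos-theoretic computation of interpretations in $U$, an equally good alternative is to invoke the completeness of presheaf type theories with respect to their set-based models (Remark \ref{obs:CartSheaves}) together with the description of these models in Theorem \ref{FinChainMod}: for a subalgebra $A \hookrightarrow \mathbb Q \cap [0,1]$ and $a \in A$ with $\phi_r(a)$, Lemma \ref{lmm:divisibility} forces $a = 1/r$, and the subalgebra generated by $1/r$ is again $\{m/r : 0 \le m \le r\} \cong S_r$, so that $t(a) = ma$ as before. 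Either way the statement follows.
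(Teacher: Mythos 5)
Your proposal is correct, but it takes a genuinely different route from the paper. The paper proves the lemma \emph{syntactically}, by structural induction on the term $t$: the base case $t(x)=x$ is trivial; for $t=s\oplus q$ the induction hypothesis gives $s(x)=mx$ and $q(x)=kx$, whence $t(x)=(m\oplus k)x$; and for $t=\neg s$ one uses Lemma \ref{lmm:divisibility} to rewrite $\neg(mx)$ as $(r-m)x$ under the hypothesis $\phi_r(x)$. Your argument is instead \emph{semantic}: reduce provability in $\mathbb F$ to validity in all finite chains, then verify the sequent in each $S_k$ by observing that a witness of $\phi_r$ forces $r\mid k$, equals $k/r$, and generates a subalgebra isomorphic to $S_r$ in which every element is a multiple of the generator. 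Both verifications are sound, and both ultimately rest on Lemma \ref{lmm:divisibility}. Two remarks on the trade-offs. First, the reduction you flag as ``the only genuinely delicate point'' is in fact immediate and needs none of the presheaf-type machinery you deploy: the paper \emph{defines} $\mathbb F$ as the theory consisting of \emph{all} geometric sequents valid in every finite chain, so a sequent valid in every finite chain is literally an axiom of $\mathbb F$, hence provable; your invocation of Theorem \ref{thm:finite chains presheaf type}, Corollary \ref{cor:FinChainf.p.Mod} and the objectwise computation of interpretations in the universal model (or, in your alternative, of Theorem \ref{FinChainMod}) is correct but is overhead. Second, what the paper's induction buys is self-containedness and explicitness: it never leaves the syntax, it is essentially constructive, and it yields a recursion computing the witnessing $m$ directly from the shape of $t$; what your approach buys is a conceptually transparent one-line reduction plus a clean structural fact (the subalgebra generated by a $\phi_r$-witness is a copy of $S_r$) that explains \emph{why} the lemma is true in every model at once.
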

\begin{proof}
We reason informally by induction on the structure of the term $t$:
\begin{itemize}
\item If $t(x)=x$ then it is clearly true;
\item $t(x)=s(x)\oplus q(x)$, by the induction hypothesis there exist $m,k\in \mathbb{N}$ such that $s(x)=mx$ and $q(x)=kx$. Hence,
$t(x)=mx\oplus kx=(m\oplus k)x$;
\item $t(x)=\neg s(x)$, by the induction hypothesis there exists $m\in \mathbb{N}$ such that $s(x)=mx$. Hence, $t(x)=\neg mx=(r-m)x$ (cf. Lemma \ref{lmm:divisibility}). 
\end{itemize}
\end{proof}

\begin{theorem}\label{thm:axiomDef}
The geometric theory $\mathbb F$ of finite chains is the theory obtained from $\mathbb{MV}$ by adding the following axiom:
$$(\top\vdash_x \bigvee\limits_{k,t\in \mathbb{N}}(\exists z)(\phi_k(z)\wedge x=tz)).$$
\end{theorem}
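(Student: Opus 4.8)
The plan is to obtain this axiomatization as a direct application of the general machinery for axiomatizing $\mathcal{A}$-completions, namely Theorem 6.32 \cite{Caramello5}, which is precisely the tool announced earlier for this purpose. I take $\mathbb{T}=\mathbb{MV}$ and let $\mathcal{A}$ be the category of finite chains, regarded as a full subcategory of $\textrm{f.p.}\mathbb{MV}\textrm{-mod}(\mathbf{Set})$ (which is legitimate by Corollary \ref{cor:FinChainf.p.Mod}). By Theorems \ref{thm:A-completion} and \ref{thm:finite chains presheaf type}, the theory $\mathbb F$ of finite chains is exactly the $\mathcal{A}$-completion of $\mathbb{MV}$, so Theorem 6.32 \cite{Caramello5} applies once its hypotheses (finite presentability of the objects of $\mathcal{A}$ together with explicit presenting formulae) are checked; the presenting formulae are supplied by Lemma \ref{lmn:formula presents f.c.}, which exhibits $\phi_k$ as a presentation of $S_k$.

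The substance of the proof then lies in simplifying the axiom scheme that Theorem 6.32 \cite{Caramello5} produces into the single displayed sequent. That scheme forces every element of a model to lie in the image of a homomorphism from an object of $\mathcal{A}$: concretely, every $x$ must arise as $s(\vec z)$ for some term $s$ and some realization $\vec z$ of the presenting formula of a finite chain. Since by Lemma \ref{lmn:formula presents f.c.} the chain $S_k$ is presented by the \emph{single-variable} formula $\phi_k$, and a homomorphism out of $S_k$ is completely determined by the image $z$ of the canonical generator, such a factorization takes the form $x=t(z)$ with $\phi_k(z)$ holding; and by Lemma \ref{lmm:term}, in the presence of $\phi_k(z)$ every term $t(z)$ is provably equal to a multiple $tz$. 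Collecting these reductions turns the scheme into exactly
$$(\top\vdash_x \bigvee\limits_{k,t\in \mathbb{N}}(\exists z)(\phi_k(z)\wedge x=tz)),$$
which is the claimed axiom.

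As a cross-check on both directions, and to confirm that the displayed axiom is neither too weak nor too strong, I would verify semantically that $\mathbb{MV}$ together with this axiom has the same set-based models as $\mathbb F$. For soundness, in each finite chain $S_k=\Gamma(\mathbb{Z},k)$ the canonical atom $z$ satisfies $\phi_k$ (by Lemma \ref{lmm:divisibility}, since $kz=u$) and every element is of the form $tz$, so the axiom holds. Conversely, in any set-based model $A$ of $\mathbb{MV}$ plus the axiom, every nonzero $x=tz$ with $kz=u$ satisfies $kx=1$, so $A$ is simple; and the subalgebra generated by finitely many such generators $z_1,\dots,z_m$ (with $k_iz_i=u$) embeds into the finite chain $S_{\textrm{l.c.m.}(k_1,\dots,k_m)}$, so $A$ is a directed union of finite chains and hence embeds into $\mathbb{Q}\cap[0,1]$; by Theorem \ref{FinChainMod} it is a model of $\mathbb F$. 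Since $\mathbb F$ is of presheaf type and therefore complete with respect to its set-based models, this coincidence of models is consistent with the computed axiomatization. The main obstacle is the faithful instantiation of Theorem 6.32 \cite{Caramello5} — verifying its hypotheses for $\mathcal{A}$ and carrying out the reduction of its general image-factorization axioms to the clean multiple form $x=tz$ through Lemma \ref{lmm:term}; the semantic verification above is then routine.
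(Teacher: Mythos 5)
There is a genuine gap. Your top-level route is the paper's own --- instantiate Theorem 6.32 of \cite{Caramello5} at $\mathcal{A}=$ the category of finite chains and then simplify --- but your account of what that theorem outputs, and hence of what remains to be proved, is incorrect. The theorem does not produce only the covering scheme saying that every element lies in the image of a homomorphism from a finite chain; as the paper records, it produces in addition (i) an existence axiom $(\top\vdash_{[]}\bigvee_{n\in\mathbb{N}}(\exists x)\phi_n(x))$ and, crucially, (ii) an amalgamation scheme
$$(\phi_n(x)\wedge \phi_m(y)\vdash_{x,y}\bigvee_{k,t,s}(\exists z)(\phi_k(z)\wedge x=tz\wedge y=sz)),$$
which guarantees that any two elements realizing presenting formulae lie in a common finite chain (this is what makes models filtered colimits of finite chains rather than mere unions of images). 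Your reductions via Lemma \ref{lmn:formula presents f.c.} and Lemma \ref{lmm:term} only massage the covering scheme into the clean form $x=tz$; they do nothing about (ii). The mathematical substance of the paper's proof is precisely the elimination of (ii) by showing it is already provable in $\mathbb{MV}$: one passes to the associated $\ell$-u group, sets $z=ay-bx$ where $a,b$ are the B\'ezout coefficients of $\gcd(n,m)$ (Theorem \ref{thm:bezout}), and checks, using that $\ell$-groups are torsion-free, that $\textrm{l.c.m.}(n,m)\,z=u$, whence $\phi_k(z)$, $x=tz$ and $y=sz$ for $k=\textrm{l.c.m.}(n,m)$, $t=k/n$, $s=k/m$. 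Nothing in your proposal supplies this step. A smaller omission of the same kind: the paper must first observe that the displayed axiom forces every model homomorphism (in any Grothendieck topos) to be monic, which is what legitimates applying Theorem 6.32 in the form used, via Remarks 5.4(b) and 5.8(a) of \cite{Caramello5}; the hypotheses you propose to check (finite presentability plus explicit presenting formulae) are not the relevant ones.

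Your semantic cross-check cannot repair this, and you are right to present it only as a consistency check: proving that $\mathbb{MV}$ plus the axiom has the same set-based models as $\mathbb F$ does not show that these two quotients of $\mathbb{MV}$ are syntactically equivalent, because $\mathbb{MV}$ plus the axiom is not yet known to be of presheaf type (or otherwise complete with respect to its set-based models), and infinitary geometric logic has no general set-based completeness theorem; completeness of $\mathbb F$ gives the implication in the direction you do not need. Note also that the step of your semantic argument in which $z_1,\dots,z_m$ with $k_iz_i=u$ are claimed to generate a subalgebra of $S_{\textrm{l.c.m.}(k_1,\dots,k_m)}$ is exactly the semantic shadow of scheme (ii), and it too rests on the B\'ezout computation (besides the nonconstructive embedding of a simple MV-algebra into $[0,1]$). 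So your proof stands or falls with the elimination of the amalgamation scheme, which is missing.
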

\begin{proof}
By definition of $\mathbb F$, the sequent
$$(\top\vdash_x \bigvee\limits_{k,t\in \mathbb{N}}(\exists z)(\phi_k(z)\wedge x=tz))$$
is provable in $\mathbb F$ as it is satisfied in every finite chain. On the other hand, this axiom, added to the theory $\mathbb{MV}$, entails that every model homomorphism in any Grothendieck topos is monic; so, applying Theorem 6.32 in \cite{Caramello5} in view of Lemmas \ref{lmn:formula presents f.c.} and \ref{lmm:term} and Remarks 5.4(b) and 5.8(a) in \cite{Caramello5}, we obtain that $\mathbb F$ can be axiomatized by adding to the theory of MV-algebras the following sequents:
\begin{enumerate}[(i)]
\item $(\top\vdash_{[]}\bigvee\limits_{n\in \mathbb{N}}(\exists x)(\phi_n(x)))$;
\item $(\phi_n(x)\wedge \phi_m(y)\vdash_{x,y}\bigvee\limits_{k,t,s}(\exists z)(\phi_k(z)\wedge x=tz\wedge y=sz))$,
where the disjunction is taken over all the natural numbers $k$ and all the terms $t$ and $s$ such that, denoting by $\xi$ the canonical generator of $S_k$, $t\xi\in [[x. \phi_n(x)]]_{S_{k}}$ and $s\xi\in [[x. \phi_m(x)]]_{S_{k}}$; 
\item $(\top\vdash_x \bigvee\limits_{k,t\in \mathbb{N}}(\exists z)(\phi_k(z)\wedge x=tz))$.
\end{enumerate} 

Now, axiom (iii) clearly entails axiom (i). Let us show that axiom (ii) is provable in the theory of MV-algebras, equivalently satisfied in every MV-algebra $A$. Given elements $x$ and $y$ in $A$ which respectively satisfy formulae $\phi_n$ and $\phi_m$, we have by Lemma \ref{lmm:divisibility} that $nx=u$ and $my=u$, where $(\tilde{A}, u)$ is the $\ell$-u group corresponding to the MV-algebra $A$ via Mundici's equivalence. Set $z$ equal to $ay-bx$ in this group, where $a$ and $b$ are the Bezout coefficients for the g.c.d. of $n$ and $m$ (cf. Theorem \ref{thm:bezout}), so that $\textrm{g.c.d.}(n, m)=an-bm$. Let us show that $kz=u$ for $k=\textrm{l.c.m.}(n, m)=\frac{nm}{\textrm{g.c.d.}(n,m)}$. Since $\ell$-groups are torsion-free, $kz=u$ if and only if $nmz=\textrm{g.c.d.}(n, m)u$. But $nmz=nm(ay-bx)=na(my)-mb(nx)=(an-bm)u=\textrm{g.c.d.}(n,m)u$, as required. Since $kz=u$, $z$ is an element of $A$ which by Lemma \ref{lmm:divisibility} satisfies the formula $\phi_k$. So by Lemma \ref{lmn:formula presents f.c.} there exists an homomorphism $i:S_k \to A$ sending the canonical generator $\xi$ of $S_k$ to $z$. Set $t=\frac{k}{n}$ and $s=\frac{k}{m}$. We clearly have that $x=tz$ and $y=sz$. The fact that $t\xi\in [[x. \phi_n(x)]]_{S_{k}}$ and $s\xi\in [[x. \phi_m(x)]]_{S_{k}}$ follows from these identities observing that $i$ is an embedding of MV-algebras. 

To obtain an axiomatization for $\mathbb F$ starting from the theory $\mathbb{MV}$ it therefore suffices to add axiom (iii).
\end{proof}

\section{Conclusions}

In \cite{Russo} and \cite{Russo2} we used topos-theoretic techniques to study Morita-equiva-lences obtained by `lifting' categorical equivalences that were already known in the literature on MV-algebras. In this paper, instead, topos theory plays a central role in establishing a new class of categorical and Morita equivalences, and new representation theorems. This shows that, as it was already argued in \cite{Caramello1}, topos theory is indeed a powerful tool for discovering new equivalences in Mathematics, as well as for investigating known ones.

\vspace{1cm}
\textbf{Acknowledgements:}
We thank Antonio Di Nola for suggesting us to look at the subject of local MV-algebras in varieties from a topos-theoretic perspective. We are also very grateful to Giacomo Lenzi for helpful discussions. 

\vspace{1cm}

\textsc{Olivia Caramello}

{\small \textsc{UFR de Math\'ematiques, Universit\'e de Paris VII, B\^atiment Sophie Germain, 5 rue Thomas Mann, 75205 Paris CEDEX 13, France}\\
\emph{E-mail address:} \texttt{olivia@oliviacaramello.com}}

\vspace{0.5cm}

\textsc{Anna Carla Russo}

{\small \textsc{Dipartimento di Matematica e Informatica, Universit\'a di Salerno, Via Giovanni Paolo II, 132 - 84084 Fisciano (SA), Italy } and\\

\vspace{-0.4cm}
{\small \textsc{UFR de Math\'ematiques, Universit\'e de Paris VII, B\^atiment Sophie Germain, 5 rue Thomas Mann, 75205 Paris CEDEX 13, France}

\emph{E-mail address:} \texttt{anrusso@unisa.it}}

\bibliographystyle{plain}
\bibliography{Biblio}

\end{document}